\theoremstyle{plain}
\newtheorem{quest}{Question}[section]
\newtheorem{question}[quest]{Question}
\newtheorem{defi}[quest]{Definition}
\newtheorem{teor}[quest]{Theorem}
\newtheorem{prop}[quest]{Proposition}
\newtheorem{coro}[quest]{Corollary}
\newtheorem{lemma}[quest]{Lemma}
\theoremstyle{remark}
\newtheorem{exam}[quest]{Example}
\newtheorem{obs}[quest]{Remark}
\newtheorem*{claim}{Claim}
\newtheorem*{teor1}{Theorem}
\newcommand{\re}{\ensuremath{\mathbbm{R}}}
\newcommand{\ene}{\ensuremath{\mathbbm{N}}}
\newcommand{\partes}{\ensuremath{\mathbbm{P}}}
\newcommand{\nnormbuno}{\ensuremath{\mathbbm{D}}} 
\newcommand{\nnormb}{\ensuremath{ \mathcal{D} }} 
\newcommand{\nnomega}[1]{\ensuremath{(\mathcal{D}_{#1})^\omega}}
\newcommand{\blocksubspaces}{\ensuremath{ \mathcal{F} }}
\newcommand{\nnormbasis}{\ensuremath{ \mathcal{B} }} 
\newcommand{\admfam}{\ensuremath{ \mathfrak{B} }} 
\newcommand{\eneinf}{\ensuremath{[\mathbbm{N}}]^{\infty}}
\newcommand{\enefin}{\ensuremath{[\mathbbm{N}}]^{<\infty}}
\newcommand{\nn}{\ensuremath{\mathcal{N}}}
\newcommand{\baseen}{\ensuremath{(e_n)_n}}
\newcommand{\basexn}{\ensuremath{(x_n)_n}}
\newcommand{\baseyn}{\ensuremath{(y_n)_n}}
\newcommand{\basezn}{\ensuremath{(z_n)_n}}
\newcommand{\baseun}{\ensuremath{(u_n)_n}}
\newcommand{\basevn}{\ensuremath{(v_n)_n}}
\newcommand{\basewn}{\ensuremath{(w_n)_n}}
\newcommand{\ui}{\ensuremath{(u_i)_i}}
\newcommand{\vi}{\ensuremath{(v_i)_i}}
\newcommand{\wi}{\ensuremath{(w_i)_i}}
\newcommand{\Chi}{\ensuremath{\mbox{\large $\chi$}}}
\newcommand{\summ}[4]{\ensuremath{ \sum_{{#1} = {#2}}^{{#3}} {#4} }}
\newcommand{\normm}[1]{\ensuremath{\|{#1} \|}}
\newcommand{\normmm}[1]{{\left\vert\kern-0.25ex\left\vert\kern-0.25ex\left\vert #1 \right\vert\kern-0.25ex\right\vert\kern-0.25ex\right\vert}}
\newcommand{\tends}[3]{\ensuremath{\xrightarrow[{#1} \rightarrow {#2}]{} {#3}}}
\newcommand{\supp}[2]{\ensuremath{supp_{{}_{#1}}({#2}) }}
\newcommand{\sph}{\ensuremath{\mathbbm{S}}}
\newcommand{\ba}{\ensuremath{\mathbbm{B}}}
\newcommand{\aset}{\ensuremath{\mathcal{A}}}
\newcommand{\asetf}[1]{\ensuremath{\mathcal{A}_{#1}}}
\newcommand{\asetfin}[1]{\ensuremath{[\mathcal{A}_{#1}]}}
\newcommand{\hast}{\ensuremath{H_{Y,X}^\aset}}
\newcommand{\hastml}{\ensuremath{H_{L,M}^\aset}}
\newcommand{\embast}{\ensuremath{ \overset{ \tiny{\aset}}{\hookrightarrow}}}
\newcommand{\embs}{\ensuremath{ \overset{\text{\tiny{s}}}{\hookrightarrow}}}
\newcommand{\nembast}{\ensuremath{ \not \overset{\tiny{\aset}}{\hookrightarrow}}}
\newcommand{\gast}{\ensuremath{G_{Y,X}^\aset}}
\newcommand{\gastml}{\ensuremath{G_{L,M}^\aset}}
\newcommand{\seqq}[1]{\ensuremath{\mathfrak{#1}}}
\title{Tight - minimal dichotomies in Banach spaces}
\author{Alejandra C. C\'aceres-Rigo, Valentin Ferenczi}
\address{Departamento de Matem\'atica, Instituto de Matem\'atica e
Estat\'\i stica, Universidade de S\~ao Paulo, rua do Mat\~ao 1010,
05508-090 S\~ao Paulo SP, Brazil.}
\email{caceres.rigo@gmail.com, ferenczi@ime.usp.br }
\keywords{Tight bases, minimal spaces, spreading bases, dichotomies on Banach spaces}
\subjclass[2020]{Primary: 46B20. Secundary: 46B03, 03E15.}
\thanks{The first author was supported by the São Paulo Research Fundation (FAPESP), project 2017/18976-5. The second author was supported by the São Paulo Research Fundation (FAPESP), projects 2016/25574-8 and 2022/04745-0, and by the National Council for Scientific and Technological Development (CNPq), grant 303731/2019-2.}
\begin{document}

\begin{abstract} 
 We extend the methods used by V. Ferenczi and Ch. Rosendal to obtain the `third dichotomy' in the program of classification of Banach spaces up to subspaces, in order to prove that a Banach space $E$ with an admissible system of blocks $(\nnormb_E, \asetf{E})$, contains an infinite dimensional subspace with a basis which is either $\asetf{E}$-tight or $\asetf{E}$-minimal. In this setting we obtain, in particular, dichotomies regarding subsequences of a basis, and as a corollary, we show that every normalized basic sequence $\baseen$ has a subsequence which satisfies a tigthness property or is spreading. Other dichotomies between notions of minimality and tightness are demonstrated, and the Ferenczi - Godefroy interpretation of tightness in terms of Baire category is extended to this new context.
\end{abstract}

\maketitle

\tableofcontents

\section{Introduction}
In this paper, when we refer to a Banach space, we are considering  a separable infinite dimensional Banach space.  Subspaces of Banach spaces are assumed to be infinite dimensional and closed, unless stated otherwise. 
In \cite{Gow02} W. T. Gowers began the {\it Classification Program of Banach spaces up to subspaces}. The program aims to classify Banach spaces into ``inevitable'' classes, using dichotomies between two opposite inevitable classes of Banach spaces. Conditions for a class to be considered of interest for the Program were given by Gowers: the classes must be {\bf inevitable}, that is, every Banach space must belong to a class. A class must be {\bf hereditary} for closed subspaces or, if the property that determines the class is defined for basic sequences, then the class must be hereditary for block subspaces. Two different classes must be {\bf disjoint}. The property that determines the class {\bf must give additional information} about the space of operators defined over the space or over its subspaces. 

A Banach space $X$ is decomposable if it can be written as the direct sum of two closed infinite-dimensional subspaces, otherwise $X$ is said indecomposable. A Banach space is said hereditarily indecomposable (or HI) if all its infinite-dimensional subspaces are indecomposable. Gowers showed a {\it first dichotomy} (see \cite{Gow96}) giving the first two examples of inevitable classes: every Banach space has a separable subspace that is either hereditary indecomposable, or has an unconditional basis. In \cite{Gow02} were proved a {\it second dichotomy}: Any Banach space contains a subspace with a basis such that no pair of disjointly supported block subspaces are isomorphic, or any two block subspaces have isomorphic subspaces. In resume, in \cite{Gow02}, there were presented four inevitable classes.

Later in \cite{FerencziRosendal1}, V. Ferenczi and Ch. Rosendal proved three new dichotomies. They refined in that work the list of inevitable classes into six main classes and 19 secondary classes. The main result in \cite{FerencziRosendal1} is a {\it third dichotomy}, which contrasts the dual notions of minimality and tightness and is central for this work:
\begin{teor}[Third dichotomy, \cite{FerencziRosendal1}]
    If $E$ is a Banach space, then $E$ contains a subspace with basis which is either tight or minimal.
\end{teor}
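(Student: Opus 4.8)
The plan is to fix a normalized bimonotone Schauder basis of a subspace of $E$ (every Banach space has one) and to work throughout with \emph{block subspaces}, since embeddability into every block subspace is equivalent to minimality: every subspace contains, up to arbitrarily small perturbation, a block subspace, so a space that embeds into all of its block subspaces embeds into all of its subspaces. It therefore suffices to produce a block subspace $Y=[y_n]$ of $E$ which is either tight or embeds into each of its own block subspaces. The engine is Gowers' determinacy theorem for the Gowers (infinite asymptotic) game. First I would code embeddings analytically: for block subspaces $X,Z$ the relation $X\hookrightarrow Z$ is analytic in the natural coding of block sequences, because an embedding is witnessed by a bounded, bounded-below operator, and after a standard perturbation reduction one may assume such an operator carries block vectors close to block vectors, so membership becomes a Borel/analytic condition on the pair of block sequences.

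With this in hand, for a fixed block subspace $X_0$ I would consider the analytic set
\[
\mathcal{A}_{X_0}=\{\,\bar{x}\ \text{a block sequence}:\ X_0\hookrightarrow[\bar{x}]\,\}
\]
and apply Gowers' dichotomy to it. In the first alternative there is a block subspace $Y$ all of whose block sequences lie outside $\mathcal{A}_{X_0}$, i.e.\ $X_0\not\hookrightarrow[\bar{x}]$ for every block sequence $\bar{x}$ of $Y$. In the second alternative, for each gap $\Delta$ Player II has a strategy in the Gowers game on some block subspace $Y$ forcing the played sequence into the expansion $(\mathcal{A}_{X_0})_\Delta$. The decisive point is that a Player II winning strategy is uniform over block subspaces: whenever Player I confines the play to a block subspace $Z\subseteq Y$, Player II's output $\bar{x}$ lies in $Z$, and hence $X_0\hookrightarrow[\bar{x}]\subseteq Z$. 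Thus $X_0$ embeds into every block subspace of $Y$.

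This is where the genuine difficulty lies, and it is twofold. The \emph{self-reference} problem: to obtain a minimal space I need the target $X_0$ to be the very space $Y$ delivered by the dichotomy rather than a space fixed in advance; I expect to handle this by letting the target vary with the play through a diagonal, self-similar choice of the set $\mathcal{A}$, or by iterating the second alternative and stabilizing until the conclusion reads $Y\hookrightarrow$ every block subspace of $Y$. The \emph{uniformity} problem: tightness of $(y_n)$ demands, for \emph{every} subspace $Z$, successive intervals $I_0<I_1<\cdots$ with $Z\not\hookrightarrow[y_n:n\notin\bigcup_{i\in A}I_i]$ for all infinite $A$, so the first alternative must be leveraged against a continuum of potential witnesses simultaneously. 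From the raw avoidance "$X_0\not\hookrightarrow[\bar{x}]$ for all block $\bar{x}$ of $Y$" one already gets tightness against $X_0$ itself (every gappy span is the span of a sub-block sequence of $Y$); the interval structure for a general $Z$ would come from a gliding-hump/successive-extraction argument in which $I_n$ captures enough of the initial behaviour of $Z$ that any embedding into a span omitting infinitely many $I_n$ reproduces a copy of an avoided target. Turning this single-space statement into the universal "for all $Z$" statement is the main obstacle; I expect to resolve it by observing that it suffices to defeat a countable family of subspaces dense in the embeddability quasi-order and then to cover arbitrary $Z$ by a perturbation/amalgamation argument, so that a single block subspace $Y$ obtained by diagonalizing the first alternative over this family is tight.

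Finally I would combine the two cases: if some instance of the construction triggers the self-referential second alternative, the associated $Y$ is minimal; otherwise the first alternative persists, and the block subspace produced by diagonalizing it over the dense family of targets is tight. In either case $E$ contains a subspace with a basis which is tight or minimal, which is the assertion.
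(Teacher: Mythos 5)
Your proposal has a genuine and, I believe, fatal gap on the tight side of the dichotomy. Gowers' first alternative applied to $\mathcal{A}_{X_0}$ gives a block subspace $Y$ into none of whose block subspaces $X_0$ embeds; in particular $X_0 \not\hookrightarrow Y$. This ``raw avoidance'' is strictly stronger than tightness and can \emph{never} hold for the spaces that matter most: tightness of $(y_n)_n$ must defeat exactly those $Z$ which do embed into $Y$ (every block subspace of $Y$, including $Y$ itself), and for such $Z$ the only possible statement is the interval one --- $Z$ embeds into $Y$, but into no span $[y_n : n \notin \cup_{i \in A} I_i]$ with $A$ infinite. Your framework contains no mechanism that produces these intervals, and your proposed repair --- defeat a countable family of subspaces downward-dense in the embeddability quasi-order and diagonalize --- is impossible in principle. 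Indeed, if every block subspace of $Y$ contained a copy of some member $W_n$ of a countable family, and each $W_n$ were defeated by its own interval sequence $(I^n_j)_j$, then interleaving (choosing successive intervals $I_j$ containing one interval from each $(I^n_i)_i$ for $n \leq j$) would give a \emph{single} sequence $(I_j)_j$ such that no subspace of $Y$ at all embeds into any gappy span $[y_n : n \notin \cup_{i\in A} I_i]$; but such a gappy span is itself an infinite-dimensional subspace of $Y$ and embeds into itself. So no countable downward-dense family can exist whenever the tight alternative is the operative one, which is precisely the case you need it for. In the actual proof, the intervals for each $Z$ are extracted from a winning strategy for player I in the infinite asymptotic game $H_{Z,X}$ (Lemma \ref{lemma3.7} here), made constant-free by Lemma \ref{lema3.8}, and made to work for all $Z$ simultaneously by the diagonalization of Lemma \ref{lema3.9}; the quantifier ``for every $Z$ there exist intervals'' is handled by letting the intervals depend on $Z$ through $Z$'s own strategy, not by reducing to countably many $Z$.

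On the minimal side your ``self-reference'' paragraph names the difficulty but does not resolve it: ``iterating the second alternative and stabilizing'' \emph{is} the hard content, and it is not accomplished by Gowers' dichotomy itself. The paper (following \cite{FerencziRosendal1} and Pelczar) resolves it with open Gale--Stewart determinacy of the asymptotic game $\hast$ rather than Gowers' Ramsey-type theorem, a transfer lemma from $\hast$ to the two-sided game $\gast$ (Lemma \ref{lemma3.12}), and a stabilization argument on the \emph{countable} set of game states: two monotone maps $\tau$, $\rho$ stabilized via Lemma \ref{lema3.14}, followed by the state-extension Claim in the proof of Theorem \ref{teor3.13}, which exploits that the winning condition is closed for player II. Countability of the set of states is what makes stabilization legitimate --- contrast this with your countable family of subspaces, which does not exist. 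Finally, note that the statement you were asked to prove is obtained in the paper as Corollary \ref{corothirddicho} of the general Theorem \ref{teor3.13}, specialized to the admissible system $(\nnormbuno_E, (\nnormbuno_E)^\omega)$; any correct proof must contain, in some form, both the strategy-to-intervals mechanism and the state-stabilization mechanism, and your proposal has neither.
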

It is well known that a Banach space is minimal if it can be isomorphically embedded in any of its  subspaces. Let us suppose that $E$ is a Banach space with Schauder basis $\baseen$.  A  Banach space $Y$ is tight in $E$  (see \cite{FerencziRosendal1} for this definition and an extensive study of this notion) if there is a sequence $(I_n)_n$ of successive finite subsets of $\ene$,  such that for every $A$ infinite subset of $\ene$, $Y$ can not be isomorphically embedded in $[e_n, n \notin \cup_{i \in A}I_i]$. A basis $(e_n)_n$ is tight for $E$ if any Banach space $Y$ is tight in $E$, and $E$ is tight if it has a tight basis.

A useful characterization of tightness was given  in \cite{TightBaire} using Baire category: $Y$ is tight in $E = [e_n]_n$ if, and only if, the set of indices $A \subseteq \ene$  for which $Y$ can be embedded in $ [e_n: n \in A]$ is meager in $\partes(\ene)$ (after the natural identification of $\partes(\ene)$ with the Cantor space $2^\omega$, via characteristic functions). 

Tightness is an opposite notion to minimality: it is clear that a tight space can not be minimal, nor a minimal space can have a tight subspace. In both definitions, tight and minimal spaces, the underlying embedding is the isomorphic embedding. We say that $Y=[y_n]_n$ isomorphically embeds in $E=[e_n]_n$ if $\baseyn$ is equivalent to a (basic) sequence $\basexn$ in $E$. After a standard perturbation argument, one can ask that such basic basic sequence is a sequence of finitely supported vectors of $E$. One can consider different forms of embedding of $Y$ into $E$, depending of the  properties the basic sequence $\basexn$ in $E$ satisfies. For example, one can ask $\basexn$ to be a a block sequence of the basis $\baseen$ of $E$ or a sequence of disjointly supported vectors in $E$.

The authors in \cite{FerencziRosendal1} also stated that after a variation of the notion of embedding in the definition of tight basis, and consequently, modifying the methods involved in the proof of the {\it third dichotomy}, the following result is obtained:

\begin{teor1}[Theorem 3.16, \cite{FerencziRosendal1}]\label{teordichogrande}
    Every Banach space with a basis contains a block subspace $E=[e_n]_n$ satisfying one of the following properties:
    \begin{itemize}
        \item[(1)] For any  $[y_n]_n\leq E$, there is a sequence $(I_n)_n$ of successive intervals in $\ene$ such that for any $A \in \eneinf$, $[y_n]_n$ does not embed into $[e_n, n \notin \cup_{i \in A}I_i]$, as a sequence of disjointly supported vectors, respectively as a block sequence.
        \item[(2)] For any $[y_n]_n \leq E$, $(e_n)_n$ is equivalent to a sequence of disjointly supported vectors of $[y_n]_n$, respectively $(e_n)_n$ is equivalent to a block sequence of $[y_n]_n$.
    \end{itemize}
\end{teor1}


Therefore, modifying the embedding we obtain a corresponding type of minimality and its associated dual type of tightness. In this work we define and study different types of minimality and their respective dual notions of tightness, in order to obtain new dichotomies between them. An additional attractive aspect of  this point of view is to allow us to extend the techniques to the study of subsequences of a given basis, instead of subspaces of a given space.

Those ways of interpreting the embedding are coded in what we call an admissible system of blocks, which is a pair $(\nnormb_E, \asetf{E})$ associated to a Banach space $E$ with a fixed normalized basis $\baseen$. Basically, a set of blocks (see Definition \ref{setofblocks}) $\nnormb_E$ for $E$ is a set ``containing'' the possible bases of the block subspaces one admits to consider. Meanwhile,  an admissible set (see Definition \ref{admpro}) $\asetf{E}$ for $E$ is the set of infinite sequences of vectors which are the images of the embedding one wants to consider. Using this coding in the case of ``being equivalent to a subsequence of $\baseen$'', for example, $\nnormb_E$ would be the set which elements are the vectors of the basis and $\asetf{E}$ is the set of all the subsequences of $E$. The properties of set of blocks and admissible families will be studied in Section  \ref{sec:Admissiblesystem}.

This coding for embedding through admissible sets $\asetf{E}$ of vectors naturally leads us to define the notions of $\asetf{E}$-minimality and $\aset$-tightness, which depend on the pair $(\nnormb_E, \asetf{E})$, as follows: given a set of blocks $\nnormb_E$ and an admissible set $\asetf{E}$ for $E$, we say that $E$ is $\asetf{E}$-minimal if for every block sequence $\basexn \in (\nnormb_E)^\omega $, there is a sequence $\baseyn \in \asetf{E} \cap X^\omega$ equivalent to $\baseen$. We say that $\baseen$ is a $\asetf{E}$-tight basis for $E$ if for every Banach space $Y$ there is a sequence of successive intervals $(I_i)_i$ such that for every $A$ infinite subset of $\ene$:
    \begin{equation}
        Y  \nembast [e_n: n \notin \cup_{i \in A} I_i].
    \end{equation}
    
    The study of $\asetf{E}$-embeddings and $\asetf{E}$-minimality is done in Sections \ref{sec:embedding}, \ref{sec:blocksubspaceinF} and summarized in Section \ref{sec:typesofminimality}. Basic properties of $\asetf{E}$-tight bases are studied in Section \ref{sec:Atightness}.

    In this work, we generalize the methods in \cite{FerencziRosendal1} to use this admissible systems and we prove the main theorem of this work:

\begin{teor}\label{teor3.13}
    Let $E$ be a Banach space with normalized basis $\baseen$ and  $(\nnormb_E, \asetf{E})$ be an admissible system of blocks for $E$. Then $E$ contains a $\nnormb_E$-block subspace which is either $\asetf{E}$-tight  or $\asetf{E}$-minimal.
\end{teor}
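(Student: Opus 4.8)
The plan is to transport the Gowers-game machinery behind the original third dichotomy of Ferenczi and Rosendal into the present setting, systematically replacing ordinary block sequences by $\nnormb_E$-block sequences and ordinary isomorphic embedding by $\asetf{E}$-embedding. The axioms defining an admissible system of blocks are exactly what should make Gowers' Ramsey-type determinacy theorem survive this substitution, so I would first invoke the $\nnormb_E$-version of Gowers' dichotomy (established in the sections on admissible systems) and then run the standard tight/minimal alternative through it. Throughout, the fixed target of reproduction is the basis $\baseen$ itself.

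The object to feed into the game is the set of block sequences that reproduce the basis,
\[
  \mathcal{M} = \{\, \basexn \in \nnomega{E} : \exists\, \baseyn \in \asetf{E}\cap [x_n]^\omega \text{ with } \baseyn \text{ equivalent to } \baseen \,\}.
\]
I would check that $\mathcal{M}$ is analytic in the natural Polish topology on $\nnomega{E}$: the quantifier ranges over a Borel set of candidate sequences in $\asetf{E}$, and ``equivalent to $\baseen$'' is Borel, being a countable conjunction of inequalities on rational combinations of coordinates. Applying the $\nnormb_E$-version of Gowers' determinacy to $\mathcal{M}$ then yields two alternatives: either there is a $\nnormb_E$-block subspace $Z$ all of whose $\nnormb_E$-block sequences lie, up to an arbitrarily small perturbation, in $\mathcal{M}$; or the second player has a strategy $\sigma$ in Gowers' game, played inside some $\nnormb_E$-block subspace $Z$, that keeps the produced sequence out of $\mathcal{M}$.

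In the first alternative, every $\nnormb_E$-block subspace $X \le Z$ reproduces $\baseen$, that is, its span contains a sequence of $\asetf{E}$ that is equivalent to $\baseen$ up to an arbitrarily small perturbation. This is exactly the assertion that $Z$ is $\asetf{E}$-minimal, once the perturbation is absorbed; the absorption uses the stability of $\asetf{E}$ and of $\asetf{E}$-equivalence under small perturbations (an admissibility axiom) together with a routine diagonalisation passing from ``arbitrarily small perturbation'' to an exact member of $\asetf{E}\cap X^\omega$. This direction is essentially bookkeeping once the perturbation lemmas for the admissible system are in hand.

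The real work is the second alternative, which should produce an $\asetf{E}$-tight basis. Fixing an arbitrary Banach space $Y$, I must manufacture successive intervals $(I_i)_i$ with $Y \nembast [e_n : n \notin \cup_{i\in A} I_i]$ for every $A \in \eneinf$. The strategy $\sigma$ only guarantees, for a single play, that the relevant target fails to be $\asetf{E}$-reproduced; to defeat all infinite $A$ at once I would use the Ferenczi--Rosendal coding argument, in which infinite subsets of $\ene$ are coded by the first player's moves and $\sigma$ is run against every code simultaneously, the intervals $I_i$ being read off from the supports of $\sigma$'s responses along a carefully fused sequence so that deleting $\cup_{i\in A} I_i$ for any infinite $A$ leaves a subspace in which $\sigma$ has already blocked every $\asetf{E}$-embedding of $Y$. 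The delicate points, and the main obstacle, are (i) making the coding compatible with the operations admitted by $\nnormb_E$ and with $\asetf{E}$-embeddings, so that the admissibility axioms transfer ``$\sigma$ avoids $\mathcal{M}$'' into ``$Y$ fails to $\asetf{E}$-embed,'' and (ii) the fusion that makes the single sequence of intervals work uniformly over all $A$, which is where the combinatorial heart of the original dichotomy lies. As an alternative to the explicit interval construction, once the admissible-setting extension of the Ferenczi--Godefroy Baire-category characterisation is available, the tight conclusion can be obtained more directly by using $\sigma$ to show that $\{A : Y \embast [e_n : n \in A]\}$ is meager in $\partes(\ene)$.
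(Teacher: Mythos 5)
There is a genuine gap, and it lies at the very first step: the ``$\nnormb_E$-version of Gowers' dichotomy'' that you invoke is not established anywhere in the paper, and it is not available in the form you state. The paper's sections on admissible systems develop perturbation properties, the operation $\ast_X$, and the asymptotic games $\hast$ and $\gast$; they never prove a Gowers-type Ramsey theorem for $\nnormb_E$-block sequences. Worse, the alternative you quote (``all block sequences of some $Z$ lie in $\mathcal{M}$'' versus ``II can force the outcome out of $\mathcal{M}$'') is Gowers' theorem applied to the \emph{complement} of $\mathcal{M}$; since $\mathcal{M}$ is analytic (as you correctly check), its complement is coanalytic, and Gowers' determinacy theorem holds in ZFC only for analytic sets. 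Applying Gowers to $\mathcal{M}$ itself gives the other pairing (no block sequence of $Z$ in $\mathcal{M}$, versus II forcing into an expansion of $\mathcal{M}$), which does not match your case analysis. The paper avoids all of this: its only determinacy input is open Gale--Stewart determinacy of the games $\hast$ (which are open for player I), combined with Lemmas \ref{lemma3.7}, \ref{lema3.8}, \ref{lema3.9} on the tight side and the Pelczar-style stabilization (Lemma \ref{lema3.14}, the maps $\tau$ and $\rho$, and the state games $G^{\aset}_{L,M}(s)$) on the minimal side.

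The deeper problem is your choice of a \emph{fixed} target $\baseen$. Your minimality half is salvageable: from ``$E \embast X$ for every $\nnormb_Z$-block subspace $X\leq Z$'' one does get that $Z$ is $\asetf{E}$-minimal, by writing the basis of $Z$ as a block sequence of $\baseen$ and applying condition $c)$ of Definition \ref{admpro} (the $\ast$ operation), exactly as in the proof of Theorem \ref{AtightnoAminimal}. But the tightness half cannot be repaired by any coding or fusion. A strategy $\sigma$ whose only guarantee is that its outcomes fail to reproduce $\baseen$ carries no information whatsoever about an \emph{arbitrary} block subspace $Y$ failing to $\asetf{E}$-embed, whereas $\asetf{E}$-tightness of a basis $\basexn$ demands, for \emph{every} $\nnormb_X$-block subspace $Y$, a sequence of intervals (depending on $Y$) such that $Y \nembast [x_n: n\notin \cup_{i\in A}I_i]$ for all $A \in \eneinf$. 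This quantification over all targets is precisely why the paper (following Ferenczi--Rosendal) works with the whole family of games $\hast$, one for each pair $(Y,X)$: winning strategies for player I in \emph{all} of these games are converted into intervals separately for each $Y$ (Lemma \ref{lemma3.7}), the constants are removed (Lemma \ref{lema3.8}), and a diagonalization produces one tight subspace (Lemma \ref{lema3.9}); dually, since the target must be allowed to vary and only afterwards be pinned down, the minimal side requires the transfer Lemma \ref{lemma3.12} and the stabilization argument rather than a single application of a Ramsey dichotomy. Replacing that family of games by the single set $\mathcal{M}$ collapses the statement to one about embeddability of $E$ alone, from which the tight alternative of Theorem \ref{teor3.13} does not follow.
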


The authors of \cite{FerencziRosendal1} stated that modifying the notion of embedding in the definition of tight basis, and consequently modifying the methods involved in the proof of the Third Dichotomy, the following statement is true:

\begin{center}
    \it{ Every Banach space with a basis contains a block subspace $E=[e_n]_n$ satisfying that either for any  $[y_n]_n\leq E$, there is a sequence $(I_n)_n$ of successive intervals in $\ene$ such that for any $A \in \eneinf$, $[y_n]_n$ does not embed into $[e_n, n \notin \cup_{i \in A}I_i]$ as a permutation of a block sequence; or for any $[y_n]_n \leq E$, $(e_n)_n$ is permutatively equivalent to a block sequence of $[y_n]_n$.}
\end{center}

But, as we saw in Proposition \ref{remperm1} a basic sequence $\baseyn$ being embedded in $[e_n]_n=E$ as a permutation of $\baseen$,  is not an $\asetf{E}$-embbedding obtained from an admissible set for $E$, and this is fundamental for the proofs in this chapter to work.  We have no evidence that in this case the above affirmation is true, but it can not be obtained just by modifying the embedding notion in the proof of  the third dichotomy, as the authors of \cite{FerencziRosendal1} claim.

\subsection{A sketch of the proof}

 The main tool used in \cite{FerencziRosendal1} in order to prove the third dichotomy is the notion of {\it generalized asymptotic game} which is a generalization of the notion of infinite asymptotic game (see  \cite{Rosendal2006InfiniteAG} and \cite{OdellSchlumprecht}). A modification of the infinite asymptotic game was first defined by Ferenczi in \cite{Fer} to prove that a space saturated with subspaces with a Schauder basis, which embed into the closed linear span of any subsequence of their basis,  must contain a minimal subspace. The work in \cite{Fer} generalized the methods and the result of Pelczar in \cite{pelczar}: a Banach space saturated with subsymmetric basic sequences contains a minimal subspace. 

Let $X=[x_n]_n$ and $Y=[y_n]_n$ be two block subspaces of a Banach space $E$ with Schauder basis $\baseen$. The {\it generalized asymptotic game} $H_{Y,X}$ with constant $C$  is a game with infinite rounds between player $I$ and player $II$ where in the $k$-th round, player $I$ moves a natural number $n_k$ and player $II$ responds with a natural number $m_k$ and a not necessarily normalized finitely supported vector $u_k$ such that $\supp{}{u_k} \subseteq \cup_{i=0}^k [n_i, m_i]$. The outcome of the game is the not necessarily block sequence $\baseun$. Player $II$ wins the game if $\baseyn$ is $C$-equivalent to $\baseun$.

In order to prove Theorem \ref{teor3.13}, we follow the demonstration of the third dichotomy  generalizing the arguments for the context of $\asetf{E}$-minimality and $\asetf{E}$-tightness, creating the notion of ``admissible systems of blocks''. In the first place, we shall adapt for $\nnormb_E$-block subspaces two technical Lemmas (Lemma \ref{lemma2.2} and Lemma \ref{lema3.14}), whose original versions for block subspaces were proved in \cite{FerencziRosendal1} and in \cite{procpelczar}, respectively. 
We define an ${\mathcal A}$-version of the generalized asymptotic game $\hast$ with constant $C$,  depending on an admissible set $\asetf{E}$, requiring that the outcome $\baseun$ of the game to be an element of $\asetf{E}\cap X^\omega$. Again, the game $\hast$ with constant $C$ is open for player $I$ and then, by the determinacy of open Gale-Stewart games, is determined. 

In Section \ref{sec:relagametight} we prove technical lemmas varying the methods of Ferenczi and Rosendal: we show that if $E$ is in some way saturated by $\nnormb_E$-block subspaces $X$ and $Y$ such that player $I$ has a winning strategy for the game $\hast$ with constant $C$, then $E$ has an $\asetf{E}$-tight subspace. 

Before the proof of our main theorem it is necessary to introduce two games for $\asetf{E}$-minimality: the game $\gast$ with constant $C$ and a version of that game assuming that finitely many moves have been made in $\gast$. This shall be done in Section \ref{sec:gamesforminimality}. The main result in that section relates the existence of a winning strategy for player ${\rm II}$ in the game $\hast$ with the existence of a winning strategy for player ${\rm II}$ in the game $\gast$. Finally, after the proof of Theorem \ref{teor3.13} which will be showed in Section \ref{proofofteor}, some tight - minimal dichotomies are presented.

\section{Preliminaries}
\label{sec:preliminares}

If $E$ is a Banach space $\sph_E$, $\ba_E$ and $\overline{\ba}_E$ denote the unit sphere, the open and closed ball of $E$, respectively. For $\varepsilon > 0$ and $x \in E$, $\ba_E(x,\varepsilon)$ and $\overline{\ba}_E(x,\varepsilon)$ denote the open and closed ball in $E$ centered in $x$ with radius $\varepsilon$, respectively.

Suppose that $\baseen$ is a basis for the Banach space $E$. We define the support of $x \in E$ (in symbols $\supp{E}{x}$) in the basis $\baseen$ as the set $\{n \in \ene: e_{n}^\ast(x) \neq 0\}$, where $e_{k}^{\ast}$ are the coordinate functionals defined by $x = \summ{n}{0}{\infty}{\lambda_n e_n} \mapsto \lambda_k $,  for all $k \in \ene$. The support of the zero vector of $E$ is the empty set.

We say that a Banach space $X$ is isomorphic to a Banach space $Y$ with constant $K$ (denoted as $Y \simeq_K X$) if there exists a one-to-one bounded linear operator $T$ from $X$ onto $Y$ such that $T^{-1}$ is bounded and $K \geq \normm{T}\cdot \normm{T^{-1}}$.  We say that $X$ contains a $K$-isomorphic copy of $Y$, or $Y$ is $K$-embeddable in $X$ (denoted as $Y \hookrightarrow_K X$), if $Y \simeq_K Z$ for some $Z$ subspace of $X$. Finally,  $Y$ is isomorphically embeddable in $X$, or just embeddable, (in symbols $Y \hookrightarrow X$), if $Y \hookrightarrow_K X$, for some $K \geq 1$. In this case we say that $X$ contains an isomorphic copy, or just a copy, of $Y$. 

Let  $K\geq 1$, two basic sequences $\basexn$ and $\baseyn$ are $K$- equivalent ($\basexn \sim_K \baseyn$) if for all $k \in \ene$ and $(a_i)_{i=0}^k$ finite sequence of scalars we have
    $$\frac{1}{K}\normm{\summ{n}{0}{k}{a_n x_n}} \leq \normm{\summ{n}{0}{k}{a_n y_n}} \leq K \normm{\summ{n}{0}{k}{a_n x_n}}.$$
Two basic sequences are equivalent if there is $K\geq 1$ satisfying that such sequences are $K$-equivalent.

We shall use the following well known result:
\begin{prop}\label{cdem}
    Let $X$ be a Banach space with basis $\basexn$ with basis constant $C$ and let $M\geq 1$. Then, there is a constant $c \geq 1$ which depends on $C$ and $M$, such that if $(z_n)_n $ and $\baseyn$ are normalized block bases of $\basexn$ which differ only in $M$ terms, then $\baseyn \sim_{c} (z_n)_n$.  
\end{prop}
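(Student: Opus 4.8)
The plan is to prove this by a direct norm comparison, exploiting that \basezn\ and \baseyn\ agree outside a set of at most $M$ indices. Let $F \subseteq \ene$ be the set of indices $n$ for which $y_n \neq z_n$, so that $|F| \leq M$ and $y_n = z_n$ for every $n \notin F$. Given a finite scalar sequence $(a_n)$, subtracting the two sums cancels every term indexed outside $F$, so that
\[
\sum_n a_n y_n - \sum_n a_n z_n = \sum_{n \in F} a_n (y_n - z_n).
\]
The whole argument thus reduces to bounding this finite error term by a fixed multiple of $\normm{\sum_n a_n z_n}$ (and, by symmetry, of $\normm{\sum_n a_n y_n}$), where the multiple depends only on $C$ and $M$.

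To carry this out, I would first recall two standard facts. A normalized block basis of a basis with basis constant $C$ is itself basic with basis constant at most $C$; consequently its coordinate functionals have norm at most $2C$. Applying this to \basezn\ yields, for each index $n$,
\[
|a_n| = \Bigl| z_n^\ast\Bigl(\sum_m a_m z_m\Bigr) \Bigr| \leq 2C\,\normm{\sum_m a_m z_m}.
\]
Since each $y_n$ and each $z_n$ is normalized, $\normm{y_n - z_n} \leq 2$, and the error sum has at most $M$ nonzero terms. Combining these estimates gives
\[
\normm{\sum_{n \in F} a_n (y_n - z_n)} \leq \sum_{n \in F} |a_n|\,\normm{y_n - z_n} \leq 4CM\,\normm{\sum_m a_m z_m}.
\]
Hence $\normm{\sum_n a_n y_n} \leq (1 + 4CM)\,\normm{\sum_n a_n z_n}$; running the symmetric estimate with the coordinate functionals of \baseyn\ gives the reverse inequality, so $\baseyn \sim_c \basezn$ with $c = 1 + 4CM$.

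I do not expect a genuine obstacle here; the one point deserving care is the uniformity of $c$. The constant must be independent of which $M$ coordinates differ and of the particular block bases chosen, and this is guaranteed precisely because the coordinate-functional bound $2C$ is uniform over all normalized block bases (it comes from the block-basis constant being at most $C$), and because each perturbed term contributes at most $2$ to the error norm. It is also worth noting that the conclusion is insensitive to the positions of the differing indices and to whether the sequences are indexed from $0$ or from $1$.
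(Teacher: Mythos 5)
The paper gives no proof of Proposition \ref{cdem}: it is stated as a well-known fact and then invoked as a black box (in Proposition \ref{lemma2.2} and Lemma \ref{lemamagro2}), so there is no argument of the authors to compare yours against. Your proof is correct and is the standard argument one would supply: a block basis of $\basexn$ inherits basis constant at most $C$, hence its coordinate functionals have norm at most $2C$ on its closed span, and combining this with $\normm{y_n-z_n}\leq 2$ and the cancellation outside the at most $M$ exceptional positions yields the two symmetric estimates $\normm{\sum_n a_n y_n}\leq (1+4CM)\normm{\sum_n a_n z_n}$ and $\normm{\sum_n a_n z_n}\leq (1+4CM)\normm{\sum_n a_n y_n}$, which is exactly the paper's two-sided definition of $\baseyn \sim_{1+4CM} (z_n)_n$. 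One point worth making explicit: your reading of ``differ only in $M$ terms'' (equality outside a set of at most $M$ positions, with arbitrary normalized blocks at those positions) is precisely what the paper needs in its applications, where the exceptional vectors are genuinely different basis vectors rather than small perturbations of one another; since your estimate uses only $\normm{y_n-z_n}\leq 2$ and never that the differing terms are close, it covers those uses.
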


Let $A$ be a non-empty set. $|A|$ denotes the cardinality of $A$, $\partes(A)$ denotes the power set of $A$, $[A]^{<\infty}$ and $[A]^{\infty}$ denote the set of finite subsets of $A$ and the set of infinite subsets of $A$, respectively.
Given $A, B \subset\ene$ and assuming that $\max A$ and $\min B$ exist, we write $A<B$ to mean that $\max A < \min B$. So, when we refer to a sequence $(I_n)_n$ of successive finite subsets of $\ene$, we are saying that $I_n < I_{n+1}$ for every $n\in \ene$. Also, when we refer to an interval $I$  of natural numbers, we are meaning that $I = [a, b]\cap \ene$, for some $0\leq a<b$.
Let us denote the set of nonempty finite sets of $\ene$ by ${\rm FIN}$, that is  ${\rm FIN} := \enefin \setminus \{\emptyset\}$. We denote by ${\rm FIN}^\omega$ the set of infinite sequences of non-empty finite subsets of $\ene$.

We shall consider the Cantor space $2^\omega = \{0,1\}^\omega$ with its natural product topology after endow in $\{0,1\}$ the discrete topology. If $\seqq{s}=(s_i)_i \in 2^\omega$, define $\supp{}{\seqq{s}} = \{i\in \ene: s_i  = 1\}$. Notice that $\partes(\ene)$ can be identified with $2^\omega$ using the characteristic functions: let $A \in \partes(\ene)$, then the characteristic function $\Chi_A$ belongs to $2^\omega$ and $A = \supp{}{\Chi_A}$. Thus, families of subsets of $\ene$ sometimes will be seen as families of sequences of $\ene$. Therefore, any $\mathcal{F} \subseteq \partes(\ene)$ can be seen as a topological subspace of $2^\omega$. A basic open subset of $2^\omega$ determined by $\seqq{s} \in 2^\omega $ and $J \in \enefin$ is given by
$$\nn_{\seqq{s},J} := \{\seqq{u} = \baseun \in 2^\omega :  \forall n \in J (u_n = s_n) \}.$$

\subsection{A law and Ramsey-like theorems}

A Polish space is a separable completely metrizable topological space. In this subsection we shall recall some classical theorems. The next theorem is known as the first topological 0-1 law:

\begin{teor}[(8.46) in \cite{Kechris}]\label{01topologicallaw}
    Let $X$ be a Polish space, and $G$ be a group of homeomorphisms of $X$ with the following property: for any $U$ and $V$ non-empty open subsets of $X$, there is $g \in G$ such that $g(U) \cap V \neq \emptyset$. If $A\subseteq X$ has the Baire Property and is $G$-invariant (i.e. g(A) = A, for every $g \in G$), then $A$ is meager or comeager in $X$.
\end{teor}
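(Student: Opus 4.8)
The plan is to exploit the Baire Property of $A$ to replace it, up to a meager set, by an \emph{open} set, and then to use the $G$-invariance together with the transitivity-type hypothesis on $G$ to force that open set to be either empty or dense. Since $X$ is Polish it is a Baire space, and in a Baire space a non-empty open set is non-meager while an open dense set is comeager; this translates the dichotomy ``empty or dense'' into the desired dichotomy ``meager or comeager''.

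Concretely, I would first use the Baire Property to fix an open set $U \subseteq X$ with $A \triangle U$ meager. Next, observe that each $g \in G$ is a homeomorphism, so it sends open sets to open sets and meager sets to meager sets; combined with the invariance $g(A) = A$ this gives that $A \triangle g(U) = g(A \triangle U)$ is meager for every $g \in G$. Taking symmetric differences and using $U \triangle g(U) \subseteq (A \triangle U) \cup (A \triangle g(U))$, we conclude that $U \triangle g(U)$ is meager for every $g \in G$. This is the structural consequence of invariance that drives the whole argument: the approximating open set $U$ is, up to meager error, itself $G$-invariant.

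The heart of the proof is to show that $U$ is either empty or dense. Suppose it is neither: then $V := X \setminus \overline{U}$ is a non-empty open set disjoint from $U$. By the hypothesis on $G$ there is $g \in G$ with $g(U) \cap V \neq \emptyset$; being the intersection of two open sets, $g(U) \cap V$ is a non-empty open subset of the Baire space $X$, hence non-meager. On the other hand, since $V \cap U = \emptyset$ we have $g(U) \cap V \subseteq g(U) \setminus U \subseteq U \triangle g(U)$, which we have just shown is meager --- a contradiction. Therefore $U = \emptyset$ or $U$ is dense.

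Finally I would conclude: if $U = \emptyset$ then $A = A \triangle U$ is meager; if $U$ is dense then, being open and dense in a Baire space, it is comeager, and since $A \triangle U$ is meager, $A$ is comeager as well. The only genuinely delicate point is the middle reduction --- recognizing that $G$-invariance passes to the approximating open set in the precise form ``$U \triangle g(U)$ is meager'' --- after which the transitivity hypothesis and the Baire category of Polish spaces do all the work. Everything else is routine manipulation of symmetric differences and elementary facts about meager sets.
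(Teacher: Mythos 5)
Your proof is correct. Note that the paper does not prove this statement at all: it is quoted verbatim as Theorem (8.46) from Kechris's \emph{Classical Descriptive Set Theory} and used as a black box (in the proof of Lemma \ref{lemamagro2}), so there is no internal proof to compare against. Your argument is the standard one for the topological 0-1 law: approximate $A$ by an open set $U$ via the Baire Property, observe that $G$-invariance of $A$ forces $U \triangle g(U)$ to be meager for every $g \in G$, and use the mixing hypothesis on $G$ plus the Baire category theorem (non-empty open sets in a Polish space are non-meager) to conclude that $U$ is empty or dense; all steps, including the symmetric-difference bookkeeping and the final transfer from $U$ back to $A$, are sound.
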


The next theorem is known as the Galvin-Prikry's Theorem:

\begin{teor}[(19.11) in \cite{Kechris}]\label{galvinprikry}
    Let $\eneinf = P_0 \cup ... \cup P_{k-1}$, where each $P_i$ is Borel and $k \in \ene$. Then there is $H \in \eneinf$ and $i < k$ with $[H]^{\infty} \subseteq P_i$.
\end{teor}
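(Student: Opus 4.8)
The plan is to deduce the statement from the stronger assertion that \emph{every Borel subset of $[\ene]^{\infty}$ is completely Ramsey}. For $s \in [\ene]^{<\infty}$ and $A \in [\ene]^{\infty}$ with $\max s < \min A$ write
$$[s,A] := \{\, s \cup T : T \in [A]^{\infty} \,\},$$
and call $\mathcal{X} \subseteq [\ene]^{\infty}$ \emph{completely Ramsey} if for every such pair $(s,A)$ there is $B \in [A]^{\infty}$ with $[s,B] \subseteq \mathcal{X}$ or $[s,B] \cap \mathcal{X} = \emptyset$. Once this is known for Borel $\mathcal{X}$, the theorem follows by a short iteration: starting from $(\emptyset,\ene)$, I would apply the property successively to $P_0, P_1, \dots$, each of which is completely Ramsey and whose property relativises to any $[B]^{\infty}$ (a space canonically isomorphic to $[\ene]^{\infty}$). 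Either some step traps $[H]^{\infty}$ inside a piece $P_i$, or after $k-1$ steps I obtain $H$ whose infinite subsets avoid $P_0, \dots, P_{k-2}$ and hence lie in $P_{k-1}$.

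The content lies in the base case: sets open in the product topology of $[\ene]^{\infty}$ are completely Ramsey. Fix an open $\mathcal{U}$ and say $s$ \emph{accepts} $A$ when $[s,A] \subseteq \mathcal{U}$, and $s$ \emph{rejects} $A$ when $s$ accepts no infinite subset of $A$; both relations are hereditary downward, and any $(s,A)$ can be shrunk so that $s$ decides. The decisive lemma is that if $s$ rejects $A$, then there is $B \in [A]^{\infty}$ on which $s \cup \{b\}$ rejects $B$ for every $b \in B$. I would prove it by a fusion $A = A_0 \supseteq A_1 \supseteq \cdots$ with $b_i = \min A_i$ and $s \cup \{b_i\}$ deciding $A_{i+1}$: the set $B'$ of those $b_i$ for which $s \cup \{b_i\}$ \emph{accepts} $A_{i+1}$ cannot be infinite, for then, sorting each member of $[s,B']$ by its least element past $s$, the segment $s$ itself would accept $B' \subseteq A$, contradicting that $s$ rejects $A$. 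Diagonalising this lemma over all finite initial extensions of $s$ produces $B$ on which every initial segment $t$ with $s \sqsubseteq t$ and $t \setminus s \subseteq B$ rejects $B$; openness of $\mathcal{U}$ then forces $[s,B] \cap \mathcal{U} = \emptyset$, since any point of $[s,B] \cap \mathcal{U}$ would carry a finite initial segment accepting a tail of $B$.

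Next I would check that the completely Ramsey sets form a $\sigma$-algebra containing the open sets, and conclude by transfinite induction along the Borel hierarchy. Closure under complement is immediate from the symmetry of the definition. Closure under countable unions is the crux, and it is precisely here that the fully localised (``complete'') form of the property is indispensable: given completely Ramsey sets $\mathcal{X}_n$ with union $\mathcal{X}$ and a pair $(s,A)$, I would run a single fusion that, for every finite initial extension $t$ of $s$ lying in the surviving pool and for every $n$, decides $\mathcal{X}_n$, yielding $B$ together with a coherent system of decisions. If some extension $t$ forces $[t,B] \subseteq \mathcal{X}_n$ one pulls this back to $[s,B] \subseteq \mathcal{X}$; otherwise every decision is the empty one, so already $[s,B] \cap \mathcal{X}_n = \emptyset$ for all $n$ and thus $[s,B] \cap \mathcal{X} = \emptyset$.

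I expect this last fusion to be the main obstacle. The difficulty is not the $k$-fold partition, which is routine, nor the open case, whose combinatorics are self-contained, but keeping the countably many sets $\mathcal{X}_n$ decided \emph{coherently along the diagonal sequence}: a decision taken for $(s, B_n)$ need not survive to the diagonal set $B$, because $[s,B]$ uses early elements outside $B_n$, and this is exactly why one must decide all initial extensions $t$ simultaneously rather than $s$ alone. Organising this bookkeeping and verifying that the two alternatives in the union step are genuinely exhaustive is the delicate part; the resulting statement is the Galvin--Prikry theorem, for which one may also consult the proof of (19.11) in \cite{Kechris}.
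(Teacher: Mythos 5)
The paper itself offers no proof of this statement: it is quoted verbatim as (19.11) from Kechris, so your sketch must be judged against the classical Galvin--Prikry argument that the citation points to. Your architecture is exactly that argument -- reduce the $k$-piece partition statement to ``Borel sets are completely Ramsey'', prove the open case by accept/reject combinatorial forcing with a fusion, then close the completely Ramsey sets under complements and countable unions -- and your open case and the $k$-fold iteration at the top are sound. The genuine gap is in the union step, at the precise sentence ``if some extension $t$ forces $[t,B]\subseteq \mathcal{X}_n$ one pulls this back to $[s,B]\subseteq \mathcal{X}$''. This implication is false: an inside decision at a proper extension $t\supsetneq s$ only controls those points of $[s,B]$ that pass through $t$; a point $x\in[s,B]$ omitting some element of $t\setminus s$ is untouched by it, and no further shrinking of $B$ can convert a decision localized at $t$ into one localized at $s$ (what it \emph{does} give is that the outside alternative is blocked for any $B'\supseteq t\setminus s$, which is weaker). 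So the two alternatives you state are trivially exhaustive -- the delicacy you locate in ``verifying exhaustiveness'' is not where the problem lies -- but the conclusion drawn in the first alternative is unjustified, and this is the crux of the whole theorem, not bookkeeping.

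The standard repair, which is how Galvin--Prikry and Kechris close the argument, needs one more application of the open case that you have already proved. After the fusion $B=\{b_0<b_1<\cdots\}$ that decides every pair $(t,n)$ coherently (arrange that the pair $(t,n)$ with $t\setminus s\subseteq\{b_0,\dots,b_{i-1}\}$, $n\leq i$, is decided at stage $i$, so that the decision is inherited by the tail $\{b_i,b_{i+1},\dots\}$), let $\mathcal{U}$ be the set of $x\in[s,B]$ admitting a \emph{full} initial segment $t$ (so that $x\setminus t$ lies in the tail current when $(t,n)$ was decided) whose decision for some $\mathcal{X}_n$ was ``inside''. Membership in $\mathcal{U}$ depends only on a finite initial segment of $x$, so $\mathcal{U}$ is open; applying the open case to $\mathcal{U}$ and $(s,B)$ yields $B'\in[B]^{\infty}$ with either $[s,B']\subseteq \mathcal{U}$, in which case every $x\in[s,B']$ lies in some decided $[t,\text{tail}]\subseteq \mathcal{X}_n\subseteq \mathcal{X}$ and hence $[s,B']\subseteq \mathcal{X}$, or $[s,B']\cap \mathcal{U}=\emptyset$, in which case every decision along every point of $[s,B']$ is the empty one and $[s,B']\cap\mathcal{X}=\emptyset$ exactly as in your second branch. (Alternatively: note that your naive fusion \emph{is} correct verbatim when every $\mathcal{X}_n$ only ever receives the empty decision, i.e.\ for countable unions of Ramsey-null sets, and then treat a general completely Ramsey set as open modulo Ramsey-null -- the Ellentuck Baire-property route.) Either patch turns your sketch into a complete proof.
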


According to Ramsey Theory's nomenclature, a subset $\mathcal{C}$ of $\eneinf$ is Ramsey if, and only if, there is some $H \in \eneinf$ such that $[H]^\infty \subseteq \mathcal{C}$ or  $[H]^\infty \subseteq \eneinf \setminus \mathcal{C}$. So, Galvin-Prikry's Theorem can be enunciated as follows: the Borel sets of $\eneinf$ are Ramsey. The next theorem, Silver's Theorem, says that analytic subsets of $\eneinf$ are completely Ramsey, which implies that analytic subsets of $\eneinf$ are Ramsey (since all completely Ramsey subsets are Ramsey). We recommend the lecture of \cite{Kechris} for more information about this definitions and proofs.

\begin{teor}[(29.8) in \cite{Kechris}]\label{silver}
   Analytic subsets of $\eneinf$ are  completely Ramsey.
\end{teor}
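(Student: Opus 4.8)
The plan is to route the statement through the \emph{Ellentuck topology} and reduce complete Ramsey-ness to a Baire-category property for that topology, where analytic sets become easy to control. For a finite set $s \in [\ene]^{<\infty}$ and an infinite set $A \in [\ene]^{\infty}$ with $\max s < \min A$, set
$$[s,A] = \{ x \in [\ene]^{\infty} : s \subseteq x \subseteq s \cup A \},$$
and recall that $\mathcal{X} \subseteq [\ene]^{\infty}$ is \emph{completely Ramsey} if for every such pair $(s,A)$ there is $B \in [A]^{\infty}$ with $[s,B] \subseteq \mathcal{X}$ or $[s,B] \cap \mathcal{X} = \emptyset$, and \emph{completely Ramsey null} (CR-null) if one can always force the second alternative. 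The sets $[s,A]$ form a basis for a topology $\tau$ on $[\ene]^{\infty}$, the Ellentuck topology, which refines the Cantor topology inherited from $2^{\omega}$; in particular every subset that is closed in the usual topology is $\tau$-closed, hence has the $\tau$-Baire property.

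The combinatorial heart, which I would isolate first, is \emph{Ellentuck's theorem}: $\mathcal{X}$ is completely Ramsey if and only if it has the Baire property with respect to $\tau$, and $\mathcal{X}$ is CR-null if and only if it is $\tau$-meager. I would prove the direction I need (the $\tau$-Baire property $\Rightarrow$ complete Ramsey-ness) through the standard accept--reject machinery of combinatorial forcing: say $A$ \emph{accepts} $s$ if $[s,A] \subseteq \mathcal{X}$ and \emph{rejects} $s$ if no infinite subset of $A$ accepts $s$; these notions are inherited by infinite subsets, and for any $(s,A)$ some $B \in [A]^{\infty}$ decides $s$. A fusion (diagonalization) argument then produces, for each $A$, a single $B \in [A]^{\infty}$ deciding $s$ for all finite $s \subseteq B$ simultaneously, and propagating rejection of $s$ to rejection of almost all $s \cup \{n\}$. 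From this one extracts the two pillars of Ellentuck's theorem: that $\tau$-nowhere dense and CR-null coincide (so that, the CR-null sets forming a $\sigma$-ideal, $\tau$-meager $=$ $\tau$-nowhere dense), and that on a suitable $[s,B]$ one can always decide membership in a given $\tau$-open set modulo a CR-null remainder. Gluing these on a common fusion sequence converts any representation $\mathcal{X} = U \triangle M$, with $U$ $\tau$-open and $M$ $\tau$-meager, into the complete Ramsey alternative. I expect this fusion argument, and especially the verification that $\tau$-meager collapses to $\tau$-nowhere dense, to be the main obstacle and the only genuinely hard step.

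With Ellentuck's theorem available, the passage from the Borel to the analytic case is soft. The family of sets with the $\tau$-Baire property is a $\sigma$-algebra closed under the Souslin operation; this is the classical fact that the Baire property is preserved under the Souslin operation (Nikodym's theorem), which holds in an arbitrary topological space and uses only that the $\tau$-meager sets form a $\sigma$-ideal together with the decomposition ``open $\triangle$ meager''. Now every analytic subset of the Polish space $[\ene]^{\infty}$ can be written as $\bigcup_{\alpha \in \omega^{\omega}} \bigcap_{n} F_{\alpha | n}$ for a scheme $(F_s)_{s \in \omega^{<\omega}}$ of closed sets, that is, as the result of the Souslin operation applied to closed sets; since closed sets are $\tau$-closed and hence have the $\tau$-Baire property, closure under the Souslin operation gives that every analytic set has the $\tau$-Baire property. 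Ellentuck's theorem then upgrades this to the desired conclusion: every analytic subset of $[\ene]^{\infty}$ is completely Ramsey. As a side remark, the already-stated Galvin--Prikry theorem \ref{galvinprikry} yields the Borel case directly, by relativizing the partition statement to each $[s,A]$ through the natural bijection $y \mapsto s \cup y$ between $[B]^{\infty}$ and $[s,B]$; but the analytic case genuinely requires the additional Souslin-operation step above.
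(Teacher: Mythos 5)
The paper offers no proof of this statement: it is quoted as Theorem (29.8) of \cite{Kechris}, and your argument is precisely the proof given in that cited source --- Ellentuck's theorem identifying the completely Ramsey subsets of $\eneinf$ with the sets having the Baire property in the Ellentuck topology $\tau$ (proved by the accept/reject fusion machinery you describe, including the collapse of $\tau$-meager to $\tau$-nowhere dense), followed by Nikodym's theorem that the Baire property in an arbitrary topological space is preserved under the Souslin operation, applied to a closed (hence $\tau$-closed) Souslin scheme representing the analytic set. Your sketch is correct and takes essentially the same route as the reference the paper relies on, so there is nothing further to compare.
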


\section{Admissible sets and families}
\label{sec:Admissiblesystem}

Along this section suppose $E$ is a Banach space with Schauder basis $\baseen$. Set $\nnormbasis_E :=\{e_n:n \in \ene\}$ and  $\nnormbasis_{E}^{\pm} := \{e_n:n \in \ene\}\cup \{- e_n:n \in \ene\} $. Let $\mathbf{F}_E$ be a countable subfield of $\re$ containing the rationals such that for all $\summ{i}{0}{n}{\lambda_i e_i}$, with $n \in \ene$ and $(\lambda_i)_{i=0}^n \in (\mathbf{F}_{E})^{n+1}$, the norm $\normm{\summ{i}{0}{n}{\lambda_i e_i}} \in \mathbf{F}_E$. We denote by $\nnormbuno_E$ the countable set of nonzero not necessarily normalized finite $\mathbf{F}_E$-linear combinations of $\baseen$.

\subsection{Definitions and notations}\label{subsec:defssystemblocks}
\begin{defi}\label{defast}
    Let $\basexn$ be a sequence of successive finitely supported vectors of $E$. For $X = [x_n]_n$, let us define the operation $\ast_X: (\nnormbuno_E \cap X)^\omega \times(\nnormbuno_E)^\omega \rightarrow (\nnormbuno_E)^\omega$ as follows: if $v = \basevn $ belong to $ (\nnormbuno_E)^\omega $ and $u = \baseun \in (\nnormbuno_E \cap X)^\omega $ such that  for each $n \in \ene$ 
    \begin{equation*}
        u_n = \sum_{i \in \supp{X}{u_n}} \lambda_{i}^n x_i,
    \end{equation*}
    then $u \ast_X v $ is the sequence $\basewn$, such that for each $n \in \ene$
    $$w_n = \sum_{i \in \supp{X}{u_n}} \lambda_{i}^n v_i.$$
\end{defi}

Notice that the set $\nnormbuno_E \cap X$ could be empty. In our work we shall take subspaces generated by vectors on $\nnormbuno_E$, so this will not occur.

\begin{defi}\label{setofblocks}
    We define a set of blocks for the space $E$ to be a set $\nnormb_E$ satisfying the following conditions 
    \begin{enumerate}
        \item[a)] $\nnormb_E$ is a subset of $\nnormbuno_E$.
        \item[b)]  The set $\{e_n: n \in \ene\} $ is contained in $ \nnormb_E$.
        \item[c)] If $u \in \nnormb_E$, then $\frac{u}{\|u\|} \in \nnormb_E$. 
        \item[d)] For every $\baseun \in (\nnormb_E)^\omega$ and $\basevn \in (\nnormb_{E})^\omega$, we have $ \baseun \ast_E \basevn \in (\nnormb_{E})^\omega.$
        \item[e)] Let $(x_i)_{i=0}^{n} \in (\nnormb_E)^{n+1} $ with $x_i < x_{i+1}$ for every $0\leq i \leq n$, and $X=[x_i]_{i\leq n}$. If $ u \in  \nnormb_E $ is such that 
        $$u = \sum_{i =0}^{n} \lambda_{i} x_i,$$
        then 
         $$v = \sum_{i =0}^{n} \lambda_{i} e_i \in \nnormb_E.$$
    \end{enumerate}
    We say that a vector $u$ is a  $\nnormb_E$-block if  $u$ is an element of the set $\nnormb_E$.
\end{defi}

\begin{exam}
    $ \nnormbasis_E$, $\nnormbasis_{E}^{\pm} $ and $\nnormbuno_E$ are sets of blocks for $E$. 
\end{exam}

\begin{defi}\label{blocksequence}
    Let $D \subseteq \nnormbuno_E$ be an infinite subset such that $D^\omega$ contains a block basis of $\baseen$.
    \begin{itemize}
        \item[$(i)$] We say that $\baseyn \in  E^\omega$ is a $D$-block sequence if, and only if, $\baseyn$ is a block basis of $\baseen$ and for each $n \in \ene$ we have $y_n \in D$.
        \item[$(ii)$] A subspace $Y$ is a $D$-block subspace if it is the closed subspace spanned by a $D$-block sequence. $\baseyn$. 
    \end{itemize}
     Without loss of generality we shall suppose that a $D$-block subspace is always generated by a normalized $D$-block sequence. We say that $Y=[y_n]_n$ is a $D$-block subspace when $Y$ is a $D$-block subspace generated by the normalized $D$-block sequence $\baseyn$.
\end{defi}

\begin{defi}
    Let $\nnormb_E$ be a set of blocks for $E$. Let $X$ be a $\nnormb_E$-block subspace. 
    \begin{itemize}
        \item[$(i)$] We define $\nnormbuno_X := \nnormbuno_E \cap X$.
        \item[$(ii)$]We denote by $\nnormb_X := \nnormb_E \cap X$  the set formed by the blocks which belong to $X$.
        \item[$(iii)$] We denote by $bb_{\nnormb}(E)$ the set of normalized $\nnormb_E$-block sequences of  $E$, i.e.
        $$ bb_\nnormb(E) := \{\basexn \in (\nnormb_E)^\omega : \basexn \text{ is a } \nnormb_E\text{-block sequence of } E \;\& \; \forall n \in \ene (\normm{x_n} = 1) \}.$$
        \item[$(iv)$] 
        We denote by $bb_{\nnormb}(X)$ the set of normalized
        $\nnormb_X$-block sequences of $E$, i.e.
        $$ bb_\nnormb(X) := \{\baseyn \in (\nnormb_X)^\omega : \baseyn \text{ is a } \nnormb_X\text{-block sequence of } E \;\& \; \forall n \in \ene (\normm{x_n} = 1) \}.$$
    \end{itemize}
\end{defi}

If $\nnormb_E$ is a set of blocks for $E$ and $X$ is a $\nnormb_E$-block subspace, then we sometimes identify an element $\baseyn$ of $bb_\nnormb(X)$ with the $\nnormb_E$-block subspace that it generates. 

We endow $(\nnormb_E)^\omega$  with the product topology obtained by considering $\nnormb_E$ with the discrete topology.  $(\nnormb_E)^\omega$ is a Polish space. Also, the set $(\ene \times \ene \times \nnormb_E)^\omega$ with its natural product topology is Polish. The set $bb_{\nnormb}(E)$ is a non-empty closed topological subspace of $\nnomega{E}$, therefore, it is Polish.

\begin{defi}\label{admpro}
    Let $\nnormb_E$ be a set of blocks for $E$. We say that a set $\asetf{E}$ is an admissible set for $E$ if, and only if, it satisfies the following conditions:
    \begin{itemize}
        \item[a)]\label{defprop} $\asetf{E}$ is a closed subset of $(\nnormb_E)^\omega$.
        \item[b)] $\asetf{E}$ is a subset of $(\nnormb_E)^\omega$ such that  it contains all the $\nnormb_E$-block sequences.
        \item[c)] For every $\baseyn \in \asetf{E}$ and every $\nnormb_E$-block subspace  $X=[x_n]_n$ we have that if $\baseun \in {(\nnormb_X)}^\omega$, then
        $$ \baseun \in \asetf{E} \iff \baseun \ast_X \baseyn  \in \asetf{E}. $$
        \item[d)] Let $\baseyn$ be a $\nnormb_E$-block sequence and $Y = [y_n]_n$. For every $\baseun \in \asetf{E}$ and $k \in \ene$, there is $\basevn \in Y^\omega$ such that $(u_0, ..., u_k, v_0, v_1, ...) \in \asetf{E}$.
    \end{itemize}
\end{defi}

\begin{defi}
 Let $\nnormb_E$ be a set of blocks for $E$, $\asetf{E}$ an admissible set for $E$ and $X$ be a $\nnormb_E$-block subspace.
    \begin{itemize} 
        \item[$(i)$]  We denote  by $\asetf{X}$ the intersection $\asetf{E} \cap X^\omega$.
        \item[$(ii)$] We denote by $\asetfin{X}$  the set of initial parts of $\asetf{X}$, that is:
            $$\asetfin{X}:= \bigcup_{n \in \ene} \{(u_0, u_1, ..., u_n) \in (\nnormb_X)^{n+1}: \exists (w_i)_i \in \asetf{X} \text{ s.t. } w_i = u_i, \text{ for } 0 \leq i \leq n \}.$$
    \end{itemize}
\end{defi}

\begin{obs}\label{asetfinclopen}
    \begin{itemize}
        \item[$(i)$]  Notice that an admissible set depends on the set of blocks that has been settled for $E$.
        \item[$(ii)$] Since $(\nnormb_{E})^{i}$ is a discrete topological space, the set $\asetfin{E} \cap (\nnormb_E)^{i}$ is a clopen subset of $(\nnormb_E)^{i}$, for every $i \geq 1$. 
        \item[$(iii)$]  If $X$ and $Y$ are $\nnormb_E$-block subspaces such that $Y \subseteq X$, then $\asetf{Y} \subseteq \asetf{X}$.
    \end{itemize}
\end{obs}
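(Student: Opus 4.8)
The plan is to verify the three items one at a time, each directly from the pertinent definition; none of them requires the asymptotic-game or Ramsey machinery developed later, and in fact (ii) is a purely topological triviality while (iii) is a set-theoretic inclusion. I would therefore present the whole remark as a short sequence of observations rather than as a structured argument.

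For (i), I would only point back to Definition \ref{admpro}: every one of its clauses (a)--(d) is phrased in terms of $\nnormb_E$, since $\asetf{E}$ is required to be a closed subset of $(\nnormb_E)^\omega$, to contain all the $\nnormb_E$-block sequences, and to interact coherently with arbitrary $\nnormb_E$-block subspaces $X$ via the operation $\ast_X$. Hence the very notion of admissibility is meaningless without a fixed set of blocks $\nnormb_E$ in the background, which is precisely the content of the remark; no further argument is needed.

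For (ii), the essential observation is that $(\nnormb_E)^{i}$ is a finite product of the discrete space $\nnormb_E$ and is therefore itself discrete. In a discrete topological space every subset is simultaneously open and closed, so $\asetfin{E} \cap (\nnormb_E)^{i}$ is clopen irrespective of its internal structure. The only point to spell out is that $\asetfin{E} \cap (\nnormb_E)^{i}$ is genuinely a subset of $(\nnormb_E)^{i}$, which is immediate from the definition of $\asetfin{E}$ as a union over lengths $n$ of sets of tuples $(u_0,\dots,u_n) \in (\nnormb_X)^{n+1}$. For (iii), I would unwind the definitions $\asetf{X} = \asetf{E} \cap X^\omega$ and $\asetf{Y} = \asetf{E} \cap Y^\omega$. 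Since $Y \subseteq X$ as subspaces of $E$, every infinite sequence all of whose terms lie in $Y$ has all of its terms in $X$, i.e. $Y^\omega \subseteq X^\omega$; intersecting both sides with the fixed set $\asetf{E}$ preserves the inclusion, giving $\asetf{Y} = \asetf{E}\cap Y^\omega \subseteq \asetf{E}\cap X^\omega = \asetf{X}$.

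As for the main obstacle, there is essentially none: all three items follow at once from the definitions, and the only place demanding even a modicum of care is the bookkeeping in (ii), namely confirming that the finite-length slice $\asetfin{E} \cap (\nnormb_E)^{i}$ really does sit inside the discrete space $(\nnormb_E)^{i}$ so that the ``every subset is clopen'' principle applies.
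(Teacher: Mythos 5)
Your proposal is correct and follows exactly the reasoning the paper intends: the remark is stated without a separate proof precisely because all three items are immediate from Definition \ref{admpro}, the discreteness of the finite power $(\nnormb_E)^{i}$ (so that every subset is clopen), and the definitions $\asetf{X}=\asetf{E}\cap X^\omega$, $\asetf{Y}=\asetf{E}\cap Y^\omega$ together with $Y^\omega\subseteq X^\omega$. There is nothing to add or correct.
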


\begin{defi}\label{densa}
     Let $\nnormb_E$ be a set of blocks for $E$ and $\asetf{E}$ be an admissible set for $E$. We say that the pair $(\nnormb_E, \asetf{E})$ is an admissible system of blocks for $E$ if, and only if, $\nnormb_E$ and $ \asetf{E}$ satisfy the following relation:
    For every  $\nnormb_E$-block subspace $X$ of $E$, for every sequence $(\delta_n)_n$ with $0<\delta_n<1$, and $K \geq 1$, there is a collection $(A_n)_n$ of non-empty subsets of $\nnormb_X$  such that
        \begin{enumerate}
            \item[a)]  For each $n$ and for each $d \in \enefin$ such that there is $w \in \nnormb_X$ with $\supp{X}{w}= d$, we have that there are finitely many vectors $u \in A_n$ such that $\supp{X}{u} = d$.
            \item[b)] For every sequence $\wi \in \asetf{X}$ satisfying $1/K \leq  \normm{w_i} \leq   K$, for every $i$, there is  $(u_i)_i \in \asetf{X}$ such that for each $n$ we have
            \begin{itemize}
                \item[b.1)] $u_n \in A_n$,
                \item[b.2)] $\supp{X}{u_n} \subseteq \supp{X}{w_n}$,
                \item[b.3)] $\normm{w_n - u_n}< \delta_n$.
            \end{itemize}
        \end{enumerate}
\end{defi}

\subsection{Properties of admissible sets}\label{subsec:propadmsets}

\begin{prop}\label{equivc}
    Let $\nnormb_E$ be a set of blocks and $ \asetf{E}$ be an admissible set for $E$, then the following are equivalent:
     \begin{itemize}
         \item[$(i)$] For every $\baseyn \in \asetf{E}$ and every $\nnormb_E$-block subspace  $X=[x_n]_n$ we have that if $\baseun \in {(\nnormb_X)}^\omega$, then
        $$ \baseun \in \asetf{E} \iff \baseun \ast_X \baseyn  \in \asetf{E}. $$
         \item[$(ii)$] For every $\baseyn$ and $\basezn$ in $\asetf{E}$ we have that if $\basewn \in (\nnormb_E)^\omega$ then
        $$\basewn \ast_E \baseyn  \in \asetf{E} \iff \basewn \ast_E \basezn \in \asetf{E}. $$
         \item[$(iii)$] For every $\baseyn$ and $\basezn  $ in $\asetf{E}$ and every $\nnormb_E$-block subspace  $X=[x_n]_n$ we have that if $\baseun \in {(\nnormb_X)}^\omega$ then
        $$\baseun \ast_X \baseyn  \in \asetf{E} \iff \baseun \ast_X \basezn \in \asetf{E}. $$
     \end{itemize}
\end{prop}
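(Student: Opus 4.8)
The plan is to establish the cycle of implications $(i)\Rightarrow(iii)\Rightarrow(ii)\Rightarrow(i)$. Observe that $(i)$ is exactly condition c) of Definition~\ref{admpro}, so the point of the proposition is that the apparent variants $(ii)$ and $(iii)$ are in fact equivalent reformulations of it. The first two implications will be essentially formal; the real content will be the recovery of the ``arbitrary $X$'' statement $(i)$ from the ``$X=E$'' statement $(ii)$, where I expect condition e) of Definition~\ref{setofblocks} together with the substitution rules of Definition~\ref{defast} to do the work.

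For $(i)\Rightarrow(iii)$ I would fix $\baseyn,\basezn\in\asetf{E}$, a $\nnormb_E$-block subspace $X=[x_n]_n$, and $\baseun\in(\nnormb_X)^\omega$. Since both $\baseyn$ and $\basezn$ lie in $\asetf{E}$, applying $(i)$ once with each of them gives $\baseun\ast_X\baseyn\in\asetf{E}\iff\baseun\in\asetf{E}$ and $\baseun\in\asetf{E}\iff\baseun\ast_X\basezn\in\asetf{E}$, and chaining these is exactly $(iii)$. For $(iii)\Rightarrow(ii)$ I would simply specialize $X=E$: the basis $\baseen$ is a normalized $\nnormb_E$-block sequence by condition b) of Definition~\ref{setofblocks}, so $E=[e_n]_n$ is itself a $\nnormb_E$-block subspace, one has $\nnormb_X=\nnormb_E$ for $X=E$, and $\ast_E$ is the instance of $\ast_X$ for this choice; statement $(iii)$ then reads verbatim as $(ii)$ after renaming $\baseun$ as $\basewn$.

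The core will be $(ii)\Rightarrow(i)$. Here I would fix $\baseyn\in\asetf{E}$, a $\nnormb_E$-block subspace $X=[x_n]_n$, and $\baseun\in(\nnormb_X)^\omega$ with $u_n=\sum_{i\in\supp{X}{u_n}}\lambda_i^n x_i$, and convert the $X$-substitution $\ast_X$ into an $E$-substitution $\ast_E$ by passing to $\baseen$-coordinates. To this end I would introduce $\tilde{u}=(\tilde{u}_n)_n$ with $\tilde{u}_n:=\sum_{i\in\supp{X}{u_n}}\lambda_i^n e_i$. Applying condition e) of Definition~\ref{setofblocks} to each $u_n\in\nnormb_X\subseteq\nnormb_E$, written over the finite block sequence of the relevant $x_i$, should yield $\tilde{u}_n\in\nnormb_E$, hence $\tilde{u}\in(\nnormb_E)^\omega$. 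Because the $X$-support of $u_n$ and the $E$-support of $\tilde{u}_n$ coincide (both equal $\{i:\lambda_i^n\neq0\}$) with the same coefficients, Definition~\ref{defast} should give the two identities $\baseun=\tilde{u}\ast_E(x_n)_n$ and $\baseun\ast_X\baseyn=\tilde{u}\ast_E\baseyn$. Since $(x_n)_n\in\asetf{E}$ (a normalized $\nnormb_E$-block sequence, by condition b) of Definition~\ref{admpro}) and $\baseyn\in\asetf{E}$, applying $(ii)$ with $\basewn=\tilde{u}$ to the pair $(x_n)_n,\baseyn$ would give
\[
\baseun\in\asetf{E}\iff\tilde{u}\ast_E(x_n)_n\in\asetf{E}\iff\tilde{u}\ast_E\baseyn\in\asetf{E}\iff\baseun\ast_X\baseyn\in\asetf{E}
\]
which is precisely $(i)$.

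The main obstacle is this last implication, and I expect it to be conceptual rather than computational: one has to notice that the general condition c) of Definition~\ref{admpro} follows formally from its restriction to $X=E$, the mechanism being that every $\ast_X$-substitution equals an $\ast_E$-substitution applied to the $\baseen$-coordinate rewriting $\tilde{u}$ of the blocks of $X$. The only step genuinely requiring care will be checking that $\tilde{u}$ has all its entries in $\nnormb_E$, which is exactly what condition e) of Definition~\ref{setofblocks} is built to guarantee; the rest is bookkeeping of supports and coefficients through Definition~\ref{defast}.
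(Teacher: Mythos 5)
Your proposal is correct and takes essentially the same route as the paper: the paper's own proof is just the one-line assertion that the equivalence ``follows directly from Definition \ref{setofblocks} and Definition \ref{admpro}'', and your argument is exactly the fleshed-out version of that claim, with the only substantive step, $(ii)\Rightarrow(i)$, resting (as it must) on condition e) of Definition \ref{setofblocks} to rewrite each $\ast_X$-substitution as an $\ast_E$-substitution via the $(e_n)_n$-coordinate vectors $\tilde{u}_n$, and on conditions b) of Definition \ref{admpro} to place $(x_n)_n$ in $\asetf{E}$. All the supporting identities you state (equality of supports and the two factorizations $\baseun=\tilde{u}\ast_E\basexn$ and $\baseun\ast_X\baseyn=\tilde{u}\ast_E\baseyn$) check out against Definition \ref{defast}, so there is nothing to correct.
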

\begin{proof}
    It follows directly from Definition \ref{setofblocks} and Definition \ref{admpro}.
\end{proof}

We can easily  prove that a set of blocks satisfies the following heredity properties:
\begin{prop}\label{subsetofblocks}
      Let $\nnormb_E$ be a set of blocks and $ \asetf{E}$ be an admissible set for $E$. If $X=[x_n]_n$ is a $\nnormb_E$-block subspace, then we have
    \begin{itemize}
        \item[$(i)$] $\nnormb_X\subseteq \nnormbuno_X$.
        \item[$(ii)$] $\{x_n: n \in \ene\} \subseteq \nnormb_X$. 
        \item[$(iii)$] If $u \in \nnormb_X$, then $\frac{u}{\|u\|} \in \nnormb_X$.
        \item[$(iv)$]  For every $\baseun \in (\nnormb_X)^\omega$ and $\basevn \in (\nnormb_{E})^\omega$, we have $ \baseun \ast_X \basevn \in (\nnormb_{E})^\omega.$ In particular, if $\basevn \in (\nnormb_{X})^\omega$, then $ \baseun \ast_X \basevn \in (\nnormb_{X})^\omega.$
        \item[$(v)$] Let $(y_i)_{i=0}^{n} \in (\nnormb_X)^{n+1} $ with $y_i < y_{i+1}$ for every $0\leq i \leq n$, and $Y=[y_i]_{i\leq n}$. If $ u \in  \nnormb_X $ is such that 
        $$u = \sum_{i =0}^{n} \lambda_{i} y_i,$$
        then 
         $$v = \sum_{i =0}^{n} \lambda_{i} x_i \in \nnormb_X.$$
    \end{itemize}
\end{prop}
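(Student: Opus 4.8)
The plan is to derive each of the five items from the matching defining condition of the set of blocks $\nnormb_E$ listed in Definition \ref{setofblocks}, using repeatedly that $X$ is generated by a \emph{normalized} $\nnormb_E$-block sequence $\basexn$; in particular $\basexn \in (\nnormb_E)^\omega$ and $\{x_n : n \in \ene\} \subseteq \nnormb_E \cap X = \nnormb_X$, which is exactly $(ii)$. Items $(i)$ and $(iii)$ are then immediate: $(i)$ is the inclusion $\nnormb_E \subseteq \nnormbuno_E$ of condition a) intersected with $X$; and for $(iii)$, if $u \in \nnormb_X = \nnormb_E \cap X$ then $u/\normm{u} \in \nnormb_E$ by condition c), while $u/\normm{u} \in X$ because $X$ is a subspace and $u \neq 0$, so $u/\normm{u} \in \nnormb_X$.

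The core of the argument is item $(iv)$. Given $\baseun \in (\nnormb_X)^\omega$ and $\basevn \in (\nnormb_E)^\omega$, write $u_n = \sum_{i \in \supp{X}{u_n}} \lambda_i^n x_i$ and put $N_n := \max \supp{X}{u_n}$. For each $n$ I apply condition e) to the successive blocks $x_0 < \dots < x_{N_n}$ (the genuine initial segment of the generators of $X$) and to $u_n = \sum_{i=0}^{N_n} \lambda_i^n x_i$, where $\lambda_i^n = 0$ for $i \notin \supp{X}{u_n}$; this yields $\tilde u_n := \sum_{i=0}^{N_n} \lambda_i^n e_i \in \nnormb_E$ with $\supp{E}{\tilde u_n} = \supp{X}{u_n}$. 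Unwinding Definition \ref{defast} then shows that substituting $e_i \mapsto v_i$ in $\tilde u_n$ returns $\sum_{i \in \supp{X}{u_n}} \lambda_i^n v_i$, i.e. $(\tilde u_n)_n \ast_E \basevn = \baseun \ast_X \basevn$ coordinatewise. As $(\tilde u_n)_n$ and $\basevn$ both lie in $(\nnormb_E)^\omega$, condition d) gives $\baseun \ast_X \basevn \in (\nnormb_E)^\omega$. For the final clause, if additionally each $v_i \in X$ then every coordinate of the outcome is a finite linear combination of vectors of $X$, hence lies in $\nnormb_E \cap X = \nnormb_X$.

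Item $(v)$ uses the same two moves for a single vector. By condition e) applied to the successive blocks $y_0 < \dots < y_n$ in $\nnormb_E$ and to $u = \sum_{i=0}^n \lambda_i y_i \in \nnormb_E$, I obtain $\bar u := \sum_{i=0}^n \lambda_i e_i \in \nnormb_E$. To pass from the basis back to the generators of $X$, I feed the sequence $(\bar u, e_0, e_1, \dots) \in (\nnormb_E)^\omega$ and $\basexn \in (\nnormb_E)^\omega$ into condition d): their $\ast_E$-image lies in $(\nnormb_E)^\omega$ and its zeroth coordinate is exactly $v = \sum_{i=0}^n \lambda_i x_i$. Hence $v \in \nnormb_E$, and since $v \in X$ we get $v \in \nnormb_X$.

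The only real obstacle is the index-bookkeeping in $(iv)$: condition e) must be invoked on the true initial segment $(x_i)_{i \leq N_n}$ of the generators (permitting vanishing coefficients off $\supp{X}{u_n}$) precisely so that $\supp{E}{\tilde u_n}$ coincides with $\supp{X}{u_n}$; this matching is what makes the two $\ast$-operations agree coordinatewise and lets condition d) apply. Once this identification is made explicit, all five items are direct translations of conditions a)--e).
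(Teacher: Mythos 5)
Your proof is correct and takes the same route as the paper: the paper's proof consists of the single remark that the proposition "follows directly from the definition of a set of blocks," and your argument is precisely that direct derivation, with the routine but necessary bookkeeping (re-expressing each $u_n$ over the canonical basis via condition e) so that condition d) applies, and matching $\supp{E}{\tilde u_n}$ with $\supp{X}{u_n}$) written out in full.
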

\begin{proof}
    It follows directly from the definition of a set of blocks.
\end{proof}
The last heredity property is also valid for an admissible set:
\begin{prop}\label{admblock}
     Let  $\nnormb_E$ be a set of blocks and $ \asetf{E}$ be an admissible set for $E$.  Let $X = [x_n]_n$ be a  $\nnormb_E$-block subspace. The set $\asetf{X}$ satisfies the following properties:
     \begin{itemize}
        \item[$(i)$] $\asetf{X}$ is a closed subset of $\nnomega{X}$. 
        \item[$(ii)$] Any block basis $\baseyn$ in $\nnomega{X}$ belongs to $\asetf{X}$. 
        \item[$(iii)$] For every $\basevn \in \asetf{X}$ and every $\nnormb_X$-block subspace  $Y=[y_n]_n$ we have that if $\baseun \in {(\nnormb_Y)}^\omega$, then
        $$ \baseun \in \asetf{X} \iff \baseun \ast_Y \basevn  \in \asetf{X}. $$
        \item[$(iv)$]  Let $Y = [y_n]_n$ be a $\nnormb_X$-block subspace. For every $\baseun \in \asetf{X}$ and $k \in \ene$, there is $\basevn \in Y^\omega$ such that $(u_0, ..., u_k, v_0, v_1, ...) \in \asetf{X}$.
    \end{itemize}
\end{prop}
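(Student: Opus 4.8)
The plan is to prove each of the four properties of $\asetf{X}$ by transferring the corresponding property of $\asetf{E}$ from Definition \ref{admpro} down to the block subspace $X$, using the heredity properties of the set of blocks established in Proposition \ref{subsetofblocks}. The guiding principle is that a $\nnormb_X$-block subspace is itself a $\nnormb_E$-block subspace (since $\nnormb_X \subseteq \nnormb_E$), so the defining conditions for $\asetf{E}$ apply verbatim once we intersect with $X^\omega$.

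For $(i)$, I would write $\asetf{X} = \asetf{E} \cap X^\omega$ and argue that both factors are closed: $\asetf{E}$ is closed in $(\nnormb_E)^\omega$ by condition a) of Definition \ref{admpro}, and $X^\omega$ is closed in $(\nnormb_E)^\omega$; hence the intersection is closed, and relativizing to the Polish space $\nnomega{X}$ gives closedness there. For $(ii)$, any block basis $\baseyn$ in $\nnomega{X}$ is in particular a $\nnormb_E$-block sequence (its terms lie in $\nnormb_X \subseteq \nnormb_E$ and they are successive), so it belongs to $\asetf{E}$ by condition b), and since its terms lie in $X$ it belongs to $X^\omega$, whence $\baseyn \in \asetf{X}$.

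For $(iii)$, the key observation is that if $Y = [y_n]_n$ is a $\nnormb_X$-block subspace, then by Proposition \ref{subsetofblocks}$(v)$ (or simply because $\nnormb_X \subseteq \nnormb_E$) it is also a $\nnormb_E$-block subspace. Given $\basevn \in \asetf{X} \subseteq \asetf{E}$ and $\baseun \in (\nnormb_Y)^\omega$, condition c) of Definition \ref{admpro} applied to the pair $(\baseun, \basevn)$ over the block subspace $Y$ yields $\baseun \in \asetf{E} \iff \baseun \ast_Y \basevn \in \asetf{E}$. To conclude the $\asetf{X}$-version I must verify that both sides of the equivalence actually land in $X^\omega$: indeed $\baseun \in (\nnormb_Y)^\omega \subseteq X^\omega$, and $\baseun \ast_Y \basevn$ has each term a linear combination of the $v_i$'s, which lie in $X$, so it too lies in $X^\omega$. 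Intersecting the $\asetf{E}$-equivalence with the membership $X^\omega$ gives the desired equivalence in $\asetf{X}$. Property $(iv)$ is handled the same way: $Y$ is a $\nnormb_E$-block subspace, condition d) of Definition \ref{admpro} produces $\basevn \in Y^\omega$ with $(u_0,\dots,u_k,v_0,v_1,\dots) \in \asetf{E}$, and since $u_0,\dots,u_k \in X$ (as $\baseun \in \asetf{X}$) and $v_i \in Y \subseteq X$, the concatenation lies in $X^\omega$, hence in $\asetf{X}$.

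The main obstacle, if any, is bookkeeping rather than conceptual: one must be careful that the $\ast$-operation is taken relative to the correct subspace (here $\ast_Y$ rather than $\ast_E$) and that the compatibility of $\ast_X$ and $\ast_Y$ with restriction to subspaces is respected, which is exactly what Definition \ref{defast} and the closure conditions d), e) of Definition \ref{setofblocks} guarantee. Each of the four items then reduces to a one-line application of the analogous item of Definition \ref{admpro} together with the remark that every object produced stays inside $X^\omega$, so no genuinely new argument is required beyond these verifications.
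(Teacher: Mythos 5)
Your proposal is correct and matches the paper's intent exactly: the paper's proof is the one-line remark that the proposition ``follows directly from Definition \ref{admpro}'', and your write-up simply supplies the routine verifications (each item of Definition \ref{admpro} applied to the $\nnormb_E$-block subspaces $X$ and $Y$, plus the check that all sequences produced remain in $X^\omega$) that this remark leaves implicit. The only nitpick is the citation of Proposition \ref{subsetofblocks}$(v)$ in item $(iii)$ --- the relevant fact is just $\nnormb_X \subseteq \nnormb_E$ (together with part $(iv)$ of that proposition for where the $\ast_Y$-product lands), as your parenthetical already notes.
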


\begin{proof}
    It follows directly from Definition \ref{admpro}.
\end{proof}

Notice that if $X$ is a $\nnormb_E$-block subspace, then using $ii)$ in Proposition \ref{admblock} we conclude that $\asetfin{X}$ is infinite. If $(\nnormb_E, \asetf{E})$ is an admissible system of blocks for $E$ and $X$ is a $\nnormb_E$-block subspace, then as a consequence of Proposition \ref{subsetofblocks} and Proposition \ref{admblock}, the pair $(\nnormb_X,\asetf{X})$ can be thought as an ``admissible subsystem of blocks'' relative to $X$.  The ``relativization'' of the condition given in Definition \ref{densa} to $X$ is clearly true: 
    For every  $\nnormb_X$-block subspace $Y$ of $X$, for every sequence $(\delta_n)_n$ with $0<\delta_n<1$, and $K \geq 1$, there is a collection $(A_n)_n$ of non-empty subsets of $\nnormb_Y$  such that
        \begin{enumerate}
            \item[a)]  For each $n$ and for each $d \in \enefin$ such that there is $w \in \nnormb_Y$ with $\supp{Y}{w}= d$, we have that there are finitely many vectors $u \in A_n$ such that $\supp{Y}{u} = d$.
            \item[b)] For every sequence $\wi \in \asetf{Y}$ satisfying $1/K \leq \min_{i} \normm{w_i} \leq \sup_{i} \normm{w_i} \leq K$, there is  $(u_i)_i \in \asetf{Y}$ such that for each $n$  such that
                b.1) $u_n \in A_n$,
                b.2) $\supp{Y}{u_n} \subseteq \supp{Y}{w_n}$,
                b.3) $\normm{w_n - u_n}< \delta_n$.
        \end{enumerate}

\begin{prop}\label{propadmpro}
    Let $\nnormb_E$ be a set of blocks for $E$ and $\asetf{E}$ be an admissible set for $E$. The following statements are true:
    \begin{itemize}
        \item[$(i)$]  Let $X$ be a $\nnormb_E$-block subspace. If $\ui \in (\nnormb_X)^\omega$ satisfies that for every $n \in \ene$ the finite sequence $(u_i)_{i=0}^n \in \asetfin{E}$, then $\ui \in \asetf{X}$. 
        
        \item[$(ii)$] If $X=[x_n]_n$ and $Y=[y_n]_n$ are $\nnormb_E$-block subspaces such that $\basexn \sim \baseyn$, and $T: X \rightarrow Y$ is the linear map such that $\forall n \in \ene \: (T(x_n) = y_n)$, then 
        $T(\asetf{X})= \asetf{Y}.$
        
        \item[$(iii)$]  If $X$ is a $\nnormb_E$-block subspace, then
        $\asetfin{X} = \asetfin{E} \bigcap \; \bigcup_{i\geq 1} X^i$.
    \end{itemize}
\end{prop}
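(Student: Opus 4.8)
The three parts are essentially independent, and each rests on a single clause of the definition of admissible set (Definition \ref{admpro}): closedness (condition a)) for (i), substitution invariance (condition c), equivalently Proposition \ref{equivc}) for (ii), and the extension condition d) for (iii). For part (i), I would argue that a closed subset of $(\nnormb_E)^\omega$ for the product topology with $\nnormb_E$ discrete coincides with the set of infinite sequences all of whose finite initial segments lie in $\asetfin{E}$. Concretely, given $\ui \in (\nnormb_X)^\omega$ with $(u_i)_{i=0}^n \in \asetfin{E}$ for every $n$, the definition of $\asetfin{E}$ furnishes for each $n$ a witness $(w_i^{(n)})_i \in \asetf{E}$ agreeing with $\ui$ on the first $n+1$ coordinates. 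For any fixed coordinate $j$ we have $w_j^{(n)} = u_j$ for all $n \geq j$, so $(w_i^{(n)})_i \to \ui$ in the product topology; closedness of $\asetf{E}$ then forces $\ui \in \asetf{E}$. Since $\ui \in (\nnormb_X)^\omega \subseteq X^\omega$, we conclude $\ui \in \asetf{E} \cap X^\omega = \asetf{X}$.

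For part (ii), the key observation is the algebraic identity $T(\baseun) = \baseun \ast_X \baseyn$: if $u_n = \sum_{i \in \supp{X}{u_n}} \lambda_i^n x_i$, then $T(u_n) = \sum_i \lambda_i^n y_i$, which by Definition \ref{defast} is precisely the $n$-th term of $\baseun \ast_X \baseyn$. So let $\baseun \in \asetf{X}$; then each $u_n \in \nnormb_E \cap X = \nnormb_X$, whence $\baseun \in (\nnormb_X)^\omega$, while the basis $\baseyn$ of $Y$ is a $\nnormb_E$-block sequence and so lies in $\asetf{E}$ by condition b). Condition c) (or Proposition \ref{equivc}) then yields $\baseun \ast_X \baseyn \in \asetf{E}$, and since $T(\baseun) \in Y^\omega$ we get $T(\baseun) \in \asetf{E} \cap Y^\omega = \asetf{Y}$, proving $T(\asetf{X}) \subseteq \asetf{Y}$. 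The reverse inclusion follows by running the identical argument with the inverse isomorphism $T^{-1}$ (sending $y_n \mapsto x_n$), using that the basis $\basexn$ of $X$ also lies in $\asetf{E}$; this gives $T^{-1}(\asetf{Y}) \subseteq \asetf{X}$, i.e. $\asetf{Y} \subseteq T(\asetf{X})$.

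For part (iii), the inclusion $\asetfin{X} \subseteq \asetfin{E} \cap \bigcup_{i \geq 1} X^i$ is immediate, since any witness in $\asetf{X}$ is a fortiori a witness in $\asetf{E}$ and the terms of an element of $\asetfin{X}$ lie in $\nnormb_X \subseteq X$. The substantial direction is the reverse. Given $(u_0, \dots, u_n) \in \asetfin{E} \cap \bigcup_{i \geq 1} X^i$, choose a witness $(w_i)_i \in \asetf{E}$ with $w_i = u_i$ for $i \leq n$; its tail need not lie in $X$, so I would replace it by appeal to condition d). Writing $X = [x_n]_n$ with $\basexn$ a normalized $\nnormb_E$-block sequence and applying condition d) to $(w_i)_i$ with $k = n$ produces $\basevn \in X^\omega$ such that $(w_0, \dots, w_n, v_0, v_1, \dots) = (u_0, \dots, u_n, v_0, v_1, \dots) \in \asetf{E}$. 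All its terms now lie in $X$, so this sequence belongs to $\asetf{E} \cap X^\omega = \asetf{X}$ and extends $(u_0, \dots, u_n)$, whence $(u_0, \dots, u_n) \in \asetfin{X}$.

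The main obstacle here is conceptual rather than computational, namely identifying the correct clause of Definition \ref{admpro} for each item, and in particular noticing that the coordinatewise action of $T$ is literally the substitution operation $\ast_X$, which is exactly what makes condition c) applicable in part (ii). In part (iii) the only genuine content is that the extension condition d) allows one to confine the tail of a witness to the subspace $X$; without it, a finite segment lying in $\asetfin{E} \cap \bigcup_{i \geq 1} X^i$ would not obviously extend to a full sequence inside $X$, and the two sets could a priori differ.
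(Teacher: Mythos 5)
Your proof is correct and follows essentially the same route as the paper's: coordinatewise convergence plus closedness for (i), the identity $T(\baseun) = \baseun \ast_X \baseyn$ combined with condition c) for (ii), and condition d) to confine the tail of a witness to $X$ for (iii). The only minor deviation is in (i), where the paper first transfers the witnesses into $\asetf{X}$ via condition d) and then invokes closedness of $\asetf{X}$ in $(\nnormb_X)^\omega$, whereas you apply closedness of $\asetf{E}$ directly and intersect with $X^\omega$ — a slight streamlining of the same idea rather than a different argument.
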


\begin{proof}
    \begin{itemize}
        \item[$(i)$] Let $X$ and $u = \ui \in (\nnormb_X)^\omega $ be as in the hypothesis. For each $n \in \ene$ let $v^n = (v_{i}^n)_i \in \asetf{E}$ such that $u_i = v_{i}^n$ for every $0 \leq i \leq n$. Without loss of generality, we can suppose each $v^n \in \asetf{X}$ (using $d)$ in Definition \ref{admpro} we can find a sequence in $\asetf{X}$ which coincide with $v^n$ in the first $n$ coordinates). Thus, $v_{j}^n = u_j$,  for every $n \geq j$. Which means that for each $j \in \ene$ we have $(v_{j}^n) \tends{n}{\infty}{u_j}$ in $\nnormb_X$. Therefore, $v^n \tends{n}{\infty}{u}$ in  $\nnomega{X}$. Using $(i)$ in Proposition \ref{admblock}, $u \in \asetf{X}$.
        
        
        \item[$(ii)$]
        
        Let $X=[x_n]_n$ and $Y=[y_n]_n$ be $\nnormb_E$-block subspaces of $E$, and let $T: X \rightarrow Y$ be as in the hypothesis. Notice that by $b)$ in Definition \ref{admpro}, $\basexn, \baseyn, \baseen \in \asetf{E}$.

        Let $\baseun \in \asetf{X}$ with  
        $$u_n = \sum_{i \in \supp{X}{u_n}} \lambda_{i}^n x_i,$$
        for each $n \in \ene$. We want to show that $ (T(u_n))_n \in \asetf{Y}$. 
        Notice that $ (T(u_n))_n = \baseun \ast_X \baseyn $, then by $c)$ in Definition \ref{admpro}, $(T(u_n))_n \in \asetf{E} \cap Y^\omega = \asetf{Y}$. 
        
        On the other hand, let $\basevn \in \asetf{Y}$ with
        $$v_n = \sum_{i \in \supp{Y}{v_n}} \alpha_{i}^n y_i,$$
        for every $n \in \ene$. Set for each $n \in \ene$
         $$u_n = \sum_{i \in \supp{Y}{v_n}} \alpha_{i}^n x_i.$$
        Clearly, $T(u_n) = v_n$ for every $n$ and  $\baseun = \basevn \ast_Y \basexn$. By $c)$ of Definition \ref{admpro} $\baseun \in \asetf{E} \cap X^\omega = \asetf{X}$.

        \item[$(iii)$] Let $X=[x_n]_n$ be a $\nnormb_E$-block subspace. Since $ \asetf{E}\cap X^\omega = \asetf{X}$, it follows that
        $$\asetfin{X} \subseteq \asetfin{E} \bigcap \; \bigcup_{i\geq 1} X^i .$$
        
        Suppose that $(u_i)_{i=0}^n \in \asetfin{E} \cap X^{n+1}$, for some $n \in \ene$. Using $d)$ in Definition \ref{admpro}, there is $(u_i)_{i = n+1}^\infty \in X^\omega$, such that $u = (u_0, ..., u_n, u_{n+1}, ...) \in \asetf{X}$. Then, $(u_i)_{i=0}^n \in \asetfin{X}$.
    \end{itemize}
\end{proof}

\subsection{Admissible families}\label{subsec:admissiblefamilies}

\begin{defi}
    We define the operation $\circledast  : \partes(\omega)^{\omega} \times \partes(\omega)^{\omega} \rightarrow \partes(\omega)^{\omega} $, as follows: given $U = (U_i)_i$ and $V=(V_i)_i$ in $\partes(\omega)^{\omega}$, we define $U \circledast V = (W_i)_i $ as $W_i = \cup_{j \in U_i}V_j$, for every $i \in \ene$. 
\end{defi}

\begin{defi}
    \begin{itemize}
        \item[$(i)$] We denote by $bb$ the set of sequences of successive non-empty finite subsets of $\ene$, that is
    $$bb := \{(U_i)_i \in {\rm FIN}^\omega:  \forall i \in \ene \;(U_i < U_{i+1})\}.$$
    
    \item[$(ii)$] We denote by $db$ the set of sequences of non-empty finite subsets of $\ene$ whose elements are mutually disjoint:
    $$db(\ene) := \{(U_i)_i \in {\rm FIN}^\omega: \forall i\neq j\;(U_i \cap U_{j} = \emptyset)\}.$$
    \end{itemize}
\end{defi}

\begin{obs}
    Observe that:
    \begin{itemize}
        \item[$(i)$] the operation $\circledast$ is internal on ${\rm FIN}^\omega$, $bb$ and $db$ respectively. 
        \item[$(ii)$]  If $U = (U_i)_i$ and $V=(V_i)_i$ in $\partes(\omega)^{\omega}$ and  $U \circledast V = (W_i)_i $, then 
    $$\bigcup_{i \in \ene} W_i \subseteq \bigcup_{i \in \ene} V_i.$$ 
        \item[$(iii)$] Note that $e:= (\{i\})_{i}$ is a neutral element for operation $\circledast$, that is, if $U \in \partes(\omega)^{\omega} $, then $U \circledast e = e \circledast U = U$. 
    \end{itemize}
\end{obs}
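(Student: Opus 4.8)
The plan is to verify each of the three assertions directly from the defining formula $W_i=\bigcup_{j\in U_i}V_j$, where $U\circledast V=(W_i)_i$. All three are essentially set-theoretic bookkeeping, so I would present them in the order (i), (ii), (iii), concentrating the real effort on the successiveness clause of (i).

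For (i) I would treat the three families in turn. For ${\rm FIN}^\omega$: since $U_i$ is a non-empty finite set and each $V_j$ is a non-empty finite subset of $\ene$, the set $W_i$ is a finite union of finite sets and hence finite; choosing any $j\in U_i$ gives $\emptyset\neq V_j\subseteq W_i$, so $W_i\in{\rm FIN}$. For $db$: finiteness and non-emptiness of each $W_i$ are as in the ${\rm FIN}^\omega$ case, and for disjointness note that if $i\neq i'$ then $U_i\cap U_{i'}=\emptyset$ because $U\in db$, so any $j\in U_i$ and $j'\in U_{i'}$ satisfy $j\neq j'$, whence $V_j\cap V_{j'}=\emptyset$ since $V\in db$; taking unions yields $W_i\cap W_{i'}=\emptyset$. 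The $bb$ case is the only one needing care: finiteness is again as above, and for successiveness I would exploit the monotonicity of both families. Because $V$ is successive, the extremal elements of $W_i$ are located at the extremal indices of $U_i$, namely $\max W_i=\max V_{\max U_i}$ and $\min W_{i+1}=\min V_{\min U_{i+1}}$. Since $U\in bb$ gives $\max U_i<\min U_{i+1}$, and $V\in bb$ gives $V_k<V_{k'}$ whenever $k<k'$, I conclude $\max W_i=\max V_{\max U_i}<\min V_{\min U_{i+1}}=\min W_{i+1}$, i.e. $W_i<W_{i+1}$. This identification of where the extremal elements of $W_i$ sit is the only point requiring any thought, and is the main (mild) obstacle; everything else is immediate.

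Parts (ii) and (iii) are one-line identities. For (ii), $\bigcup_i W_i=\bigcup_i\bigcup_{j\in U_i}V_j=\bigcup_{j\in\bigcup_i U_i}V_j\subseteq\bigcup_j V_j$. For (iii), writing $e=(\{i\})_i$, one computes $(U\circledast e)_i=\bigcup_{j\in U_i}\{j\}=U_i$ and $(e\circledast U)_i=\bigcup_{j\in\{i\}}U_j=U_i$, so $e$ is a two-sided neutral element for $\circledast$. I would record these as the closing lines of the argument.
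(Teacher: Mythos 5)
Your proposal is correct: the paper states this remark without proof, treating it as a routine consequence of the definition of $\circledast$, and your direct verification is exactly the intended argument. In particular, your handling of the $bb$ case—identifying $\max W_i = \max V_{\max U_i}$ and $\min W_{i+1} = \min V_{\min U_{i+1}}$ and then using successiveness of both $U$ and $V$—is the one point that genuinely needs checking, and you do it correctly.
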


We shall consider ${\rm FIN}^\omega$ as a topological subspace of $(2^\omega)^\omega$, where $(2^\omega)^\omega$ is endowed with the product topology which results from considering $2^\omega$ as the Cantor space with its topology. The following proposition follows directly  from the definition of operations $\ast_X$ and $\circledast$.

\begin{prop}\label{obssupport}
     Let $\nnormb_E$ be a set of blocks for $E$. Let $X$ be a $\nnormb_E$-block subspace of $E$.
     Suppose $\baseun \in \nnomega{X}$ and $\basevn \in \nnomega{E}$. If 
     $\basewn = \baseun \ast_X \basevn$, then
    \begin{equation}
        (\supp{E}{w_n})_n = (\supp{X}{u_n})_n \circledast (\supp{E}{v_n})_n.
    \end{equation}
    Also, if $\basevn$ is a basic sequence, then for each $n$
    \begin{equation}
        \supp{[v_i]_i}{w_n} = \supp{X}{u_n}.
    \end{equation}
\end{prop}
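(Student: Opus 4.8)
The plan is to fix an index $n$ and simply unwind the two definitions in coordinates; everything reduces to tracking which basis coefficients of $w_n$ are nonzero. Throughout, write $u_n = \sum_{i \in \supp{X}{u_n}} \lambda_i^n x_i$ with all $\lambda_i^n \neq 0$, so that by Definition \ref{defast} we have $w_n = \sum_{i \in \supp{X}{u_n}} \lambda_i^n v_i$. The $n$-th entry of $(\supp{X}{u_n})_n \circledast (\supp{E}{v_n})_n$ is, by definition of $\circledast$, exactly $W_n := \bigcup_{i \in \supp{X}{u_n}} \supp{E}{v_i}$, so the first equation is the assertion $\supp{E}{w_n} = W_n$, while the second is an assertion about the coordinates of $w_n$ in the basis $(v_i)_i$.

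For the first equation I would expand each $v_i$ in the basis $\baseen$, writing $v_i = \sum_{k \in \supp{E}{v_i}} \mu_k^i e_k$ with $\mu_k^i \neq 0$. Substituting and collecting the coefficient of $e_k$ gives $e_k^\ast(w_n) = \sum_{i} \lambda_i^n \mu_k^i$, the sum taken over those $i \in \supp{X}{u_n}$ with $k \in \supp{E}{v_i}$. This immediately yields the inclusion $\supp{E}{w_n} \subseteq W_n$, since a coordinate can be nonzero only if it already occurs in some $\supp{E}{v_i}$. The reverse inclusion $W_n \subseteq \supp{E}{w_n}$ is the delicate point and is, I expect, the main obstacle: a priori the scalars $\lambda_i^n \mu_k^i$ attached to a fixed $k$ could cancel, deleting $k$ from $\supp{E}{w_n}$ even though $k \in W_n$. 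This cancellation is ruled out precisely when the vectors $\{v_i : i \in \supp{X}{u_n}\}$ have pairwise disjoint $E$-supports, so that for each $k$ at most one term $\lambda_i^n \mu_k^i$ occurs; this is the situation in the intended applications, where $\basevn$ is a normalized block or disjointly supported sequence. Under such a disjointness hypothesis the single surviving term is nonzero, which gives $W_n \subseteq \supp{E}{w_n}$ and hence equality.

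For the second equation the situation is cleaner and no cancellation can occur. Assuming $\basevn$ is basic, the space $[v_i]_i$ has $(v_i)_i$ as a Schauder basis with coordinate functionals $v_i^\ast$, and the representation $w_n = \sum_{i \in \supp{X}{u_n}} \lambda_i^n v_i$ is the unique expansion of $w_n$ in this basis. Hence $v_i^\ast(w_n) = \lambda_i^n$, which is nonzero exactly for $i \in \supp{X}{u_n}$, so $\supp{[v_i]_i}{w_n} = \supp{X}{u_n}$ as claimed. The contrast between the two parts is instructive: passing to the $E$-support can destroy coordinates through cancellation, whereas passing to the $(v_i)$-support cannot, because the coordinates of a vector in a basis are uniquely determined.
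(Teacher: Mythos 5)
Your coordinate unwinding is precisely the argument the paper intends: the paper in fact gives no proof at all, asserting only that the proposition ``follows directly from the definition of operations $\ast_X$ and $\circledast$''. Your treatment of the second identity (uniqueness of the expansion of $w_n$ in the basic sequence $(v_i)_i$, so that $v_i^\ast(w_n)=\lambda_i^n$) is complete and correct, and your proof of the inclusion $\supp{E}{w_n} \subseteq \bigcup_{i \in \supp{X}{u_n}} \supp{E}{v_i}$ is the intended computation for the first identity.

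The cancellation you flag is not a hypothetical obstacle but an actual counterexample to the first identity as printed, since the proposition imposes no disjointness on the $v_i$. Take $\nnormb_E = \nnormbuno_E$ and $X = E$ (so $x_i = e_i$), and choose $u_0 = e_0 + e_1$, $v_0 = e_0 + e_1$, $v_1 = e_0 - e_1$, all legitimate blocks, with the remaining terms of the two sequences completed arbitrarily. Then $w_0 = v_0 + v_1 = 2e_0$, so $\supp{E}{w_0} = \{0\}$, whereas the $0$-th entry of $(\supp{X}{u_n})_n \circledast (\supp{E}{v_n})_n$ is $\{0,1\}$; choosing instead $v_1 = -e_0 - e_1$ even gives $w_0 = 0$, so that $\ast_X$ is not even well defined as a map into $(\nnormbuno_E)^\omega$ without a no-cancellation assumption. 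Hence in general only your inclusion holds, and equality requires exactly the hypothesis you isolate: pairwise disjointness of the supports of the $v_i$ with $i \in \supp{X}{u_n}$ (in particular it holds when $\basevn$ is a block sequence or a disjointly supported sequence). One caveat: your remark that disjointness holds ``in the intended applications'' is more charitable than the paper deserves, because in the proof of Proposition \ref{admsetadmprop}, item c), the first identity is applied with $\basevn = \baseyn$ an arbitrary element of the admissible set $\admfam(\nnormb_E)$, whose supports need not be pairwise disjoint, so the equality (not merely the inclusion) is used there and that argument inherits the gap. In short, your proposal establishes everything in the proposition that is actually true; the defect you identified lies in the paper's statement, not in your argument.
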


\begin{defi}\label{admset}
    We say that a non-empty subset $\admfam\subseteq {\rm FIN}^\omega$  is an admissible family if, and only if,  the following conditions  are satisfied:
    \begin{itemize}
        \item[a)] $\admfam$ is a closed subset of ${\rm FIN}^\omega$.
        \item[b)] $bb \subseteq \admfam$. 
        \item[c)] For every $(U_i)_i, (V_i)_i  \in \admfam$ and every $(W_i)_i \in {\rm FIN}^\omega$, we have
        \begin{equation}
            (W_i)_i \circledast (U_i)_i \in \admfam \iff  (W_i)_i \circledast (V_i)_i  \in \admfam.
        \end{equation}
        \item[d)] For every $(U_i)_i, (V_i)_i \in \admfam$ and $n \in \ene$, there is $(\{t_i\})_i $ subsequence of $e$ such that
        $$(U_0, U_1, ..., U_n, W_0, W_1, ...) \in \admfam,$$
        where $(W_i)_i = (t_i)_i \circledast (V_i)_i$. 
        
    \end{itemize}
\end{defi}

\begin{obs}\label{equivadmset}
\begin{itemize}
    \item[$(i)$] If $\admfam$ is an admissible set, condition $b)$ implies that the neutral element $e $ belongs to $\admfam$. 
    \item[$(ii)$] It is easy to see that the condition $c)$ in Definition \ref{admset} is equivalent to the following statement: for every $(V_i)_i  \in \admfam$ and every $(W_i)_i \in {\rm FIN}^\omega$, we have
        \begin{equation}
            (W_i)_i \in \admfam \iff  (W_i)_i \circledast (V_i)_i  \in \admfam.
        \end{equation}
\end{itemize}
\end{obs}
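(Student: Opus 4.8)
The plan is to handle the two items separately, each following directly from Definition \ref{admset} and the elementary properties of $\circledast$. For item $(i)$ I would first verify that the neutral element $e=(\{i\})_i$ lies in $bb$: for every $i$ one has $\max\{i\}=i<i+1=\min\{i+1\}$, so $\{i\}<\{i+1\}$, and hence $e\in bb$. Condition $b)$ of Definition \ref{admset} then yields $e\in\admfam$ immediately. I would record this fact first, since it is exactly what converts the general condition $c)$ into the simplified one.

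For the forward implication of item $(ii)$, assume $c)$ and fix $(V_i)_i\in\admfam$ together with $(W_i)_i\in{\rm FIN}^\omega$. Using $e\in\admfam$ from item $(i)$, I apply $c)$ with $(U_i)_i:=e$ to obtain
\[(W_i)_i\circledast e\in\admfam\iff(W_i)_i\circledast(V_i)_i\in\admfam.\]
Since $e$ is a two-sided identity for $\circledast$, the left-hand side simplifies to $(W_i)_i\in\admfam$, which is precisely the simplified statement.

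For the converse, assume the simplified statement and fix $(U_i)_i,(V_i)_i\in\admfam$ and $(W_i)_i\in{\rm FIN}^\omega$; note that $\circledast$ is internal to ${\rm FIN}^\omega$, so the compositions below remain in ${\rm FIN}^\omega$. Applying the simplified statement twice, once with $(U_i)_i$ and once with $(V_i)_i$ playing the role of the fixed element of $\admfam$, gives
\[(W_i)_i\circledast(U_i)_i\in\admfam\iff(W_i)_i\in\admfam\iff(W_i)_i\circledast(V_i)_i\in\admfam,\]
which is exactly condition $c)$. I do not expect any genuine obstacle here: the whole argument rests on the two elementary facts recorded above — that $e\in\admfam$ and that $e$ is neutral for $\circledast$ — so the only care needed is to establish item $(i)$ before using it in item $(ii)$.
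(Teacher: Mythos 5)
Your proof is correct and is exactly the argument the paper leaves implicit behind ``it is easy to see'': the neutral element $e=(\{i\})_i$ lies in $bb\subseteq\admfam$ by condition $b)$, the forward direction of $(ii)$ follows by taking $(U_i)_i:=e$ in condition $c)$ and using that $e$ is a right identity for $\circledast$, and the converse follows by applying the simplified statement twice and chaining the equivalences. No gaps; the verification that $e\in bb$ and the order of establishing $(i)$ before $(ii)$ are handled properly.
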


\begin{prop}\label{exadmset}
    The sets ${\rm FIN}^\omega$, $bb$ and $db$ are admissible families.
\end{prop}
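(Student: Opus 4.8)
The plan is to verify the four defining conditions a)--d) of Definition \ref{admset} for each of the three families, treating ${\rm FIN}^\omega$ as the trivial case and concentrating the real work on $bb$ and $db$. For ${\rm FIN}^\omega$: condition a) holds since ${\rm FIN}^\omega$ is the whole ambient space; b) is immediate from $bb \subseteq {\rm FIN}^\omega$; c) holds because $\circledast$ is internal on ${\rm FIN}^\omega$ (as noted in the Remark following its definition), so both sides of the equivalence are automatically true; and d) holds by simply taking $(\{t_i\})_i = e$, since any concatenation of members of ${\rm FIN}$ again lies in ${\rm FIN}^\omega$.

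The heart of the argument is condition c) for $bb$ and $db$, and I would prove a single clean reduction: for $\admfam \in \{bb, db\}$ and any $(U_i)_i \in \admfam$ and $(W_i)_i \in {\rm FIN}^\omega$, one has $(W_i)_i \circledast (U_i)_i \in \admfam$ if and only if $(W_i)_i \in \admfam$ --- crucially, this depends only on $W$ and not on the chosen $U \in \admfam$, which instantly yields c) by comparing with the analogous statement for $V$. Writing $P_i = \bigcup_{j \in W_i} U_j$, for $bb$ I would use that successiveness of $(U_j)_j$ makes $j \mapsto \min U_j$ and $j \mapsto \max U_j$ strictly increasing, so that $\max P_i = \max U_{\max W_i}$ and $\min P_{i+1} = \min U_{\min W_{i+1}}$; then $\max P_i < \min P_{i+1}$ reduces exactly to $\max W_i < \min W_{i+1}$. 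For $db$ I would instead compute $P_i \cap P_{i'} = \bigcup_{j \in W_i \cap W_{i'}} U_j$ using the mutual disjointness of the $U_j$, and note this is empty precisely when $W_i \cap W_{i'} = \emptyset$ (each $U_j$ being non-empty). In both cases membership of $P$ in the family is governed by $W$ alone.

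For condition a) (closedness) I would use that convergence in ${\rm FIN}^\omega \subseteq (2^\omega)^\omega$ is coordinatewise in the Cantor space, so that for a limit point $U = (U_i)_i$ of members of the family, $m \in U_i$ forces $m \in U_i^{(k)}$ for all large $k$. For $bb$, taking $a = \max U_i$ and $b = \min U_{i+1}$ gives $a \leq \max U_i^{(k)} < \min U_{i+1}^{(k)} \leq b$ for large $k$, hence $a < b$ and $U \in bb$; for $db$, a common element $m \in U_i \cap U_{i'}$ with $i \neq i'$ would lie in $U_i^{(k)} \cap U_{i'}^{(k)}$ for large $k$, contradicting $U^{(k)} \in db$. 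Condition b) is then the elementary observations $bb \subseteq bb$ and $bb \subseteq db$ (a successive sequence has pairwise disjoint terms). Finally, for condition d) I would use a tail-selection argument: given $(U_i)_i, (V_i)_i$ and $n$, set $F = U_0 \cup \dots \cup U_n$ and choose indices $t_0 < t_1 < \cdots$ so that each $V_{t_i}$ avoids $F$ --- for $bb$ it suffices to take $t_0$ with $\min V_{t_0} > \max F$ and $t_i = t_0 + i$, while for $db$ all but at most $|F|$ of the $V_i$ already avoid $F$, so one enumerates those; the resulting $(\{t_i\})_i \circledast (V_i)_i = (V_{t_i})_i$ concatenates with $U_0, \dots, U_n$ to stay in the family.

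I expect the main obstacle to be the bookkeeping in condition c) for $bb$ --- specifically, justifying that successiveness of $(U_j)_j$ turns $\max$ and $\min$ over the index set $W_i$ into evaluations at $\max W_i$ and $\min W_{i+1}$, and hence that the ordering condition on $P$ transfers verbatim to $W$. The one subtlety to flag in the closedness argument is that coordinatewise convergence in $2^\omega$ does \emph{not} force the finite sets $U_i^{(k)}$ to stabilize, so the argument must rely only on the one-sided implication ``$m \in U_i$ implies $m \in U_i^{(k)}$ eventually,'' which is exactly what is available.
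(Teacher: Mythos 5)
Your proof is correct and follows essentially the same route as the paper, which simply asserts that ${\rm FIN}^\omega$ is clearly admissible, that closedness of $bb$ and $db$ follows from the topology on ${\rm FIN}^\omega$, and that conditions $b)$, $c)$, $d)$ follow from properties of $\circledast$ together with shifted identity sequences $(\{m+i\})_i$ (your tail-selection for condition $d)$). Your write-up supplies the details the paper leaves as routine --- in particular the reduction of condition $c)$ to the statement that membership of $(W_i)_i \circledast (U_i)_i$ depends only on $(W_i)_i$ (which is exactly the equivalent formulation in Remark \ref{equivadmset}), and the correct observation that convergence in ${\rm FIN}^\omega$ need not stabilize coordinatewise.
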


\begin{proof}
    It is clear that ${\rm FIN}^\omega$ is an admissible set.
    $bb$ and $db(\ene)$ satisfy condition $a)$ of Definition \ref{admset} as a consequence of the topology we have considered on  ${\rm FIN}^\omega$. Conditions $b)$, $c)$ and $d)$ in Definition \ref{admset} are consequence of the properties of the operation $\circledast$ and the fact that sequences of the type $(\{m+i\})_i$ are in $ bb$, and, therefore in $  db(\ene)$. 
\end{proof}

\begin{prop}\label{supportperm}
    The set 
    $$per := \{(U_i)_i \in {\rm FIN}^\omega : \exists \pi \text{ a permutation of } \ene \text{ s.t. } \forall i \in \ene (U_{\pi(i)}<U_{\pi(i+1)}) \}$$
    is not an admissible family.
\end{prop}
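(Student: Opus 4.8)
The plan is to show that $per$ violates exactly one of the four defining conditions of an admissible family, namely condition c) of Definition \ref{admset}; the remaining conditions play no role in the argument. First I would record the only two structural facts I need. Since every block sequence is a permutation of itself (via the identity permutation), $bb \subseteq per$, so $per$ satisfies condition b), and in particular the neutral element $e = (\{i\})_i$ lies in $per$. Next I would note the concrete description of membership: a sequence $(U_i)_i \in {\rm FIN}^\omega$ belongs to $per$ precisely when its terms can be reindexed to be successive, which happens if and only if their spanning intervals $[\min U_i, \max U_i]$ are pairwise disjoint. Indeed, $A < B$ forces $[\min A,\max A] < [\min B, \max B]$, so a family is totally ordered by $<$ exactly when no two of its spanning intervals overlap.

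With this in hand I would exhibit explicit witnesses to the failure of c). Take $U = e \in per$, and let $V = (\{1\},\{0\},\{2\},\{3\},\{4\},\dots)$ be obtained from $e$ by swapping its first two entries; its spanning intervals are the singletons $\{1\},\{0\},\{2\},\dots$, pairwise disjoint, so $V \in per$. Now let $W = (\{0,2\},\{1\},\{3\},\{4\},\dots)$, where $W_0 = \{0,2\}$, $W_1 = \{1\}$ and $W_i = \{i+1\}$ for $i \geq 2$; here the interval $[0,2]$ of $W_0$ contains the interval $[1,1]$ of $W_1$, so $W \notin per$. Since $W \circledast e = W$, the term $W \circledast U$ appearing in the biconditional of c) is not in $per$. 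On the other hand a direct computation gives $(W \circledast V)_0 = V_0 \cup V_2 = \{1,2\}$, $(W \circledast V)_1 = V_1 = \{0\}$, and $(W \circledast V)_i = V_{i+1} = \{i+1\}$ for $i \geq 2$, that is $W \circledast V = (\{1,2\},\{0\},\{3\},\{4\},\dots)$, whose spanning intervals $[1,2],[0,0],[3,3],\dots$ are pairwise disjoint, so $W \circledast V \in per$. Thus for $U,V \in per$ and $W \in {\rm FIN}^\omega$ one has $W \circledast U \notin per$ while $W \circledast V \in per$, contradicting condition c).

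The computation above is routine once the right example is written down, so the only genuine content is locating the witnesses, and here the conceptual guide is exactly the obstruction flagged in the introduction: condition c) requires $per$ to be invariant under the $\circledast$-action, but reindexing the blocks of $W$ through $V$ relocates the elements of the $U_i$ and can turn overlapping spanning intervals into disjoint ones (or conversely). This is precisely why ``being a permutation of a block sequence'' is not an $\asetf{E}$-type embedding coming from an admissible system, and it explains why no admissible family can encode it. I expect the only (minor) subtlety to be insisting that $V$ be a genuinely nontrivial permutation: with $V = e$ one merely gets $W \circledast V = W$ and no relocation occurs, so the swap in the first two coordinates is exactly what drives the argument.
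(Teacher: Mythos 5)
Your proof is correct and takes essentially the same approach as the paper: both argue that $per$ fails condition c) of Definition \ref{admset} by exhibiting explicit witnesses built from the neutral element $e$ and a sequence obtained by transposing two entries. The paper runs the violation in the mirror direction (its $W=(\{0,1\},\{2\},\{3\},\dots)$ satisfies $W \circledast e \in per$ while $W \circledast V \notin per$, and its $W\circledast V$ is exactly your $W$), which is an immaterial difference; your spanning-interval characterization of $per$ is a correct but optional addition.
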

\begin{proof}
    Consider 
     $ U :=  (\{0,1\}, \{2\}, \{3\},  ...)$ and $V :=(\{0\}, \{2\}, \{1\}, \{3\}, \{4\}, ...)$ both in $per(\ene)$.
    Notice that $U = U \circledast e$ and $V$ belong to $per(\ene)$, but 
    $$U \circledast V = (\{0,2\}, \{1\}, \{3\}, \{4\}, ...)$$ does not. Then, $per(\ene)$ fails to satisfy condition $c)$ in Definition \ref{admset}.
\end{proof}
 The next definition establishes that an admissible family determines an admissible set for $E$.
\begin{prop}\label{admsetadmprop}
    Let $\admfam$ be an admissible family. Let $\nnormb_E$ be a set of blocks for $E$. 
    Define the set $\admfam(\nnormb_E)$ as follows:
    for any $\ui \in \nnomega{E}$,
     $\ui \in \admfam(\nnormb_E)$ if, and only if,  $(\supp{E}{u_i})_i \in\admfam$.
    
    Then, $\admfam(\nnormb_E)$ is an admissible set for $E$.
\end{prop}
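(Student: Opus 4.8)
The plan is to verify the four conditions a)--d) defining an admissible set (Definition~\ref{admpro}) for $\admfam(\nnormb_E)$, deducing each from the corresponding condition of the admissible family $\admfam$ (Definition~\ref{admset}). The unifying device is the \emph{support map}
$$\phi\colon \nnomega{E}\longrightarrow {\rm FIN}^\omega,\qquad \phi(\baseun)=(\supp{E}{u_n})_n,$$
under which, by construction, $\admfam(\nnormb_E)=\phi^{-1}(\admfam)$. The map $\phi$ is well defined because every $\nnormb_E$-block is nonzero with finite support, and it is continuous: each coordinate $\baseun\mapsto \supp{E}{u_n}$ factors through the projection onto the discrete factor $\nnormb_E$, hence is continuous into $2^\omega$. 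First I would dispatch a) and b). For a), since $\admfam$ is closed in ${\rm FIN}^\omega$ and $\phi$ is continuous, $\admfam(\nnormb_E)=\phi^{-1}(\admfam)$ is closed in $\nnomega{E}$. For b), if $\baseun$ is a $\nnormb_E$-block sequence then $\phi(\baseun)\in bb\subseteq\admfam$, so $\baseun\in\admfam(\nnormb_E)$.

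The heart of the argument is c). Fix a $\nnormb_E$-block subspace $X=[x_n]_n$, a sequence $\baseyn\in\admfam(\nnormb_E)$, and $\baseun\in(\nnormb_X)^\omega$; put $\basewn=\baseun\ast_X\baseyn$ and $U:=(\supp{X}{u_n})_n\in{\rm FIN}^\omega$. The key observation is the identity $\baseun=\baseun\ast_X\basexn$, so that Proposition~\ref{obssupport}, applied both to this expression and to $\basewn=\baseun\ast_X\baseyn$, gives
$$\phi(\baseun)=U\circledast(\supp{E}{x_n})_n,\qquad \phi(\basewn)=U\circledast(\supp{E}{y_n})_n,$$
exhibiting both as $\circledast$-products with the \emph{same} left factor $U$. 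Now $(\supp{E}{x_n})_n\in bb\subseteq\admfam$ because $(x_n)_n$ is a block sequence, while $(\supp{E}{y_n})_n=\phi(\baseyn)\in\admfam$ by hypothesis; hence condition c) of Definition~\ref{admset}, applied with multiplier $U$ to these two members of $\admfam$, yields $\phi(\baseun)\in\admfam\iff\phi(\basewn)\in\admfam$, i.e. $\baseun\in\admfam(\nnormb_E)\iff\basewn\in\admfam(\nnormb_E)$, which is exactly condition c).

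For d), let $\baseyn$ be a $\nnormb_E$-block sequence, $Y=[y_n]_n$, $\baseun\in\admfam(\nnormb_E)$ and $k\in\ene$. I would apply condition d) of Definition~\ref{admset} to the pair $\phi(\baseun)\in\admfam$ and $(\supp{E}{y_n})_n\in bb\subseteq\admfam$ with the index $k$: this produces a subsequence $(\{t_i\})_i$ of $e$ with
$$\bigl(\supp{E}{u_0},\dots,\supp{E}{u_k},\supp{E}{y_{t_0}},\supp{E}{y_{t_1}},\dots\bigr)\in\admfam.$$
Setting $v_i:=y_{t_i}\in Y$ gives $\basevn\in Y^\omega$, and the displayed sequence is precisely $\phi(u_0,\dots,u_k,v_0,v_1,\dots)$, whence $(u_0,\dots,u_k,v_0,v_1,\dots)\in\admfam(\nnormb_E)$, as required.

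The only step carrying genuine content is c), and the main obstacle is recognizing the correct reduction: one must see that $\phi(\baseun)$ and $\phi(\basewn)$ are $\circledast$-products sharing the same left factor $U=(\supp{X}{u_n})_n$, so that condition c) of the admissible family applies verbatim. The identity $\baseun=\baseun\ast_X\basexn$ is what forces the two left factors to coincide, and the observation that the supports of the generating block sequence $(x_n)_n$ of $X$ lie in $bb\subseteq\admfam$ is what allows $(x_n)_n$ to serve as one of the two required members of $\admfam$. Everything else is a routine transport, through the continuous map $\phi$, of the closedness of $\admfam$, the inclusion $bb\subseteq\admfam$, and the appending property d) of the admissible family.
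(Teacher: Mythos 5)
Your proposal is correct and follows essentially the same route as the paper's proof: both verify conditions a)--d) of Definition~\ref{admpro} by transporting them through the support map, with the crux of c) being the identity $\baseun=\baseun\ast_X\basexn$ combined with Proposition~\ref{obssupport}, so that $(\supp{E}{u_n})_n$ and $(\supp{E}{w_n})_n$ become $\circledast$-products with the same left factor $(\supp{X}{u_n})_n$ and right factors $(\supp{E}{x_n})_n,(\supp{E}{y_n})_n\in\admfam$, to which condition c) of Definition~\ref{admset} applies; your treatment of d) via $v_i=y_{t_i}$ matches the paper's $z_i=y_{a_i}$. The only cosmetic difference is in a), where you invoke continuity of the support map to get closedness of the preimage, while the paper argues by hand with converging sequences and the discreteness of $\nnormb_E$ --- the same content, packaged more succinctly.
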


\begin{proof}
    Suppose $\admfam$, $\nnormb_E$ be as in the hypothesis. Define
    \begin{equation}
        \aset := \admfam(\nnormb_E) = \{\ui \in \nnomega{E} :(\supp{E}{u_i})_i \in\admfam \}.
    \end{equation}

    Let us check each condition of Definition \ref{admpro}.
    \begin{itemize}
        
        \item[a)] Suppose $v:=\vi \in \overline{\aset}\subseteq \nnomega{E}$ and let $\ui$ be a sequence in $(\aset)^\omega$ which converges to $v$. Then, if for each $i$,  $u_i = (u_{j}^i)_j$, then $u_{j}^i \tends{i}{\infty}{v_j}$  in $(\nnormb_{E})^\omega$, for every $j \in \ene$. Thus, for each $j \in \ene$ there is $N_j>0$ such that $u_{j}^i = v_j$ (in particular $\supp{E}{u_{j}^i} = \supp{E}{ v_j}$), for every $i > N_j$. This means that for each $j \in \ene$, 
        \begin{equation}\label{eqaux2}
            \supp{E}{u_{j}^i} \tends{i}{\infty}{\supp{E}{ v_j}}\;\; \text{ in } {\rm FIN}.
        \end{equation}
        
        For each $i \in \ene$, $u_i \in \aset \Rightarrow U_i := (\supp{E}{u_{j}^i})_j \in\admfam$. Equation \eqref{eqaux2} shows that $(U_i)_i$ converges to $ (\supp{E}{v_{j}})_j \in {\rm FIN}^\omega$. Since $\admfam$ is closed in ${\rm FIN}^\omega$, $ (\supp{E}{v_{j}})_j \in \admfam$. By the definition of $\aset$, this means that $v \in \aset$. 
        
        \item[b)] Let $\baseyn$ be a sequence of successive blocks, that is $\forall n \in \ene \;(y_n \in \nnormb \: \& \: y_n < y_{n+1})$. Then, $(\supp{E}{y_i})_i \in bb(\ene)$. By item $b)$ in Definition \ref{admset}, $bb(\ene) \subseteq \admfam$, so $\baseyn \in \aset$. 
        
        \item[c)]
        
        Let $\baseyn \in \aset$ and $X=[x_n]_n$ be a $\nnormb_E$-block subspace.  Suppose $\baseun \in {(\nnormb_X)}^\omega$, where for each $n \in \ene$,
        
        $$ u_n = \sum_{i \in \supp{X}{u_n} }\lambda_{i}^n x_i.$$
        
        We want to see that 
        \begin{equation}\label{eq19}
            \baseun \in \aset \iff \basevn:= \baseun \ast_X \baseyn \in \aset
        \end{equation}
        Observe that  $\baseun \in {(\nnormb_X)}^\omega$ and,  due to $(iv)$ in Proposition \ref{subsetofblocks}, we know that $\baseun \ast_X \baseyn  \in \nnomega{E}$.
        
       By Proposition \ref{obssupport}, we know that
        \begin{equation}
           (\supp{E}{v_n})_n = (\supp{X}{u_n})_n \circledast (\supp{E}{y_n})_n.
       \end{equation}
       As a consequence of the last equation, the definition of $\aset$ and condition $c)$ of Definition \ref{admset}, we obtain
        \begin{eqnarray*}
           \baseun \in \aset &\iff& (\supp{E}{u_n})_n \in \admfam\\
            &\iff&  (\supp{X}{u_n})_n \circledast (\supp{E}{x_n})_n\in \admfam\\
            &\iff&(\supp{X}{u_n})_n \circledast (\supp{E}{y_n})_n\in \admfam\\
            &\iff&(\supp{E}{v_n})_n\in \admfam\\
            &\iff&\basevn\in \aset
        \end{eqnarray*}
      
        \item[d)]  Let $\baseyn$ a $\nnormb_E$-block sequence and $Y=[y_n]_n$. By using item $b)$, we have $(\supp{E}{y_n})_n \in \admfam$. Let $(u_i)_i \in \aset$, so $(\supp{E}{u_i})_i \in \admfam$. By condition $d)$ in Definition \ref{admset} there is $(\{a_i\})_i \in bb(\ene)$ such that 
        \begin{equation}\label{eqaux12}
             (\supp{E}{u_0}, \supp{E}{u_1}, ..., \supp{E}{u_n}, B_0, B_1, ...) \in \admfam,
        \end{equation}
        where $(B_i)_i = (\{a_i\})_i \circledast (\supp{E}{y_i})_i$. For each $i \in \ene$, let $z_i = y_{a_i}$.
        It is clear that $(z_i)_i \in {\nnormb_{Y}}^\omega$ and $\supp{E}{z_i} = B_i$, for every $i \in \ene$. Then, by Equation \eqref{eqaux12} we have
        $$(u_0, ..., u_n, z_0, z_{1}, ...) \in \aset.$$
    \end{itemize}
\end{proof}


    Under the hypothesis of Proposition \ref{admsetadmprop}, we shall refer to the obtained set $\admfam(\nnormb_E)$ as the admissible set for $E$ determined by the admissible family $\admfam$.

\begin{prop}
    Let $\admfam$ be an admissible family. Let $\nnormb_E$ be a set of blocks for $E$. Let $X$ be a $\nnormb_E$-block subspace of $E$. If $\asetf{E}=\admfam(\nnormb_E)$, then $\asetf{X}=\admfam(\nnormb_X)$.
\end{prop}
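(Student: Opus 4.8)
The plan is to unfold both sides down to the level of supports and then to transfer between the two notions of support via the operation $\circledast$, using condition c) of Definition \ref{admset}. By definition $\asetf{X} = \asetf{E} \cap X^\omega$, and since $\asetf{E} = \admfam(\nnormb_E)$, a sequence $\baseun$ lies in $\asetf{X}$ exactly when $\baseun \in (\nnormb_E \cap X)^\omega = (\nnormb_X)^\omega$ and $(\supp{E}{u_n})_n \in \admfam$. On the other hand, by the relativized definition, $\admfam(\nnormb_X)$ consists of those $\baseun \in (\nnormb_X)^\omega$ with $(\supp{X}{u_n})_n \in \admfam$. Hence the whole statement reduces to proving, for a fixed $\baseun \in (\nnormb_X)^\omega$, the equivalence
$$(\supp{E}{u_n})_n \in \admfam \iff (\supp{X}{u_n})_n \in \admfam.$$

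First I would record the support identity that links the two notions of support. Writing $X=[x_n]_n$ for the generating normalized $\nnormb_E$-block sequence and expanding each $u_n = \sum_{i \in \supp{X}{u_n}} \lambda_i^n x_i$, the pairwise disjointness and successivity of the blocks $(x_i)_i$ prevent any cancellation, so $\supp{E}{u_n} = \bigcup_{i \in \supp{X}{u_n}} \supp{E}{x_i}$. In the notation of $\circledast$ this is exactly
$$(\supp{E}{u_n})_n = (\supp{X}{u_n})_n \circledast (\supp{E}{x_n})_n,$$
which is the content of Proposition \ref{obssupport} applied to $\basewn = \baseun \ast_X \basexn = \baseun$ (note $\baseun \ast_X \basexn = \baseun$, since substituting $x_i$ back for $x_i$ changes nothing).

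Next I would exploit the block structure of $X$: because $(x_n)_n$ is a $\nnormb_E$-block sequence, $(\supp{E}{x_n})_n \in bb \subseteq \admfam$ by condition b) of Definition \ref{admset}. Applying the equivalent form of condition c) recorded in Remark \ref{equivadmset}$(ii)$, with $(V_i)_i = (\supp{E}{x_n})_n \in \admfam$ and $(W_i)_i = (\supp{X}{u_n})_n \in {\rm FIN}^\omega$, gives
$$(\supp{X}{u_n})_n \in \admfam \iff (\supp{X}{u_n})_n \circledast (\supp{E}{x_n})_n \in \admfam \iff (\supp{E}{u_n})_n \in \admfam,$$
which is precisely the equivalence sought. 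Combining with the first paragraph yields $\asetf{X} = \admfam(\nnormb_X)$.

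I expect no genuine obstacle here; the single point that needs care is the support factorization $(\supp{E}{u_n})_n = (\supp{X}{u_n})_n \circledast (\supp{E}{x_n})_n$, that is, the claim that passing from coordinates in the basis $(x_i)_i$ of $X$ to coordinates in $(e_k)_k$ produces no cancellation. This is guaranteed by the fact that $X$ is generated by a genuine block sequence, whose supports are successive and hence disjoint. Everything else is a direct translation between the two descriptions of the admissible set together with one use of the multiplicative invariance c).
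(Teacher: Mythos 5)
Your proof is correct and follows essentially the same route as the paper: both reduce the statement to the equivalence $(\supp{E}{u_n})_n \in \admfam \iff (\supp{X}{u_n})_n \in \admfam$ for $\baseun \in (\nnormb_X)^\omega$, established via the factorization $(\supp{E}{u_n})_n = (\supp{X}{u_n})_n \circledast (\supp{E}{x_n})_n$ from Proposition \ref{obssupport}. In fact you supply details the paper leaves implicit, namely that one also needs $(\supp{E}{x_n})_n \in bb \subseteq \admfam$ and the invariance property of Remark \ref{equivadmset}$(ii)$ to convert that factorization into the desired equivalence.
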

\begin{proof}
    It follows from the facts that $\asetf{X}=\asetf{E} \cap X^\omega$, that $\nnormb_X= \nnormb_E \cap X$, and that for every $\baseun \in \nnomega{X}$,
    \begin{equation}\label{eq21}
        (\supp{E}{u_n})_n \in \admfam \iff (\supp{X}{u_n})_n \in \admfam.
    \end{equation}
    And this last fact follows from Proposition \ref{obssupport}.
\end{proof}


From  Proposition \ref{admsetadmprop} we obtain immediately:

\begin{prop}\label{exadmprop}
    Let $\nnormb_E$ be a set of blocks for $E$. The following sets are admissible for $E$:
    \begin{itemize}
        \item[$(i)$] The set $(\nnormb_E)^\omega$ of infinite sequences of $\nnormb_E$-blocks.
        \item[$(ii)$] The set $bb(\nnormb_E)$ of $\nnormb_E$-block sequences of $E$.
        \item[$(iii)$] The set $db(\nnormb_E)$ of infinite sequences of pairwise disjointly supported $\nnormb_E$-blocks.
    \end{itemize}
\end{prop}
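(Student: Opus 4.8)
The plan is to recognize each of the three sets as the admissible set determined by one of the admissible families already treated in Proposition \ref{exadmset}, and then to invoke Proposition \ref{admsetadmprop} directly. Recall from that proposition that, given an admissible family $\admfam$, the set $\admfam(\nnormb_E) = \{\ui \in \nnomega{E} : (\supp{E}{u_i})_i \in \admfam\}$ is automatically an admissible set for $E$. Thus it suffices to identify the support-codings of ${\rm FIN}^\omega$, $bb$ and $db$ with the three sets in the statement, which reduces everything to an elementary dictionary between a property of a sequence of $\nnormb_E$-blocks and the corresponding property of its sequence of supports.

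First I would treat $(i)$. Every $\nnormb_E$-block is by definition a nonzero element of $\nnormbuno_E$, hence a finitely supported vector with nonempty finite support; consequently, for any $\ui \in \nnomega{E}$ the associated sequence $(\supp{E}{u_i})_i$ automatically lies in ${\rm FIN}^\omega$. This gives the identity $(\nnormb_E)^\omega = {\rm FIN}^\omega(\nnormb_E)$. Since ${\rm FIN}^\omega$ is an admissible family by Proposition \ref{exadmset}, Proposition \ref{admsetadmprop} immediately yields that $(\nnormb_E)^\omega$ is an admissible set for $E$.

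For $(ii)$ and $(iii)$ I would argue by unwinding the definitions. A sequence $\ui$ of $\nnormb_E$-blocks is a $\nnormb_E$-block sequence exactly when its supports are successive, that is, when $(\supp{E}{u_i})_i \in bb$; thus the set $bb(\nnormb_E)$ of statement $(ii)$ is precisely the support-coding $\admfam(\nnormb_E)$ for the family $\admfam = bb$. Likewise, a sequence of $\nnormb_E$-blocks is pairwise disjointly supported exactly when $(\supp{E}{u_i})_i \in db$, so the set of statement $(iii)$ is the support-coding of $\admfam = db$. As $bb$ and $db$ are admissible families by Proposition \ref{exadmset}, two further applications of Proposition \ref{admsetadmprop} finish both cases.

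There is essentially no serious obstacle here, since the real content was already isolated in Propositions \ref{exadmset} and \ref{admsetadmprop}. The only point meriting a word of care is case $(i)$, where one must explicitly note that the finiteness and nonemptiness of the supports—needed for membership in ${\rm FIN}^\omega$—are guaranteed by the nonvanishing requirement built into $\nnormbuno_E$ together with condition a) in Definition \ref{setofblocks}; for $(ii)$ and $(iii)$ the matching of block-sequence, respectively disjoint-support, conditions with $bb$ and $db$ is immediate from the definitions.
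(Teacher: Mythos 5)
Your proposal is correct and is exactly the paper's argument: the paper states this proposition as an immediate consequence of Proposition \ref{admsetadmprop} applied to the admissible families ${\rm FIN}^\omega$, $bb$ and $db$ of Proposition \ref{exadmset}, which is precisely your route. You merely make explicit the support-coding identifications (including the observation that block supports are nonempty and finite, so case $(i)$ really is all of $(\nnormb_E)^\omega$), which the paper leaves implicit.
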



\section{Embeddings and minimality}\label{sec:embedding}

In this section we shall use the previous sets of blocks and admissible sets to code different kinds of embeddings. Doing this we shall be able of associate to each embedding a notion of tightness and of minimality, which in some cases, coincide with minimal notions studied previously, for example in \cite{Ferergodic}. To simplify the notations we shall fix a Banach space $E$ with normalized basis $\baseen$. 

\begin{defi}\label{defiAembed}
    Let $\nnormb_E$ be a set of blocks for $E$ and $\asetf{E}$ an admissible set for $E$. Suppose that $X$ is a $\nnormb_E$-block subspace. Let $Y$ be a Banach space with normalized basis $\baseyn$ and suppose $K\geq 1$. 
    \begin{itemize}
        \item[$(i)$]  We shall say that $Y$ $\asetf{X}$-embeds in $X$ with constant $K$ (in symbols $Y \embast_K X$) if, and only if, there is some sequence $\baseun \in \asetf{X}$  of blocks such that $\baseun \sim_K \baseyn$.
        \item[$(ii)$] We say that $Y$ $\asetf{X}$-embeds
        in $X$  (in symbols $Y \embast X$), if $Y \embast_K X$ for some constant $K \geq 1$. 
    \end{itemize}
\end{defi}
A number of natural properties follow directly from Definition \ref{defiAembed} and will be used. For example, the definition guarantees that 
if $Y$ is a $\nnormb_X$-block subspace of $X$ and $Z \embast Y$, then $Z \embast X$ as well.

\begin{defi}\label{defispaceminimal}
    Let $E$ be a Banach space with normalized basis $\baseen$. Let $\nnormb_E$ be a set of blocks for $E$ and $\asetf{E}$ an admissible set for $E$. Suppose that $X$ is a $\nnormb_E$-block subspace. We say that $X$ is $\asetf{E}$-minimal  if, and only if, for every $\nnormb_X$-block subspace $Y$ we have that $X \embast Y$.
\end{defi}

The following proposition establishes that the property of being $\asetf{E}$-minimal is hereditary by taking $\nnormb_E$-subspaces.

\begin{prop}
     Let $E$ be a Banach space with normalized basis $\baseen$. Let $\nnormb_E$ be a set of blocks for $E$ and $\asetf{E}$ be an admissible set for $E$. Suppose that $X$ is a  $\nnormb_E$-block subspace which is $\asetf{E}$-minimal. If $Y$ is a $\nnormb_X$-block subspace of $X$, then $Y$ is $\asetf{E}$-minimal.
\end{prop}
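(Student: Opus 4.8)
The plan is to derive the conclusion directly from Definition \ref{defispaceminimal}, using the substitution operation $\ast_X$ to carry an $\asetf{Z}$-embedding of $X$ over to one of $Y$. Fix an arbitrary $\nnormb_Y$-block subspace $Z$ of $Y$; the goal is to show $Y \embast Z$. First I would observe that $Z$ is automatically a $\nnormb_X$-block subspace: since $Y \subseteq X$ we have $\nnormb_Y = \nnormb_E \cap Y \subseteq \nnormb_E \cap X = \nnormb_X$, and a $\nnormb_Y$-block sequence is by definition a block basis of $\baseen$ with entries in $\nnormb_Y \subseteq \nnormb_X$, hence a $\nnormb_X$-block sequence. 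As $X$ is $\asetf{E}$-minimal, Definition \ref{defispaceminimal} applied to the $\nnormb_X$-block subspace $Z$ yields a constant $K$ and a sequence $\baseun \in \asetf{Z}$ with $\baseun \sim_K \basexn$.

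Writing $y_n = \sum_{i \in \supp{X}{y_n}} \lambda_i^n x_i$ (a finite expansion, since $y_n$ has finite $E$-support while the $x_i$ have disjoint successive supports), I would set $\basewn := \baseyn \ast_X \baseun$, so that $w_n = \sum_i \lambda_i^n u_i \in Z$. The equivalence $\basewn \sim_K \baseyn$ is then immediate: for scalars $(b_n)_n$ one has $\sum_n b_n w_n = \sum_i c_i u_i$ and $\sum_n b_n y_n = \sum_i c_i x_i$ with the same coefficients $c_i = \sum_n b_n \lambda_i^n$, so $\baseun \sim_K \basexn$ transfers the norm estimates verbatim.

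It remains to check $\basewn \in \asetf{Z}$, which is the heart of the matter. Both $\basexn$ and $\baseun$ lie in $\asetf{E}$ --- the former as a $\nnormb_E$-block sequence, the latter because $\asetf{Z} \subseteq \asetf{E}$ --- and $\baseyn \in (\nnormb_X)^\omega$, so Proposition \ref{equivc}$(iii)$, with $X$ in the role of the block subspace, gives
\[
\baseyn \ast_X \basexn \in \asetf{E} \iff \baseyn \ast_X \baseun \in \asetf{E}.
\]
Substituting each $x_i$ by itself returns $\baseyn$, i.e. $\baseyn \ast_X \basexn = \baseyn$, and $\baseyn \in \asetf{E}$ as a $\nnormb_E$-block sequence; hence $\basewn = \baseyn \ast_X \baseun \in \asetf{E}$. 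Since every $w_n \in Z$, we conclude $\basewn \in \asetf{E} \cap Z^\omega = \asetf{Z}$, and together with $\basewn \sim_K \baseyn$ this gives $Y \embast Z$ by Definition \ref{defiAembed}. As $Z$ was arbitrary, $Y$ is $\asetf{E}$-minimal. I expect the only genuine obstacle to be this membership step: it is exactly where the invariance axiom $(c)$ of an admissible set (packaged as Proposition \ref{equivc}) must be invoked, and it is why the concrete $\ast_X$-substitution, rather than an abstract transitivity of $\embast$, is the right vehicle for the argument.
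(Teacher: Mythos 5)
Your proof is correct and follows essentially the same route as the paper's: fix an arbitrary $\nnormb_Y$-block subspace $Z$, use the $\asetf{E}$-minimality of $X$ to produce $\baseun \in \asetf{Z}$ equivalent to $\basexn$, pass to $\basewn = \baseyn \ast_X \baseun$, verify membership in $\asetf{Z}$ via the invariance axiom of admissible sets, and get $\basewn \sim \baseyn$ by coefficient transfer. The only (immaterial) difference is that you invoke Proposition \ref{equivc}$(iii)$ at the level of $\asetf{E}$ where the paper cites Proposition \ref{admblock}$(iii)$ at the level of $\asetf{X}$ --- both are repackagings of condition $c)$ in Definition \ref{admpro}.
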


\begin{proof}
    Let $E$, $\nnormb_E$, $\asetf{E}$ and $X$ be as in the hypothesis. Let $Y=[y_n]_n$ be a $\nnormb_X$-block subspace of $X$. Let $Z=[z_n]_n$ be a $\nnormb_Y$-block subspace of $Y$ (so it is also a $\nnormb_X$-block subspace of $\basexn$). We want to see that $Y\embast Z$.  
    
    By the $\asetf{E}$-minimality of $X$, we have $X\embast Z $, thus there is $\baseun \in \asetf{Z} \subseteq \asetf{X} $ such that $\basexn \sim \baseun$.
    By $(iii)$ in Proposition \ref{admblock} we have
    $$ \baseyn \in \asetf{X} \Rightarrow \basewn := \baseyn \ast_X \baseun \in \asetf{X} \cap Z = \asetf{Z}. $$
    
    Then, $\basewn$ is a block basis of the basic sequence $\baseun$ of $\nnormb_Z$-blocks (it is not necessarily  a block sequence of $X$ because $\baseun$ need not be a block sequence).  Since $\baseun \sim \basexn$ and each $w_n$ has the same scalars in it expansion than $y_n$, we have that $\baseyn \sim \basewn$. So, $Y \embast Z$.
\end{proof}

\begin{obs}\label{obsminimalimpli}
Notice that, in the context of Proposition \ref{exadmprop}, for a fixed set of blocks $\nnormb_E$, we have 
\begin{center}
    $bb({\nnormb_E})$-minimality $\implies$ $db({\nnormb_E})$-minimality $\implies$ $ \nnomega{E}$-minimality.
\end{center}
\end{obs}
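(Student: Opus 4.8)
The plan is to reduce both implications to a single monotonicity principle: if $\asetf{E} \subseteq \mathcal{C}_E$ are two admissible sets for $E$ (as subsets of $(\nnormb_E)^\omega$), then $\asetf{E}$-minimality implies $\mathcal{C}_E$-minimality. The two arrows in the statement are then just the instances produced by the chain of inclusions $bb(\nnormb_E) \subseteq db(\nnormb_E) \subseteq (\nnormb_E)^\omega$.

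First I would record these inclusions. By Proposition \ref{admsetadmprop} (together with Proposition \ref{exadmprop}), a sequence $\baseun$ belongs to $bb(\nnormb_E)$, to $db(\nnormb_E)$, or to $(\nnormb_E)^\omega$ precisely when the support sequence $(\supp{E}{u_n})_n$ lies in $bb$, in $db$, or in ${\rm FIN}^\omega$, respectively. Since $(U_i)_i \in bb$ means $\max U_i < \min U_{i+1}$ for all $i$, the sets $U_i$ are pairwise disjoint, so $bb \subseteq db$; and trivially $db \subseteq {\rm FIN}^\omega$. Passing to supports gives
$$ bb(\nnormb_E) \subseteq db(\nnormb_E) \subseteq (\nnormb_E)^\omega. $$

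Next I would prove the monotonicity principle by directly unwinding Definition \ref{defispaceminimal} and Definition \ref{defiAembed}. Assume $\asetf{E} \subseteq \mathcal{C}_E$ and that the $\nnormb_E$-block subspace $X = [x_n]_n$ is $\asetf{E}$-minimal. Fix an arbitrary $\nnormb_X$-block subspace $Y$. By $\asetf{E}$-minimality, $X$ $\asetf{Y}$-embeds in $Y$, so there is a witness $\baseun \in \asetf{Y}$ with $\baseun \sim \basexn$. The crucial step is the relativization identity $\asetf{Y} = \asetf{E} \cap Y^\omega$ and $\mathcal{C}_Y = \mathcal{C}_E \cap Y^\omega$: intersecting $\asetf{E} \subseteq \mathcal{C}_E$ with $Y^\omega$ yields $\asetf{Y} \subseteq \mathcal{C}_Y$, so the same $\baseun$ lies in $\mathcal{C}_Y$ and still satisfies $\baseun \sim \basexn$. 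Hence $X$ $\mathcal{C}_Y$-embeds in $Y$, and since $Y$ was arbitrary, $X$ is $\mathcal{C}_E$-minimal.

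Applying the principle along the inclusions above yields the stated chain. There is no serious obstacle here; the only point demanding care is the direction of the implication. A smaller admissible set restricts the allowed witnesses of an embedding and so encodes a more demanding notion of minimality, which is exactly why the smallest family $bb$ produces the strongest minimality and the implications run from $bb(\nnormb_E)$-minimality toward $(\nnormb_E)^\omega$-minimality. Tracking this monotonicity, together with the relativization $\asetf{Y} = \asetf{E} \cap Y^\omega$, is all that the argument requires.
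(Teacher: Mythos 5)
Your proof is correct and follows exactly the reasoning the paper leaves implicit: the remark is stated there without proof, as an immediate observation. Its content is precisely your monotonicity principle — since $bb(\nnormb_E) \subseteq db(\nnormb_E) \subseteq (\nnormb_E)^\omega$, any witness $\baseun \in \asetf{Y} = \asetf{E} \cap Y^\omega$ for the stronger notion of $\asetf{E}$-embedding is also a witness for the weaker one, so the corresponding notions of minimality are ordered as stated.
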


\section{Interpretations for the set of blocks}\label{sec:blocksubspaceinF}

Depending on the set of blocks $\nnormb_E \subseteq \nnormbuno_E$  we have chosen for the Banach space $E$, it is possible to give different interpretations for the admissible set considered. In this subsection we shall explore various sets of blocks and analyze the admissible sets obtained in Proposition \ref{exadmprop} in each context.

\subsection{Blocks as the non-zero F-linear combinations}\label{blocksbeingeverything}

We shall start this exposition with the biggest set of blocks possible. Consider the set of blocks $\nnormbuno_E$, that is the set which elements are all non-zero finitely supported $\mathbf{F}_E$-linear combinations of the basis $\baseen$. This set of blocks coincide with the context of blocks used by A. Pelczar in \cite{pelczar} and also by V. Ferenczi and Ch. Rosendal in \cite{FerencziRosendal1}. 

In this context, a $\nnormbuno_E$-block sequence is a block basis which elements are non-zero finitely supported $\mathbf{F}_E$-linear combinations and
$$bb_{\nnormbuno}(E) = \{(x_n)_n \in (\nnormbuno_E)^\omega: \forall n \in \ene ( x_n < x_{n+1} \; \& \; \normm{x_n} = 1)\}.$$

\begin{obs}
    Any normalized finitely supported basic sequence $\baseyn$ in $E = [e_n]_n$, is equivalent to $\basezn \in (\nnormbuno_E)^\omega$ with $\supp{E}{z_n} = \supp{E}{y_n}$, for every $n$. This is a consequence of the density of $\nnormbuno_E$ in $E$ and the principle of small perturbations.
\end{obs}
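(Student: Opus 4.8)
The plan is to approximate each $y_n$ coordinatewise by a vector whose coefficients lie in the field $\mathbf{F}_E$, arranging that the support is preserved exactly, and then to invoke the principle of small perturbations to promote closeness into equivalence.

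First I would record the uniform control on the biorthogonal functionals. Let $C$ be the basis constant of the basic sequence $\baseyn$; since $\baseyn$ is normalized, its coordinate functionals $(y_n^\ast)_n$ relative to $[y_n]_n$ satisfy $\normm{y_n^\ast} \leq 2C$ for every $n$. I would then fix $\theta \in (0,1)$ together with a sequence $(\varepsilon_n)_n$ of positive reals chosen so that $\sum_n 2C\,\varepsilon_n < \theta$.

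Next comes the construction. For each $n$ write
$$ y_n = \sum_{i \in \supp{E}{y_n}} a_i^n e_i, $$
where every coefficient $a_i^n$ is nonzero and the support is nonempty (this is exactly what membership in the support means, and $y_n \neq 0$ because $\baseyn$ is basic). Because $\mathbf{F}_E \supseteq \mathbb{Q}$ is dense in \re{} and $\re \setminus \{0\}$ is open, for each index $i$ I can select $\lambda_i^n \in \mathbf{F}_E \setminus \{0\}$ with $|a_i^n - \lambda_i^n|$ as small as desired. Setting
$$ z_n := \sum_{i \in \supp{E}{y_n}} \lambda_i^n e_i, $$
the sum is finite, so (the $e_i$ being normalized) $\normm{y_n - z_n} \leq \sum_{i \in \supp{E}{y_n}} |a_i^n - \lambda_i^n|$ can be forced below $\varepsilon_n$. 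Since all $\lambda_i^n$ are nonzero and the $e_i$ are linearly independent, $z_n$ is a nonzero finite $\mathbf{F}_E$-linear combination, so $z_n \in \nnormbuno_E$, and crucially $\supp{E}{z_n} = \supp{E}{y_n}$.

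Finally, by construction $\sum_n \normm{y_n^\ast}\,\normm{y_n - z_n} \leq \sum_n 2C\,\varepsilon_n < \theta < 1$, which is precisely the hypothesis of the classical principle of small perturbations; applying it yields that $\basezn$ is a basic sequence equivalent to $\baseyn$. The one point requiring genuine attention is the \emph{exact} preservation of supports: one must take the approximating coefficients $\lambda_i^n$ to be nonzero, so that no coordinate is inadvertently dropped, and this is possible exactly because the original coefficients on the support are nonzero while $\mathbf{F}_E$ is dense. Beyond that subtlety the argument is a routine combination of density and the summable-perturbation lemma, so I do not expect any serious obstacle here.
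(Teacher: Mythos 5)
Your proof is correct and follows exactly the route the paper intends: the paper justifies this remark in one line (density of $\nnormbuno_E$ plus the principle of small perturbations), and your argument is the natural fleshing-out of that line, coordinatewise approximation by nonzero scalars from $\mathbf{F}_E$ followed by the summable-perturbation lemma via the bound $\normm{y_n^\ast}\leq 2C$. You also correctly identify and handle the only real subtlety, namely choosing the approximating coefficients nonzero so that $\supp{E}{z_n}=\supp{E}{y_n}$ holds exactly rather than just $\supp{E}{z_n}\subseteq\supp{E}{y_n}$.
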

The proof of the following proposition is an adaptation of the beginning of the proof of Lemma 3.7 in \cite{FerencziRosendal1}.
\begin{prop}\label{admissibleblockDD}
    Suppose that we are considering the set of blocks for $E$ as $\nnormbuno_E$ and that $\asetf{E}$ is the admissible set for $E$ determined by an admissible family $\admfam$, i.e., $\asetf{E}=\admfam(\nnormbuno_E)$. Then, the pair $(\nnormbuno_E, \asetf{E})$ is an admissible system of blocks for $E$. 
    
\end{prop}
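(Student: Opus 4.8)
The plan is to verify the defining relation of Definition~\ref{densa} directly for the pair $(\nnormbuno_E,\asetf{E})$, mirroring the opening construction in the proof of Lemma~3.7 of \cite{FerencziRosendal1}. Fix a $\nnormbuno_E$-block subspace $X=[x_n]_n$, a sequence $(\delta_n)_n$ with $0<\delta_n<1$, and $K\geq 1$; I must exhibit non-empty sets $(A_n)_n$ of $\nnormbuno_X$-blocks satisfying a) and b). I would first record two facts used throughout. Since $\mathbf{F}_E$ is a field dense in $\re$, for each finite $d\in\enefin$ the combinations $\sum_{i\in d}\lambda_i x_i$ with all $\lambda_i\in\mathbf{F}_E$ nonzero belong to $\nnormbuno_X$, have support exactly $d$, and are dense in the finite-dimensional space $[x_i:i\in d]$. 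Secondly, recall that with $\asetf{E}=\admfam(\nnormbuno_E)$ one has $\asetf{X}=\admfam(\nnormbuno_X)$, so that a sequence $\ui\in\nnomega{X}$ lies in $\asetf{X}$ if and only if $(\supp{X}{u_i})_i\in\admfam$; thus membership in $\asetf{X}$ depends only on the support sequence.

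For the construction, fix $n\in\ene$. For each $d\in\enefin$ consider $T_d:=\{w\in X:\supp{X}{w}=d,\ 1/K\leq\normm{w}\leq K\}$. As a bounded subset of the finite-dimensional space $[x_i:i\in d]$, $T_d$ is totally bounded, so it admits a finite $\delta_n/2$-net; approximating each point of this net by an $\mathbf{F}_E$-combination of full support $d$ (possible by the density above), I obtain a finite $\delta_n$-net $N_{d,n}\subseteq\nnormbuno_X$ of $T_d$ all of whose vectors have support exactly $d$. I then set $A_n:=\bigcup_{d\in\enefin}N_{d,n}$, a non-empty subset of $\nnormb_X=\nnormbuno_X$.

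Condition a) is immediate from the full-support choice: if $u\in A_n$ has $\supp{X}{u}=d_0$, then $u\in N_{d,n}$ forces $d=d_0$, so the vectors of $A_n$ of support exactly $d_0$ are precisely those of the finite set $N_{d_0,n}$. For condition b), let $\wi\in\asetf{X}$ satisfy $1/K\leq\normm{w_i}\leq K$ for every $i$, and for each $n$ set $d_n:=\supp{X}{w_n}$. Then $w_n\in T_{d_n}$, so I may pick $u_n\in N_{d_n,n}\subseteq A_n$ with $\normm{w_n-u_n}<\delta_n$; by construction $\supp{X}{u_n}=d_n=\supp{X}{w_n}$. This yields b.1), b.2) (indeed with equality of supports) and b.3) at once.

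It remains to check that $\ui\in\asetf{X}$, which is the only genuinely delicate point and the precise reason for insisting that the net vectors have support exactly $d$. Because $\supp{X}{u_n}=\supp{X}{w_n}$ for every $n$, the sequences $\ui$ and $\wi$ share the same support sequence; since $\wi\in\asetf{X}=\admfam(\nnormbuno_X)$ gives $(\supp{X}{w_n})_n\in\admfam$, we get $(\supp{X}{u_n})_n\in\admfam$, hence $\ui\in\asetf{X}$. I expect the main obstacle to be exactly the simultaneous reconciliation of the finiteness requirement a) with the membership $\ui\in\asetf{X}$: the first pushes toward nets that are small per support, while the second is controlled only through the support sequence. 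Choosing net vectors of support identical to the vector they approximate resolves both, and this is where the hypothesis $\asetf{E}=\admfam(\nnormbuno_E)$ --- that the admissible set is support-determined --- is essential.
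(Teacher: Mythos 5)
Your proposal is correct and follows essentially the same route as the paper's proof: build, for each $n$ and each finite support $d$, a finite net of $\nnormbuno_X$-vectors approximating the vectors of support $d$ with norm in $[1/K,K]$, take $A_n$ as the union over $d$, and then use the fact that membership in $\asetf{X}=\admfam(\nnormbuno_X)$ depends only on the support sequence to conclude that the support-preserving approximants $(u_i)_i$ remain in $\asetf{X}$. If anything, your version is slightly more explicit than the paper's (which builds the nets by induction on $|d|$ without spelling out that the net vectors have support exactly $d$), since you arrange full-support $\mathbf{F}_E$-approximants directly and correctly identify that equality of supports is what makes the crucial membership step $\ui\in\asetf{X}$ go through.
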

\begin{proof}

    Let $X=[x_n]_n$ be a $\nnormbuno_E$-block subspace, $(\delta_n)_n$ with $0<\delta_n<1$ and $K\geq 1$.
    We are going to construct for each $n \in \ene$ sets $D_n$ of not necessarily normalized $\nnormbuno_X$-blocks with the following properties:
    \begin{enumerate}
        \item For each $d \in \enefin $, there are a finite number of vectors $u \in D_n$ such that $\supp{X}{u} = d$.
        \item If $w$ is a $\nnormbuno_X$-block vector with norm in $[\frac{1}{K}, K]$, then there is some $u \in D_n$, with the same support in $X$ of $w$ such that $\normm{w-u}< \delta_n$.
    \end{enumerate}
    
    Before the proof of the existence of such sets $D_n$, let us show why this is sufficient: let $\vi \in \asetf{X}$ satisfying $\frac{1}{K}\leq \normm{v_i}\leq K$, for every $i \in \ene$. Since $\vi \in \asetf{X}$ and $\asetf{E}$ is the admissible set for $E$ determined by an admissible family $\admfam$, it follows that 
    \begin{equation}\label{eqaux11}
        (\supp{X}{v_i})_i \in \admfam.
    \end{equation}
    Using $2)$, for each $i$ there is $w_i \in D_i$ with  $\normm{w_i-v_i}< \delta_i$ and $\supp{X}{w_i} = \supp{X}{v_i}$, so by Equation \eqref{eqaux11} $(\supp{X}{w_i})_i \in \admfam$, what means that $\wi \in \asetf{X}$. Therefore, $(\nnormbuno_E, \asetf{E})$ is an admissible system of blocks for $E$.
    
    Let us prove that such sets $D_n$ exist: set $n \in \ene$. We proceed by induction: if $d\in [\ene]^{1}$, then, since the closed K-ball of $[x_i]_{i \in d}$ is totally bounded and $\nnormbuno_E$ is dense in $E$, it is possible to find a finite  $U_d = \{u_{1}^d, ... u_{m(d)}^d\}\subset \overline{\ba}_{K}([x_i]_{i \in d}) \cap \nnormbuno_E $ such that if $w \in [x_i]_{i \in d}$ and $\frac{1}{K}\leq \normm{w} \leq K$, then there is some $j \leq m(d)$ with $\normm{w - u_{j}^d}< \delta_n$.
    
    Suppose we have found for every $d \in [\ene]^{<m}$ such vectors  $U_d= \{u_{1}^d, ... u_{m(d)}^d\}\subset \overline{\ba}_{K}([x_i]_{i \in d}) \cap \nnormbuno_E$ with the desired property. Let $d \in [\ene]^m$, then as the closed K-ball of 
    $$[x_i]_{i \in d} \setminus \bigcup_{d' \subset d} [x_i]_{i \in d'}$$ 
    is again totally bounded and $\nnormbuno_E$ is dense in $E$, there is $U_d= \{u_{1}^d, ... u_{m(d)}^d\}\subset \overline{\ba}([x_i]_{i \in d}) \cap \nnormbuno_E$ such that if $w \in [x_i]_{i \in d}$,  $\frac{1}{K}\leq \normm{w} \leq K$ and $\supp{}{w} = d$, then there is some $j \leq m(d)$ such that $\normm{w - u_{j}^d}< \delta_n$.
    Finally, set 
    $$D_n = \bigcup_{d \in \enefin} U_d.$$ 
\end{proof}

As an immediate consequence of Proposition \ref{exadmprop} and Proposition \ref{admissibleblockDD}, we deduce:

\begin{coro}\label{admisiDD3}
    The pairs $(\nnormbuno_E, \nnormbuno_E^\omega)$,  $(\nnormbuno_E, db(\nnormbuno_E)$ and $(\nnormbuno_E, bb(\nnormbuno_E)$ are admissible systems of blocks for $E$. 
\end{coro}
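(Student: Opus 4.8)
The plan is to exhibit each of the three admissible sets listed as the admissible set for $E$ determined, via Proposition \ref{admsetadmprop}, by an appropriate admissible family, and then to apply Proposition \ref{admissibleblockDD} directly with set of blocks $\nnormbuno_E$. All of the analytic work---the construction of the totally bounded nets $D_n$ witnessing the density condition of Definition \ref{densa}---has already been carried out in Proposition \ref{admissibleblockDD}, so the only remaining task is the bookkeeping that identifies the three sets with sets of the form $\admfam(\nnormbuno_E)$.

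First I would recall from Proposition \ref{exadmset} that ${\rm FIN}^\omega$, $bb$ and $db(\ene)$ are all admissible families. Then I would check the three identifications
\begin{equation*}
    (\nnormbuno_E)^\omega = {\rm FIN}^\omega(\nnormbuno_E), \qquad db(\nnormbuno_E) = db(\ene)(\nnormbuno_E), \qquad bb(\nnormbuno_E) = bb(\ene)(\nnormbuno_E),
\end{equation*}
each of which is immediate from the defining equivalence $\ui \in \admfam(\nnormbuno_E) \iff (\supp{E}{u_i})_i \in \admfam$. For the first equality one uses that every $u \in \nnormbuno_E$ is a nonzero finitely supported vector, so $\supp{E}{u}$ is a nonempty finite set and the membership condition $(\supp{E}{u_i})_i \in {\rm FIN}^\omega$ holds automatically. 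For the second and third, the condition that $(\supp{E}{u_i})_i$ lie in $db(\ene)$ (resp.\ $bb(\ene)$) is precisely the condition that the supports $\supp{E}{u_i}$ be pairwise disjoint (resp.\ successive), which is exactly what defines $db(\nnormbuno_E)$ (resp.\ $bb(\nnormbuno_E)$) in Proposition \ref{exadmprop}.

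Finally, having written each set as $\admfam(\nnormbuno_E)$ for $\admfam \in \{{\rm FIN}^\omega, db(\ene), bb(\ene)\}$, I would invoke Proposition \ref{admissibleblockDD}, whose hypotheses are exactly that the set of blocks is $\nnormbuno_E$ and that the admissible set is of the form $\admfam(\nnormbuno_E)$ for some admissible family $\admfam$. This yields at once that each of $(\nnormbuno_E, (\nnormbuno_E)^\omega)$, $(\nnormbuno_E, db(\nnormbuno_E))$ and $(\nnormbuno_E, bb(\nnormbuno_E))$ is an admissible system of blocks for $E$. There is no genuine obstacle here: the single point demanding care is the verification of the support identifications above, and in particular the observation that passing from a sequence of blocks to its sequence of supports carries $db(\nnormbuno_E)$ and $bb(\nnormbuno_E)$ onto $db(\ene)$ and $bb(\ene)$ respectively.
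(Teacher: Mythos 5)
Your proposal is correct and follows essentially the same route as the paper, which deduces the corollary immediately from Proposition \ref{exadmprop} (i.e., the identification of the three sets as $\admfam(\nnormbuno_E)$ for $\admfam \in \{{\rm FIN}^\omega, db, bb\}$) together with Proposition \ref{admissibleblockDD}. The only difference is that you spell out the support identifications explicitly, which the paper leaves implicit.
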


Notice that it is a fact frequently used, for example in \cite{pelczar} and \cite{FerencziRosendal1}, that after a perturbation argument,  an $(\nnormbuno_E)^\omega$-embedding is ``equivalent'' to the usual isomorphic embedding, i.e. if $X=[x_n]_n$ is a $\nnormbuno_E$-block subspace, then $Y \embast X \iff Y \hookrightarrow X$, when $\asetf{E} = (\nnormbuno_E)^\omega$. Furthermore, if $Y \hookrightarrow_K X$ for some $K \geq 1$, then for any $\varepsilon >0$ we have $Y \embast_{K+\varepsilon} X$.

As it was proved in Proposition \ref{supportperm}, the family $per$ is not admissible for ${\rm FIN}^\omega$. So, Proposition \ref{admsetadmprop} can not be used to determined whether  the set of sequences of blocks that are a permutation of a block basis is an admissible set for the Banach space $E$. In the next proposition we actually prove that such set of sequences is not admissible for $E$.

\begin{prop}\label{remperm1}
    The set 
    $$per(\nnormbuno_E):= \{\basexn \in (\nnormbuno_E)^\omega : (\supp{E}{x_n} )_n \in per\}$$
    is not admissible for $E$.
\end{prop}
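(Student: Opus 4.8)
The plan is to show that $per(\nnormbuno_E)$ fails condition c) of Definition \ref{admpro}, thereby lifting to the level of vectors the obstruction already exhibited for the family $per$ in Proposition \ref{supportperm}. The bridge between the two levels is Proposition \ref{obssupport}: if $\basewn = \baseun \ast_X \basevn$, then the sequence of $E$-supports of $\basewn$ equals $(\supp{E}{u_n})_n \circledast (\supp{E}{v_n})_n$, at least when no cancellation collapses a support. I would take $X = E$ (a $\nnormbuno_E$-block subspace via $x_n = e_n$), so that $\ast_X$ becomes $\ast_E$ and the $X$-supports agree with the $E$-supports; the task is then to realize by explicit vectors the two support sequences $U$ and $V$ used in the proof of Proposition \ref{supportperm}.

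Concretely, I would set $u_0 = e_0 + e_1$ and $u_n = e_{n+1}$ for $n \geq 1$, so that $(\supp{E}{u_n})_n = (\{0,1\},\{2\},\{3\},\dots)$; this is a successive sequence, hence lies in $bb \subseteq per$, and therefore $\baseun \in per(\nnormbuno_E)$. I would set $y_0 = e_0$, $y_1 = e_2$, $y_2 = e_1$ and $y_n = e_n$ for $n \geq 3$, so that $(\supp{E}{y_n})_n = (\{0\},\{2\},\{1\},\{3\},\{4\},\dots)$; the transposition of the indices $1$ and $2$ rearranges this into a block sequence, so it belongs to $per$ and $\baseyn \in per(\nnormbuno_E)$.

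I would then compute $\basewn := \baseun \ast_E \baseyn$ directly from Definition \ref{defast}, obtaining $w_0 = y_0 + y_1 = e_0 + e_2$, $w_1 = y_2 = e_1$, and $w_n = y_{n+1} = e_{n+1}$ for $n \geq 2$. Since the $y_i$ are disjointly supported single basis vectors, no cancellation occurs and Proposition \ref{obssupport} yields $(\supp{E}{w_n})_n = (\{0,2\},\{1\},\{3\},\{4\},\dots)$, which is exactly the sequence $U \circledast V$ shown not to be in $per$ in the proof of Proposition \ref{supportperm}: the singleton $\{1\}$ interleaves $\{0,2\}$, so no permutation can make the two sets successive. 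Hence $\basewn \notin per(\nnormbuno_E)$, while $\baseun, \baseyn \in per(\nnormbuno_E)$, so the biconditional in condition c) of Definition \ref{admpro} fails, proving that $per(\nnormbuno_E)$ is not admissible for $E$.

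The one point demanding care — and the only real obstacle — is justifying that the support identity of Proposition \ref{obssupport} holds \emph{on the nose}, i.e.\ that forming the linear combination $w_n = \sum_i \lambda_i^n y_i$ introduces no cancellation that would shrink $\supp{E}{w_n}$ below $\bigcup_{i} \supp{E}{y_i}$. This is precisely why I choose the $y_i$ to be disjointly supported basis vectors rather than arbitrary blocks; once this is secured, the remainder is a routine transcription of the combinatorial counterexample of Proposition \ref{supportperm} into the vector setting.
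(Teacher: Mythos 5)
Your proposal is correct and takes essentially the same route as the paper: the paper's proof uses exactly the same counterexample, namely the sequences $(e_0+e_1, e_2, e_3, \dots)$ and $(e_0, e_2, e_1, e_3, e_4, \dots)$, whose $\ast_E$-product $(e_0+e_2, e_1, e_3, \dots)$ has support sequence $(\{0,2\},\{1\},\{3\},\dots) \notin per$, violating condition c) of Definition \ref{admpro}. Your additional care about possible cancellation in Proposition \ref{obssupport} is a reasonable precaution, but it is moot here since both you and the paper compute the product vectors explicitly.
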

\begin{proof}
    Let 
    \begin{equation*}
        \basezn = (e_0, e_2, e_1, e_3, e_4, ...) \text{ and } \basewn = (e_0 + e_1, e_2, e_3, e_4, ...)
    \end{equation*}
   Both $\basezn$ and $\basewn$ are permutations of  $\nnormbuno_E$-block sequences but 
    $\basewn \ast_E \basezn = (e_0 + e_2, e_1, e_3, ...)$ is not.  So, condition $c)$ in Definition \ref{admpro} is not satisfied.
\end{proof}

\subsection{Blocks as the set of vectors of the basis}

The smallest set of blocks we can consider is the set for which  the blocks are exclusively the vectors of the basis $\nnormbasis_E$.
Notice that in this case all blocks are normalized.
In this context a $\nnormbasis_E$-block sequence is a subsequence of the basis, and a sequence of disjointly supported blocks is a sequence of different elements of the basis (not necessarily in increasing order). 

\begin{prop}\label{sequencecompadmprop}
    Let $\admfam$ be an admissible family. Then   $(\nnormbasis_E, \admfam(\nnormbasis_E))$ is an admissible system of blocks for $E$. 
\end{prop}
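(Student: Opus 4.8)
The plan is to verify that the pair $(\nnormbasis_E, \admfam(\nnormbasis_E))$ satisfies the defining relation of an admissible system of blocks from Definition \ref{densa}. Since $\nnormbasis_E$ is a legitimate set of blocks (by the Example following Definition \ref{setofblocks}) and $\admfam(\nnormbasis_E)$ is an admissible set for $E$ by Proposition \ref{admsetadmprop}, the only thing left to check is the quantitative density condition relating $\nnormbasis_E$ and $\admfam(\nnormbasis_E)$. This is in sharp contrast with the proof of Proposition \ref{admissibleblockDD}, where the density argument required the totally-bounded-ball approximation machinery; here the set of blocks is so rigid that the condition becomes almost trivial.

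First I would fix a $\nnormbasis_E$-block subspace $X=[x_n]_n$, a sequence $(\delta_n)_n$ with $0<\delta_n<1$, and $K\geq 1$. The key observation is that since $\nnormbasis_E$ consists exclusively of the basis vectors $e_k$, every $\nnormbasis_E$-block subspace $X$ is spanned by a subsequence $(e_{k_n})_n$ of the basis, and hence $\nnormbasis_X = \{x_n : n\in\ene\} = \{e_{k_n}: n \in \ene\}$ is precisely the set of its generating vectors. In particular, every element of $\nnormbasis_X$ has a singleton support in $X$ and is normalized. I would then define $A_n := \nnormbasis_X$ for every $n$.

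For condition a) of Definition \ref{densa}, given $n$ and $d\in\enefin$ for which some $w\in\nnormbasis_X$ has $\supp{X}{w}=d$, necessarily $d$ is a singleton $\{i\}$ and the only vector of $\nnormbasis_X$ with that support is $x_i$ itself; so there is exactly one such vector in $A_n$, which is certainly finitely many. For condition b), I would take any $\wi\in\asetf{X}=\admfam(\nnormbasis_X)$ with $1/K\leq\normm{w_i}\leq K$; but each $w_i\in\nnormbasis_X$ is already normalized, so I simply set $u_i := w_i$. Then $(u_i)_i=\wi\in\asetf{X}$, and for each $n$ we have trivially $u_n\in A_n$, $\supp{X}{u_n}=\supp{X}{w_n}$, and $\normm{w_n-u_n}=0<\delta_n$. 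This exhausts all the required conditions, so $(\nnormbasis_E, \admfam(\nnormbasis_E))$ is an admissible system of blocks.

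The main (and only mild) obstacle is conceptual rather than computational: one must recognize that the density condition of Definition \ref{densa} is designed to handle approximation of arbitrary norm-bounded admissible sequences by elements of the prescribed collections $(A_n)_n$, and that in the degenerate case $\nnormb_E=\nnormbasis_E$ there is nothing to approximate because the only admissible sequences are already built from normalized basis vectors. I would take care to invoke Proposition \ref{admsetadmprop} to know $\admfam(\nnormbasis_E)$ is admissible, and to note (as in the Proposition preceding Proposition \ref{exadmprop}) that $\asetf{X}=\admfam(\nnormbasis_X)$, so that elements of $\asetf{X}$ genuinely consist of vectors of $\nnormbasis_X$ and are therefore automatically normalized.
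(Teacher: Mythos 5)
Your proof is correct and takes essentially the same approach as the paper, which simply observes that since only one $\nnormbasis_E$-block has any given singleton support, the conditions of Definition \ref{densa} hold trivially with no perturbation needed. Your write-up just makes explicit (via $A_n := \nnormbasis_X$ and $u_i := w_i$) what the paper's one-line argument leaves implicit.
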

\begin{proof}
    It follows directly from the fact that for each $n \in \ene$ only one $\nnormbasis_E$-block has support $\{n\}$. In this case,  the conditions asked in Definition \ref{densa} are trivial. What we are saying is that for the case of embedding, minimality or tightness by sequences, it is not necessary to perturb the vectors along the proofs.  
\end{proof}

As a corollary, using the admissible families $bb$ and $db$ (Proposition \ref{exadmprop}), we obtain:



\begin{coro}
     The pairs $(\nnormbasis_E, bb(\nnormbasis_E))$ and $(\nnormbasis_E, bb(\nnormbasis_E))$ 
     are admissible system of blocks for $E$, corresponding to the admissible sets of subsequences of $(e_n)_n$ and of pairwise distinct elements of $(e_n)_n$, respectively.
\end{coro}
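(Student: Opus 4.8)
The plan is to obtain both assertions as immediate specializations of the general machinery already developed, with the only genuine content being the identification of the two admissible sets with the concrete families named in the statement. For the admissibility of the systems, I would first record, via Proposition \ref{exadmset}, that $bb$ and $db$ are admissible families in ${\rm FIN}^\omega$. Applying Proposition \ref{sequencecompadmprop} with $\admfam = bb$ and then with $\admfam = db$ yields at once that $(\nnormbasis_E, bb(\nnormbasis_E))$ and $(\nnormbasis_E, db(\nnormbasis_E))$ are admissible systems of blocks for $E$; here I use that, by Proposition \ref{exadmprop} (whose proof reduces to Proposition \ref{admsetadmprop}), the admissible set $bb(\nnormbasis_E)$ coincides with $\admfam(\nnormbasis_E)$ for the admissible family $\admfam = bb$, and likewise $db(\nnormbasis_E) = \admfam(\nnormbasis_E)$ for $\admfam = db$. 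No perturbation argument is needed, so this part requires essentially nothing beyond citing the two propositions.

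For the concrete description, I would unwind the definition of $\admfam(\nnormbasis_E)$ from Proposition \ref{admsetadmprop}, relying on the single observation that each block in $\nnormbasis_E$ is a basis vector and hence has singleton support. Writing a sequence $(x_n)_n \in (\nnormbasis_E)^\omega$ as $(e_{k_n})_n$, one has $\supp{E}{x_n} = \{k_n\}$. Then $(x_n)_n \in bb(\nnormbasis_E)$ iff $(\{k_n\})_n \in bb$, and since the terms are singletons this is exactly $\{k_n\} < \{k_{n+1}\}$ for all $n$, i.e. $k_0 < k_1 < \cdots$; equivalently $(e_{k_n})_n$ is an increasing subsequence of $\baseen$. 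Similarly $(x_n)_n \in db(\nnormbasis_E)$ iff $(\{k_n\})_n \in db$, i.e. the singletons $\{k_n\}$ are pairwise disjoint, which for singletons just says the indices $k_n$ are pairwise distinct; equivalently $(e_{k_n})_n$ is a sequence of pairwise distinct basis vectors, listed in an arbitrary order. This gives the two claimed identifications.

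I do not expect any real obstacle: the corollary is a specialization of the earlier results to the smallest possible set of blocks $\nnormbasis_E$. The only point deserving care is the passage from ``successive supports'' to ``increasing indices'' and from ``pairwise disjoint supports'' to ``pairwise distinct indices,'' which is legitimate precisely because every $\nnormbasis_E$-block has singleton support. This same feature is what makes the density requirement of Definition \ref{densa} hold trivially (as already noted in Proposition \ref{sequencecompadmprop}), since for each singleton there is a unique $\nnormbasis_E$-block with that support and so no approximation is ever required.
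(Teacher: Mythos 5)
Your proposal is correct and follows essentially the same route as the paper: it invokes the fact that $bb$ and $db$ are admissible families, applies Proposition \ref{sequencecompadmprop} (whose triviality of the density condition rests on each $\nnormbasis_E$-block having singleton support, with a unique block per support), and then identifies $bb(\nnormbasis_E)$ and $db(\nnormbasis_E)$ with subsequences and with sequences of pairwise distinct basis vectors, respectively. You also implicitly and correctly repair the statement's typo, reading the second pair as $(\nnormbasis_E, db(\nnormbasis_E))$ rather than the repeated $bb(\nnormbasis_E)$.
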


\begin{obs}\label{obsfinblocks}
    Let $\nnormb_E$ be a set of blocks for the Banach space $E$ and $\asetf{E}$ be an admissible set determined by an admissible family. Notice that Proposition \ref{sequencecompadmprop} is true in the case where for each $d \in \enefin$ such that there is $w \in \nnormb_E$ with $\supp{E}{w} = d$, we have that the set $\{u \in \nnormb_E: d=\supp{E}{u}\}$ is finite. Under this hypothesis a pair $(\nnormb_E,\asetf{E})$ is an admissible system of blocks for $E$.
\end{obs}

\begin{defi}
    Let $Y$ be a Banach space with  normalized basis $\baseyn$. We write $(y_n)_n\embs (e_n)_n$ to denote that $(y_n)_n$ is equivalent to a subsequence of $(e_n)_n$.
\end{defi}

 Note that $(y_n)_n \embs (e_n)_n$ if, and only if, $Y \embast E$ where the set of blocks is $\nnormbasis_E$  and  $\aset_E = bb(\nnormbasis_E)$ is the admissible set for $E$.
Therefore this definition enters into the general context of our paper.

\subsection{Blocks as signed elements of the basis}

Additionally, we shall study the case of the set of blocks $\nnormbasis_{E}^{\pm} $ for $E$, where we recall that $ x \in  \nnormbasis_{E}^{\pm}$ if, and only if,  $x=\varepsilon e_k$, for some $k \in \ene$ and some sign $\varepsilon \in \{-1,1\}$.

Since for each $n \in \ene$ only two vectors $e_n$ and $-e_n$ in $\nnormbasis_E$ have as support $\{n\}$, from Remark \ref{obsfinblocks} we have immediately:

\begin{prop}\label{signedadmissible}
    Let $\admfam$ be an admissible family. Then  $(\nnormbasis_{E}^{\pm}, \admfam(\nnormbasis_{E}^{\pm}))$ is an admissible system of blocks for $E$. 
\end{prop}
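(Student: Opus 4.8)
The plan is to reduce the statement to the finiteness criterion recorded in Remark \ref{obsfinblocks}, which is precisely the situation this proposition was engineered to exploit. First I would recall that $\nnormbasis_{E}^{\pm}$ has already been verified to be a set of blocks for $E$ — it appears in the Example following Definition \ref{setofblocks} — and that by Proposition \ref{admsetadmprop} the set $\admfam(\nnormbasis_{E}^{\pm})$ is an admissible set for $E$, namely the admissible set determined by the admissible family $\admfam$. Hence both components of the pair are already of the correct type, and the only remaining point is to confirm the density relation of Definition \ref{densa}, that is, to confirm that the pair is genuinely an admissible \emph{system} and not merely a set of blocks together with an admissible set.

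For this I would verify the hypothesis of Remark \ref{obsfinblocks}: for every $d \in \enefin$ that arises as the support of some block, the set $\{u \in \nnormbasis_{E}^{\pm} : \supp{E}{u} = d\}$ is finite. The key observation is that every element of $\nnormbasis_{E}^{\pm}$ has the form $\varepsilon e_k$ with $\varepsilon \in \{-1,1\}$ and $k \in \ene$, so its support is the singleton $\{k\}$. Consequently the only finite sets occurring as supports are singletons, and if $d = \{k\}$ then the blocks with support $d$ are exactly the two vectors $e_k$ and $-e_k$; for any non-singleton $d$ the set in question is empty. In either case it is finite, so the finiteness hypothesis holds.

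Having checked this condition, I would invoke Remark \ref{obsfinblocks} directly to conclude that $(\nnormbasis_{E}^{\pm}, \admfam(\nnormbasis_{E}^{\pm}))$ is an admissible system of blocks for $E$. I expect no substantive obstacle here: the entire content is the elementary bookkeeping that each singleton support $\{k\}$ is witnessed by only the two signed copies $\pm e_k$, which places the pair squarely within the scope of Remark \ref{obsfinblocks} — the signed analogue of the subsequence case already treated in Proposition \ref{sequencecompadmprop}. The proof is therefore immediate.
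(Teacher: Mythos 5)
Your proposal is correct and follows exactly the paper's own route: the paper proves this proposition by observing that for each $n$ only the two vectors $e_n$ and $-e_n$ have support $\{n\}$, and then invoking Remark \ref{obsfinblocks} (which reduces the density condition of Definition \ref{densa} to this finiteness of blocks per support). Your additional bookkeeping — that $\nnormbasis_{E}^{\pm}$ is a set of blocks and that $\admfam(\nnormbasis_{E}^{\pm})$ is an admissible set by Proposition \ref{admsetadmprop} — is implicit in the paper's statement and does no harm.
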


\begin{defi}
    We say that $\basexn$ is a signed subsequence of $\baseen$ if 
    $$\basexn \in bb(\nnormbasis^{\pm}_E):= \{(\varepsilon_i e_{n_i})_i : (n_i)_i \in \ene^\omega \text{ is increasing } \& \; (\varepsilon_i)_i \in \{-1,1\}^\omega\}.$$
    The sequence $\basexn$ is a signed permutation of a subsequence of $\baseen$ if
$$\basexn \in db(\nnormbasis^{\pm}_E):= \{(\varepsilon_i e_{n_i})_i : (n_i)_i \in \ene^\omega \text{ are mutually distinct } \& \; (\varepsilon_i)_i \in \{-1,1\}^\omega\}.$$

\end{defi}



From Proposition \ref{admsetadmprop} and Proposition \ref{signedadmissible}, we have: 

\begin{coro}
the pairs $(\nnormbasis_{E}^{\pm}, bb(\nnormbasis^{\pm}_E))$ and $(\nnormbasis_{E}^{\pm}, db( \nnormbasis^{\pm}_E))$  are admissible systems of blocks for $E$, associated to the admissible sets of  signed subsequences of $\baseen$ and signed permutations of subsequences of $\baseen$, respectively.
\end{coro}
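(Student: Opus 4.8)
The plan is to obtain both statements as immediate instances of Proposition \ref{signedadmissible}, after pinning down the admissible families involved and identifying the resulting admissible sets concretely. First I would recall from Proposition \ref{exadmset} that $bb$ and $db$ are admissible families. Since Proposition \ref{signedadmissible} asserts that $(\nnormbasis_{E}^{\pm}, \admfam(\nnormbasis_{E}^{\pm}))$ is an admissible system of blocks for \emph{any} admissible family $\admfam$, it suffices to instantiate it at $\admfam = bb$ and at $\admfam = db$. This hands us directly that the pairs $(\nnormbasis_{E}^{\pm}, bb(\nnormbasis^{\pm}_E))$ and $(\nnormbasis_{E}^{\pm}, db(\nnormbasis^{\pm}_E))$ are admissible systems of blocks for $E$, where $bb(\nnormbasis^{\pm}_E)$ and $db(\nnormbasis^{\pm}_E)$ denote the admissible sets for $E$ determined by the families $bb$ and $db$ in the sense of Proposition \ref{admsetadmprop}.

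The only remaining task is to verify that these admissible sets coincide with the sets of signed subsequences and of signed permutations of subsequences of $\baseen$ introduced in the preceding definition. For this I would exploit the defining feature of $\nnormbasis_{E}^{\pm}$: each of its elements has the form $\varepsilon e_n$ with $\varepsilon \in \{-1,1\}$ and $n \in \ene$, so $\supp{E}{\varepsilon e_n} = \{n\}$ is a singleton. Hence for $\ui = (\varepsilon_i e_{n_i})_i \in (\nnormbasis_{E}^{\pm})^\omega$ the associated support sequence is simply $(\{n_i\})_i$, and by the definition in Proposition \ref{admsetadmprop} membership of $\ui$ in $\admfam(\nnormbasis_{E}^{\pm})$ is equivalent to $(\{n_i\})_i \in \admfam$. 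Specializing, $(\{n_i\})_i \in bb$ says exactly that $\{n_i\} < \{n_{i+1}\}$ for all $i$, i.e. the indices $(n_i)_i$ strictly increase, which is the defining condition of a signed subsequence; while $(\{n_i\})_i \in db$ says that the singletons $\{n_i\}$ are pairwise disjoint, i.e. the $(n_i)_i$ are mutually distinct, which is the defining condition of a signed permutation of a subsequence. This gives $bb(\nnormbasis^{\pm}_E)$ and $db(\nnormbasis^{\pm}_E)$ the intended meaning and closes the argument.

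I do not anticipate any real difficulty here, and the proof is essentially a dictionary translation. The one point deserving care is the overloaded notation: one must keep the admissible families $bb, db \subseteq {\rm FIN}^\omega$ separate from the admissible sets $bb(\nnormbasis^{\pm}_E), db(\nnormbasis^{\pm}_E) \subseteq (\nnormbasis_{E}^{\pm})^\omega$ that they induce, and observe that, because every block has singleton support, the family-level conditions on support sequences transcribe without loss into the order, respectively distinctness, conditions on the indices $(n_i)_i$. Alternatively, one could bypass Proposition \ref{signedadmissible} and appeal directly to Remark \ref{obsfinblocks}, noting that for each $d \in \enefin$ the set of blocks with support $d$ is finite (indeed, only $e_n$ and $-e_n$ have support $\{n\}$), which is precisely the hypothesis under which an admissible family yields an admissible system of blocks.
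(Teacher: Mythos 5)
Your proposal is correct and follows essentially the same route as the paper: the paper obtains this corollary directly by combining Proposition \ref{admsetadmprop} with Proposition \ref{signedadmissible} (instantiated at the admissible families $bb$ and $db$ from Proposition \ref{exadmset}), exactly as you do. Your explicit singleton-support verification that the induced admissible sets coincide with the sets of signed subsequences and signed permutations of subsequences is handled in the paper by the preceding definition of those sets, so your write-up just makes that identification explicit.
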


   

\section{Summary of types of minimality}\label{sec:typesofminimality}

We can summarize the interpretation of each embedding as follows: let $Y$ be a Banach space with  normalized basis $\baseyn$. Suppose that we are considering the set of blocks $\nnormb_E$ to be $\nnormbasis_E$, $\nnormbasis_{E}^{\pm}$ or $\nnormbuno_E$, and $\asetf{E}$ the admissible set determined by any of the admissible families ${\rm FIN}^\omega$, $bb$ or  $db$. To say that $Y \embast E$ means in each case that the basis $\baseyn$  is equivalent to a sequence $\basexn$ in $E^\omega$ which satisfies the respective condition we have represented in Table 1.

Notice that since $\baseyn$ is a basic sequence, in the trivial cases when the admissible family is ${\rm FIN}^\omega$, then, necessarily, $\basexn$ must also be basic, so in particular $x_n \neq x_m$ for $n \neq m$. For that reason, the first and third rows of the $\nnormbasis_E$ and $\nnormbasis_{E}^\pm$ columns are the same.

  \begin{center}
\begin{table}[hbt!]\label{table2}
\resizebox{\textwidth}{!}{%
     \begin{tabular}{||m{1.5cm}| m{4cm}| m{5cm} | m{4.5cm} ||}
        \hline\hline 
        \diagbox{$\admfam$}{$\nnormb_E$} & $\;\;\;\;\;\;\;\;\;\;\;\;\; \;\;\nnormbasis_E$&  $\;\;\;\;\;\;\; \;\;\;\;\;\;\;\;\;\;\; \;\nnormbasis_{E}^\pm$& $\;\;\;\;\;\;\; \;\;\;\;\;\; \;\;\;\;\nnormbuno_E$\\
        \hline\hline
        
        \multirow[c]{3}{=}
      
        & & & \\  {${\rm FIN}^\omega$  }
          & $\basexn $ is a permutation of a subsequence of $\baseen$  &   $\basexn$ is a permutation of a signed subsequence & $\basexn$ is a sequence of finitely supported vectors of $\nnormbuno_E$\\
         
        \hline \hline
        \multirow[c]{4}{=}
        & & & \\ 
        {$bb$}
          & $\basexn $ is a subsequence of $\baseen$ &  $\basexn  $ is a signed subsequence of $\baseen$ &$\basexn$ is a $\nnormbuno_E$-block sequence\\
        \hline \hline
        
        \multirow[c]{4}{=}
        & & & \\ {$db$}
          &  $\basexn $ is a per\-mu\-ta\-tion of a sub\-sequence of $\baseen$ &  $\basexn$ is a per\-mu\-ta\-tion of a signed sub\-sequence &$\basexn$ is a sequence of dis\-jointly finitely supported vectors of $\nnormbuno_E$\\
          
        \hline\hline
    \end{tabular}
}

\

\caption{ \small{$\aset$-embeddings for an admissible set determined by an admissible family $\admfam$.}}
\label{tab:table2}
\end{table}
\end{center}

\newpage

We can summarize the notions of $\asetf{E}$-minimality which follows from each non-trivial  $\asetf{E}$-embedding notion given in Table \ref{tab:table2}. For this we first give a few simple definitions. 

In \cite{onaquestion} was defined that a basis $\baseen$ for a Banach space $E$ to be block equivalence minimal  if, and only if, any block sequence have a further block sequence equivalent to $\baseen$.

Recall that two basic sequences $\basexn$ and $\baseyn$ are said to be permutatively equivalent if $(x_n)_n \sim (y_{\sigma(n)})_n$ for some permutation $\sigma$ of the integers. Similarly:

\begin{defi}
Let $\basexn$ and $\baseyn$ be two basic sequences. 
We say that $\basexn$ is signed equivalent to $\baseyn$
if, and only if, there is some $(\epsilon_n)_n \in \{-1,1\}^ \omega$ such that $( x_n)_n \sim (\epsilon_n y_n)_n$. We say that $\basexn$ is signed permutatively equivalent to $\baseyn$ if it is permutatively equivalent to 
$(\epsilon_n y_n)_n$ for 
 some $(\epsilon_n)_n \in \{-1,1\}^ \omega$.
\end{defi}




Recall that a basic sequence is spreading when it is equivalent to all its subsequences. Similarly:

\begin{defi}
    We say that the basic sequence $\baseen$ is signed (resp. permutatively, resp. signed permutatively) spreading if, and only if, $\baseen$ is signed equivalent (resp. permutatively equivalent, resp. signed permutatively equivalent) to all its subsequences. 
\end{defi}


In \cite{onaquestion} it was proved that, as a consequence of the Galvin-Prikry Theorem, if a basis $\baseen$ satisfies that every subsequence has a further subsequence equivalent to $\baseen$, then it is spreading. Adapting the proof of this fact, we shall use the Silver's Theorem to prove the natural form of minimalities in the specific case of subsequences.
All notions of minimalities are summarized in the next Proposition.

\begin{prop}\label{typesofminimality}
    Let $E$ be a Banach space with normalized basis $\baseen$. Then
    \begin{itemize}
    \item Consider the set of blocks $\nnormbasis_E$ for $E$, and $X=[x_n]_n$ where $(x_n)_n$ is a subsequence of $(e_n)_n$. We have
        \begin{itemize}
            \item[$(i)$] $X$ is $bb({\nnormbasis}_E)$-minimal if and only if $\basexn$ is spreading.
            \item[$(ii)$] $X$ is $db({\nnormbasis}_E)$-minimal if and only if $\basexn$ is permutatively spreading.
        \end{itemize}
    \item Consider the set of blocks $\nnormbasis^{\pm}_E$ for $E$, and $X=[x_n]_n$ where is $\basexn$ is a signed subsequence of $\baseen$. We have
        \begin{itemize}
            \item[$(iii)$] $X$ is $bb({\nnormbasis}^\pm_E)$-minimal if and only if $\basexn$ is signed spreading. 
            \item[$(iv)$] $X$ is $db({\nnormbasis}^\pm_E)$-minimal if and only if $\basexn$ is signed permutatively spreading. 
            \end{itemize}
    \item Consider the set of blocks $\nnormbuno_E$ for $E$, and $X=[x_n]_n$ a $\nnormbuno_E$-block subspace of $E$. We have
        \begin{itemize}
            
            \item[$(v)$] $X$ is $bb(\nnormbuno_E)$-minimal if and only if $\basexn$ is block equivalence minimal. 
            \item[$(vi)$] $X$ is $db(\nnormbuno_E)$-minimal if and only if for every $\nnormbuno_X$-block sequence $(y_n)_n$ of $\basexn$ there is a sequence of disjointly supported blocks $\basezn$ of $Y = [y_n]_n$ such that $\basezn \sim \basexn$. 
            \item[$(vii)$] $X$ is $(\nnormbuno_E)^\omega$-minimal if and only if $X$ is minimal.
        \end{itemize}
    \end{itemize}
\end{prop}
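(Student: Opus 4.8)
The plan is to treat the seven equivalences in two groups. The three assertions $(v)$, $(vi)$, $(vii)$ for the set of blocks $\nnormbuno_E$ are obtained by simply unfolding Definition \ref{defispaceminimal} and Definition \ref{defiAembed}: a $\nnormbuno_X$-block subspace $Y$ is exactly a block subspace of $X$, and $X\embast Y$ with $\asetf{E}$ equal to $bb(\nnormbuno_E)$ (resp.\ $db(\nnormbuno_E)$, resp.\ $(\nnormbuno_E)^\omega$) asserts precisely the existence inside $Y$ of a block sequence (resp.\ a disjointly supported sequence, resp.\ an arbitrary sequence of finitely supported vectors) equivalent to $\basexn$. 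For $(v)$ this is verbatim the definition of block equivalence minimality from \cite{onaquestion}, and $(vi)$ is the displayed condition. For $(vii)$ I would additionally invoke the coincidence, recorded after Corollary \ref{admisiDD3}, of $(\nnormbuno_E)^\omega$-embedding with ordinary isomorphic embedding, together with the standard fact that a space with a basis embeds into each of its subspaces if and only if it embeds into each of its block subspaces; this reduces $(\nnormbuno_E)^\omega$-minimality of $X$ to plain minimality.

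The remaining cases $(i)$--$(iv)$, for $\nnormbasis_E$ and $\nnormbasis_{E}^\pm$, are the genuine content and rest on Silver's Theorem (\ref{silver}). The easy implication (spreading-type property $\Rightarrow$ minimality) is immediate: if, say, $\basexn$ is spreading and $Y=[x_{n_k}]_k$ is generated by a subsequence, then the generating subsequence itself lies in $\asetf{Y}=bb(\nnormbasis_E)\cap Y^\omega$ and is equivalent to $\basexn$, so $X\embast Y$; the signed and permutative versions are identical after inserting the relevant signs or permutation into the witnessing sequence.

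For the converse I would, in case $(i)$, introduce the set of ``good'' index sets
\[
\mathcal{C}=\{A\in\eneinf:(x_n)_{n\in A}\sim\basexn\}=\bigcup_{K\in\ene}\{A\in\eneinf:(x_n)_{n\in A}\sim_K\basexn\}.
\]
Each set in the union is closed (it is cut out by countably many inequalities on the norms evaluated at rational scalar vectors), so $\mathcal{C}$ is Borel, hence analytic; by Proposition \ref{cdem} it is invariant under finite modifications of $A$; and $bb(\nnormbasis_E)$-minimality says exactly that $\mathcal{C}$ meets $[A]^\infty$ for every infinite $A$. Applying Silver's Theorem and discarding the alternative forbidden by density, I obtain $H\in\eneinf$ with $[H]^\infty\subseteq\mathcal{C}$, so that $(x_n)_{n\in H}\sim\basexn$ and every infinite subset of $H$ yields a subsequence equivalent to $\basexn$. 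Cases $(ii)$--$(iv)$ run along the same lines with $\sim$ replaced by permutative, signed, or signed permutative equivalence; since there are only two admissible signs per coordinate and the permutations range over a standard Borel set, the corresponding $\mathcal{C}$ is again analytic and finite-change invariant, so Silver's Theorem applies verbatim.

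The main obstacle is the final upgrade from ``$(x_n)_{n\in H}$ is spreading'' to ``$\basexn$ is spreading'', that is, from control inside $H$ to control of an arbitrary subsequence $(x_{d_k})_k$, $D=\{d_0<d_1<\cdots\}$. The point I would exploit is that the equivalence $\basexn\sim(x_n)_{n\in H}$ is \emph{index preserving}: the isomorphism it provides sends $x_k$ to $x_{h_k}$, hence restricts to an equivalence $(x_{d_k})_k\sim(x_{h_{d_k}})_k$. But $\{h_{d_k}:k\in\ene\}$ is an infinite subset of $H$, so $(x_{h_{d_k}})_k\sim\basexn$ because $[H]^\infty\subseteq\mathcal{C}$; chaining the two equivalences gives $(x_{d_k})_k\sim\basexn$, which is exactly spreading. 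In the signed and permutative cases this transfer requires carrying the sign pattern and the permutation through the index-preserving isomorphism, which is the one delicate bookkeeping step; the Ramsey-theoretic input is otherwise unchanged.
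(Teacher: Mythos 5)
Your proposal is correct and follows essentially the same route as the paper: cases $(v)$--$(vii)$ by unfolding the definitions, and cases $(i)$--$(iv)$ by applying Silver's Theorem to the (analytic) set $\mathcal{C}$ of index sets yielding equivalent (resp.\ signed, permutatively, signed permutatively equivalent) subsequences, then transferring the conclusion from $[H]^\infty\subseteq\mathcal{C}$ to arbitrary subsequences. The only cosmetic differences are that you prove case $(i)$ directly rather than citing \cite{onaquestion}, and that you spell out the index-preserving restriction argument in the final transfer step, which the paper states tersely via transitivity of signed permutative equivalence.
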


\begin{proof}
    First, suppose that  the set of blocks for $E$ is 
    $\nnormbasis_E$, and $X=[x_n]_n$ is a $\nnormbasis_E$-block subspace of $E$, i.e. $\basexn$ is a subsequence of $\baseen$. Then
    \begin{itemize}
        \item[$(i)$] By Definition \ref{defispaceminimal}, $X$ is  $bb_{\nnormbasis}(E)$-minimal  if and only if  for every $(y_{n})_n$ subsequence of $\basexn$, there is a further subsequence $(y_{n_k})_k$ equivalent to $\basexn$, which implies that the sequence $\basexn$ is spreading (see \cite{Ferergodic}). 
    \end{itemize}
    
    Now, consider the set of blocks $\nnormbuno_E$ for $E$, and suppose that $X=[x_n]_n$ is a $\nnormbuno_E$-block subspace of $E$. 
    \begin{itemize}
        \item[$(vii)$] As it was noticed in Subsection \ref{blocksbeingeverything}, we know that  $Y \embast X \iff Y \hookrightarrow X$, when $\asetf{E} = (\nnormbuno_E)^\omega$. So, the conclusion is clear.
        \item[$(v)$] $X$ is $bb(\nnormbuno_E)$-minimal if, and only if, for every $\nnormbuno(X)$-block sequence there is a further $\nnormbuno(X)$-block sequence equivalent to $\basexn$. Therefore, $\basexn$ is a block equivalence minimal basis.
        \item[$(vi)$] simply follows from Definition \ref{defispaceminimal}.
    \end{itemize}
It remains to prove (ii) (iii) (iv),
which are consequence of the facts that if every subsequence of a basis $(e_n)_n$ admits a subsequence which is (ii) permutatively, resp. (iii) signed, resp. (iv) signed permutatively equivalent to $(e_n)_n$, then $(e_n)$ must be (ii) permutatively, resp. (iii) signed, resp. (iv) signed permutatively equivalent to all its subsequences.
We shall only prove (iv), leaving the very similar and easier proofs of (ii) and (iii) as exercises. We follow the proof of the similar lemma from \cite{onaquestion}, with the difference that we shall use that analytic sets in $\eneinf$ are Ramsey. 
    
    Let $\mathcal{C} \subseteq \eneinf$ be such that
    $$\{n_k: k \in \ene\} \in \mathcal{C} \iff (e_{n_k})_k {\rm\ is\ signed\ permutatively\ equivalent\ to\ } (e_k)_k.$$
    
    Let us consider the Polish space $\eneinf \times \{-1,1\}^\omega \times {\rm Bij}(\omega)$, where $\{-1,1\}^\omega$ is equipped with the usual topology, and the Polish topology on the set ${\rm Bij}(\omega)$ of bijections of $\omega$ is induced by its inclusion in $\omega^\omega$. Then $\mathcal{C}$ can be expressed as follows:
    \begin{eqnarray*}
        \mathcal{C} &=& \{\{n_k: k \in \ene\} \in \eneinf : \exists (\delta_k)_k \in \{-1,1\}^\omega, \exists \sigma \in {\rm Bij}(\omega): ((\delta_k e_{n_k})_k \sim (e_{\sigma(k)})_k)\}\\
        &=& \{\{n_k: k \in \ene\} \in \eneinf : \exists (\delta_k)_k \in \{-1,1\}^\omega, \exists \sigma \in {\rm Bij}(\omega): (\{n_k: k \in \ene\},(\delta_k)_k, \sigma) \in \mathcal{B}\}\\
        &=& \text{proj}_{\eneinf}(\mathcal{B}),
    \end{eqnarray*}
   
    where $$\mathcal{B} = \bigcup_{C\geq 1} \bigcap_{k \geq 0} \left\{(\{n_i: i \in \ene\},(\delta_i)_i, \sigma) \in \eneinf \times \{-1,1\}^\omega \times {\rm Bij}(\omega): ((\delta_i e_{n_i})_{i=0}^k \sim_C (e_\sigma(i))_{i=0}^k) \right\}.$$
     
    Since 
    this is a countable union of countable intersections of  open sets in $ \eneinf \times \{-1,1\}^\omega \times {\rm Bij}(\omega)$, the set $\mathcal{B}$ is a Borel subset of $ \eneinf \times \{-1,1\}^\omega \times {\rm Bij}(\omega)$. Therefore, $\mathcal{C}$ is analytic in $\eneinf$. By  Silver's Theorem, there is some $H \in \eneinf$ such that either $[H]^\infty \subseteq \mathcal{C}$ or  $[H]^\infty \subseteq \eneinf\setminus \mathcal{C}$. 
    
    If $[H]^\infty \subseteq \eneinf\setminus \mathcal{C}$ then the sequence $(e_n)_{n \in H}$ satisfies that all its subsequences are not signed permutatively equivalent to $\baseen$, which is a contradiction. On the contrary, if $[H]^\infty \subseteq \mathcal{C}$, then $(e_n)_{n \in H}$ is signed permutatively equivalent to all its subsequences, and so  is $\baseen$ because it is signed permutatively equivalent to $(e_n)_{n \in H}$. 
\end{proof}
Let us end this section with a few obvious implications:

\[
\begin{array}
[c]{ccc}
\baseen \text{ is spreading} &\Longrightarrow & \baseen \text{ is signed spreading}\\
\big\Downarrow & & \big\Downarrow  \\
\baseen \text{ is permutatively spreading}  & \Longrightarrow & \baseen \text{ is signed permutatively spreading}  
\end{array}
\]
also,
\begin{center}
      $\baseen$ is  block equivalence minimal  $\implies$ $\baseen$ is  $db(\nnormbuno_E)$-minimal $\implies$ $E$ is  minimal.
\end{center}

The canonical basis of $c_0$ and $\ell_p$, with $1\leq p < \infty$, is, in each case, block equivalence minimal. In \cite{androschlum} it was proved that the canonical basis of the Schlumprecht space $\mathcal{S}$ is block equivalence minimal. 
In  \cite{Fer} it was observed  that $\mathbf{T}^\ast$ has no  ``block minimal'' block subspaces, and so in particular does not have 
block equivalence minimal block subspaces. 
It may actually be seen that $\mathbf{T}^\ast$ contains no block subspace with (vi). Here is a sketch of the proof:
$\mathbf{T}^\ast$
is ``strongly asymptotically $\ell_\infty$'' (see \cite{CO} and also \cite{DFKO}), which means that $n$ normalized disjointly supported vectors supported far enough on the basis are equivalent to the natural basis of $\ell_\infty^n$;
on the other hand, a standard diagonalization argument (see e.g. Lemma \ref{lemma2.2}) gives
that a block subspace with (vi) must have a further block subspace $(x_n)_n$ with the uniform version of (vi), i.e. in any block-sequence $(y_n)_n$, the existence of disjointly supported blocks $K$-equivalent to $(x_n)$ for some fixed $K$; the conjunction of the two implies that $(x_n)_n$ must be $K$-equivalent to the unit basis of $c_0$, a contradiction with the fact that
$\mathbf{T}^\ast$ does not contain a copy of $c_0$.
In conclusion, $\mathbf{T}^\ast$ satisfies $(vii)$ and does not satisfy the minimality conditions of $(v)$ (nor $(vi)$) in Proposition \ref{typesofminimality}.  We do not know of spaces satisfying condition $(vi)$ of minimality but not being block equivalence minimal (not satisfying condition $(v)$). 


\section{Results on A-tightness}\label{sec:Atightness}

\subsection{Notions of tightness}

\begin{defi}\label{defiYtightenX}
    Let $E$ be a Banach space with normalized basis $\baseen$. Let $\nnormb_E$ be a set of blocks for $E$ and $\asetf{E}$ an admissible set for $E$. Suppose that $X=[x_n]_n$ is a  $\nnormb_E$-block subspace. We say that a Banach space $Y$ with Schauder basis is $\asetf{E}$-tight in the basis  $\basexn$ if, and only if, there is a sequence of successive intervals $(I_i)_i$ such that for every $A \in \eneinf$
    \begin{equation}
        Y \nembast [x_n: n \notin \cup_{i \in A} I_i].
    \end{equation}
\end{defi}
\begin{defi}\label{defiAtight}
Let $E$ be a Banach space with a normalized basis $\baseen$. Let $\nnormb_E$ be a set of blocks for $E$ and $\asetf{E}$ an admissible set for $E$. Suppose that $X=[x_n]_n$ is a  $\nnormb_E$-block subspace. The basis $\basexn$ is $\asetf{E}$-tight if, and only if, every $\nnormb_X$-block subspace $Y$ of $X$ is $\asetf{E}$-tight in the basis $\basexn$. The $\nnormb_E$-block subspace $X$ is $\asetf{E}$-tight if, and only if, $\basexn$ is an $\asetf{E}$-tight basis.
\end{defi}

\begin{obs}
    Let $E$ be a Banach space with a normalized basis $\baseen$. Let $\nnormb_E$ be a set of blocks for $E$ and $\asetf{E}$ an admissible set for $E$. From Definition \ref{defiAtight}, $E$ is $\asetf{E}$-tight if, and only if, every $\nnormb_E$-block subspace $X$ is $\asetf{E}$-tight in $\baseen$.
\end{obs}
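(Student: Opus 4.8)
The plan is to unfold the two relevant definitions, the only genuine point being to verify that $E$ itself qualifies as a $\nnormb_E$-block subspace and that the block set associated to it is again $\nnormb_E$. First I would check that $\baseen$ is a normalized $\nnormb_E$-block sequence: condition (b) of Definition \ref{setofblocks} gives $\{e_n : n \in \ene\} \subseteq \nnormb_E$, and since $\baseen$ is by hypothesis normalized and successive, it belongs to $bb_\nnormb(E)$. Hence $E = [e_n]_n$ is a $\nnormb_E$-block subspace of itself, so Definitions \ref{defiYtightenX} and \ref{defiAtight} may be legitimately instantiated at $X = E$.

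Next I would record the identification $\nnormb_{[e_n]_n} = \nnormb_E \cap E = \nnormb_E$, which holds because every $\nnormb_E$-block already lies in $E$. Consequently the $\nnormb_X$-block subspaces of $X = E$ are exactly the $\nnormb_E$-block subspaces, and the instance of Definition \ref{defiYtightenX} with $X = E$ is precisely the notion ``$\asetf{E}$-tight in $\baseen$''. With this bookkeeping settled, the two implications are read off directly from Definition \ref{defiAtight} applied to $X = E$: that definition asserts that $E$ is $\asetf{E}$-tight if and only if the basis $\baseen$ is an $\asetf{E}$-tight basis, and the latter holds if and only if every $\nnormb_E$-block subspace $Y$ of $E$ is $\asetf{E}$-tight in $\baseen$. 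Chaining these equivalences with the identification above yields exactly the asserted statement.

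The main (and essentially only) obstacle here is the self-referential bookkeeping: one must confirm that the notions, stated for an arbitrary $\nnormb_E$-block subspace $X$, are validly specialized to $X = E$ and that no circularity is introduced by treating $E$ as a block subspace of itself. Once the equalities $E = [e_n]_n$ and $\nnormb_{[e_n]_n} = \nnormb_E$ are in place, there is no analytic content remaining, and the equivalence is an immediate consequence of the definitions.
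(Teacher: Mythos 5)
Your proposal is correct and matches the paper's treatment: the paper offers no separate argument, asserting the remark follows directly from Definition \ref{defiAtight}, and your unfolding — verifying via condition (b) of Definition \ref{setofblocks} that $E=[e_n]_n$ is itself a $\nnormb_E$-block subspace with $\nnormb_{E}\cap E=\nnormb_E$, then specializing Definition \ref{defiAtight} to $X=E$ — is exactly the intended bookkeeping.
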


The following result extends Proposition 3.1 of Ferenczi - Godefroy \cite{TightBaire} for the original notion of tightness. To prove it we use Corollary 2.4 in \cite{TightBaire} where the following characterization of meager and comeager subsets of the Cantor space is given and it is the following: let $B$ be a subset of $2^\omega$ closed by supersets. 
    \begin{itemize}
        \item[$(i)$] B is meager if, and only if, there exist a sequence $(I_i)_i$ of successive intervals in $\ene$ such that 
        $$\seqq{u} \in B \Rightarrow \{n \in \omega: \supp{}{\seqq{u}} \cap I_n = \emptyset\} \text{ is finite.}$$
        \item[$(ii)$] B is comeager if, and only if, there exist a sequence $(I_i)_i$ of successive intervals in $\ene$ such that 
        $$ \{n \in \omega:  I_n\subseteq \supp{}{\seqq{u}}\} \text{ is infinite} \Rightarrow \seqq{u} \in B $$
    \end{itemize}

\begin{prop}\label{EyAtight}
     Let $E$ be a Banach space with normalized basis $\baseen$. Let $\nnormb_E$ be a set of blocks for $E$ and $\asetf{E}$ an admissible set for $E$. Suppose that $X=[x_n]_n$ is a  $\nnormb_E$-block subspace and $Y$ is a $\nnormb_X$-block subspace of $X$. Then, $Y$ is $\asetf{X}$-tight in $\basexn$  if, and only if, the set 
    \begin{equation}\label{eqEy}
        E_{Y,X}^\aset : = \{\seqq{u} \in 2^\omega: Y \embast [x_n: n \in \supp{}{u}]\}
    \end{equation}
    is meager in $2^\omega$.
\end{prop}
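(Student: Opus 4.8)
The statement to prove is a characterization: $Y$ is $\asetf{X}$-tight in $\basexn$ if and only if the set $E_{Y,X}^\aset = \{\seqq{u} \in 2^\omega : Y \embast [x_n : n \in \supp{}{u}]\}$ is meager. My plan is to reduce this to the combinatorial characterization of meagerness for upward-closed sets quoted just before the statement (Corollary 2.4 of \cite{TightBaire}). The first order of business is to verify that $E_{Y,X}^\aset$ is closed under supersets, so that this characterization applies. This should follow from the monotonicity built into the $\asetf{E}$-embedding notion: if $A \subseteq B$ (as subsets of $\ene$, i.e.\ $\supp{}{\seqq{u}} \subseteq \supp{}{\seqq{u'}}$), then $[x_n : n \in A]$ is a $\nnormb_X$-block subspace of $[x_n : n \in B]$, and the remark following Definition \ref{defiAembed} guarantees that an $\asetf{}$-embedding into a block subspace yields an $\asetf{}$-embedding into the larger space. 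So $\seqq{u} \in E_{Y,X}^\aset$ and $\supp{}{\seqq{u}} \subseteq \supp{}{\seqq{u'}}$ force $\seqq{u'} \in E_{Y,X}^\aset$.

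\textbf{The two directions.} With upward-closedness in hand, I would translate both sides through the quoted criterion. For the \emph{meager $\Rightarrow$ tight} direction: assume $E_{Y,X}^\aset$ is meager. By part $(i)$ of the quoted characterization there is a sequence $(I_i)_i$ of successive intervals such that $\seqq{u} \in E_{Y,X}^\aset$ implies $\{n : \supp{}{\seqq{u}} \cap I_n = \emptyset\}$ is finite. I claim this same interval sequence witnesses $\asetf{X}$-tightness of $Y$. Indeed, fix any $A \in \eneinf$ and consider the set $B = \ene \setminus \cup_{i \in A} I_i$; let $\seqq{u}_B$ be its characteristic function. Then $\{n : \supp{}{\seqq{u}_B} \cap I_n = \emptyset\} \supseteq A$ is infinite, so by the contrapositive of the implication $\seqq{u}_B \notin E_{Y,X}^\aset$, which is exactly $Y \nembast [x_n : n \in B] = [x_n : n \notin \cup_{i \in A} I_i]$. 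Hence $Y$ is $\asetf{X}$-tight in $\basexn$.

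\textbf{The converse.} For \emph{tight $\Rightarrow$ meager}: suppose $Y$ is $\asetf{X}$-tight, witnessed by successive intervals $(I_i)_i$, so that for every $A \in \eneinf$ we have $Y \nembast [x_n : n \notin \cup_{i \in A} I_i]$. I want to produce (possibly after passing to a coarser sequence of intervals) a sequence witnessing meagerness via part $(i)$. The key observation is the contrapositive reading: if $\seqq{u} \in E_{Y,X}^\aset$, then $\supp{}{\seqq{u}}$ cannot be contained in any set of the form $\ene \setminus \cup_{i \in A} I_i$ with $A$ infinite; equivalently, $\supp{}{\seqq{u}}$ must meet $I_i$ for all but finitely many $i$, since otherwise the set $A$ of indices $i$ with $\supp{}{\seqq{u}} \cap I_i = \emptyset$ would be infinite and we could embed $Y$ into $[x_n : n \notin \cup_{i \in A} I_i]$, contradicting tightness. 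This is precisely the condition in part $(i)$ of the characterization, so $E_{Y,X}^\aset$ is meager.

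\textbf{Main obstacle.} The delicate point is the converse direction, specifically making sure the intervals $(I_i)_i$ witnessing tightness can be used \emph{verbatim} to witness meagerness. The tightness hypothesis quantifies over infinite $A$ and asserts non-embeddability into the complement $[x_n : n \notin \cup_{i \in A} I_i]$; I must confirm that ``$\supp{}{\seqq{u}} \cap I_i = \emptyset$ for infinitely many $i$'' genuinely lets me extract an infinite $A$ with $\supp{}{\seqq{u}} \subseteq \ene \setminus \cup_{i \in A} I_i$, and then that $\seqq{u} \in E_{Y,X}^\aset$ would yield an embedding of $Y$ into this complement — here I use again the upward-closure/monotonicity so that an embedding into the subspace spanned by $\{x_n : n \in \supp{}{\seqq{u}}\}$ promotes to one into the larger complement space. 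Once this bookkeeping between ``missing infinitely many intervals'' and ``embedding into a complement indexed by infinite $A$'' is matched up cleanly, both implications close, and the whole argument is essentially a dictionary translation between the tightness definition and the Ferenczi–Godefroy meagerness criterion.
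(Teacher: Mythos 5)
Your proposal is correct and takes essentially the same route as the paper's proof: both directions are the same dictionary translation between the tightness intervals and the Ferenczi--Godefroy criterion, using the monotonicity of $\asetf{E}$-embeddings (the remark after Definition \ref{defiAembed}) to promote an embedding into $[x_n: n \in \supp{}{\seqq{u}}]$ to one into the larger complement $[x_n : n \notin \cup_{i \in A} I_i]$. Your explicit verification that $E_{Y,X}^\aset$ is closed under supersets, needed to invoke Corollary 2.4 of \cite{TightBaire}, is a hypothesis the paper leaves implicit, but otherwise the arguments coincide.
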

\begin{proof}
   Let $E$, $\nnormb_E$ and $\asetf{E}$ be as in the hypothesis.  Suppose that $X=[x_n]_n$ is a  $\nnormb_E$-block subspace and let $Y$ be a $\nnormb_X$-block subspace of $X$. If $Y$ is $\asetf{E}$-tight in $\basexn$, then there are intervals $I_0 < I_1< ...$ such that for any $A \in \eneinf$, 
        \begin{equation}\label{eqblocksubspace1}
            Y \nembast [x_n : n \notin \cup_{i \in A}I_i].    
        \end{equation}
        Let $\seqq{u} \in E_{Y, X}^\aset$ (clearly $\supp{}{\seqq{u}}\in \eneinf$) and suppose by contradiction that $A_\seqq{u} =\{i \in \ene: I_i \cap \supp{}{\seqq{u}} = \emptyset\}$ is infinite. We have
        $$\supp{}{\seqq{u}} \subseteq  \ene \setminus \bigcup_{i \in A_u}I_i. $$ 
        By the observation after Definition \ref{defiAembed}, we obtain
        $$Y \embast [x_n: n \in \supp{}{\seqq{u}}] \Rightarrow Y \embast [x_n: n \notin \cup_{i \in A_u}I_i], $$
        contradicting   Equation \eqref{eqblocksubspace1}. Therefore $A_u $ is finite and, by Corollary 2.4 in \cite{TightBaire}, $E_{Y}^\aset$ is meager in $2^\omega$. 
    
    For the opposite implication, suppose that $E_{Y}^\aset$ is meager in $2^\omega$. Then, by Corollary 2.4 in \cite{TightBaire}, there are subsets $I_0 < I_1< ...$ such that if $\seqq{u} \in E_{Y}^\aset$, then $ \{i \in \ene: I_i \cap \supp{}{\seqq{u}} = \emptyset\}$ is finite. If there is $A \in \eneinf$ such that $Y \embast [x_n : n \notin \cup_{i \in A}I_i]$, then take $v = \ene \setminus \cup_{i \in A}I_i$. Clearly $\Chi_v \in E_{Y}^\aset$ and $\{i \in \ene: I_i \cap v = \emptyset\}$ is infinite, which contradicts that $E_{Y}^\aset$ is a meager subset of $2^\omega$.
\end{proof}

The following lemma uses the same scheme as in \cite{TightBaire} to prove that the set $E_Y = \{u \subseteq \omega: Y \hookrightarrow [x_n: n \in u]\}$ is meager or comeager.

\begin{lemma}\label{lemamagro2}
    Let $E$ be a Banach space with normalized basis $\baseen$. Let $\nnormb_E$ be a set of blocks for $E$ and $\asetf{E}$ an admissible set for $E$. Suppose that $X=[x_n]_n$ is a  $\nnormb_E$-block subspace and $Y$ is a $\nnormb_X$-block subspace of $X$. Then $E_{Y,X}^\aset$ defined in Equation \eqref{eqEy} is either meager or comeager in $2^\omega$.
\end{lemma}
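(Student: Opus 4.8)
The plan is to apply the topological $0$–$1$ law (Theorem~\ref{01topologicallaw}) to the set $E_{Y,X}^\aset$ of \eqref{eqEy}, following the scheme of \cite{TightBaire}. For this I must exhibit a topologically transitive group $G$ of homeomorphisms of $2^\omega$ under which $E_{Y,X}^\aset$ is invariant, and separately check that $E_{Y,X}^\aset$ has the Baire property.

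First I would establish the Baire property by showing $E_{Y,X}^\aset$ is analytic. Unwinding Definition~\ref{defiAembed}, $\seqq{u}\in E_{Y,X}^\aset$ if and only if there exist $K\ge 1$ and $\baseun\in\asetf{[x_n:\,n\in\supp{}{\seqq{u}}]}$ with $\baseun\sim_K\baseyn$. The set of triples $(\seqq{u},K,\baseun)$ meeting these requirements is Borel in $2^\omega\times[1,\infty)\times(\nnormb_X)^\omega$: the constraint $\supp{X}{u_n}\subseteq\supp{}{\seqq{u}}$ together with membership of $\baseun$ in the closed set $\asetf{X}$ is a closed condition, and $K$-equivalence to $\baseyn$ is a countable intersection of open conditions. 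Hence $E_{Y,X}^\aset$ is the projection of a Borel set, so it is analytic and therefore has the Baire property.

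Next I would take $G$ to be the group of homeomorphisms of $2^\omega$ induced by finite symmetric differences $\seqq{u}\mapsto\seqq{u}\,\triangle\,F$ with $F\subseteq\ene$ finite. This $G$ is topologically transitive: given basic clopen sets $\nn_{\seqq{s},J}$ and $\nn_{\seqq{t},J'}$, the element attached to $F=\{n\in J\cap J':s_n\ne t_n\}$ carries a suitable point of the former into the latter. One half of the required invariance is immediate from monotonicity: if $\supp{}{\seqq{u}}\subseteq\supp{}{\seqq{v}}$ then $[x_n:n\in\supp{}{\seqq{u}}]$ is a subspace of $[x_n:n\in\supp{}{\seqq{v}}]$, so by the heredity $\asetf{Z}\subseteq\asetf{W}$ of Remark~\ref{asetfinclopen}$(iii)$ every witness for $\seqq{u}$ is a witness for $\seqq{v}$; thus enlarging $\supp{}{\seqq{u}}$ by a finite set cannot destroy membership.

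The hard part will be the opposite direction, that deleting a finite set $F\subseteq\supp{}{\seqq{u}}$ also preserves membership, and this is what I expect to be the main obstacle. Starting from a witness $\baseun\in\asetf{[x_n:n\in\supp{}{\seqq{u}}]}$ with $\baseun\sim_K\baseyn$, I would strip the $x_i$-components with $i\in F$ from the $u_n$ and then repair the resulting sequence into one supported in $\supp{}{\seqq{u}}\setminus F$ that still lies in the admissible set and is still equivalent to $\baseyn$; Proposition~\ref{cdem} supplies the equivalence that survives when only finitely many terms are altered, and the substitution and tail-completion axioms of Definition~\ref{admpro}$(c)$–$(d)$ (through Proposition~\ref{admblock}) are the tools for reinstating the repaired sequence inside $\asetf{[x_n:\,n\in\supp{}{\seqq{u}}\setminus F]}$. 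The genuine difficulty is that for an arbitrary admissible set $\asetf{E}$ the witness $\baseun$ need not be successively, nor even disjointly, supported, so a single coordinate of $F$ may appear in the supports of infinitely many $u_n$; controlling this is the analogue here of the ``push the finite part out to infinity'' device available for ordinary isomorphic embeddings, and it is precisely where the admissible-system structure has to be used with care. Once this invariance is secured, Theorem~\ref{01topologicallaw} applied to the topologically transitive group $G$ yields at once that $E_{Y,X}^\aset$ is meager or comeager in $2^\omega$, as claimed.
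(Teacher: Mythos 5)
There is a genuine gap, and it sits exactly at the point you flagged as ``the main obstacle'': invariance of $E_{Y,X}^\aset$ under deletion of a finite set from the support is not merely hard to prove, it is false in general, so the group of finite symmetric differences cannot be used in Theorem \ref{01topologicallaw}. Already in the classical setting ($\nnormb_E = \nnormbuno_E$ and $\asetf{E} = (\nnormbuno_E)^\omega$, where $\embast$ reduces to the usual isomorphic embedding), take $E$ a (complex) hereditarily indecomposable space with a basis, $X = Y = E$, $\seqq{u}$ with $\supp{}{\seqq{u}} = \ene$, and $F = \{0\}$. Then $\seqq{u} \in E_{Y,X}^\aset$ trivially, but $\ene \setminus F \notin E_{Y,X}^\aset$: any into-isomorphism $T \colon E \to [x_n : n \geq 1] \subseteq E$ would have the form $\lambda I + S$ with $S$ strictly singular; $\lambda = 0$ is impossible for an embedding, and $\lambda \neq 0$ makes $T$ Fredholm of index $0$, hence onto $E$, contradicting $T(E) \subseteq [x_n : n \geq 1]$. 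So the set is closed under finite enlargements (your easy half is correct) but not under finite deletions, and no amount of ``stripping and repairing'' witnesses can fix this, since the obstruction is already present for ordinary embeddings where witnesses can be moved freely.

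The paper's proof applies the $0$--$1$ law to a smaller group for which invariance does hold: the group $G_0'$ of finitely supported permutations of $\ene$, whose orbits are the classes of the relation $\seqq{u} \, E_0' \, \seqq{v}$ requiring that $u$ and $v$ agree above some $n$ \emph{and} satisfy $|u \cap [0,n-1]| = |v \cap [0,n-1]|$. The equal-cardinality clause is the whole point: it guarantees that the enumerations of $(x_n)_{n \in u}$ and $(x_n)_{n \in v}$ differ in only finitely many terms (instead of being shifted against each other along an infinite tail, as happens after a deletion), so Proposition \ref{cdem} yields a $K$-equivalence between these two $\nnormb_X$-block sequences, and the induced isomorphism $T$ sending the $k$-th element of one onto the $k$-th element of the other satisfies $T(\asetf{X_u}) = \asetf{X_v}$ by Proposition \ref{propadmpro}$(ii)$; witnesses then transfer in both directions, giving $G_0'$-invariance with no repair step at all. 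Since $G_0'$ is still topologically transitive on $2^\omega$, the $0$--$1$ law applies. Your Baire-property argument (analyticity via projection of a Borel set, using closedness of $\asetf{X}$ and of $K$-equivalence) coincides with the paper's and is fine; the defect is solely in the choice of group.
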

\begin{proof}
     As it is affirmed in Example 2.2 in \cite{TightBaire}, the relation $E_0' $ defined on $\partes(\omega)$ as follows
    $$u E_0' v \iff \exists n\geq 0 ((u \cap [n, \infty) = v \cap [n, \infty)) \; \& \;( |u \cap [0, n-1]|= |v \cap [0, n-1]|)) $$
    is an equivalence relation and it equivalence classes are the orbits of the group $G_0'$ of permutations of $\ene$ with finite support. Once we see $\partes(\omega)$ as the Cantor space, it is Polish and clearly $G_0'$ satisfies that for any $U$ and $V$ non-empty open subsets of $\partes(\omega)$, there is $g \in G$ such that $g(U) \cap V \neq \emptyset$. 
    We want to use the first topological 0-1 law (Theorem \ref{01topologicallaw}) to conclude that $E_{Y,X}^\aset$ is meager or comeager in $2^\omega$, or more specifically we have to prove:
    \begin{itemize}
        \item[$(i)$]  $E_{Y,X}^\aset$ has the Baire Property.
        \item[$(ii)$] $E_{Y,X}^\aset$ is $G_0'$-invariant. 
    \end{itemize}
    To prove $(i)$ we shall see that $E_{Y,X}^\aset$ is an analytic subset of $2^\omega$ (See Theorem 21.6 in \cite{Kechris}). Notice that we can write the set $E_{Y,X}^\aset$ as the projection on the first coordinate of the set $B:=\cup_{k \in \omega} B_k$, where for each $k \in \omega$
    \begin{eqnarray*}
        B_k := \{(u,\basewn) \in 2^\omega \times \nnomega{X}: (y_n)_n \sim_k \basewn \: \& \:\basewn \in [x_i: i \in u] \: \& \: \basewn \in \asetf{X} \}.
    \end{eqnarray*}
    Each $B_k$ is Borel in $ 2^\omega \times \nnomega{X}$ since the relation of two sequences being equivalent is closed and $\asetf{X}$ is a closed subset of $(\nnormb_X)^\omega$.

    In order to prove $(ii)$ we shall see that $E_{Y,X}^\aset$ is $E_0'$-saturated (this is sufficient because the orbits of the group $G_0'$ coincide with the equivalence classes of the relation $E_0'$), that is:
    \begin{equation*}
        E_{Y,X}^\aset = {E_{Y,X}^\aset}^{E_0'} :=\{v \subseteq \omega: \exists u \in E_{Y,X}^\aset (u E_0' v) \}.
    \end{equation*}
    Clearly, $ E_{Y,X}^\aset \subseteq {E_{Y,X}^\aset}^{E_0'} $. Take $v \in {E_{Y,X}^\aset}^{E_0'}$ and let $u \in E_{Y,X}^\aset$ be such that $u E_0' v$. Notice that there is $M$ such that $u$ and $v$ only differ on $M$ elements and $(x_n)_{n \in u}$ and $(x_n)_{n \in u}$ are $\nnormb_X$-block sequences. So, by Proposition \ref{cdem}, there is $K\geq 1$ such that  $(x_n)_{n \in u} \sim_K (x_n)_{n \in v} $. Let $T$ be such $K$-isomorphism from $X_u:= [x_n]_{n \in u}$ to $X_v:= [x_n]_{n \in v}$. By Proposition \ref{propadmpro}, part $(ii)$ we have $T[\asetf{X_u}] = \asetf{X_v}$. Therefore,
    \begin{eqnarray*}
        u \in E_{Y,X}^\aset &\Rightarrow& \exists (z_n)_n \in \asetf{X_u} (\baseyn \sim \basezn)\\
        &\Rightarrow& \baseyn \sim (T(z_n))_n \text{ and }  (T(z_n))_n \in \asetf{X_v}\\
        &\Rightarrow& v \in  E_{Y,X}^\aset.
    \end{eqnarray*}
\end{proof}


\begin{prop}\label{blocksubAtight2}
    Let $E$ be a Banach space with normalized basis $\baseen$. Let $\nnormb_E$ be a set of blocks for $E$ and $\asetf{E}$ an admissible set for $E$. Suppose that $X=[x_n]_n$ is a  $\nnormb_E$-block subspace, $Y$ is a $\nnormb_X$-block subspace of $X$ and $Z$ is a  $\nnormb_Y$-block subspace. If $Z$ is $\asetf{E}$-tight in $X$, then $Z$ is $\asetf{E}$-tight in $Y$.
\end{prop}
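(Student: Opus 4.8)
The plan is to pass to the Baire-category characterisation of tightness furnished by Proposition~\ref{EyAtight}. Since $Y$ is itself a $\nnormb_E$-block subspace and $Z$ is a $\nnormb_Y$-block subspace of $Y$, the hypothesis that $Z$ is $\asetf{E}$-tight in $X$ is equivalent, by Proposition~\ref{EyAtight}, to the meagerness of $E_{Z,X}^{\aset}=\{\seqq{u}\in 2^\omega: Z\embast [x_n:n\in \supp{}{\seqq{u}}]\}$ in $2^\omega$, while the desired conclusion is equivalent to the meagerness of $E_{Z,Y}^{\aset}=\{\seqq{v}\in 2^\omega: Z\embast [y_n:n\in \supp{}{\seqq{v}}]\}$. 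So the whole problem reduces to showing that meagerness of $E_{Z,X}^{\aset}$ forces meagerness of $E_{Z,Y}^{\aset}$, and for this I would produce explicit witnessing intervals from the intervals that witness meagerness of $E_{Z,X}^{\aset}$. Both sets are closed under supersets by the composition property noted after Definition~\ref{defiAembed}, so Corollary~2.4 of \cite{TightBaire} is available throughout.

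The key transfer step records how $Y$-supports relate to $X$-supports. Writing $s_n:=\supp{X}{y_n}$, each $y_n$ is an $\nnormb_X$-block, so for any $v\subseteq \ene$ the space $[y_n:n\in v]$ is an $\nnormb_{X_u}$-block subspace of $X_u:=[x_n:n\in u]$, where $u:=\bigcup_{n\in v}s_n$. Consequently, by the composition property, $Z\embast [y_n:n\in v]$ implies $Z\embast X_u$, that is, $v\in E_{Z,Y}^{\aset}\Rightarrow u\in E_{Z,X}^{\aset}$. Now let $(I_i)_i$ be successive intervals witnessing, via Corollary~2.4 of \cite{TightBaire}, the meagerness of $E_{Z,X}^{\aset}$, so that $\{i: u\cap I_i=\emptyset\}$ is finite whenever $u\in E_{Z,X}^{\aset}$. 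Setting $J_i:=\{n\in\ene: s_n\cap I_i\neq\emptyset\}$ and observing that $u\cap I_i=\emptyset$ holds exactly when $v\cap J_i=\emptyset$, the implication above yields that $\{i: v\cap J_i=\emptyset\}$ is finite for every $v\in E_{Z,Y}^{\aset}$.

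This is precisely the meagerness criterion of Corollary~2.4 of \cite{TightBaire}, save for one point, which is where the real work lies: that corollary requires the $(J_i)$ to be \emph{successive} intervals, whereas the sets $J_i$ defined above, although each is readily checked to be an interval of $\ene$ (using that the $s_n$ are successive and each $I_i$ is an interval), may overlap when a support $s_n$ straddles the gap between $I_i$ and $I_{i+1}$, and some $J_i$ may be empty. I would resolve both issues by thinning $(I_i)$, which is harmless because any subsequence of a family witnessing meagerness still witnesses it, since $\{k: u\cap I_{i_k}=\emptyset\}\subseteq\{i:u\cap I_i=\emptyset\}$. First, fixing some $v_0\in E_{Z,Y}^{\aset}$ (if this set is empty there is nothing to prove), finiteness of $\{i:v_0\cap J_i=\emptyset\}$ shows that only finitely many $J_i$ are empty, so after discarding them every remaining $I_i$ meets $\bigcup_n s_n$. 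Then I would choose the subsequence greedily: having chosen $I_{i_k}$, let $m$ be the least $y$-index with $\min s_m>\max I_{i_k}$, and pick a remaining interval $I_{i_{k+1}}$ with $\min I_{i_{k+1}}>\max s_m$; since $s_m$ then lies inside the gap $(\max I_{i_k},\min I_{i_{k+1}})$, a direct comparison of endpoints gives $\max J_{i_k}<\min J_{i_{k+1}}$, so the nonempty intervals $J_{i_k}$ are pairwise disjoint and successive.

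The main obstacle is exactly this last, combinatorial step: converting the naturally defined sets $J_i$ into genuinely successive intervals. Finiteness of each support $s_n$ is what makes the greedy thinning possible, and the remark that subsequences preserve the meagerness witness is what makes it cost-free. Once the successive intervals $(J_{i_k})_k$ are in hand, Corollary~2.4 of \cite{TightBaire} gives that $E_{Z,Y}^{\aset}$ is meager, and a final appeal to Proposition~\ref{EyAtight} (applied to the pair $Y,Z$) yields that $Z$ is $\asetf{E}$-tight in $Y$, as required.
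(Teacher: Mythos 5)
Your proof is correct, but it takes a genuinely different route from the paper's. Both arguments start the same way: reduce via Proposition \ref{EyAtight} to the meagerness of the sets $E_{Z,X}^{\aset}$ and $E_{Z,Y}^{\aset}$, and use the same transfer step, namely that $v \in E_{Z,Y}^{\aset}$ implies $u := \bigcup_{n\in v}\supp{X}{y_n} \in E_{Z,X}^{\aset}$ (the observation after Definition \ref{defiAembed}). From there, however, the paper does not construct witnessing intervals for $E_{Z,Y}^{\aset}$ at all: it invokes Lemma \ref{lemamagro2} --- the topological zero-one law argument, which rests on the analyticity of $E_{Z,Y}^{\aset}$ and its invariance under finitely supported permutations --- to conclude that $E_{Z,Y}^{\aset}$ is either meager or comeager, and then rules out the comeager alternative by a contradiction: taking intervals $(I_i)_i$ from the meagerness criterion $(i)$ of Corollary 2.4 of \cite{TightBaire} for $E_{Z,X}^{\aset}$ and intervals $(J_i)_i$ from the comeagerness criterion $(ii)$ for $E_{Z,Y}^{\aset}$, it chooses a single infinite set $A$ so that $v=\bigcup_{n\in A}J_n$ belongs to $E_{Z,Y}^{\aset}$ while the associated $u$ misses infinitely many $I_k$. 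What the paper's route buys is economy: with Lemma \ref{lemamagro2} in hand one only needs to exhibit one set $A$, and no bookkeeping about successiveness or nonemptiness of intervals arises. What your route buys is that it bypasses Lemma \ref{lemamagro2} entirely --- no analyticity, Baire property, or zero-one law --- replacing it by a purely combinatorial conversion of the witness $(I_i)_i$ into an explicit witness $(J_{i_k})_k$ for $E_{Z,Y}^{\aset}$; your greedy thinning, which yields $\max J_{i_k} < m < \min J_{i_{k+1}}$ using the successiveness of the supports $\supp{X}{y_n}$, is precisely the extra work that the zero-one law lets the paper avoid, and your handling of the degenerate cases (empty $E_{Z,Y}^{\aset}$, finitely many empty $J_i$, closure under supersets of both sets) is sound. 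As a bonus, your argument is constructive: it exhibits the tightness intervals for $Z$ in $Y$ explicitly in terms of those for $Z$ in $X$, rather than deducing their existence abstractly.
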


\begin{proof}
     Let $E$, $\nnormb_E$ and $\asetf{E}$ be as in the hypothesis. Suppose that $X = [x_n]_n$ is a  $\nnormb_E$-block subspace, $Y$ is a $\nnormb_X$-block subspace of $X$ and $Z$ is a  $\nnormb_Y$-block subspace. 
    Let us denote by 
    $$E_{Z,X}^\aset := \{u \subseteq \omega :  Z \embast [x_n: n \in u]\}$$
    and
    $$E_{Z,Y}^\aset := \{u \subseteq \omega :  Z \embast [y_n: n \in u]\}.$$
    By hypothesis, we know that $E_{Z,X}^\aset$ is meager in $\partes(\omega)$ after the identification of $\partes(\omega)$ with $2^\omega$. Using  Lemma \ref{lemamagro2}, $E_{Z,Y}^\aset$ is meager or comeager. If it were meager, by Proposition \ref{EyAtight} the demonstration ends. Suppose that $E_{Z,Y}^\aset$ is comeager in $\partes(\omega)$. By Corollary 2.4 in \cite{TightBaire}, there are sequences of successive intervals $(I_i)_i$ and $(J_i)_i$ such that 
    \begin{equation}\label{eqmagro1}
        u \in E_{Z,X}^\aset \Rightarrow \{n \in \omega: u \cap I_n = \emptyset\} \text{ is finite}
    \end{equation}
    and
    \begin{equation}\label{eqcomagro1}
        \{n \in \omega:  J_n\subseteq v\} \text{ is infinite} \Rightarrow v \in E_{Z,Y}^\aset.
    \end{equation}
    Let $A \in \eneinf$ be such that 
    $$\left\{k \in \ene: (\bigcup_{n \in A} \bigcup_{i \in J_n} \supp{X}{y_i}) \cap I_k = \emptyset\right\}$$
    is infinite. Such $A$ exists because each $I_i$ and $J_i$ are finite and each $y_i$ is finitely supported. Let $v =  \bigcup_{n \in A} J_n$, then by  Equation \eqref{eqcomagro1}, we have $v \in E_{Z,Y}^\aset$. If $u = \bigcup_{k \in v} \supp{X}{y_k}$, then 
    $$Z\embast [y_n : n \in v] \Rightarrow Z\embast [x_n : n \in u].$$
    
    The last implication follows from the observation after Definition \ref{defiAembed}. Therefore, $u \in E_{Z,X}^\aset$ but it is disjoint of infinitely many intervals $I_k$'s, contradicting Equation \eqref{eqmagro1}.
\end{proof}

\begin{coro}\label{blocksubAtight}
     Let $E$ be a Banach space with normalized basis $\baseen$. Let $\nnormb_E$ be a set of blocks for $E$ and $\asetf{E}$ an admissible set for $E$. Suppose that $X=[x_n]_n$ is a $\asetf{E}$-tight $\nnormb_E$-block subspace. Then, any $\nnormb_E$-block sequence  $(y_n)_n$ of $(x_n)_n$ is an $\aset$-tight basis. 
\end{coro}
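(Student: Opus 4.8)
The plan is to reduce the whole statement to Proposition \ref{blocksubAtight2}, of which this corollary is essentially a one-step iteration. First I would unwind the definitions. By Definition \ref{defiAtight}, saying that $X=[x_n]_n$ is $\asetf{E}$-tight means precisely that every $\nnormb_X$-block subspace of $X$ is $\asetf{E}$-tight in the basis $\basexn$; and to prove that a $\nnormb_E$-block sequence $\baseyn$ of $\basexn$ generates an $\asetf{E}$-tight basis, I must verify that every $\nnormb_Y$-block subspace $Z$ of $Y=[y_n]_n$ is $\asetf{E}$-tight in $\baseyn$.

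So let $Z$ be an arbitrary $\nnormb_Y$-block subspace of $Y$. The key preliminary observation is that $Z$ is simultaneously a $\nnormb_X$-block subspace of $X$: indeed, $Y$ is a $\nnormb_X$-block subspace of $X$, and a block subspace of a block subspace is again a block subspace of the original space. This follows from the heredity of sets of blocks recorded in Proposition \ref{subsetofblocks}, in particular item $(v)$, which allows one to re-express a $\nnormb_Y$-block of the $y_i$ as a $\nnormb_X$-block of the $x_i$. This transitivity is the only genuinely verifiable, if routine, point.

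With this in hand I would finish in two short steps. Since $X$ is $\asetf{E}$-tight and $Z$ is a $\nnormb_X$-block subspace of $X$, the definition of a tight basis gives directly that $Z$ is $\asetf{E}$-tight in $\basexn$, that is, $\asetf{E}$-tight in $X$. Now I apply Proposition \ref{blocksubAtight2} to the triple $X \supseteq Y \supseteq Z$: from $Z$ being $\asetf{E}$-tight in $X$ we conclude that $Z$ is $\asetf{E}$-tight in $Y$, i.e.\ in the basis $\baseyn$. As $Z$ was an arbitrary $\nnormb_Y$-block subspace of $Y$, every such subspace is $\asetf{E}$-tight in $\baseyn$, so by Definition \ref{defiAtight} the basis $\baseyn$ is $\asetf{E}$-tight, which is the desired conclusion.

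I do not expect any serious obstacle here: the substantive content is already packaged inside Proposition \ref{blocksubAtight2}, whose proof rested on the Baire-category characterization of tightness (Proposition \ref{EyAtight}) together with the meager/comeager dichotomy of Lemma \ref{lemamagro2}. The only thing that requires an explicit line of justification in the present argument is the transitivity of the $\nnormb$-block-subspace relation, which is what lets me feed $Z$ into the tightness hypothesis on $X$.
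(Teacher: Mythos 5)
Your proof is correct and is essentially identical to the paper's own argument: the paper likewise takes an arbitrary $\nnormb_Y$-block subspace $Z$ of $Y$, notes that $Z$ is a $\nnormb_X$-block subspace of $X$ and hence $\asetf{E}$-tight in $X$, and then applies Proposition \ref{blocksubAtight2} to conclude that $Z$ is $\asetf{E}$-tight in $Y$. The only difference is that you spell out the transitivity of the block-subspace relation (via Proposition \ref{subsetofblocks}) and the unwinding of Definition \ref{defiAtight}, which the paper leaves implicit.
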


\begin{proof}
Let $Z$ be a $\nnormb_Y$-block subspace of $Y$. Since $Z$ is a $\nnormb_X$-block subspace of $X$ and $Z$ is $\asetf{E}$-tight in $X$, by Proposition \ref{blocksubAtight2}, $Z$ is $\asetf{E}$-tight in $Y$. 
\end{proof}

\begin{teor}\label{AtightnoAminimal}
    Let $E$ be a Banach space with normalized basis $\baseen$. Let $\nnormb_E$ be a set of blocks for $E$ and $\asetf{E}$ an admissible set for $E$. If $X=[x_n]_n$ is a $\asetf{E}$-tight $\nnormb_E$-block subspace, then it contains no $\asetf{E}$-minimal $\nnormb_X$-block subspaces.
\end{teor}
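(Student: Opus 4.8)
The plan is to argue by contradiction, playing the meagerness coming from tightness against a comeagerness coming from minimality, after relativizing the whole admissible system to the putative minimal subspace. Suppose, towards a contradiction, that $X$ contains an $\asetf{E}$-minimal $\nnormb_X$-block subspace $W=[w_n]_n$. The guiding observation is that all the relevant objects should be examined \emph{inside} $W$: the minimality of $W$ speaks about $\nnormb_W$-block subspaces of $W$, so it is these block subspaces, rather than those of $X$, that one must feed into the Baire-category characterization of tightness. As explained in the discussion following Definition~\ref{densa}, the pair $(\nnormb_W,\asetf{W})$ behaves as an admissible subsystem relative to $W$, and this is what legitimizes applying the earlier results with $W$ in the role of the ambient block subspace.

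For the tightness side, I would first note that $(w_n)_n$ is a $\nnormb_E$-block sequence of $(x_n)_n$; since $X$ is $\asetf{E}$-tight, Corollary~\ref{blocksubAtight} gives that $(w_n)_n$ is itself an $\aset$-tight basis. In particular, taking $W$ as a $\nnormb_W$-block subspace of itself in Definition~\ref{defiAtight}, the space $W$ is $\asetf{E}$-tight in the basis $(w_n)_n$. Applying Proposition~\ref{EyAtight} to the admissible subsystem $(\nnormb_W,\asetf{W})$, with both the ambient space and the tight space taken equal to $W$, we conclude that
\begin{equation*}
    E_{W,W}^\aset = \{\seqq{u} \in 2^\omega : W \embast [w_n : n \in \supp{}{\seqq{u}}]\}
\end{equation*}
is meager in $2^\omega$.

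For the minimality side, I would use that for every infinite $\seqq{u} \subseteq \omega$ the subspace $[w_n : n \in \supp{}{\seqq{u}}]$ is a $\nnormb_W$-block subspace of $W$, being spanned by the subsequence $(w_n)_{n \in \supp{}{\seqq{u}}}$, which is a $\nnormb_W$-block sequence. Hence the $\asetf{E}$-minimality of $W$ yields $W \embast [w_n : n \in \supp{}{\seqq{u}}]$, that is $\seqq{u} \in E_{W,W}^\aset$, for every infinite $\seqq{u}$. Thus $E_{W,W}^\aset$ contains every infinite subset of $\omega$, and its complement, consisting solely of finite sets, is countable and therefore meager; so $E_{W,W}^\aset$ is comeager in $2^\omega$. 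Since no subset of the Baire space $2^\omega$ can be simultaneously meager and comeager, we reach the desired contradiction.

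The argument is short once the correct viewpoint is fixed, and the step I would treat most carefully is the relativization: one must check that Corollary~\ref{blocksubAtight} and Proposition~\ref{EyAtight} genuinely apply with $W$ in place of $X$, which rests on $(\nnormb_W,\asetf{W})$ being an admissible subsystem. A secondary point worth flagging is the choice to test tightness against the space $W$ itself, using $E_{W,W}^\aset$ rather than $E_{Z,W}^\aset$ for a proper block subspace $Z$: this makes the minimality input apply directly and sidesteps the need for any transitivity property of $\embast$, which would otherwise have to be established separately.
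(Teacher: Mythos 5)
Your proof is correct, and it follows the same broad strategy as the paper: derive a contradiction by exhibiting a single set that is meager (from tightness, via Proposition~\ref{EyAtight} together with the heredity results Proposition~\ref{blocksubAtight2} and Corollary~\ref{blocksubAtight}) and simultaneously comeager (from minimality). The genuine difference is the choice of test space. The paper fixes an arbitrary $\nnormb_Y$-block subspace $Z=[z_n]_n$ of the putative minimal subspace $Y$ and works with $E_{Z,Y}^\aset$; to show that this set contains every infinite subset, it must transfer the embedding $Y \embast [y_n : n \in v]$ supplied by minimality into an embedding of $Z$, and this requires the composition step $\basewn := \basezn \ast_Y \baseun$ justified by Proposition~\ref{equivc}$(iii)$. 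You instead take the test space to be the minimal subspace $W$ itself, working with $E_{W,W}^\aset$: since $W$ is a $\nnormb_W$-block subspace of itself, the tightness side still applies (Corollary~\ref{blocksubAtight}, then Proposition~\ref{EyAtight} with both roles played by $W$), while minimality now yields $v \in E_{W,W}^\aset$ for every infinite $v$ directly, with no composition or $\ast_Y$ argument at all — your choice is exactly the degenerate case $Z=W$ in which the paper's transfer step trivializes. This buys a shorter and cleaner argument; what it forgoes is only the slightly more general fact implicit in the paper's proof, namely that $E_{Z,Y}^\aset$ is comeager for \emph{every} block subspace $Z$ of a minimal $Y$, which is not needed for the contradiction. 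One small remark: the relativization point you flag is actually moot, since Proposition~\ref{EyAtight} and Corollary~\ref{blocksubAtight} are stated for an arbitrary $\nnormb_E$-block subspace in the role of the ambient space and require only a set of blocks and an admissible set (not the full admissible-system condition of Definition~\ref{densa}), so they apply to $W$ verbatim.
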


\begin{proof}
    Let $E$, $\nnormb_E$, $\asetf{E}$ and $X$ be as in the hypothesis. By contradiction, suppose $Y=[y_n]_n$ an $\asetf{E}$-minimal $\nnormb_X$-block subspace of $X$. Let $Z= [z_n]_n$ be a $\nnormb_Y$-block subspace of $Y$, so $Z$ is $\asetf{E}$-tight in $X$. By Proposition \ref{blocksubAtight2}, $Z$ is $\asetf{E}$-tight in $Y$, then 
    $$E_{Z,Y}^\aset = \{u \subseteq \omega : Z \embast [y_n: n \in u]\}$$ 
    must be meager in $\partes(\omega)$. 
    
    We shall see that the set $E_{Z,Y}^\aset$ coincides with the subset of all the characteristic functions over infinite subsets of $\ene$, which is comeager what leads us to a contradiction. Let us prove this: suppose $v \subseteq \omega$  infinite, then by the $\asetf{E}$-minimality of $Y$ 
    $$Y \embast [y_n : n \in v],$$
    so, there is $\baseun \in \asetf{E} \cap [y_n : n \in v]$ such that $\baseyn \sim \baseun$. 
    
   We know that  $\baseyn, \baseun, \basezn \in \asetf{Y}$ and by $iii)$ in Proposition \ref{equivc} we have
    $$\basezn \ast_Y \baseyn = \basezn \in \asetf{Y} \Rightarrow \basewn := \basezn \ast_Y \baseun \in \asetf{Y}. $$
    Then, $\basewn$ is a $\nnormb_Y$-block sequence of the basic sequence $\baseun$ (it is not necessarily  a block sequence of $X$ because $\baseun$ is not necessarily a block sequence). Also, each $w_n$ has the same scalars in its expansion than $z_n$. Since $\baseun \sim \baseyn$, we have that $\basezn \sim \basewn$ and also we already know that $\basewn \in \asetf{E} \cap [y_n : n \in v]$. So, $Z \embast [y_n: n \in v]$, which means that $v \in E_{Z,Y}^\aset$. We just proved that $\eneinf$ is contained in $E_{Z,Y}^\aset$, thus they are the same set.
\end{proof}

From Definition \ref{defiYtightenX} we obtain the following observation.
\begin{prop}\label{tighteqAtight}
    Let $E$ be a Banach space with normalized basis $\baseen$. Consider $\nnormbuno_E$ as the set of blocks for $E$ and set $\asetf{E} = (\nnormbuno_E)^\omega $. A $\nnormbuno_E$-block basis $(x_n)_n$ is a $\asetf{X}$-tight if, and only if, $\basexn$ is tight (in the usual sense). 
\end{prop}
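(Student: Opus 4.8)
The plan is to reduce the statement to the identification of $\asetf{E}$-embeddings with ordinary embeddings recorded in the observation following Proposition \ref{admissibleblockDD}: since here the set of blocks is $\nnormbuno_E$ and $\asetf{E}=(\nnormbuno_E)^\omega$, for every Banach space $Y$ and every $u\subseteq\ene$ we have $Y\embast[x_n:n\in u]\iff Y\hookrightarrow[x_n:n\in u]$, because $[x_n:n\in u]$ is itself a $\nnormbuno_E$-block subspace. Consequently, for a fixed $Y$, Definition \ref{defiYtightenX} asserts exactly that there are successive intervals $(I_i)_i$ with $Y\not\hookrightarrow[x_n:n\notin\cup_{i\in A}I_i]$ for all $A\in\eneinf$; that is, ``$Y$ is $\asetf{E}$-tight in $\basexn$'' coincides verbatim with ``$Y$ is tight in $\basexn$'' in the usual sense. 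Granting this identification, the forward implication is immediate: if $\basexn$ is tight in the usual sense then every Banach space is tight in $\basexn$, in particular every $\nnormb_X$-block subspace $Y$ of $X$ is tight, hence $\asetf{E}$-tight in $\basexn$, so $\basexn$ is $\asetf{E}$-tight by Definition \ref{defiAtight}.

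The content lies in the converse, whose main obstacle is the quantifier gap: Definition \ref{defiAtight} only ranges over $\nnormb_X$-block subspaces, whereas the usual notion quantifies over \emph{all} Banach spaces. I would argue by contraposition. Suppose $Y$ is a Banach space that is not tight in $\basexn$; unwinding the definition, for every choice of successive intervals $(I_i)_i$ there is $A\in\eneinf$ with $Y\hookrightarrow[x_n:n\notin\cup_{i\in A}I_i]$. Taking $(I_i)_i=(\{i\})_i$ already yields $Y\hookrightarrow X$, so we may fix a subspace $Z\leq X$ isomorphic to $Y$. By a standard gliding-hump argument, $Z$ contains, up to arbitrarily small perturbation, a block sequence of $\basexn$, and by the density of the $\mathbf{F}_E$-linear combinations we may take this sequence to consist of $\nnormbuno_X$-blocks; this produces a $\nnormbuno_X$-block subspace $W$ with $W\hookrightarrow Z\cong Y$.

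The key point is now purely formal: since $W\hookrightarrow Y$, any embedding of $Y$ into some $[x_n:n\notin\cup_{i\in A}I_i]$ composes with $W\hookrightarrow Y$ to give an embedding of $W$ into the same subspace. Hence, for every $(I_i)_i$ there is $A\in\eneinf$ with $W\hookrightarrow[x_n:n\notin\cup_{i\in A}I_i]$, which is precisely the failure of tightness for the $\nnormb_X$-block subspace $W$. Since $\basexn$ is assumed $\asetf{E}$-tight, $W$ is $\asetf{E}$-tight in $\basexn$, hence tight by the identification of the first paragraph, a contradiction. Therefore no Banach space fails to be tight in $\basexn$, i.e.\ $\basexn$ is tight in the usual sense.

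I would present this elementary composition argument as the primary route, since it sidesteps any Baire-category machinery for the (possibly basis-free) space $Y$. Conceptually, the same conclusion can be read off from the inclusion $E_{Y,X}^\aset\subseteq E_{W,X}^\aset$ forced by $W\hookrightarrow Y$, which transfers comeagerness from $Y$ to $W$ and combines with Lemma \ref{lemamagro2} and Proposition \ref{EyAtight} applied to the block subspace $W$; but the composition of embeddings is cleaner. The only delicate ingredient is the gliding-hump extraction of a genuine $\nnormbuno_X$-block subspace from $Z$, which is routine via small perturbations and the density of $\mathbf{F}_E$-combinations.
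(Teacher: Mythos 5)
Your proof is correct. Note, however, that the paper offers essentially no argument for this proposition: it is presented as an immediate observation from Definition \ref{defiYtightenX}, resting on the earlier remark (Subsection \ref{blocksbeingeverything}) that when $\asetf{E}=(\nnormbuno_E)^\omega$ an $\asetf{E}$-embedding into a $\nnormbuno_E$-block subspace coincides, after perturbation, with an ordinary isomorphic embedding. Your first paragraph reproduces exactly this intended identification, so to that extent you follow the paper's route; but you go further and close a genuine quantifier gap that the paper silently glosses over: Definition \ref{defiAtight} quantifies only over $\nnormb_X$-block subspaces $Y$ of $X$, whereas usual tightness quantifies over \emph{all} Banach spaces. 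Your contrapositive argument --- if some Banach space $Y$ fails tightness in $\basexn$, then $Y\hookrightarrow X$, a gliding-hump plus density-of-$\mathbf{F}_E$-combinations extraction yields a $\nnormbuno_X$-block subspace $W$ with $W\hookrightarrow Y$, and composition of embeddings transfers the failure of tightness from $Y$ to $W$, contradicting $\asetf{E}$-tightness --- is precisely the standard reduction (going back to Ferenczi--Rosendal) by which tightness need only be verified on block subspaces; the paper invokes this folklore implicitly. So your write-up buys completeness where the paper buys brevity, and all the individual steps (the perturbation identification, the block-subspace extraction, the composition) are sound.
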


In particular:

\begin{coro}[Proposition 3.3 in \cite{FerencziRosendal1}]
If $E$ is a Banach space with normalized tight basis $\baseen$, then $E$ has no minimal subspaces.
\end{coro}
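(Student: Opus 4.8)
The plan is to specialize the general machinery developed above to the set of blocks $\nnormbuno_E$ together with the admissible set $\asetf{E} := (\nnormbuno_E)^\omega$, and then to unwind the three translations that connect this particular instance with the classical notions of tightness and minimality.

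First I would record that $(\nnormbuno_E, (\nnormbuno_E)^\omega)$ is an admissible system of blocks for $E$, which is exactly Corollary \ref{admisiDD3}. Next, since the basis $\baseen$ is tight in the usual sense, Proposition \ref{tighteqAtight} yields that $\baseen$ is an $\asetf{E}$-tight basis; thus $E$, viewed as a $\nnormbuno_E$-block subspace of itself (its basis is a normalized $\nnormbuno_E$-block sequence), is $\asetf{E}$-tight. Applying Theorem \ref{AtightnoAminimal} with $X = E$, so that $\nnormb_X = \nnormbuno_E$, I conclude that $E$ contains no $\asetf{E}$-minimal $\nnormbuno_E$-block subspaces. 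Finally, by part $(vii)$ of Proposition \ref{typesofminimality}, $(\nnormbuno_E)^\omega$-minimality is precisely ordinary minimality, so $E$ contains no minimal $\nnormbuno_E$-block subspace.

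It remains to pass from block subspaces to arbitrary subspaces, and this is the only step that is not a direct citation. Suppose, for contradiction, that $E$ had a minimal subspace $M$. I would first check that minimality is hereditary and an isomorphic invariant: if $Z \leq M$ and $W \leq Z$, then $W \leq M$, so $M \hookrightarrow W$ by minimality of $M$, whence $Z \hookrightarrow M \hookrightarrow W$; since $W$ was arbitrary, $Z$ is minimal, and transporting the same argument through an isomorphism shows that minimality is preserved under isomorphisms. Then a standard gliding-hump and small-perturbation argument produces inside $M$ a normalized basic sequence equivalent to a block sequence of $\baseen$, hence a subspace of $M$ isomorphic to a genuine $\nnormbuno_E$-block subspace $Y$ of $E$. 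By heredity and isomorphic invariance, $Y$ is minimal, contradicting the previous paragraph. Therefore $E$ has no minimal subspaces.

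I expect the main (and essentially only) obstacle to be this last reduction: one must ensure that minimality descends to the extracted block subspace, which relies both on heredity and on the isomorphic invariance of minimality, combined with the standard extraction of a block subspace out of an arbitrary infinite-dimensional subspace. Everything preceding it is a straightforward citation chain through Corollary \ref{admisiDD3}, Proposition \ref{tighteqAtight}, Theorem \ref{AtightnoAminimal}, and Proposition \ref{typesofminimality}.
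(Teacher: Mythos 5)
Your proposal is correct and follows essentially the same route as the paper: the corollary is stated there as an immediate consequence (``In particular'') of Proposition \ref{tighteqAtight}, Theorem \ref{AtightnoAminimal} and Proposition \ref{typesofminimality} $(vii)$, applied to the admissible system $(\nnormbuno_E, (\nnormbuno_E)^\omega)$ of Corollary \ref{admisiDD3}, which is exactly your citation chain. The only difference is that you spell out the standard passage from ``no minimal $\nnormbuno_E$-block subspaces'' to ``no minimal subspaces at all'' (heredity and isomorphic invariance of minimality plus the gliding-hump extraction of a block subspace), a step the paper leaves implicit.
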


As an exercise, the reader may write the other forms of tightness associated to the set $\nnormb_E$ of blocks 
$\nnormbasis_E$, $\nnormbasis^{\pm}_E$ or $\nnormbuno_E$ respectively, and to the choices $bb(\nnormb_E), db(\nnormb_E)$ and $\nnormb_E^\omega$.
All these forms of tightness will be explicited in the final chapter when we list all dichotomies between minimality and tightness associated to each case.

\section{Games for tightness}\label{sec:relagametight}
In this section the objective is to represent forms of tightness in terms of certain infinite games, as in \cite{FerencziRosendal1}.
Let $\basexn$ and $\baseyn$ be two sequences of successive and finitely supported vectors of $E$. Let $Y= [y_n]_n$ and $X=[x_n]_n$. We write $Y \leq^\ast X$  if there is some $N\geq 1$ such that $y_n \in X $, for every $n \geq N$.  First we need two preliminary lemmas. Lemma \ref{lemma2.2} is a modification of Lemma 2.2 in \cite{FerencziRosendal1} and Lemma \ref{lema3.14} is a modification of Lemma 2.1 in \cite{procpelczar}. In both original cases the result was proved for usual block subspaces. We extend those results to $\nnormb_E$-block subspaces

\begin{prop}\label{lemma2.2}
    Let $E$ be a Banach space and $\nnormb_E$ be a set of blocks for $E$.  Suppose that $X=[x_{n}^{0}]_n$ is a $\nnormb_E$-block subspace and $[x_{n}^{1}]_n \geq [x_{n}^{2}]_n \geq ... $ is a decreasing sequence of $\nnormb_X$-block subspaces. Then, there exists a $\nnormb_X$-block sequence $\baseyn$ such that $\baseyn$ is $\sqrt{K}$-equivalent with a $\nnormb_{X}$-block sequence of $[x_{n}^{K}]_n$, for every $K\geq 1$.
\end{prop}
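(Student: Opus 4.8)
The plan is to build the desired sequence $\baseyn$ by a single diagonalisation, placing each $y_n$ inside one of the subspaces $X^k := [x_i^k]_i$ according to a slowly increasing schedule, and then to calibrate the growth of that schedule against the perturbation constant supplied by Proposition \ref{cdem} so that precisely the factor $\sqrt K$ emerges. Concretely, I would fix the basis constant $C$ of $X$ and let $c(C,M)$ denote the constant of Proposition \ref{cdem} for normalized $\nnormb_X$-block bases differing in $M$ terms; since $M \mapsto c(C,M)$ is a fixed nondecreasing function with $c(C,0)=1$, I can set $M_K := \max\{M \in \ene : c(C,M) \le \sqrt K\}$, which is nondecreasing and tends to $\infty$. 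I then prescribe a nondecreasing schedule $\phi \colon \ene \to \ene$ with $\phi \ge 1$, $\phi(n) \to \infty$, and such that for every $K$ the number $m_K := |\{n : \phi(n) < K\}|$ of ``low'' terms satisfies $m_K \le M_K$; letting the value $j$ be taken $M_{j+1}-M_j$ times realises such a $\phi$ and forces $m_1 = 0$.

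Next I would carry out the inductive construction of $\baseyn$. Having chosen successive normalized $\nnormb_X$-blocks $y_0 < \dots < y_{n-1}$ with $y_i \in X^{\phi(i)}$, I pick a normalized $\nnormb_X$-block $y_n \in X^{\phi(n)}$ with $\supp{X}{y_{n-1}} < \supp{X}{y_n}$; this is possible because $X^{\phi(n)}$ is an infinite-dimensional $\nnormb_X$-block subspace and therefore contains normalized blocks supported arbitrarily far out, which are $\nnormb_X$-blocks by the axioms for a set of blocks. Crucially, I would also require $\min \supp{X}{y_n}$ to be large enough that, for every $K$ in the finite range $\{\phi(n-1)+1, \dots, \phi(n)\}$, the subspace $X^K$ has at least $n$ of its basis vectors $x_i^K$ supported below $\min \supp{X}{y_n}$; this can always be arranged by pushing $y_n$ further out, since the supports of the $x_i^K$ tend to infinity.

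With $\baseyn$ in hand I would, for each fixed $K \ge 1$, define a competing sequence $(z_n^K)_n$: set $z_n^K = y_n$ whenever $\phi(n) \ge K$, so that these terms already lie in $X^K$ and are successive; and replace the $m_K$ low terms $y_0, \dots, y_{m_K-1}$ by $m_K$ successive normalized basis vectors of $X^K$ chosen below $\min \supp{X}{y_{m_K}}$, which exist by the room reserved in the construction (note $m_K$ is exactly the index of the first retained term). Then $(z_n^K)_n$ is a normalized $\nnormb_X$-block sequence of $X^K$ differing from $\baseyn$ in exactly $m_K$ terms, so Proposition \ref{cdem} gives $\baseyn \sim_{c(C,m_K)} (z_n^K)_n$, and by $m_K \le M_K$ we obtain $c(C,m_K) \le c(C,M_K) \le \sqrt K$. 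This yields the required $\sqrt K$-equivalence for every $K$, with $K=1$ being the degenerate case $m_1 = 0$, $(z_n^1)_n = \baseyn$.

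The main obstacle is the interaction between the two demands on the construction: on the one hand $\phi$ must grow slowly enough that only $m_K \le M_K$ terms are ``wrong'' for $X^K$, which is what forces the sharp constant $\sqrt K$; on the other hand each $y_n$ must sit high enough in its subspace that every relevant $X^K$ still has room for $m_K$ replacement blocks below it. Both are satisfiable at once because the room requirement is merely a lower bound on $\min \supp{X}{y_n}$ that can always be met, while the schedule requirement is purely combinatorial and independent of the supports; the only genuinely quantitative input is the calibration $M_K = \max\{M : c(C,M) \le \sqrt K\}$, which converts the abstract perturbation estimate of Proposition \ref{cdem} into the stated bound.
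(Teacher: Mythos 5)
Your proposal is correct and follows essentially the same route as the paper: both calibrate $M_K$ as the largest $M$ with $c(M,C)\le\sqrt{K}$ from Proposition \ref{cdem}, build a diagonal $\nnormb_X$-block sequence whose tail from some index at most $M_K$ lies in $[x_n^K]_n$, and then compare with the sequence obtained by substituting the initial terms with basis vectors of $[x_n^K]_n$. Your version merely makes explicit what the paper compresses into ``using a diagonal argument'' (the schedule $\phi$ and the reservation of room for the replacement vectors below each $y_n$), which is a useful but not essentially different elaboration.
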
  

\begin{proof}

    Let $X=[x_{n}^{0}]_n \geq [x_{n}^{1}]_n \geq ... $ be a decreasing sequence of $\nnormb_X$-block subspaces as in the hypothesis and $C$ be the basis constant of $(x_{n}^{0})_n$. For $M>0$, consider $c(M,C) $ the constant that exists by Proposition \ref{cdem} applied to $X$. 
    
    For each $K \geq 1$, let $M_K$ be the greatest non-negative integer such that 
    \begin{equation}\label{eq0lemma2.2}
        c(M_K, C) \leq \sqrt{K}.
    \end{equation}
    Using  a diagonal argument, we can find a sequence an increasing sequence of natural numbers $(l_i)_i$ and a $\nnormb_X$-block sequence $(y_n)_n$, with the property that for each $K$ there is some $i\leq M_K$ such that $x_{i-1}^{K}< y_i$ and $(y_m)_{m \geq i}$ is a $\nnormb_X$-block sequence of $[x_{n}^{K}: n\geq i ] $. Therefore, $\baseyn$ differs in $i-1$ terms from the block sequence $(x_{0}^K,x_{1}^K, ..., x_{i-1}^K, y_i, y_{i+1}, ... )$. Therefore, such sequences are $c(M_K, C)$-equivalent and by Equation \eqref{eq0lemma2.2} they are $\sqrt{K}$-equivalent.
\end{proof}

\begin{lemma}\label{lema3.14}
    Let $E$ be a Banach space and $\nnormb_E$ a set of blocks for $E$. Suppose that $X$ is a $\nnormb_E$-block subspace. Let $N$ be a countable set and let $\mu : bb_\nnormb(X) \rightarrow \mathbbm{P}(N)$ satisfying either of the following monotonic conditions:
    $$V \leq^{\ast} W \Rightarrow \mu(V) \subseteq \mu(W)$$
    or 
     $$V \leq^{\ast} W \Rightarrow \mu(V) \supseteq \mu(W).$$
    Then, there exists an ``stabilizing'' $\nnormb_X$-block subspace $V_0 \leq E$, i.e. a $\nnormb_X$-block subspace such that $\mu(V) = \mu(V_0)$, for all $V \leq^{\ast} V_0$.
\end{lemma}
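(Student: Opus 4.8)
The plan is to stabilize the membership of each element of $N$ separately by a diagonal argument over a decreasing chain of $\nnormb_X$-block subspaces, exploiting the monotonicity of $\mu$. I shall treat the first hypothesis $V \leq^\ast W \Rightarrow \mu(V) \subseteq \mu(W)$; the second is entirely symmetric, with the roles of ``$n_k \in \mu$'' and ``$n_k \notin \mu$'' interchanged. Since $N$ is countable, fix an enumeration $N = \{n_k : k \in \ene\}$ (if $N$ is finite the construction terminates after finitely many steps).

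First I would build recursively a chain of $\nnormb_X$-block subspaces $W_0, W_1, W_2, \dots$ with $W_k \leq^\ast W_{k-1}$ and $W_0 \leq^\ast X$, where $W_k$ decides $n_k$. Given $W_{k-1}$, ask whether there is a $\nnormb_X$-block subspace $V \leq^\ast W_{k-1}$ with $n_k \notin \mu(V)$. If such a $V$ exists, set $W_k = V$; by $\subseteq$-monotonicity every $\nnormb_X$-block subspace $V' \leq^\ast W_k$ satisfies $\mu(V') \subseteq \mu(W_k)$, and since $n_k \notin \mu(W_k)$ we get $n_k \notin \mu(V')$, so the status of $n_k$ is frozen to ``out'' below $W_k$. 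If no such $V$ exists, then $n_k \in \mu(V)$ for every $\nnormb_X$-block subspace $V \leq^\ast W_{k-1}$; set $W_k = W_{k-1}$, freezing $n_k$ to ``in''. In either case $W_k \leq^\ast W_{k-1}$ and the status of $n_k$ is determined for all $\nnormb_X$-block subspaces below $W_k$.

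Next I would pass to a diagonal $\nnormb_X$-block subspace $V_0$ lying $\leq^\ast$-below every $W_k$ at once. Writing each $W_k$ as generated by a normalized $\nnormb_X$-block sequence and using the transitivity of the eventual-containment relation $\leq^\ast$, I can select a successive $\nnormb_X$-block sequence $(y_k)_k$ with $y_k$ a block of $W_k$ chosen far enough out that $y_k \in W_0 \cap \cdots \cap W_k$; that each $y_k$ is a genuine $\nnormb_X$-block and that $V_0 := [y_k]_k$ is a $\nnormb_X$-block subspace follows from the heredity properties of a set of blocks in Proposition \ref{subsetofblocks}. By construction $V_0 \leq^\ast W_k$ for every $k$.

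Finally I would verify the conclusion. For any $\nnormb_X$-block subspace $V \leq^\ast V_0$ we have $V \leq^\ast W_k$ for all $k$, so for each $k$ the status of $n_k$ in $\mu(V)$ agrees with its frozen value, which coincides with its value in $\mu(V_0)$ (as $V_0 \leq^\ast W_k$ as well). Since this holds for every $k$ and $\{n_k\}$ exhausts $N$, we conclude $\mu(V) = \mu(V_0)$, as required. The main obstacle is the diagonalization step: one must guarantee that the extracted diagonal sequence is simultaneously a valid $\nnormb_X$-block sequence and eventually contained in each $W_k$, which is precisely where the axioms of a set of blocks and the transitivity of $\leq^\ast$ in the tail sense are essential; the stabilization dichotomy itself is then a routine consequence of monotonicity.
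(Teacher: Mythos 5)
Your proof is correct, but it takes a genuinely different route from the paper's. The paper argues by contradiction: assuming no stabilizing subspace exists, monotonicity forces every $\nnormb_X$-block subspace $W$ to admit some $V \leq^{\ast} W$ with $\mu(V) \subsetneqq \mu(W)$, and one then builds a transfinite $\leq^{\ast}$-decreasing sequence $(W_\gamma)_{\gamma < \omega_1}$ whose $\mu$-values form an uncountable strictly decreasing chain of subsets of the countable set $N$ --- a cardinality contradiction. You instead give a direct construction: enumerate $N = \{n_k\}_k$, freeze the membership status of each $n_k$ one at a time along an $\omega$-chain $W_0 \geq^{\ast} W_1 \geq^{\ast} \cdots$ (using monotonicity exactly where it is needed, in the ``if some $V$ drops $n_k$, pass to it'' dichotomy), and then diagonalize once to get $V_0 \leq^{\ast} W_k$ for all $k$. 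Both arguments hinge on the same two ingredients --- monotonicity of $\mu$ and the ability to diagonalize a countable $\leq^{\ast}$-decreasing chain of $\nnormb_X$-block subspaces --- but your version makes the diagonalization explicit (the paper's transfinite construction silently needs it at every countable limit ordinal), avoids transfinite recursion and ordinals altogether, and produces the stabilizing subspace constructively rather than by contradiction. What the paper's argument buys in exchange is brevity: it dispenses with the per-element bookkeeping and concludes in a few lines from the impossibility of an uncountable strictly decreasing chain in $\partes(N)$. One point worth keeping in your write-up is the justification that $\leq^{\ast}$ is transitive in the tail sense for $\nnormb_X$-block subspaces (successive supports guarantee that tails of a block sequence eventually lie in tails of the ambient block subspace); you flag this correctly, and it is needed in both approaches.
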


\begin{proof}
    If $\mu$ is monotonic increasing, suppose by contradiction that for every $\nnormb_X$-block subspace $W$, there is $V\leq^\ast W$ such that $\mu(V) \subsetneqq \mu(W)$. It is possible to construct a transfinite sequence $(W_\gamma)_{\gamma< \omega_1}$ of  $\nnormb_X$-block subspaces such that if $ \gamma< \eta<\omega_1$, then $W_\eta \leq^\ast W_\gamma$ and $\mu(W_\eta) \subsetneqq \mu(W_\gamma)$.
    
    The sequence $(\mu(W_\eta))_{\eta<\omega_1}$ obtained is an uncountable strongly decreasing chain (with respect to the inclusion) of subsets of $N$, which contradicts that $N$ is a countable set. If $\mu$ is a  monotonic decreasing function, the result follows analogously. 
\end{proof}
We now define asymptotic games in same vein as in \cite{FerencziRosendal1}, which a careful  choice of the sets of blocks in which the players are allowed to choose their moves.
\begin{defi}
    Let $E$ be a Banach space with normalized basis $\baseen$, $\nnormb_E$ be a set of blocks for $E$ and $\asetf{E}$ be an admissible set for $E$. Let $X= [x_n]_n$ be a $\nnormb_E$-block subspace, and let $Y$ be a  Banach space with normalized basis $\baseyn$. Suppose $C\geq 1$. We define the asymptotic game $\hast$ with constant $C$ between players ${\rm I}$ and ${\rm II}$ taking turns as follows: ${\rm I}$ plays a natural number $n_i$, and ${\rm II}$ plays a natural number $m_i$ and a not necessarily normalized $\nnormb_X$-block vector $u_i \in X[n_0, m_0] +... +X[n_i, m_i]$, where $X[k, m]:=[x_n : k \leq n \leq m]\cap \nnormb_X$, for $k \leq m$ natural numbers. Diagramatically,

    \begin{tabular}{ c c c c c c c c }
        \bf{I} & & $n_0$ &  & $n_1$ &  &  & ... \\ 
        \bf{II} &  &  & $m_0$, $u_0$ &  & $m_1$, $u_1$ &  & ...
    \end{tabular}
    
    The sequence $\baseun$ is the outcome of the game and we say that ${\rm II}$ wins the game $\hast$ with constant $C$, if $(u_n)_n \sim_C \baseyn$ and $\baseun \in \asetf{X}$.
\end{defi}

The game $\hast$ with constant $C$ is determined since it is equivalent to a Gale-Stewart game, which is open for player I; we shall say that the game  $\hast$ with constant $C$ is open for player I.  Notice that if ${\rm II}$ has a winning strategy for the game $\hast$ with constant $C$, then for any sequence $(I_i)_i$ of successive intervals we have $Y \embast_C (X, I_i)$. Therefore, if ${\rm II}$ has a winning strategy for the game $\hast$ with constant $C$ then $Y$ is not $\asetf{E}$-tight in $X$.

The following definition is similar to the one used in \cite{FerencziRosendal1}.

\begin{defi}\label{nota1}
    Let $E$ be a Banach space with normalized basis $\baseen$, $\nnormb_E$ be a set of blocks and $\asetf{E}$ an admissible set for $E$. Let $X=[x_n]_n$ be a $\nnormb_E$-block subspace, $Y$ be a Banach space and $(I_i)_{i}$ be a sequence of successive non-empty intervals of natural numbers.
    \begin{itemize}
        \item[$(i)$] Let $K$ be a positive constant. We write
        $$Y \embast_K (X, I_i)$$
        if there is $A \in \eneinf$ containing 0, such that $Y \embast_K [x_n, n \notin \cup_{i \in A}I_i]$.
        \item[$(ii)$] We write 
        $$Y \embast (X, I_i)$$
        if there is $A \in \eneinf$ such that $Y \embast [x_n, n \notin \cup_{i \in A}I_i]$.
    \end{itemize}
\end{defi}

\begin{obs}
Notice that under the hypothesis of the Definition \ref{nota1}, if there is some $A \in \eneinf$ such that $Y \embast [x_n, n \notin \cup_{i \in A}I_i]$ and $0 \notin A$, then there is some $B \in \eneinf$ containing $0$ such that $Y \embast [x_n, n \notin \cup_{i \in B}I_i]$.
\end{obs}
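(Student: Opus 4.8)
The plan is to avoid the naive choice $B=A\cup\{0\}$: since $0\notin A$, adding $0$ passes from $X_A$ to a finite-codimensional \emph{tail} subspace, and $Y$ need not embed into such a tail (the associated shift is unbounded for conditional bases). Instead I would \emph{trade} the interval $I_0$ against finitely many of the intervals already removed by $A$. Write $A=\{a_0<a_1<\cdots\}$ with $a_0\geq 1$. Since $A$ is infinite and each $I_{a_l}$ is non-empty, fix $j$ so large that $\sum_{l=0}^{j}|I_{a_l}|\geq |I_0|$, and set
\[
B:=\{0\}\cup\bigl(A\setminus\{a_0,\dots,a_j\}\bigr)=\{0\}\cup\{a_{j+1},a_{j+2},\dots\}.
\]
Then $B\in\eneinf$ and $0\in B$. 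Computing complements, $R_A:=\ene\setminus\bigcup_{i\in A}I_i$ and $R_B:=\ene\setminus\bigcup_{i\in B}I_i$ coincide outside the finite set $I_0\cup I_{a_0}\cup\cdots\cup I_{a_j}$: the block of indices $I_0$ lies in $R_A$ but not in $R_B$ (as $0\notin A$, $0\in B$), while the block $I_{a_0}\cup\cdots\cup I_{a_j}$ lies in $R_B$ but not in $R_A$, and $|I_0|\leq\sum_{l\le j}|I_{a_l}|$.

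First I would transport the witnessing sequence. Let $\baseun\in\asetf{X_A}$ with $\baseun\sim_K\baseyn$ realize $Y\embast X_A$; each $u_n$ is supported in $R_A$. Define $V\colon X_A\to X_B$ to be the identity on the coordinates of $R_A\setminus I_0$ and to send the finite block $[x_n:n\in I_0]$ injectively into $[x_n:n\in I_{a_0}\cup\cdots\cup I_{a_j}]\subseteq X_B$, so that $V=(\mathrm{Id}-P)+\phi P$, where $P$ is the finite-rank basis projection of $X_A$ onto $[x_n:n\in I_0]$ and $\phi$ is an into-isomorphism of finite-dimensional spaces. For every $u$ the vectors $(\mathrm{Id}-P)u$ and $\phi Pu$ are disjointly supported in the basis $\basexn$ (the first is supported in $R_A\setminus I_0$, the second in $\bigcup_{l\le j}I_{a_l}$, which is disjoint from $R_A$), so $V$ is a finite-rank modification of a partial identity that is bounded below, hence an isomorphism onto its image. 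Consequently $(Vu_n)_n$ lies in $X_B$ and is $K'$-equivalent to $\baseyn$ for some $K'\geq1$, providing a candidate witness for $Y\embast X_B$.

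The main obstacle is to guarantee that the transported sequence stays admissible, i.e.\ $(Vu_n)_n\in\asetf{X_B}$; this is the only place where the choice of $B$ and the hypotheses carried by the admissible system are genuinely used. The clean route I would take is to arrange the coordinate identification so that the normalized basis of $X_B$ differs from that of $X_A$ in only finitely many terms; then Proposition~\ref{cdem} shows the two bases are equivalent, and Proposition~\ref{propadmpro}(ii) shows that the induced isomorphism carries $\asetf{X_A}$ onto $\asetf{X_B}$, whence $(Vu_n)_n\in\asetf{X_B}$ and $Y\embast X_B$. Checking this compatibility with $\asetf{}$ — equivalently, that relocating the finite block $I_0$ does not leave the admissible family, and handling the size discrepancy $\sum_{l\le j}|I_{a_l}|>|I_0|$ when it occurs — is the delicate step: it is immediate for the admissible sets arising from ${\rm FIN}^\omega$ and from $db$ (where global injectivity of the identification preserves disjointness), and for the remaining families it is carried out using the closedness of $\asetf{X_B}$ together with the $\ast_X$-invariance of condition~(c) in Definition~\ref{admpro}.
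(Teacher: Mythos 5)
Your high-level plan --- trading $I_0$ against finitely many intervals $I_{a_0},\dots,I_{a_j}$ of $A$ and then invoking Proposition \ref{cdem} together with Proposition \ref{propadmpro}(ii) --- is the right one; it is essentially the device the paper itself uses inside the proof of Lemma \ref{lema3.8} (the remark is stated there without proof). But the transport map you actually construct does not do the job, and this is exactly where the statement has content. Your $V=(\mathrm{Id}-P)+\phi P$ fixes every coordinate of $R_A\setminus I_0$ and moves the coordinates of $I_0$ \emph{upwards} into $I_{a_0}\cup\dots\cup I_{a_j}$; as a map of coordinates it is not increasing, so it is not the canonical basis-to-basis map covered by Proposition \ref{propadmpro}(ii), and in general it does not preserve admissibility. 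Concretely, take $\asetf{E}=bb(\nnormb_E)$: if $u_0$ meets $I_0$ and $u_1$ is supported in $R_A\cap(\max I_0,\min I_{a_0})$, then $(u_n)_n$ can be a block sequence of $X_A$ while $\supp{E}{Vu_0}$ meets $\bigcup_{l\le j}I_{a_l}$, which lies entirely above $\supp{E}{Vu_1}=\supp{E}{u_1}$; hence $(Vu_n)_n$ is not successive and so $(Vu_n)_n\notin\asetf{X_B}$. Your fallback for such families (``closedness of $\asetf{X_B}$ plus the invariance in condition (c)'') is a gesture, not an argument --- no property of $\asetf{}$ can rescue a map whose image already fails to be admissible --- and your ``clean route'' is unavailable in general, because making the full bases of $X_A$ and $X_B$ differ in finitely many terms forces the exact equality $|I_0|=\sum_{l\le j}|I_{a_l}|$, which cannot be arranged when, say, every $|I_{a_l}|>|I_0|$.

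The repair is to give up mapping $X_A$ \emph{onto} $X_B$ and instead map it monotonely \emph{into} $X_B$. Keep your $B=\{0\}\cup\{a_{j+1},a_{j+2},\dots\}$ with $\sum_{l\le j}|I_{a_l}|\ge|I_0|$. Counting coordinates below $\max I_{a_j}$ gives $|R_A\cap[0,\max I_{a_j}]|=\max I_{a_j}+1-\sum_{l\le j}|I_{a_l}|\le \max I_{a_j}+1-|I_0|=|R_B\cap[0,\max I_{a_j}]|$, so there exists an increasing injection $\sigma\colon R_A\to R_B$ which is the identity above $\max I_{a_j}$. Put $T(x_n)=x_{\sigma(n)}$ and $Z:=[x_{\sigma(n)}:n\in R_A]\subseteq X_B$. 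Now $T$ \emph{is} the canonical map between the two normalized block bases $(x_n)_{n\in R_A}$ and $(x_{\sigma(n)})_{n\in R_A}$, which differ in only finitely many terms; Proposition \ref{cdem} makes them equivalent, Proposition \ref{propadmpro}(ii) gives $T(\asetf{X_A})=\asetf{Z}$, and Remark \ref{asetfinclopen}(iii) gives $\asetf{Z}\subseteq\asetf{X_B}$. Hence $(Tu_n)_n\in\asetf{X_B}$ and $(Tu_n)_n\sim\baseun\sim\baseyn$, so $Y\embast X_B$ --- with a worse constant, which is precisely why the remark (unlike part (i) of Definition \ref{nota1}) carries no constant. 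Note that passing to an into-map dissolves, rather than solves, your ``size discrepancy'' problem: the surplus coordinates of $X_B$ are simply never used, and no case distinction on the admissible family is needed.
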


In the original paper of Ferenczi-Rosendal, special attention is given to the (Borel, continuous,...) dependence of the sequence $I_j$ of intervals associated to a subspace $Y$ in the definition of tightness. This has application for considerations on the classification of the isomorphism relation between subspaces and the so-called ``ergodic space" problem \cite{Ferergodic}, as in \cite{FerencziRosendal1} Theorem 7.3. In the present paper we are not considering these aspects, which allows to give a simplify certain parts of the proof - there is no reference to a Borel or continuous map defining those intervals as in the notion of continuous tightness (\cite{FerencziRosendal1} p165).
On the other hand, although the general scheme of the proof is the same, special attention has to be given to the roles of the set of blocks and of the type of embeddings to generalize the tight- minimal dichotomy from \cite{FerencziRosendal1}. Approximation properties work similarly, but diagonalization properties must be ensured, as well as the topological properties (closed, open) of the outcomes, and this requires a careful definition of the infinite games at hand.

\begin{lemma}\label{lemma3.7}
    Let $E$ be a Banach space with normalized basis $\baseen$ and  $(\nnormb_E, \asetf{E})$ be an admissible system of blocks for $E$. Suppose that $X=[x_n]_n$ is a $\nnormb_E$-block subspace and that $K$ and $\varepsilon$ are positive constants such that for every $\nnormb_X$-block subspace $Y$ of $X$ there is a winning strategy for player $I$ in the game $\hast$ with constant $K+\varepsilon$. 
Then, for every $\nnormb_X$-block subspace $Y$ there exist a sequence of successive intervals $(I_j)_j$ such that $Y\nembast_K (X,I_j)$.
\end{lemma}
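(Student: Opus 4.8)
The plan is to fix a $\nnormb_X$-block subspace $Y$ with normalized basis $\baseyn$ and to read the intervals $(I_j)_j$ directly off a winning strategy of player ${\rm I}$. Let $\sigma$ be a winning strategy for ${\rm I}$ in the game $\hast$ with constant $K+\varepsilon$, which exists by hypothesis. I would argue by contradiction, against the negation of Definition \ref{nota1}$(i)$: supposing $Y\embast_K(X,I_j)$, there are $A\in\eneinf$ with $0\in A$ and a sequence $\basevn\in\asetf{X}$ whose supports avoid $\bigcup_{i\in A}I_i$ and with $\basevn\sim_K\baseyn$. The aim is to replay these vectors against $\sigma$ so that the resulting outcome is $(K+\varepsilon)$-equivalent to $\baseyn$ and lies in $\asetf{X}$, which contradicts that $\sigma$ is winning for ${\rm I}$.

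The first and decisive step is to discretize the moves of player ${\rm II}$ so that the tree of $\sigma$-consistent plays becomes finitely branching on bounded supports. Since $\baseyn$ is normalized and $\basevn\sim_K\baseyn$, each $\normm{v_n}\in[1/K,K]$. Applying the admissibility of the system $(\nnormb_E,\asetf{E})$ relativized to $X$ (Definition \ref{densa}) with this $K$ and a rapidly decreasing sequence $(\delta_n)_n$, I obtain sets $(A_n)_n$ of $\nnormb_X$-blocks together with a sequence $\baseun\in\asetf{X}$ satisfying $u_n\in A_n$, $\supp{X}{u_n}\subseteq\supp{X}{v_n}$ and $\normm{v_n-u_n}<\delta_n$. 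Choosing the $\delta_n$ small enough relative to $K$ and the basis constant of $\basexn$, a standard small-perturbation estimate gives $\baseun\sim_{K+\varepsilon}\baseyn$; moreover $\baseun\in\asetf{X}$ and its supports still avoid $\bigcup_{i\in A}I_i$. The crucial gain is condition a) of Definition \ref{densa}: for each finite support $d$ only finitely many vectors of $A_n$ have support $d$, so for every bound $N$ there are only finitely many legal moves $(m,u)$ of ${\rm II}$ with $m\le N$ and $\supp{X}{u}\subseteq[0,N]$.

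Using this finiteness I would build $(I_j)_j$ by induction. Set $q_{-1}=-1$ and, having defined $q_0<\dots<q_{j-1}$, let $q_j$ be one more than the largest number that player ${\rm I}$ can be made to play at any round $r\le j$ of a $\sigma$-consistent position in which all previous moves $(m_i,u_i)$ of ${\rm II}$ satisfy $u_i\in A_i$, $m_i\le q_{j-1}$ and $\supp{X}{u_i}\subseteq[0,q_{j-1}]$ (also forcing $q_j>q_{j-1}$). By the previous paragraph this maximum runs over a finite set of positions, so $q_j$ is well defined; I then set $I_0=[0,q_0]$ and $I_j=[q_{j-1}+1,q_j]$, a partition of $\ene$ into successive intervals. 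The design yields the key bound: whenever ${\rm II}$ has played, up to some round, $A_i$-vectors with windows ending at $m_i\le q_p$, the next move dictated by $\sigma$ is at most $q_{p+1}$, i.e. it falls inside the single interval $I_{p+1}$.

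Finally I would run the defeating play, and this synchronization is where the argument is most delicate. Writing $A=\{i_0<i_1<\dots\}$ with $i_0=0$, player ${\rm II}$ replays the $\baseun$ one per round, each time closing its current window at a right endpoint $q_p$ large enough to cover $\supp{X}{u_\ell}$ and with $p+1\in A$. Because $0\in A$, the initial lost region $[0,n_0)\subseteq I_0$ is removed, and by the key bound each gap $(m_\ell,n_{\ell+1})$ between ${\rm II}$'s window and ${\rm I}$'s next move is contained in $I_{p+1}$ with $p+1\in A$, hence again in $\bigcup_{i\in A}I_i$. Thus every lost region lies inside the removed intervals while the supports of the $u_\ell$ avoid them, so each $u_\ell$ fits into the cumulative window and the outcome is exactly $\baseun$; as $\baseun\in\asetf{X}$ and $\baseun\sim_{K+\varepsilon}\baseyn$, player ${\rm II}$ wins $\hast$ with constant $K+\varepsilon$ against $\sigma$, the desired contradiction. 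The main obstacle is making this bookkeeping rigorous: one must verify that ${\rm II}$ can always advance the frontier $p$ by at least one per round, so that the round index never outruns the length bound used to define $q_j$, while simultaneously covering arbitrarily far supports and keeping every gap inside a removed interval.
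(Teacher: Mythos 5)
Your proposal is correct and takes essentially the same approach as the paper: discretizing player II's moves through the admissible system (your sets $(A_n)_n$ are the paper's $(D_n)_n$), extracting the intervals from the finitely many strategy-consistent positions with bounded moves (your $q_j$'s play the role of the paper's $\alpha(k)$ and $I_k=[k,\alpha(k)]$), and then, under the contradiction hypothesis, perturbing the witness $\basevn$ into the discrete sets and replaying it against player I's strategy so that every gap lands inside a removed interval. Your explicit round bound $r\le j$ in the definition of $q_j$, together with the frontier-advancing invariant you flag at the end, is in fact a slightly more careful rendering of the finiteness-of-positions step that the paper settles with the maximum defining $\alpha(k)$.
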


\begin{proof}
   Suppose $X=[x_n]_n$ is a $\nnormb_E$-block subspace, $K$ and $\varepsilon$ as in the hypothesis. We will divide this proof in six steps:
    \begin{itemize}
    
    \item[1.] By hypothesis, for each $\nnormb_X$-block subspace $Y$ of $X$ there is a winning strategy $\sigma_Y$ for player $I$ in the game $\hast$ with constant $K+\varepsilon$. 
    
    \item[2.] Let $C\geq 1$ be the basis constant of $\basexn$. Let $ \rho = 1+\frac{\varepsilon}{K}$. Now, let $0<\theta<1 $ be such that $(1+\theta)(1-\theta)^{-1} = \rho$. Take $\Delta = (\delta_n)_n$ a sequence of positive numbers such that  $  2 C K^2 \sum_{n \in \ene} \delta_n  = \theta$.

    Let $\basewn$ be a $KC$-basic sequence of not necessarily normalized blocks with $\frac{1}{K}\leq \normm{w_i} \leq K$, for any $i \in \ene$. If $\baseun$ is such that $\forall i \in \ene\; (\normm{w_i - u_i} < \delta_i)$, then
    $$2KC \sum_{n \in \ene}\frac{\normm{w_n - u_n}}{\normm{w_n}}= 2CK^2 \sum_{n \in \ene}\delta_n = \theta <1.$$
    
    Thus, $\baseun \sim_{\rho} \basewn$.

    \item[3.] We shall obtain some collection of sets of vectors $\{D_n: n \in \ene \}$ which will be used in step 4 to assist in the construction of a strategy for player I. Since  $(\nnormb_E, \asetf{E})$ is an admissible system of blocks for $E$,  we have that for $X$, the sequence $(\delta_n)_n$ and $K$, there is a collection $(D_n)_n$ of non-empty sets of vectors of $\nnormb_X$ such that 
    \begin{itemize}
        \item[C-1] For each $n$ and for each $d \in \enefin $ such that there is $w \in \asetf{X}$ with $\supp{X}{w} = d$, we have that there are a finite number of vectors $u \in D_n$ such that $\supp{X}{u} = d$.
        \item[C-2] For every sequence $\wi \in \asetf{X}$ satisfying $1/K \leq \min_{i} \normm{w_i} \leq \sup_{i} \normm{w_i} \leq K$, for all $n$ there is $u_n \in D_n$, such that
            \begin{itemize}
                \item[C-2.1] $\supp{X}{u_n} \subseteq \supp{X}{w_n}$.
                \item[C-2.2] $\normm{w_n - u_n}< \delta_n$.
                \item[C-2.3] $(u_i)_i \in \asetf{X}$. 
            \end{itemize}
    \end{itemize}

    \item[4.] Suppose now that $Y$ is a $\nnormb_X$-block subspace with normalized $\nnormb_X$-block basis $\baseyn$. Suppose that $p = (n_0, u_0, m_0, ..., n_i, u_i, m_i)$, with $u_j \in D_j $ for $j \leq i$ is a legal position in the game $\hast$ in which $I$ has played according to $\sigma_Y$.
    
     \begin{tabular}{ c c c c c c c c c c }
        \bf{I} & & $n_0$ &  & $n_1$ &  &  & ... & $n_i$ &\\ 
        \bf{II} &  &  &  $u_0$, $m_0$ &  & $u_1$, $m_1$ &  & ...& & $m_i$, $u_i$
    \end{tabular}
    
    We write $p< k $ if $n_j, u_j, m_j <k$ for all $j \leq i$. Since $II$ is playing in $\prod_{j \leq i}D_j$, using the condition C-1, for every $k$ there is only a finite number of such legal positions $p$ which satisfies $p < k$. So, for every $k \in \ene$ the following maximum exists:
    \begin{equation}
        \alpha(k):= \max \{k, \max \{\sigma_Y(p): p< k\}\}.
    \end{equation}
    We set $I_k= [k, \alpha (k)]$. 
    The intervals in $(I_k)_k$ are not necessarily disjoint, but it is possible to extract a subsequence of successive intervals of it, with $I_0$ as first element. 
    
    \item[5.] To prove that $Y\nembast_K (X,I_j)$ we shall show that for every $A\in \eneinf$, containing 0, $Y\nembast_K [x_n : n \notin \cup_{k \in A}I_k]$.
    
    By contradiction, suppose  there is $A \in \eneinf$ containing 0 and a sequence of blocks $\basewn \in \asetf{X}\cap [x_n : n \notin \cup_{k \in A}I_k]$ such that 
    
    \begin{equation}\label{eqynwn}
        \baseyn \sim_K \basewn.
    \end{equation}
    Recall that, since  $\baseyn$ is normalized, $\frac{1}{K} \leq \normm{w_n}\leq K$, for all $n \in \ene$. 

    By the step 3, condition C-2, we can find for each $i$ a block $u_i \in  D_i$ such that 
    $\normm{{w}_i-u_i}< \delta_i$, $\supp{X}{u_i} \subseteq \supp{X}{w_i}$, $\baseun \in \asetf{X}$, and
    \begin{equation}\label{equnwn'}
        \baseun \sim_{\rho} ({w}_n)_n.
    \end{equation}

    By Equation \eqref{equnwn'}, $\baseun \sim_{K  \rho } \baseyn$. Considering that $\rho  = 1+\frac{\varepsilon}{K}$, we can conclude that  $\baseun \sim_{K + \varepsilon } \baseyn$.
    \item[6.] Finally, we will construct a playing $\overrightarrow{p}$ in the game $\hast$ with constant $K+\varepsilon$, where player $I$ will follow his winning strategy and the outcome will be the sequence $\baseun$. Which means that  $I$ wins the game, leading us to a contradiction. In order to do that define $n_i$, $m_i$ natural numbers and $a_i \in A$ as follows:
    
    Let $a_0 = 0 $ and $n_0 = \sigma_Y(\emptyset) = \alpha(0)$, then, by definition of $I_k$, $I_0 = [0, \alpha(0)] = [0, n_0]$. Find $a_1 \in A$, such that $n_0, u_0, a_0 <a_1$ and set $m_0 = a_1 - 1$. Then $p_0 = (n_0, m_0, u_0)$ is a legal position in $\hast$ in which $I$ has played according to his winning strategy $\sigma_Y$. Since $w_0 \in X[n_0, m_0]$ and $\supp{X}{u_0} \subseteq \supp{X}{w_0}$, we have $u_0 \in X[n_0, m_0]$. 
    
    Now, as $p_0 < a_1$, by the definition of the function $\alpha$, if $n_1= \sigma_Y(n_0, m_0, u_0)$, we obtain $n_1 \leq \alpha(a_1) $. Therefore, $]m_0, n_1[ = [m_0+1, n_1 -1] = [a_1, n_1-1]\subseteq [a_1, \alpha(a_1)]= I_{a_1}$.
    
    Suppose by induction that $n_0, ..., n_i$, $m_0, ..., m_i$ and $a_0, ..., a_i \in A$ have been defined. Since $[0, n_0[ \subseteq I_0$ and $]m_j, n_j+1[ \subseteq I_{a_{j+1}}$, for all $j<i$, we have 
    $$u_i \in X[n_0, m_0] + X[n_1, m_1] + ... + X[n_i, \infty[.$$
    Find some $a_{i+1} \in A$ greater than  $n_0, ..., n_i$, $u_0, ..., u_i$ and $a_0, ..., a_i$ and let $m_i=a_{i+1}-1$, then
     $$u_i \in X[n_0, m_0] + X[n_1, m_1] + ... + X[n_i, m_i].$$
    Therefore $p_i = (n_0, m_0, u_0, ..., n_i, m_i, u_i)$ is a legal position of the game $\hast$ with constant $K + \varepsilon$ in which $I$ has played according to $\sigma_Y$. Since $p_i < a_{i+1}$, we have 
    $$n_{i+1} = \sigma_Y(n_0, m_0, u_0, ..., n_i, m_i, u_i) \leq \alpha(a_{i+1})$$
    and
    $$]m_i, n_{i+1}[ = [m_i+1, n_{i+1} -1] = [a_{i+1}, n_{i+1} -1]\subseteq [a_{i+1}, \alpha(a_{i+1})]= I_{a_{i+1}}.$$
    Set $\overrightarrow{p}$ the legal run such that each $p_i$ is a legal position for the game. Such $\overrightarrow{p}$ is the run we were looking for to produce a contradiction.
    
    \end{itemize}
    
\end{proof}

The following technical lemma gives us a criterion for passing from the existence of  intervals dependent on $K$ for which $Y$ is not $\aset$-embedded in $(I^{(K)}_j)$ with constant $K$, to the existence of intervals $(J_j)_j$ for which $Y$ is not embedded for any constant
$K$. It is similar to Lemma 3.8 from \cite{FerencziRosendal1}.

\begin{lemma}\label{lema3.8}
     Let $E$ be a Banach space with normalized basis $\baseen$ and  $(\nnormb_E, \asetf{E})$ be an admissible system of blocks for $E$. Suppose that $X=[x_n]_n$ is a $\nnormb_E$-block subspace and $Y$ is a Banach space with normalized basis $\baseyn$. If for every constant $K$ there are successive intervals of natural numbers $(I^{(K)}_n)$ such that $Y\nembast_K (X, I^{(K)}_j)$, then there is a sequence of successive intervals $(J_j)_j$ such that $Y\nembast (X, J_j)$.
\end{lemma}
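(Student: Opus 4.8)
The plan is to fuse the countably many families $(I^{(K)}_n)_n$ into a single family $(J_j)_j$ and to lean on the elementary monotonicity behind Definition~\ref{nota1}: if $\bigcup_{i\in B}I^{(K)}_i\subseteq\bigcup_{j\in A}J_j$, then $[x_n:n\notin\bigcup_{j\in A}J_j]$ is a $\nnormb_X$-block subspace of $[x_n:n\notin\bigcup_{i\in B}I^{(K)}_i]$, so by Remark~\ref{asetfinclopen}$(iii)$ and the observation after Definition~\ref{defiAembed} every $\asetf{E}$-embedding of $Y$ into the smaller space is also one into the larger. Since $Y\embast Z$ forces $Y\embast_K Z$ for some integer $K\ge 1$, it is enough to work with the families indexed by $K\in\ene$. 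Thus the problem reduces to building $(J_j)_j$ so that, whatever infinite set $A$ is used to delete $\bigcup_{j\in A}J_j$, the deleted set absorbs an infinite subfamily $\bigcup_{i\in B}I^{(K)}_i$ of one of the $I^{(K)}$, which can then be turned into a witness against $Y\nembast_K(X,I^{(K)}_j)$.

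First I would construct $(J_j)_j$ recursively. Having chosen successive intervals $J_0<\dots<J_{j-1}$ covering an initial segment $[0,N_{j-1}]$ of $\ene$, I set $J_j=[N_{j-1}+1,N_j]$ with $N_j$ large enough that, for each $K\le j$, $J_j$ contains a whole interval $I^{(K)}_{i(K,j)}$ with index $i(K,j)$ strictly larger than every index $i(K,j')$ used at earlier stages $j'<j$. This is possible because each family $(I^{(K)}_i)_i$ is successive, so $\min I^{(K)}_i\to\infty$ and for each of the finitely many $K\le j$ a far enough interval of that family sits entirely inside $[N_{j-1}+1,\infty)$. By construction, for every fixed $K$ the indices $\{i(K,j):j\ge K\}$ are pairwise distinct; hence for each $A\in\eneinf$ the set $B_K:=\{i(K,j):j\in A,\ j\ge K\}$ is infinite and $\bigcup_{i\in B_K}I^{(K)}_i\subseteq\bigcup_{j\in A}J_j$.

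With $(J_j)_j$ in hand I would argue by contradiction: assume $Y\embast(X,J_j)$. By the remark following Definition~\ref{nota1} a witnessing $A\in\eneinf$ may be taken to contain $0$, and then $Y\embast_K[x_n:n\notin\bigcup_{j\in A}J_j]$ for some integer $K\ge 1$. Feeding this $K$ and $A$ into the construction yields the infinite set $B_K$ with $[x_n:n\notin\bigcup_{j\in A}J_j]\subseteq[x_n:n\notin\bigcup_{i\in B_K}I^{(K)}_i]$, so by the monotonicity recalled above $Y\embast_K[x_n:n\notin\bigcup_{i\in B_K}I^{(K)}_i]$. After normalizing $B_K$ so that it contains $0$, this contradicts $Y\nembast_K(X,I^{(K)}_j)$, which by Definition~\ref{nota1}$(i)$ asserts the failure of exactly such a $K$-embedding for every index set containing $0$.

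The main obstacle is precisely this last normalization, and it is where the clause in Definition~\ref{nota1}$(i)$ that the index set contains $0$ has to be reconciled with the freely chosen infinite sets $A$ and $B_K$ produced by the fusion: the recursion gives no control over whether $0\in B_K$, since $I^{(K)}_0$ sits at a fixed early position that a sparse $A$ may miss. Passing from $B_K$ to $B_K\cup\{0\}$ deletes only the finitely many coordinates of $I^{(K)}_0$, so the resolution must be the stability of $\asetf{E}$-embeddability under deleting an initial block, which is exactly the content of the remark after Definition~\ref{nota1}; everything else — that containment of the deleted sets reverses $\asetf{E}$-embeddability, that the successive families can be captured cofinally, and that a single integer constant $K$ suffices — is routine bookkeeping.
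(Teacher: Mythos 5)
Your fusion construction and the monotonicity bookkeeping are fine as far as they go: the reduction to integer constants, the recursive choice of $J_j$ swallowing one interval of each family $I^{(K)}$ for $K\le j$, and the observation that $\bigcup_{i\in B_K}I^{(K)}_i\subseteq\bigcup_{j\in A}J_j$ makes every $K$-embedding avoiding the $J$'s also a $K$-embedding avoiding the chosen $I^{(K)}$'s all reproduce, in effect, condition $(i)$ of the paper's construction. The genuine gap is exactly the step you flag as ``the main obstacle,'' and your proposed resolution does not work. The hypothesis $Y\nembast_K (X,I^{(K)}_j)$ is constant-specific: by Definition~\ref{nota1}$(i)$, to contradict it you must produce an infinite $B$ \emph{containing $0$} together with an embedding \emph{with the same constant $K$} into $[x_n: n\notin\cup_{i\in B}I^{(K)}_i]$. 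Passing from $B_K$ to $B_K\cup\{0\}$ forces you to relocate whatever part of the witnessing sequence is supported in $I^{(K)}_0$, and by Proposition~\ref{cdem} any such relocation degrades the equivalence constant from $K$ to roughly $K\cdot c(m)$, where $m$ bounds the number of displaced basis vectors. The remark after Definition~\ref{nota1} that you invoke is a statement about the constant-free relation $\embast$ (precisely because there the degradation is absorbed into ``some constant''), so it only yields a $K c(m)$-embedding avoiding $\cup_{i\in B_K\cup\{0\}}I^{(K)}_i$. That contradicts nothing: the hypothesis forbids $K$-embeddings against the family $I^{(K)}$ and $Kc(m)$-embeddings against the family $I^{(Kc(m))}$, whereas you have produced a $Kc(m)$-embedding against the family $I^{(K)}$, which falls between the two.

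This mismatch between the constant and the family index is the real content of the lemma, and the paper resolves it with a quantitative ingredient absent from your recursion: a second condition on $J_n$, namely $\max J_n > \max I^{(K)}_0 + M$ where $M=\min J_n-1$ and $K=\lceil n\, c(M)\rceil$. With that room reserved, given a witness with constant $N$ avoiding $\cup_{j\in A}J_j$, one picks $a\in A$ with $a\ge N$, sets $M=\min J_a-1$ and $K=\lceil a\,c(M)\rceil$, and shifts the at most $M+1$ surviving coordinates below $J_a$ to the positions $\max I^{(K)}_0+n+1\le\max J_a$. Because this shift is order-preserving and changes only $M+1$ basis vectors, Proposition~\ref{cdem} keeps the new constant at most $N c(M)\le a\,c(M)\le K$; and the shifted witness avoids $I^{(K)}_0$ as well as the $I^{(K)}$-intervals sitting inside the $J_l$ with $l\in A$, $l>a$, since those lie beyond $\max J_a$. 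The contradiction is then reached against the family $I^{(K)}$ for this \emph{recalibrated} $K$, not against the family indexed by the constant of the original embedding. Your intervals $J_j$, built only to capture intervals cofinally, reserve no such room: $\max I^{(K)}_0$ may well exceed $\max J_a$, and then any relocation target either breaks the order-preservation that Proposition~\ref{cdem} needs (so the constant is no longer controlled by the number of moved terms) or risks landing in one of the deleted $I^{(K)}$-intervals hidden inside the later $J_l$'s. So the argument cannot be completed from your construction as it stands; the missing idea is the coupling, at construction time, between the length of $J_n$, the perturbation constant $c(\min J_n -1)$, and the index of the family one will ultimately contradict.
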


\begin{proof}
    Let $E$, the pair $(\nnormb_E, \asetf{E})$, $X = [x_n]_n$ and $Y=[y_n]_n$ be as in the hypothesis. 
    We will construct the intervals $(J_j)_j$ inductively. The idea is to find such a sequence satisfying:
    \begin{itemize}
        \item[$(i)$] For each $n \geq 0$, $J_n$ contains one interval of each $(I_{i}^{(n)})_i$.
        \item[$(ii)$] For each $n \geq 1$, if $M = \min J_n -1$ and $K = \lceil{n \cdot c(M)}\rceil$ (where $c(M)$ is the constant which existence is guaranteed by Proposition \ref{cdem} for $(x_n)_n$), then $\max J_n > \max I_{0}^{(K)} +M$.
    \end{itemize}
    This can be done as follows: take $J_0 = I_{0}^{(1)}$.
    Now suppose that we have defined $J_0, ..., J_n$ satisfying $(i)$ and $(ii)$. Let $a$ be a natural number greater than $\max J_n$, put $M=a-1$ and $K = \lceil{(n+1)\cdot c(M)\rceil}$. Take $b >\max I_{0}^{(K)} +M$ and such that there exists $j_i \in \ene$ with $I_{j(i)}^{(i)} \subseteq [a,b]$, for all $i \in \{1, ..., n+1\}$ (this can be done because the intervals are finite and we are looking at just the first $n+1$ sequences). Let $J_{n+1} := [a,b]$. By construction, such $J_{n+1}$ satisfies the conditions $(i)$ and $(ii)$.
    
    By contradiction, suppose that $A \in \eneinf$ and that for some integer $N$, we have
    $$Y \embast_N [x_n: n \notin \cup_{i \in A}J_i].$$
    This implies that there is a sequence $\basewn$ of $\nnormb_X$-blocks in $ \asetf{X}\cap [x_n: n \notin \cup_{i \in A}J_i]$ such that $\baseyn \sim_N \basewn$.
    Pick $a \in A$ such that $a \geq N$ and set $M=\min J_a -1$ and $K = \lceil{a\cdot c(M)}\rceil$. Let us define an isomorphic embedding $T$ from 
    $$[x_n: n \notin \cup_{i \in A}J_i]$$
    into 
    $$ [x_n: \max I_{0}^{(K)}< n \leq \max J_a]  + [x_n: n \notin \cup_{i \in A}J_i \; \& \; n > \max J_a] $$
    by setting 
    \begin{equation}
        T(x_n) = \begin{cases} x_n, & \mbox{if } n> \max J_a \\ 
        x_{\max I_{0}^{(K)}+n+1 }, & \mbox{if } n\leq M. \end{cases}
    \end{equation}
    
    Notice that $T$ is an isomorphism between those two $\nnormb_X$-block subspaces. So, by ii) in Proposition \ref{propadmpro}, we have $(T(w_n))_n \in \asetf{X}$.

    Since $T$ only changes at most $M$ vectors from $\basexn$, it is a $C(M)$-embedding, then,  
    $$\baseyn \sim_N \basewn \sim_{C(M)} (T(w_n))_n,$$ 
    
    and because $N \cdot c(M) \leq a \cdot c(M)\leq K$, we obtain
    \begin{equation}\label{eqlema3.8.1}
        Y \embast_{K} [x_n: n \notin \cup_{i \in A}J_i \; \& \; n > \max J_a] + [x_n: \max I_{0}^{(K)}< n \leq \max J_a]    
    \end{equation}
    Now, since for each $n \geq 1$, $J_n$ contains one interval of each $(I_{i}^{(n)})_i$, for any $l \in A$ such that $l\geq K$ there is $b(l) \in \ene$ such that $I_{b(l)}^{(K)} \subseteq J_l$. Let $B= \{0\} \cup \{b(l): \:l \in A, \: l\geq K\}$. Then, 
    $$id: [x_n: n \notin \cup_{i \in A}J_i \; \& \; n > \max J_a] + [x_n: \max I_{0}^{(K)}< n \leq \max J_a]   \longrightarrow [x_n : n \notin \cup_{i \in B}I_{i}^{(K)}]$$ 
    is an isomorphism onto its image and by ii) in Proposition \ref{propadmpro} and Equation \eqref{eqlema3.8.1} we have:
    \begin{equation*}
         Y \embast_{K} [x_n : n \notin \cup_{i \in B}I_{i}^{(K)}],
    \end{equation*}
    which contradicts our initial hypothesis.
\end{proof}

The next lemma uses a ``diagonalization'' argument to relate the fact that a space $E$ is saturated with $\nnormb_E$-block subspaces $X$ such that for every $Y\leq X$, $I$ has a winning strategy for the game $\hast$ for any constant $K$, with the existence of a $\asetf{E}$-tight $\nnormb_E$-block subspace $X$. It is similar to \cite{FerencziRosendal1} Lemma 3.9, without the study of the Borel dependence of the intervals in the definition of tightness, and in the other hand, with attention to the types of blocks in the construction so that the diagonalization property still holds.



\begin{lemma}\label{lema3.9}
    Let $E$ be a Banach space with normalized basis $\baseen$ and  $(\nnormb_E, \asetf{E})$ be an admissible system of blocks for $E$. Suppose that for every $\nnormb_E$-block subspace $Z$ and constant $K$ there is a $\nnormb_Z$-block subspace $X$ such that for every $\nnormb_X$-block subspace $Y$, $I$ has a winning strategy for the game $\hast$ with constant $K$. Then, there is a $\nnormb_E$-block subspace $X$ which is $\asetf{E}$-tight.
\end{lemma}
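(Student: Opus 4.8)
The plan is to feed the winning-strategy hypothesis into Lemma \ref{lemma3.7} to convert it, level by level, into non-embedding statements, to stack these into a single decreasing chain of $\nnormb_E$-block subspaces, to diagonalize that chain with Lemma \ref{lemma2.2}, and finally to show the diagonal subspace $X$ is $\asetf{E}$-tight by transporting the per-level non-embeddings across the equivalences supplied by the diagonalization, upgrading to all constants at the end with Lemma \ref{lema3.8}. Recall that proving $X$ is $\asetf{E}$-tight means showing that every $\nnormb_{X}$-block subspace $Y$ is $\asetf{E}$-tight in the basis of $X$, i.e. $Y \nembast (X, J_j)$ for some successive intervals $(J_j)_j$; by Lemma \ref{lema3.8} it is enough to produce, for each constant $N$, successive intervals $(I^{(N)}_j)_j$ with $Y \nembast_N (X, I^{(N)}_j)$.

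First I would build a decreasing sequence $E = Z_0 \geq Z_1 \geq Z_2 \geq \cdots$ of $\nnormb_E$-block subspaces as follows. Given $Z_{K-1}$, apply the hypothesis to $Z = Z_{K-1}$ with the constant $K^2+1$, obtaining a $\nnormb_{Z_{K-1}}$-block subspace $Z_K$ on which player I has a winning strategy in the game $\hast$ with constant $K^2+1$ for every $\nnormb_{Z_K}$-block subspace $Y$. Lemma \ref{lemma3.7} (with $K^2$ in the role of $K$ and $\varepsilon = 1$) then gives: for every $\nnormb_{Z_K}$-block subspace $Y$ there are successive intervals $(\hat I_j)_j$ of the basis of $Z_K$ with $Y \nembast_{K^2} (Z_K, \hat I_j)$. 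Applying Lemma \ref{lemma2.2} to the chain $Z_0 \geq Z_1 \geq \cdots$ now produces a $\nnormb_E$-block sequence $(y_n)_n$ such that, for every $K \geq 1$, $(y_n)_n$ is $\sqrt{K}$-equivalent to a $\nnormb_E$-block sequence $(z_n^K)_n$ of $Z_K$; write $V_K = [z_n^K]_n \leq Z_K$ and let $T_K$ be the induced $\sqrt{K}$-isomorphism $[y_n]_n \to V_K$ sending $y_n$ to $z_n^K$ for all $n$. Set $X := [y_n]_n$; this is the subspace I claim is $\asetf{E}$-tight.

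To verify tightness, fix a $\nnormb_{X}$-block subspace $Y = [y'_m]_m$ and a constant $N$, and choose any $K > N$. The isomorphism $T_K$ carries $Y$ to $Y_K := T_K[Y] = [T_K y'_m]_m$; since $(z_n^K)_n$ is a $\nnormb_E$-block sequence of $Z_K$, heredity of blocks (Proposition \ref{subsetofblocks}) makes $Y_K$ a genuine $\nnormb_{Z_K}$-block subspace of $Z_K$, so the level-$K$ construction yields intervals $(\hat I_j)_j$ with $Y_K \nembast_{K^2} (Z_K, \hat I_j)$. I would then define $(I^{(N)}_j)_j$ as the successive intervals of the index set of $(y_n)_n$ obtained by grouping each $n$ with the $\hat I_j$ that meets $\supp{Z_K}{z_n^K}$ (passing to a successive subsequence of intervals as in step 4 of Lemma \ref{lemma3.7}). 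Suppose, for contradiction, that $Y \embast_N (X, I^{(N)}_j)$, witnessed by some $A \in \eneinf$ and a sequence in $\asetf{X} \cap [y_n : n \notin \cup_{i \in A} I^{(N)}_i]$ that is $N$-equivalent to $(y'_m)_m$. Applying $T_K$, using that $T_K$ preserves the admissible set (Proposition \ref{propadmpro}(ii)) and acts index-by-index, the image lies in $\asetf{V_K} \cap [z_n^K : n \notin \cup_{i \in A} I^{(N)}_i]$ and, absorbing the two factors $\sqrt{K}$ introduced by $T_K$, is $NK$-equivalent to $(T_K y'_m)_m$; hence $Y_K \embast_{NK} [z_n^K : n \notin \cup_{i \in A} I^{(N)}_i]$. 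By the grouping, the latter span sits inside $[x_m^K : m \notin \cup_{i \in A} \hat I_i]$, so the observation following Definition \ref{defiAembed} gives $Y_K \embast_{NK} (Z_K, \hat I_j)$; as $NK < K^2$ this contradicts $Y_K \nembast_{K^2} (Z_K, \hat I_j)$. Thus $Y \nembast_N (X, I^{(N)}_j)$, and Lemma \ref{lema3.8} yields intervals $(J_j)_j$ with $Y \nembast (X, J_j)$, so $X$ is $\asetf{E}$-tight.

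The main obstacle I anticipate is exactly this last transfer: reconciling the interval structure of the basis of $Z_K$ (in which the level-$K$ non-embedding is phrased) with that of $(y_n)_n$ (in which tightness of $Y$ in $X$ must be witnessed), together with the constant bookkeeping. The factor $\sqrt{K}$ appears twice under $T_K$, so an $N$-embedding in $X$ becomes an $NK$-embedding in $V_K \leq Z_K$; this is why the level-$K$ non-embedding must be arranged at a faster-growing constant, and choosing $K^2$ at level $K$ with $K > N$ makes $NK < K^2$ and closes the argument. A few routine points remain to be checked — that the pulled-back $(I^{(N)}_j)_j$ are genuinely successive intervals, that the witnessing set $A$ respects the ``containing $0$'' convention of Definition \ref{nota1} (cf. the remark following it), and that $[z_n^K : n \notin \cup_{i \in A} I^{(N)}_i]$ is a $\nnormb_{Z_K}$-block subspace — all of which follow from the heredity properties recorded in Propositions \ref{subsetofblocks} and \ref{admblock}.
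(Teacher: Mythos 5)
Your proposal is correct and takes essentially the same approach as the paper's own proof: it iterates the hypothesis together with Lemma \ref{lemma3.7} to build a decreasing chain of $\nnormb_E$-block subspaces with non-embedding at constant $K^2$ at level $K$, diagonalizes the chain via Proposition \ref{lemma2.2}, transfers each level-$K$ non-embedding to the diagonal subspace through the index-preserving $\sqrt{K}$-equivalences (translating the intervals between the two index structures), and concludes with Lemma \ref{lema3.8}. The only deviations are cosmetic: the paper takes $N=K$ (so the constants combine as $\sqrt{K}\cdot K\cdot\sqrt{K}=K^2$) rather than $N<K$, and it realizes the interval translation by straddling, i.e.\ choosing index intervals $[n_j,m_j]$ with $\supp{X_K}{z^{K}_{n_j}} < I^{K}_{k_j} < \supp{X_K}{z^{K}_{m_j}}$, which sidesteps the empty/overlapping-group issue that your support-grouping construction must repair.
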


\begin{proof}
    Let $E$ and $(\nnormb_E, \asetf{E})$ be as in the hypothesis. The idea of the proof is to construct inductively a sequence $X_0 \geq X_1 \geq X_2\geq ...$ of $\nnormb_E$-block subspaces and corresponding sequences $(I_j^K)_j$ of successive intervals
    such that for all $V \leq X_K$,
    $V \nembast_{K^2} (X_K,I_j^K)$. Once constructed, we will use Proposition \ref{lemma2.2} to obtain the desired $\nnormb_E$-block subspace.
    
    Consider $X_0 = E$  and let $\varepsilon > 0$. 
Assuming defined  $X_0 \geq X_1 \geq ...\geq X_n$,
    and applying the hypothesis to $X_n$, there is a $\nnormb_{X_n}$-block subspace $X_{n+1} \leq X_n$ such that for every $\nnormb$-block subspace $Y\leq X_{n+1}$ and for all $\varepsilon > 0$, $I$ has a winning strategy for the game $H_{Y, X_{n+1}}^{\aset}$ with constant $(n+1)^2+\varepsilon$. By Lemma \ref{lemma3.7}, 
    for every $\nnormb_{X_{n+1}}$-block subspace $V \leq X_{n+1}$, there are intervals $I_j$ 
    for which $V \nembast_{(n+1)^2} (X_{n+1},I_j)$. 
    
    
    Applying  Lemma \ref{lemma2.2} to the sequence
    $$X_0 \geq ...\geq X_K \geq...,$$ 
     we find a $\nnormb_E$-block subspace $X_{\infty} = [x_{n}^{\infty}]_n\leq X_0 = E$, such that for each $K \geq 1$ there is a $\nnormb_{X_K}$-block sequence $(z_{n}^{K})_n$  with $Z_K = [z_{n}^{K}]_n \leq X_K$ such that   
    \begin{equation}\label{eq2lemma3.9}
        (x_{n}^{\infty})_n \sim_{\sqrt{K}} (z_{n}^{K})_n 
    \end{equation}
    
    Let $Y= [y_n]_n \leq X_{\infty}$ be a $\nnormb_E$-block subspace of $ X_{\infty}$. For each $K\geq 1$ there exists a $\nnormb_{Z_K}$-block subspace $V_K = [v_{n}^{K}]_n$ (using the form of the isomorphism given in Equation \eqref{eq2lemma3.9} and $(ii)$ in Proposition \ref{subsetofblocks}) such that 
    \begin{equation}\label{eq3lemma3.9}
       \baseyn \sim_{ \sqrt{K}}  (v_{n}^{K})_n,
    \end{equation}
    and for such $V_K$ we may by construction find $(I_{j}^K)_j$ such that: 
    \begin{equation}\label{eqaux6}
        V_K \nembast_{K^2} (X_{K},I_{j}^K).     
    \end{equation}
    
    \underline{Claim:}  There are successive intervals $(J_{j}^K)_j$ such that 
    \begin{equation}\label{eq1lemma3.9}
        V_K \nembast_{K^2} (Z_{K},J_{j}^K).
    \end{equation}
    
    \begin{proof} (of the claim)
        Let $(n_j)_j$ and $(m_j)_j$ be increasing sequences from $\ene$, such that for each $j \in \ene$ we have
    \begin{itemize}
        \item $n_j < m_j < n_{j+1}$,
        \item there is $k_j>0$ with
    $$\supp{X_K}{z_{n_j}^K} < I_{k_j}^K < \supp{X_K}{z_{m_j}^K}.$$
    \end{itemize}
   
    Let $J_{j}^K = [n_j, m_j]$, for each $j \in \ene$. Such sequences $(n_j)_j$ and $(m_j)_j$ exist because each $I_{j}^K$ and $\supp{X_K}{z_{j}^K}$ are finite subsets. Notice that for each $A \in \eneinf$ we have 
    \begin{equation}\label{eqaux71}
        [z_{n}^K : n \notin \bigcup_{j \in A} J_{j}^K] \subseteq [x_{n}^K: n \notin  \bigcup_{j \in A} I_{k_j}^K].
    \end{equation}
    Now, suppose that there is $B \in \eneinf$ such that 
    $$V_K \embast_{K^2} [z_{n}^K : n \notin \bigcup_{i \in B} J_{i}^K].$$
    
    Then, there is $\basewn \in \asetf{E} \cap [z_{n}^K : n \notin \bigcup_{i \in B} J_{i}^K] $ such that $(v_{n}^K)_n \sim_{K^2} \basewn$. By Equation \eqref{eqaux71}, $\basewn \in \asetf{E} \cap [x_{n}^K: n \notin  \bigcup_{j \in B} I_{k_j}^K] $, then 
    $$V_K  \embast_{K^2} [x_{n}^K: n \notin \cup_{j \in A}  I_{j}^K],$$
    where $A=\{k_j: j\in B\}$, which contradicts Equation \eqref{eqaux6}.
    \end{proof}

    Now, we will show that $Y \nembast_{K} (X_\infty,J_{j}^K)$: suppose, on the contrary, that $Y \embast_{K} (X_\infty,J_{j}^K)$, then, there is $A \in \eneinf$ with $0 \in A$ and a sequence $\basewn \in \asetf{E} \cap [x_{n}^{\infty}: n \notin \cup_{i \in A}J_{j}^K]$ such that 
    \begin{equation}\label{eq.3.9.ynwn}
        \baseyn \sim_K \basewn.
    \end{equation}
    
    Recall that $Y$ and each $Z_K$ are $\nnormb_E$-block subspaces. By the isomorphism given in Equation \eqref{eq2lemma3.9} and using ii) in Proposition \ref{propadmpro}, we can find $(u_{n}^{K})_n \in Z_{K}^\omega$ (image of $\basewn$ by such isomorphism), such that $ (u_{n}^{K})_n \in \asetf{E}\cap [z_{n}^{K}: n \notin \cup_{i \in A}J_{j}^K]$ and 
    \begin{equation}\label{eq.3.9.unwn}
         (u_{n}^{K})_n \sim_{\sqrt{K}} \basewn.
    \end{equation}

    Then, using the Equations \eqref{eq3lemma3.9}, \eqref{eq.3.9.ynwn} and \eqref{eq.3.9.unwn}, we obtain
    
    \begin{equation}
       (v_{n}^{K})_n    \sim_{ \sqrt{K}} \baseyn \sim_K \basewn  \sim_{\sqrt{K}}   (u_{n}^{K})_n.
    \end{equation}
    
    Thus, $(v_{n}^{K})_n \sim_{K^2}  (u_{n}^{K})_n$, which means that 
    $$[v_{n}^{K}]_n = V_K \embast_{K^2} [z_{n}^{K}: n \notin \bigcup_{i \in A}J_{j}^K]. $$
    This contradicts Equation \eqref{eq1lemma3.9}.

    We have proved that, for every $Y \leq X_\infty$ and  for every $K  \geq 1$, there is a sequence of successive intervals $(J_{j}^{K})_j$, such that $Y \nembast_K (X_\infty,J_{j}^{K})$. Using Lemma \ref{lema3.8} there exists a sequence of successive intervals $(L_{i}^{Y})_i$ such that 
    $$Y \nembast (X_\infty,L_{j}^{Y}),$$
    which finishes our proof. 
\end{proof}

\section{Games for minimality}\label{sec:gamesforminimality}

\begin{defi}\label{gast}
     Let $E$ be a Banach space with normalized basis $\baseen$, $\nnormb_E$ be a set of blocks and $\asetf{E}$ an admissible set for $E$. Suppose $L$ and $M$ are $\nnormb_E$-block subspaces of a Banach space $E$ and $C\geq 1$ a constant. We define the asymptotic game $G_{L,M}^\aset$ with constant $C$ between players $I$ and $II$ taking turns as follows. In the $(i+1)$-th round, $I$ chooses a subspace $E_i \subseteq L$, spanned by a finite $\nnormb_L$-block sequence, a not necessarily normalized $\nnormb_L$-block  $u_i \in E_0 + ... + E_i$, and a natural number $m_i$. On the other hand, $II$ plays for the first time an integer $n_0$, and in all successive rounds $II$ plays a subspace $F_i$ spanned by a finite $\nnormb_M$-block sequence, a not necessarily normalized $\nnormb_M$-block vector $v_i \in F_0 + ...+ F_i$ and an integer $n_{i+1}$. 
    
    For a move to be legal we demand that $n_i \leq E_i$, $m_i \leq F_i$ and that for each play in the game, the chosen vectors $u_i$ and $v_i$ satisfy $ (u_0, ..., u_i) \in \asetfin{E}$ and $(v_0, ..., v_i) \in\asetfin{E}$. We present the following diagram:

    \begin{tabular}{ c c c c c c c c }
        \bf{I} & &  & $n_0 \leq E_0 \subseteq L$ &  & $n_1 \leq E_1 \subseteq L$ &  & ... \\ 
         & &  & $u_0 \in E_0$, $m_0$  &  & $u_1 \in E_0 + E_1$, $m_1$ &  &  \\
         & &  &   &  & $(u_0, u_1) \in \asetfin{E}$ &  &  \\
         & &  &  & &  & &  \\
        \bf{II} &  &$n_0$  & &  $m_0 \leq F_0 \subseteq M$  & & $m_1 \leq F_1 \subseteq M$  & ...\\
          &  &  & &  $v_0 \in F_0$, $n_1$ &  & $v_1 \in F_0+F_1$, $n_2$\\
            &  &  & &   &  & $(v_0, v_1) \in \asetfin{E}$
    \end{tabular}

    The sequences $(u_i)_i$ and $(v_i)_i$ are the outcome of the games and we say that $II$ wins the game $G_{L,M}^\aset$ with constant $C$, if $(u_i)_i \sim_C (v_i)_i$.

\end{defi}

In $\gast$ with constant $C$, players $I$ and $II$ must choose $\aset_E$-block subspaces and vectors in $\asetfin{E}$, in contrast to block subspaces and any block vectors as was defined for the game $G_{X,Y}$ with constant $C$ in \cite{FerencziRosendal1}. Also, notice that in the game $G_{L,M}^\aset$ stated in Definition \ref{gast} the outcome  $(u_i)_i$ and $(v_i)_i$ belong to $\asetf{E}$, since for each $n \in \ene$, we have $(u_i)_{i\leq n}, (v_i)_{i\leq n} \in \asetfin{E}$ and $\asetf{E}$ is closed in $(\nnormb_E)^\omega$.

In addition, since the relation of two sequences being equivalent is closed, we know that if $\overrightarrow{p}$ is a legal run in such game satisfying that  every finite stage of $\overrightarrow{p}$ is a finite stage of a run where $II$ wins the game $\gast$ with constant $C$, then $\overrightarrow{p}$ itself is a run where $II$ wins the game $\gast$ with constant $C$. In this sense we say that the winning condition is closed for the player $II$. The next lemma relates the games $\hast$ and $\gast$ with same constant.

\begin{lemma}\label{lemma3.12}
   Let $E$ be a Banach space with normalized basis $\baseen$, $\nnormb_E$ be a set of blocks and $\asetf{E}$ an admissible set for $E$. If $X$ and $Y$ are $\nnormb_E$-block subspaces of $E$ such that player $II$ has a winning strategy for the game $\hast$ with constant $C$, then $II$ has a winning strategy for the game $\gast$ with constant $C$.
\end{lemma}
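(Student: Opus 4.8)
The plan is to have Player ${\rm II}$ play $\gast$ while secretly running, in the background, a play of $\hast$ in which she follows her winning strategy, and to convert ${\rm I}$'s moves by means of the operation $\ast_Y$. Fix a winning strategy $\sigma$ for ${\rm II}$ in $\hast$ with constant $C$. Fed with any sequence of integers as the moves of Player ${\rm I}$, $\sigma$ returns a (not necessarily block) sequence $(w_j)_j \in \asetf{X}$ of $\nnormb_X$-blocks with $(w_j)_j \sim_C \baseyn$; informally $w_j$ is a copy inside $X$ of the basis vector $y_j$ of $Y$. Player ${\rm II}$'s answer in $\gast$ to a $\nnormb_Y$-block $u_i = \sum_{j \in \supp{Y}{u_i}} \lambda_j^i y_j$ produced by ${\rm I}$ will be the block $v_i = \sum_{j \in \supp{Y}{u_i}} \lambda_j^i w_j$, so that the two outcomes are related by $(v_i)_i = (u_i)_i \ast_Y (w_j)_j$.

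First I would fix the bookkeeping that lets the two games run in step. In $\gast$ Player ${\rm II}$ chooses the integers $n_0, n_1, \dots$ bounding from below the subspaces $E_k \subseteq Y$ of Player ${\rm I}$; by always answering with $n_{k+1} > \max \supp{Y}{E_k}$ she forces $E_0 < E_1 < \cdots$, so that the indices in $\bigcup_k \supp{Y}{E_k}$ increase and $\supp{Y}{u_i} \subseteq \supp{Y}{E_0 \cup \dots \cup E_i} \subseteq \{0,\dots,N_i\}$, where $N_i := \max \supp{Y}{E_0 \cup \dots \cup E_i}$. On the other side Player ${\rm I}$ chooses the integers $m_i$ bounding from below the subspaces $F_i \subseteq X$ that ${\rm II}$ must play; these are exactly the data ${\rm II}$ feeds to the simulated $\hast$ as the moves of its Player ${\rm I}$. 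Concretely, Player ${\rm II}$ starts the simulation only after $m_0$ is revealed, with a first move $\geq m_0$, and in round $i$ she advances the simulation through as many rounds as are needed to obtain every block $w_j$ with $j \leq N_i$ not yet produced, always feeding moves exceeding both $m_i$ and the ranges already used. She then lets $F_i$ be a finite $\nnormb_X$-block subspace with $m_i \leq F_i$ covering the newly occupied range of $X$; the invariant that $F_0 + \dots + F_i$ contains the whole portion of $X$ used by the simulation so far guarantees that $v_i \in F_0 + \dots + F_i$ is a legal move.

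It then remains to verify the winning condition and the admissibility of ${\rm II}$'s moves. For the first, $(w_j)_j \sim_C \baseyn$ says precisely that the linear map $T \colon y_j \mapsto w_j$ is an isomorphism onto $[w_j]_j$ with $\| T \| \, \| T^{-1} \| \leq C$; since $T(u_i) = v_i$ one has $\sum_i a_i v_i = T(\sum_i a_i u_i)$ for all scalars $(a_i)$, whence $(u_i)_i \sim_C (v_i)_i$, which is the winning condition of $\gast$. For the second, the full run yields $(u_i)_i \in \asetf{Y} \subseteq \asetf{E}$ and, by the winning property of $\sigma$, $(w_j)_j \in \asetf{X} \subseteq \asetf{E}$; as $(u_i)_i \ast_Y \baseyn = (u_i)_i$, Proposition \ref{equivc} applied to the block subspace $Y$ and the sequences $\baseyn, (w_j)_j \in \asetf{E}$ gives $(v_i)_i = (u_i)_i \ast_Y (w_j)_j \in \asetf{E}$, hence $(v_i)_i \in \asetf{X}$ and each initial segment satisfies the admissibility constraint $(v_0, \dots, v_i) \in \asetfin{E}$.

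The point demanding the most care is the synchronization of the two middle paragraphs: $\hast$ delivers its $X$-blocks indexed by the basis of $Y$, whereas in $\gast$ Player ${\rm I}$ may, in a single round, reveal a block $u_i$ whose $Y$-support reaches an index $N_i$ far beyond the round counter. Player ${\rm II}$ must therefore run the simulated $\hast$ at a variable speed---producing within one round of $\gast$ all the $w_j$ needed to express $u_i$---while still respecting the lower bounds $m_i$ on the $X$-side and keeping every initial segment inside $\asetfin{E}$. Once this is arranged, the equivalence and the admissibility follow mechanically from the isomorphism $T$ and from Proposition \ref{equivc}.
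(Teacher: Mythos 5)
Your proposal is correct and follows essentially the same route as the paper's proof: Player ${\rm II}$ simulates a run of $\hast$ under her winning strategy, transfers Player ${\rm I}$'s blocks by the operation $\ast_Y$ (so that the two outcomes carry the same coefficients, giving $(u_i)_i \sim_C (v_i)_i$), and obtains membership of the outcome in $\asetf{X}$ from condition $c)$ of Definition \ref{admpro} (Proposition \ref{equivc}) together with the closedness of $\asetf{E}$. The only divergences are bookkeeping: the paper has ${\rm II}$ play $n_j=0$ and re-create the simulated $\hast$-run at every round by repeating the integers $m_j$ (rather than running a single simulation at variable speed, with successiveness of the $E_k$'s forced through the $n_k$'s), and it certifies the finite-stage legality $(v_0,\dots,v_i)\in\asetfin{E}$ at each round via condition $d)$ of Definition \ref{admpro} instead of post hoc from the completed run; these variants are interchangeable.
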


\begin{proof}
    Let $E$, $\nnormb_E$, $\asetf{E}$ and $C\geq 1$ be as in the hypothesis. Suppose that $X= [x_n]_n$ and $Y=[y_n]_n$ are $\nnormb_E$-block subspaces. We shall exhibit the move of player $II$ after $i$ rounds in the game $\gast$ with constant $C$, and we will prove that such moves determine a winning strategy for $II$ in the game $\gast$ with constant $C$.  
    For each $i$ (even $i=0$), suppose player $I$ has played $i$ times, and we have the following stage in the game $\gast$:
    
    \begin{tabular}{ c c c c c c c c }
        \bf{I} & &   & $0 \leq E_0 \subseteq Y$ & & ...&  & $0 \leq E_i \subseteq Y$     \\ 
         & &  & $u_0 \in E_0$, $m_0$  & &   & & $u_i \in E_0 + ...+ E_i$, $m_i$    \\
          & &  &  & &   & & $(u_0, ..., u_i) \in \asetfin{E}$    \\
         & &  &  & &  & &  \\
        \bf{II} &  &$0$  & &  $m_0 \leq F_0 \subseteq X$  & ... & $m_{i-1} \leq F_{i-1} \subseteq X$  & \\
          &  &  & &  $v_0 \in F_0$, $0$ &  & $v_{i-1} \in F_0 +...+F_{i-1}$, $0$&\\
            &  &  & &   &  & $(v_0, ..., v_{i-1}) \in \asetfin{E}$&\\
    \end{tabular}
    
    Notice that without loss of generality we are asking to player $II$ to play $n_j = 0$ for all $j$ (which may do so since then  player $I$ has more possibilities to play and makes the game more difficult for II). Let us write each block vector $u_j$ as $\sum_{k=0}^{k_j}\lambda_{k}^{j} y_k$, for all $j \leq i$. We can assume that $k_{j-1} < k_{j}$, for all $j \leq i$. 
    
    Consider the following run in the game $\hast$:
    
    \begin{tabular*}{\textwidth}{ p{1mm} p{3mm} p{7mm} p{1mm} p{3mm} p{10mm} p{1mm} p{3mm} p{24mm} p{1mm} p{3mm} p{8mm} p{3mm} p{8mm}p{3mm} p{8mm} p{1mm} p{3mm}}
        \bf{I} & $m_0$ & & ... &$m_0$ & &  ...  & $m_i$ & & ...&  $m_i$   & & $m_i$& & $m_i$& &...&   \\ 
        \bf{II} &   & $p_0, w_0$ & ... & & $p_{k_0}, w_{k_0}$  & ... & &  $p_{k_{i-1}+1}, w_{k_{i-1}+1}$&...  & & $p_{k_{i}}, w_{k_{i}}$ & & $q_0, {w'}_0$ & & $q_1, {w'}_1$ &...\\
    \end{tabular*}

    where $I$ consecutively plays  $m_0$ the first $(k_0 + 1)$-times, then consecutively plays $m_j$ for $(k_j - k_{j-1})$-times, for any $j \in \{1, ..., i\}$, and then he plays $m_i$ constantly. Meanwhile, $II$ moves according to her winning strategy for the game $\hast$ with constant $C$, which, by using the properties of ${\mathcal A}$, guarantees that 
    $$w':= (w_0, ..., w_{k_i}, {w'}_0, {w'}_1, ...) \in \asetf{X}= \asetf{E} \cap X^\omega.$$ 
    
    Since $(u_0, ..., u_i) \in \asetfin{E}$, by condition $d)$ in Definition \ref{admpro}, we have that there is $(t_n)_n \in Y^\omega$ such that $u' = (u_0, ..., u_i, {t}_0, t_1, ...) \in \asetf{Y} = \asetf{E} \cap Y^\omega$.
    Notice that $u' \ast_Y \baseyn = u'\in \asetf{E}$ and $\baseyn \in bb_\nnormb(E) \subseteq \asetf{E}$, thus, using condition $c)$ of Definition \ref{admpro}, we have 
    $$v':= u' \ast_Y w'\in \asetf{E} \cap X^\omega = \asetf{X}. $$
    
    If $v' = ({v'}_j)_j$, then it follows from the inductive construction that:
    \begin{itemize}
        \item ${v'}_j = {v}_j$, for $j <i$,
        \item ${v'}_i = \sum_{k=0}^{k_i} \lambda_{k}^{i} w_k $,
        \item $ ({v'}_0, ..., {v'}_i ) \in \asetfin{X}$.
    \end{itemize}
    
    Set $v_i := {v'}_i $ and 
    $$F_i = X[m_i, \max\{p_{k_{i-1}+1}, ..., p_{k_i}\}].$$
    
    Therefore, $(v_0, ..., v_i ) \in \asetfin{X}$,   $v_i \in F_0 + ... +F_i$, with  $m_i \leq F_i \subseteq X$. This means that $(F_i, v_i, 0)$ is a legal position for $II$ to play in the game $\gast$ with constant $C$ in its $(i+1)$-th round.

    Suppose that we have continued with the game, where $II$ have played by using the previously  procedure in every round, and we have obtained the outcome: $(u_i)_i$ (which $I$ played) and $(v_i)_i$ (which $II$ played). 
    
    Using the closedness condition i) in Proposition \ref{propadmpro}, $\ui$ and $\vi$ are in $\asetf{E}$ (each initial part is in \asetfin{E}). Since $(u_i)_i $ and $(v_i)_i$ are defined with the same coefficients over $(y_i)_i$ and $(w_i)_i$, respectively, we have that $(u_i)_i \sim_C (v_i)_i$.
    Hence, we have showed the moves that $II$ can do in each round to win the game. Consequently, $II$ has a winning strategy for the game $\gast$ with constant $C$. 
\end{proof}

\subsection{An Auxiliar minimal game}

\begin{defi}
    Let $E$ be a Banach space with normalized basis $\baseen$ and $\nnormb_E$ be a set of blocks for $E$. We denote by $\blocksubspaces_E$ the set of subspaces of $E$ generated by a finite $\nnormb_E$-block sequence. 
\end{defi}

\begin{defi}\label{state}
     Let $E$ be a Banach space with normalized basis $\baseen$ and $\nnormb_E$ be a set of blocks for $E$. A state $s$ is a pair $(a,b)$ with $a,b \in (\nnormb_E \times \blocksubspaces_E)^{<\omega}$, such that if $a = (a_0, A_0, ..., a_i, A_i)$ and $b = (b_0, B_0, ..., b_j, B_j)$, then $j = i$ or $j = i-1$. Let us denote by $\mathbf{S}_E$ the (countable) set of states. 
\end{defi}

\begin{obs}
    Let $E$ be a Banach space with normalized basis $\baseen$, $\nnormb_E$ be a set of blocks and $\asetf{E}$ an admissible set for $E$. Take $M$ and $L$ two $\nnormb_E$-block subspaces and $C\geq 1$. Consider the game $G_{L,M}^{\aset}$ with constant $C$. If we forget the integers $m_i'$'s played by $I$ and $n_i$'s played by $II$ in such game, then the set $\mathbf{S}_E$ contains the set of possible positions after a finite number of runs were played.
\end{obs}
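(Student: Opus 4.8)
The plan is to verify that the forgetful image of any legal position of the game $\gastml$ with constant $C$ meets the two requirements that define a state in Definition \ref{state}: that both coordinates are finite sequences in $(\nnormb_E\times\blocksubspaces_E)^{<\omega}$, and that the number of completed moves of the two players differs by at most one, in the prescribed way. As this is essentially a bookkeeping check, I would read off precisely what each player contributes and then confirm the membership and the length conditions in turn.

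First I would isolate and classify the data. In the $(i+1)$-th round player $I$ contributes a pair $(u_i,E_i)$ with $u_i$ a $\nnormb_L$-block and $E_i$ spanned by a finite $\nnormb_L$-block sequence, while player $II$ contributes a pair $(v_i,F_i)$ with $v_i$ a $\nnormb_M$-block and $F_i$ spanned by a finite $\nnormb_M$-block sequence. Forgetting the integers $m_i$ and $n_i$, the position is recorded as $a=(u_0,E_0,\dots)$ and $b=(v_0,F_0,\dots)$. Since $L$ and $M$ are $\nnormb_E$-block subspaces, Proposition \ref{subsetofblocks} yields $\nnormb_L=\nnormb_E\cap L\subseteq\nnormb_E$ and $\nnormb_M=\nnormb_E\cap M\subseteq\nnormb_E$, so each $u_i$ and each $v_i$ lies in $\nnormb_E$; likewise a finite $\nnormb_L$-block (resp.\ $\nnormb_M$-block) sequence is in particular a finite $\nnormb_E$-block sequence, hence $E_i,F_i\in\blocksubspaces_E$. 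Thus $a,b\in(\nnormb_E\times\blocksubspaces_E)^{<\omega}$.

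Finally I would check the length condition and conclude. In $\gastml$ player $II$ opens with the integer $n_0$, which carries no block data, and thereafter the players alternate, with $I$ always moving first in each round. Consequently, after $I$'s move in round $i$ the sequence $a$ has entries indexed $0,\dots,i$ while $b$ has entries indexed $0,\dots,i-1$, so $j=i-1$; after $II$'s response $b$ acquires the index $i$, so $j=i$. In either case $j\in\{i,i-1\}$, exactly as demanded by Definition \ref{state}, and therefore the position lies in $\mathbf{S}_E$. I do not expect any genuine obstacle here: the only point requiring care is the indexing that forces the $j=i$ or $j=i-1$ dichotomy, which is dictated by the turn order of the game, together with the heredity of $\nnormb_E$-blocks supplied by Proposition \ref{subsetofblocks}.
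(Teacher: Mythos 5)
Your proposal is correct, and since the paper states this as a remark without any accompanying proof, your direct bookkeeping verification — checking that the block data lies in $\nnormb_E \times \blocksubspaces_E$ and that the turn order forces $j = i$ or $j = i-1$ — is precisely the intended argument. The only cosmetic point is that the inclusion $\nnormb_L \subseteq \nnormb_E$ is immediate from the definition $\nnormb_L := \nnormb_E \cap L$ rather than a consequence of Proposition \ref{subsetofblocks}, but this does not affect the validity of your argument.
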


\begin{defi}
    Let $E$ be a Banach space with normalized basis $\baseen$, $\nnormb_E$ be a set of blocks and $\asetf{E}$ an admissible set for $E$. Let $M$ and $L$ be two $\nnormb_E$-block subspaces and $C\geq 1$. We say that the state $s = ((a_0, A_0, ..., a_i, A_i),(b_0, B_0, ... b_j, B_j))  \in  \mathbf{S}_E$ is valid for the game  $G_{L,M}^{\aset}$ with constant $C$ if, and only if, the finite sequences $(a_0, ..., a_i), (b_0, ..., b_j) \in \asetfin{E}$.
\end{defi}

\begin{defi}
    Let $E$ be a Banach space with normalized basis $\baseen$, $\nnormb_E$ be a set of blocks and $\asetf{E}$ an admissible set for $E$. Let $M$ and $L$ be two $\nnormb_E$-block subspaces and $C\geq 1$. Consider $s \in \mathbf{S}_E$ a valid state for the game $G_{L,M}^{\aset}$ with constant $C$. We define the game $G_{L,M}^{\aset}(s)$ as the game $G_{L,M}^{\aset}$ with constant $C$ in which the vectors and finite subspaces in the state $s$ have been played in the initial rounds. That is: if $s=(a,b)$ with   $a = (a_0, A_0, ..., a_i, A_i)$ and $b = (b_0, B_0, ..., b_i,B_i)$ then the game $G_{L,M}^{\aset}(s)$ goes like follows:

\begin{tabular}{ c c c c c c  }
    \bf{I} & &  & $n_{i+1} \leq E_{i+1} \subseteq L$ &   & ... \\ 
     & &  & $u_{i+1} \in A_0 + ...+ A_i + E_{i+1}$, $m_{i+1}$  &   &  \\
     & & & $((a_0,..., a_i, u_{i+1}) \in \asetfin{E}$) &   &    \\
     & &  &  & & \\
     \bf{II} &  &$n_{i+1}$  & &  $m_{i+1} \leq F_{i+1} \subseteq M$  &   ...\\
     &  &  & &  $v_{i+1} \in B_0 + ... +B_i + F_{i+1}$, $n_{i+1}$ &  \\
     &  &  & & ($(b_0,..., b_i, v_{i+1}) \in \asetfin{E}$)   &  
\end{tabular}

The outcome of the game will be given by the pair of infinite sequences $(a_0,..., a_i, u_{i+1}, ...)$ and $(b_0,..., b_i, v_{i+1}, ...) $.

If $s=(a,b)$ with   $a = (a_0, A_0, ..., a_i, A_i)$ and $b = (b_0, B_0, ..., b_{i-1},B_{i-1})$ then the game $G_{L,M}^{\aset}(s)$ goes like follows:

\begin{tabular}{ c c c c c c  }
    \bf{I} & & $m_{i}$  &  &  $n_{i} \leq E_{i+1} \subseteq L$ & ... \\ 
     & &  &   &  $u_{i+1} \in A_0 + ... +A_i + E_{i+1}$, $m_{i+1}$ &  \\
     & & &  &  $((a_0,..., a_i, u_{i+1}) \in \asetfin{E}$ ) &    \\
     & &  &  & & \\
     \bf{II} &  & &$m_{i} \leq F_{i} \subseteq M$  &   &   ...\\
     &  &  & $v_{i} \in B_0 + ...+ B_i + F_{i}$, $n_{i}$ &  &  \\
     &  &  & ($(b_0,..., b_{i-1}, v_{i}) \in \asetfin{E}$) &   &  
\end{tabular}

The outcome of the game will be given by the pair of infinite sequences $(a_0,..., a_i, u_{i+1}, ...)$ and $(b_0,..., b_i, v_{i+1}, ...) $. We say that player $II$ wins the game $G_{L,M}^\aset(s)$ with constant $C$ if $(a_0,..., a_i, u_{i+1}, ...) \sim_C (b_0,..., b_i, v_{i+1}, ...)$.    
\end{defi}

\section{Tight-minimal dichotomies}\label{proofofteor}

Now we are ready to prove our main result, Theorem \ref{teor3.13}.
\begin{proof}[Proof of Theorem \ref{teor3.13}]
    
    Let $E$, $\baseen$ and $(\nnormb_E, \asetf{E})$ be as in the hypothesis. We shall prove that if no  $\nnormb_E$-block subspace is   $\asetf{E}$-tight, then there is a $\nnormb_E$-block subspace which is  $\asetf{E}$-minimal.
    
    If $E$ fails to have an $\asetf{E}$-tight subspace then by Lemma \ref{lema3.9} there are a  $\nnormb_E$-block subspace $Z$ of $E$ and a constant $C\geq 1$  such that for every $\nnormb_Z$-block subspace $X$ of $Z$ there is a further $\nnormb_X$-block subspace $Y$ of $X$ such that $I$ has no winning strategy for the game $\hast$ with constant $C$. If we prove that $Z$ has a  $\asetf{E}$-minimal $\nnormb_E$-block subspace the proof will be completed. So, without loss of generality we can suppose that $Z=E$. 
    
    Summing up, we are supposing that for every $\nnormb_E$-block subspace $X$ there is a further $\nnormb_X$-block subspace $Y \leq X$ such that $I$ has no winning strategy for the game $\hast$ with constant $C$. Since the game $\hast$ with constant $C$ is determined, we can conclude that for any $\nnormb_E$-block subspace $X$, there is a $\nnormb_X$-block subspace $Y$ such that $II$ has a winning strategy for the game $\hast$ with constant $C$.

    Let $\tau: bb_\nnormb(E) \rightarrow \mathbbm{P}(\mathbf{S})$ be defined by
    \begin{eqnarray*}
    s \in \tau(M) &\Leftrightarrow& \exists L \; \nnormb_M\text{-block subspace such that player $II$ has a winning strategy for } \\
    & & G_{L,M}^{\aset}(s)  \text{ with constant }C. 
    \end{eqnarray*}
    
    First observe that the elements of $\tau(M)$ are valid states for the game $\gastml$ and that $\tau(M)$ is non-empty for each $M\leq E$ $\nnormb_E$-block subspace: we already saw that there is $L \leq M$ a $\nnormb_M$-block subspace such that $II$ has a winning strategy for the game $\hastml$ with constant $C$, and by Lemma \ref{lemma3.12},  $II$ has a winning strategy for the game $\gastml$ with constant $C$. Then, it is possible to define a valid state $s=(a,b)$, with $b$ being chosen following the winning strategy for II, such that player $II$ has a winning strategy for $\gastml(s)$ and such $s \in \tau(M)$.

    Consider now $M' \leq^\ast M $ a $\nnormb_E$-block subspace and $s \in \tau(M')$. Therefore, there is $L'\leq M'$ a $\nnormb_{M'}$-block subspace such that $II$ has a winning strategy for the game $G_{L', M'}^\aset(s)$ with constant $C$. Since player $I$ can always choose finite subspaces $E_i$'s in $L'$ inside of $M$ and choose integers $n_i$'s large enough to force player $II$ to play in $M'$ and inside of $M$ (the game $G_{L', M'}^\aset$ is asymptotic, in the sense that it does not depend on the first coordinates), it follows that it is possible to find a $\nnormb_M$-block subspace $L\leq M$ such that $II$ has a winning strategy for the game $G_{L, M}^\aset(s)$ with constant $C$. Therefore, $s \in \tau(M)$, and we conclude that $\tau(M') \subseteq \tau(M)$. 
    
    By Lemma \ref{lema3.14} there is a $\nnormb_E$-block subspace $M_0\leq E$ which is stabilizing for $\tau$, i.e. $\tau(M_0) = \tau(M')$, for all $M' \leq^\ast M$  $\nnormb_M$-block subspace.
    
    Define  $\rho: bb_\nnormb(M_0) \rightarrow \mathbbm{P}(\mathbf{S})$ by setting 
    
    $$s \in \rho(L) \Leftrightarrow  \text{ player } II \text{ has a winning strategy for } G_{L,M_0}^{\aset}(s) \text{ with constant }C. $$
    
    Notice that there is a $\nnormb_{M_0}$-block subspace $L \leq M_0$ such that $\rho(L) \neq \emptyset$ (the same justification was used to show that $\tau(M) \neq \emptyset$, for every   $\nnormb_E$-block subspace $M\leq E$), so $\rho$ is a non-trivial function. As we did before, set $L' \leq^\ast L $ a  $\nnormb_{M_0}$-block subspace and $s \in \rho(L)$. If player $II$ has a winning strategy for $G_{L,M_0}^\aset(s)$ then, by the asymptoticity of the game (same previous argument for $\tau$), $II$ has a winning strategy for $G_{L',M_0}^\aset(s)$, so $s \in \rho(L_0')$. Thus $\rho$ is decreasing. We can apply  Lemma \ref{lema3.14} to $\rho$, to find a stabilizing $\nnormb_{M_0}$-block subspace $L_0$ of $M_0$ for $\rho$. Additionally, we obtain that
    \begin{equation}\label{taurho}
        \rho(L_0) = \tau(L_0) = \tau(M_0).
    \end{equation}
    
    Let us prove Equation \eqref{taurho}: since $L_0 \leq M_0$ and $M_0$ stabilizes $\tau$,  $\tau(M_0) = \tau(L_0)$. If $s \in \rho(L_0)$, then player $II$ has a winning strategy for $G_{L_0, M_0}^{\aset}(s)$, which means that $s \in \tau(M_0)$, so $\rho(L_0) \subseteq \tau(M_0)$. If $s \in \tau(M_0) = \tau(L_0)$, then there is some $L'\leq L_0$  $\nnormb_{L_0}$-block subspace such that $II$ has a winning strategy for $G_{L',L_0}^{\aset}(s)$. Since $L_0 \leq M_0$, in particular $II$ has a winning strategy for the game  $G_{L',M_0}^{\aset}(s)$ with constant $C$. Thus, $s \in \rho(L') = \rho(L_0)$ because $L_0$ is  stabilizing for $\rho$.

    \begin{claim}
        For every  $\nnormb_{L_0}$-block subspace $M$, $II$ has a winning strategy for the game $G_{L_0, M}^{\aset}$ with constant $C$.
    \end{claim}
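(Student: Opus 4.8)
The plan is to prove the slightly stronger statement that for every $\nnormb_{L_0}$-block subspace $M$ and every state $s\in\tau(M_0)$, player $II$ has a winning strategy for $\ga{L_0}{M}(s)$ with constant $C$; taking $s$ to be the empty state $s_0$ then gives the Claim. I would describe a strategy for $II$ in $\ga{L_0}{M}$ that keeps the running state inside the stabilized set $\tau(M_0)=\rho(L_0)$ of Equation \eqref{taurho}. For the base case, applying the standing hypothesis to $X=M_0$ yields a $\nnormb_{M_0}$-block subspace $L'\leq M_0$ for which $II$ wins $\ha{L'}{M_0}$ with constant $C$, and Lemma \ref{lemma3.12} upgrades this to a winning strategy for $II$ in $\ga{L'}{M_0}$ with constant $C$; hence $s_0\in\tau(M_0)=\rho(L_0)$, the source being $L'$.

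For the inductive step I would argue that, from any state $s\in\tau(M_0)$, player $II$ can answer an arbitrary move of $I$ in $L_0$ by a move in $M$ that returns the state to $\tau(M_0)$, and here the two stabilizations play complementary roles. Since $M\leq^\ast M_0$ and $M_0$ stabilizes $\tau$, we have $\tau(M)=\tau(M_0)$, so $s\in\tau(M)$ provides a source $L\leq M$ together with a winning strategy for $II$ in $\ga{L}{M}(s)$; this is what lets $II$ produce \emph{actual} answers inside $M$, keeping each initial segment of her play legal in the sense of Definition \ref{gast}. On the other hand, $s\in\rho(L_0)$ furnishes a winning strategy for $II$ in $\ga{L_0}{M_0}(s)$, whose source is exactly $L_0$; this is what absorbs the move of $I$, which is played in $L_0$ and need not lie in $L\leq M$. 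Concretely, $II$ feeds $I$'s $L_0$-move into her $\ga{L_0}{M_0}(s)$-strategy to obtain a virtual answer in $M_0$, reaching a position still in $\rho(L_0)=\tau(M_0)=\tau(M)$, and then realizes an actual answer in $M$ via the $\tau(M)$-strategy. The equality $\tau(M)=\tau(M_0)$, together with $M\subseteq L_0\subseteq M_0$ and the fact that both strategies are built with the \emph{same} constant $C$, is what guarantees that this passage from the virtual $M_0$-answer to the actual $M$-answer preserves win-continuability and leaves the new state in $\tau(M_0)$.

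Finally, since every initial segment of the resulting run is a legal position from which $II$ still wins, and the winning condition of $\ga{L_0}{M}$ is closed for $II$ (as noted after Definition \ref{gast}), maintaining the invariant through all rounds produces a full run won by $II$, which proves the Claim; the Claim in turn gives the $\asetf{E}$-minimality of $L_0$, since letting $I$ enumerate the normalized basis of $L_0$ forces $II$ to build a sequence of $\asetf{M}$ that is $C$-equivalent to it, so $L_0\embast M$ for every $\nnormb_{L_0}$-block subspace $M$. I expect the main obstacle to be precisely the synchronization in the inductive step: reconciling the source $L_0$, in which $I$ is allowed to play, with the sources $L\leq M$ supplied by $\tau(M)$, while keeping a single $C$-equivalence between the outcome in $L_0$ and the outcome in $M$ rather than composing two equivalences (which would only yield constant $C^2$). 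It is exactly here that the double stabilization and Equation \eqref{taurho} are indispensable.
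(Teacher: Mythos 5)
Your proof of the Claim is correct and follows essentially the same route as the paper: the empty state is placed in $\tau(M_0)=\rho(L_0)$ via the standing hypothesis together with Lemma \ref{lemma3.12}; the invariant ``current state lies in $\tau(M_0)$'' is maintained by alternating the two readings of Equation \eqref{taurho} --- the $\rho(L_0)$-reading (the game $\ga{L_0}{M_0}(s)$, whose source $L_0$ absorbs player $I$'s moves in $L_0$) and the $\tau(M)=\tau(M_0)$-reading (which supplies a source $L\leq M$ and hence legal answers for $II$ inside $M$) --- and the full run is won because the winning condition is closed for player $II$. The only cosmetic difference is your ``virtual answer in $M_0$'', which is never needed and should not be thought of as being ``converted'' into the $M$-answer: all that is used is that $II$'s winning strategy for $\ga{L_0}{M_0}(s)$ survives $I$'s legal move, so that the uneven state is again in $\rho(L_0)=\tau(M)$, and the actual answer in $M$ then comes from a fresh strategy attached to $\tau(M)$, exactly as you in substance do.

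One error in your closing remark, which concerns the use of the Claim rather than the Claim itself: player $I$ cannot ``enumerate the normalized basis of $L_0$'' in $\ga{L_0}{M}$, because legality forces $n_i\leq E_i$ and $u_i\in E_0+\cdots+E_i$, so $I$'s vectors must be supported increasingly far out along the basis. The paper instead lets $I$ play the outcome of $II$'s winning strategy in $\ha{Y}{L_0}$, where $Y\leq L_0$ is the subspace provided by the hypothesis; this gives $(u_i)_i\sim_C (y_i)_i$, the Claim gives $(v_i)_i\in\asetf{M}$ with $(u_i)_i\sim_C(v_i)_i$, and hence $Y\embast_{C^2}M$ for every $\nnormb_{L_0}$-block subspace $M$. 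So the $\asetf{E}$-minimal subspace produced is $Y$, with constant $C^2$, not $L_0$ with constant $C$.
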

    \begin{proof}[Proof of the claim]
    Fix $M$ a $\nnormb_{L_0}$-block subspace. The idea of the proof of this claim is to show inductively  that for each valid state $s$ from which player $II$ has a winning strategy for the game $G_{L_0, M}^{\aset}(s)$ with constant $C$, there is another state $s'$ which ``extends'' it such that player $II$ has a winning strategy for the game  $G_{L_0, M}^{\aset}(s')$. Then, to use the fact that the winning condition is closed for player $II$  to justify that $II$ has a winning strategy for the game. This method was used  by A. Pelczar in \cite{pelczar} and we are using it in the same way that V. Ferenczi and Ch. Rosendal did  in \cite{FerencziRosendal1}. 
    
    First, let us prove that $(\emptyset, \emptyset) \in \tau(L_0)$. We know that there is a $\nnormb_{L_0}$-block subspace $Y$ such that $II$ has a winning strategy for the game $H_{Y, L_0}^\aset$ with constant $C$. From Lemma \ref{lemma3.12} follows that $II$ has a winning strategy for the game $G_{Y, L_0}^\aset$ with constant $C$, and, by definition of $\tau$, this means that $(\emptyset, \emptyset) \in \tau(L_0)$. Now, we will show that:
    \begin{itemize}
        \item[$(i)$]  For all valid states for the game $G_{L_0, M}^{\aset}(s)$
        $$s=((u_0, E_0, ..., u_i, E_i), (v_0. F_0, ..., v_i, F_i)) \in \tau(L_0),$$
        there is an $n$ (which player $II$ can play), such that for any subspace $E$ spanned by a finite $\nnormb_{L_0}$-block sequence of $L_0$ with support greater than $n$, and any $u \in E_0 + ... + E_i + E$ such that $(u_0, ..., u_i, u) \in \asetfin{E}$ (that is, any move that player $I$ could do in his $(i+1)$-th round in the game $G_{L_0, M}^{\aset}(s)$, disregarding the integer $m_{i+1}$), we have 
        $$((u_0, E_0, ..., u_i, E_i, u, E), (v_0. F_0, ..., v_i, F_i)) \in \tau(L_0).$$
        
        \item[$(ii)$] For any $((u_0, E_0, ..., u_{i+1}, E_{i+1}), (v_0. F_0, ..., v_i, F_i)) \in \tau(L_0)$, and for all $m$, there are $m \leq F$ subspace spanned by a finite $\nnormb_M$-block sequence and $v \in F_0 + ... +F_i + F$ with $(v_0, ..., v_i, v) \in \asetfin{E} $ (which is a legal move that $II$ can play), such that
        $$((u_0, E_0, ...,u_{i+1}, E_{i+1}), (v_0. F_0, ..., v_i, F_i, v, F)) \in \tau(L_0).$$
        This will be the case in which both players has played $(i+1)$-rounds and player $I$ has played in his (i+1)th-move $(E_{i+1}, u_{i+1}, m)$, and it corresponds to player $II$ make a legal move. 
    \end{itemize}

    Let us prove statement $i)$. Suppose that 
    $$s=((u_0, E_0, ..., u_i, E_i), (v_0. F_0, ..., v_i, F_i)) \in \tau(L_0)$$
    By Equation \eqref{taurho}, $II$ has a winning strategy for $G_{L_0, M_0}^\aset(s)$, which means that there is $n$ such that for every  subspace $n \leq E \subseteq L_0$ spanned by a finite $\nnormb_{L_0}$-block sequence and $u \in E_0 + ... + E_i +E$, $II$ has a winning strategy for the game  $G_{L_0, M_0}^\aset(s')$, where
    $$s'=((u_0, E_0, ..., u_i, E_i, u, E), (v_0. F_0, ..., v_i, F_i)).$$
    So, $s' \in \rho(L_0) = \tau(L_0)$.
     
    To prove $ii)$, suppose 
    $$((u_0, E_0, ..., u_{i+1}, E_{i+1}), (v_0. F_0, ..., v_i, F_i)) \in \tau(L_0)$$
    and $m$ is given. Then, as $M\leq L_0 \leq M_0$ and $\tau(M) = \tau(L_0)$, $II$ has a winning strategy for  $G_{L, M}^\aset(s)$, for some $\nnormb_{M}$-block subspace $L\leq M$. Thus, there are $F\leq M$ with $m \leq F$ and $v \in F_0 + ...+F_i +F$ such that $II$ has a winning strategy for  $G_{L, M}^\aset(s')$, where
     $$s'= ((u_0, E_0, ...,u_{i+1}, E_{i+1}), (v_0. F_0, ..., v_i, F_i, v, F))$$
    So, $s' \in \tau(M) = \tau(L_0)$.
    
    Starting at state $(\emptyset, \emptyset) \in \tau(L_0)$ and following inductively those two steps, we can obtain a sequence of states $(s_i)_i$ such that each $s_i \in \tau(L_0)$ is the initial part of the following one $s_{i+1} \in \tau(L_0)$. We can define a strategy for the player $II$ as follows:

    Since $(\emptyset, \emptyset) \in \tau(L_0)$, using $i)$ there is $n_0$ such that whenever $m_0$, $E_0  \leq L_0$ and $u_0 \in E_0$ such that $n_0 \leq E_0$, played by $I$ , we have
    $$((u_0,E_0), \emptyset) \in \tau(L_0).$$
    
    Let $\sigma((\emptyset, \emptyset)) = (n_0)$. Using $ii)$, there is $F_0 \leq M$ and $v_0 \in F_0$ such that 
    
    $$((u_0,E_0),(v_0, F_0)) \in \tau(L_0).$$
    
    Again using $i)$, there is $n_1$ such that whenever $m_1$, $E_1 \leq L_0$ and $u_1 \in E_0+E_1$ such that $n_1 \leq E_1$, played by I, we have
    $$((u_0,E_0, u_1, E_1), (v_0, F_0)) \in \tau(L_0).$$
    Let $\sigma((E_0, u_0, m_0))=(F_0, v_0,n_1)$. Following this process inductively, supposing that player $I$ in the $(k+1)$-th round has played $(E_k, u_k, m_k)$, using $ii)$ there is $F_k\leq M$ and $v_k \in F_0 + ... + F_k$ such that $m_k \leq F_n$ and 
    
    $$((u_0, E_0, ...,u_{k}, E_{k}), (v_0. F_0, ..., v_k, F_k)) \in \tau(L_0).$$
    
    Using $i)$ there is $n_{k+1}$ such that whatever $m_{k+1}$, $E_{k+1}  \leq L_0$ and $u_{k+1} \in E_0 + ... + E_{k+1}$ such that $n_{k+1} \leq E_{k+1}$, played by I, we have
    $$((u_0, E_0, ..., u_{k+1}, E_{k+1}), (v_0. F_0, ..., v_k, F_k)) \in \tau(L_0).$$
    Let $\sigma((E_0, u_0, m_0, ..., E_k, u_k, m_k)) = (F_k, v_k, n_{k+1})$. $\sigma$ is a strategy for $II$ to play in the game  $G_{L_0, M}^{\aset}$ with constant $C$. 
    
    Let $\overrightarrow{p} = (n_0, E_0, u_0, m_0, F_0, v_0, n_1, ...) $ be a legal run of the game  $G_{L_0, M}^\aset$ where $II$ follows the strategy $\sigma$. So, every finite stage $(n_0, E_0, u_0, m_0, F_0, v_0, n_1, ..., E_i,u_i, m_i, F_i, v_i, n_{i+1})$ of $\overrightarrow{p}$ determines the state $s_i = ((u_0, E_0, ...,u_{i}, E_{i}), (v_0. F_0, ..., v_i, F_i)) \in \tau(L_0)  = \rho(L_0)$, which satisfy that player $II$ has a winning strategy for the game $G_{L_0, M_0}^\aset(s_i)$. By construction of $\sigma$, $II$ actually plays in $M\leq L_0 \leq M_0$, so for every $i \in \ene$ $II$ has a winning strategy for the game $G_{L_0, M}^\aset(s_i)$. 
    
    Therefore, for every $i \in \ene$, $p_i$ is a finite stage of a legal run in the game $G_{L_0, M}^\aset$ with constant $C$ where $II$ wins. So, $\overrightarrow{p}$ is a run in the game $G_{L_0, M}^\aset$ with constant $C$ where $II$ wins. Thus, $\sigma$ is a winning strategy for II.
    \end{proof}
    
    Returning to the proof of the theorem: for $L_0$ there is a $\nnormb_{L_0}$-block subspace $Y=[y_n]_n$ such that $II$ has a winning strategy for the game $H_{Y, L_0}^\aset$ with constant $C$. We finish the proof by showing that for every $\nnormb_{L_0}$-block subspace $M\leq L_0$, $Y \embast_{C^2} M$.
    
    Since $II$ has a winning strategy for $H_{Y,L_0}^\aset$ with constant $C$, player $I$ can produce in the game $G_{L_0,M}^\aset$ a sequence $(u_i)_i \in \asetf{L_0}$ such that $(u_i)_i \sim_C (x_i)_i$.
    That is, in each round of the game $G_{L_0, M}^\aset$, player $I$ can choose the pair $(0, u_i)$, where each $u_i$ is obtained by the moves of $II$ in $H_{Y,L_0}^\aset$. By the Claim, $II$ has a winning strategy for the game  $G_{L_0, M}^\aset$ for producing $(v_i)_i \in \asetf{M}$, such that $(u_i)_i \sim_C (v_i)_i$.
    By transitivity $(x_i)_i \sim_{C^2} (v_i)_i$, therefore $Y \embast_{C^2} M$, which ends the proof. 
\end{proof}

\begin{obs} It is interesting to note that our theorem always provides us with a uniform version of ${\mathcal A}$-minimality, namely, there is a constant $K$ such that
$Y$ ${\mathcal A}$-embeds with constant $K$ into any ${\mathcal D}$-block subspace of $Y$. This fact was well-known for usual minimality, i.e. every minimal space must be $K$-minimal for some $K$.
\end{obs}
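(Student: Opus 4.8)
The plan is to read the uniformity directly off the proof of Theorem~\ref{teor3.13} rather than to run any fresh argument. The crucial observation is that the constant controlling the $\asetf{E}$-minimality we produce is fixed \emph{before} any particular target block subspace is named, so it cannot depend on that target. First I would trace where the constant enters: once $E$ has no $\asetf{E}$-tight $\nnormb_E$-block subspace, Lemma~\ref{lema3.9} hands us a single $C\geq 1$ and a $\nnormb_E$-block subspace (renamed $E$) with the property that for every $\nnormb_E$-block subspace $X$ there is a further $Y\leq X$ on which player $I$ has no winning strategy in $\hast$ with constant $C$; by determinacy player $II$ then wins with that same $C$. This $C$ is carried unchanged through the stabilization of $\tau$ and of $\rho$ and through the Claim, yielding the block subspaces $M_0 \geq L_0$ and the $\nnormb_{L_0}$-block subspace $Y$ for which player $II$ wins $H_{Y,L_0}^{\aset}$ with constant $C$.

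The decisive step is the final paragraph of the proof of Theorem~\ref{teor3.13}: for \emph{every} $\nnormb_{L_0}$-block subspace $M\leq L_0$, player $I$'s transfer of the winning $H_{Y,L_0}^{\aset}$-play into $G_{L_0,M}^{\aset}$ together with player $II$'s response supplied by the Claim produces an outcome $(v_i)_i\in\asetf{M}$ with $Y\embast_{C^{2}} M$. Because $C$ is chosen once and for all and is independent of $M$, the constant $K:=C^{2}$ is uniform. I would then specialize $M$ to range over the $\nnormb_Y$-block subspaces of $Y$; these are in particular $\nnormb_{L_0}$-block subspaces of $L_0$ since $Y\leq L_0$ (heredity of the block structure, Proposition~\ref{subsetofblocks}), so the inequality applies to each of them. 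This gives simultaneously that $Y$ is $\asetf{E}$-minimal and that $Y\embast_{K} M$ for every $\nnormb_Y$-block subspace $M$ of $Y$ with the single constant $K=C^{2}$, which is exactly the asserted uniform $\asetf{E}$-minimality.

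The only genuine point to verify, and the main obstacle, is that nothing along the way silently reintroduces dependence on $M$. The functions $\tau,\rho$ and the stabilizing subspaces $M_0,L_0$ are all built using $C$ alone, and the final estimate collapses to $C^{2}$ precisely because it is the composition of two $C$-equivalences — the transfer giving $(u_i)_i\sim_C$ the basis of $Y$, and the Claim giving $(u_i)_i\sim_C(v_i)_i$ — each with the fixed constant $C$. For the closing, classical remark that an ordinary minimal space is automatically $K$-minimal for some $K$, the standard route is a Baire-category/diagonalization argument: the sets $\{\,Z : Y\hookrightarrow_K Z\,\}$ of block subspaces are analytic and increase with $K$ to the whole Polish space of block sequences, so on some block subspace one of them is comeager, and a diagonalization upgrades this to a single working constant. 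The content of the remark is that our theorem furnishes $K=C^{2}$ outright and so bypasses this separate category argument entirely.
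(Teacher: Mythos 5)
Your proposal is correct and follows exactly the route the paper intends: the remark has no separate proof, being precisely the observation that the constant $C$ in the proof of Theorem \ref{teor3.13} is fixed by (the negation of) Lemma \ref{lema3.9} before any target subspace $M$ is named, so the final estimate $Y \embast_{C^2} M$ holds with the single constant $K = C^2$ for all $\nnormb_{L_0}$-block subspaces $M \leq L_0$, and in particular for all $\nnormb_Y$-block subspaces of $Y$. Your closing sketch of the folklore fact for usual minimality goes beyond what the paper does (it merely cites it as well-known), but that does not affect the correctness of the argument.
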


\subsection{Corollaries from the A-tight - minimal dichotomy}

As a corollary of Theorem \ref{teor3.13} we obtain the third dichotomy of Ferenczi-Rosendal:

\begin{coro}[Third Dichotomy, \cite{FerencziRosendal1}]\label{corothirddicho}
    Let $E$ be a Banach space with normalized basis $\baseen$, then $E$ contains a tight block subspace or a minimal block subspace.
\end{coro}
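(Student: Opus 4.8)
The plan is to obtain Corollary \ref{corothirddicho} as the special case of Theorem \ref{teor3.13} corresponding to the largest admissible system of blocks, namely the pair $(\nnormbuno_E, (\nnormbuno_E)^\omega)$, where $\nnormbuno_E$ is the set of all nonzero finite $\mathbf{F}_E$-linear combinations of $\baseen$ and the admissible set is the full space $(\nnormbuno_E)^\omega$ of all sequences of such vectors. The first thing I would do is record that this pair really is an admissible system of blocks for $E$: this is exactly the content of Corollary \ref{admisiDD3}, so no verification is needed beyond citing it. Applying Theorem \ref{teor3.13} to this system then yields a $\nnormbuno_E$-block subspace $X$ of $E$ --- that is, an ordinary block subspace --- which is either $(\nnormbuno_E)^\omega$-tight or $(\nnormbuno_E)^\omega$-minimal.

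It then remains only to translate each of the two abstract alternatives back into the classical vocabulary. For the tight alternative I would invoke Proposition \ref{tighteqAtight}, which states that, for this choice of blocks and admissible set, a $\nnormbuno_E$-block basis is $(\nnormbuno_E)^\omega$-tight precisely when it is tight in the usual sense; hence a $(\nnormbuno_E)^\omega$-tight block subspace is a tight block subspace. For the minimal alternative I would invoke Proposition \ref{typesofminimality}(vii), which gives that $X$ is $(\nnormbuno_E)^\omega$-minimal if and only if $X$ is minimal in the classical sense. Combining these two translations with the dichotomy produced by Theorem \ref{teor3.13} gives exactly the desired conclusion: $E$ contains a tight block subspace or a minimal block subspace.

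Since all of the genuine work is already carried by Theorem \ref{teor3.13} and by the propositions identifying the abstract notions with the classical ones, I expect no real obstacle here. The only point deserving care is the identification $Y \embast X \iff Y \hookrightarrow X$ underlying both translations, which rests on the standard small-perturbation argument (noted in Subsection \ref{blocksbeingeverything}) showing that, when $\asetf{E} = (\nnormbuno_E)^\omega$, an $\asetf{E}$-embedding is equivalent to an ordinary isomorphic embedding; once this dictionary is in place, the corollary follows immediately.
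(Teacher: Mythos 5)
Your proposal is correct and is essentially identical to the paper's own proof: both apply Theorem \ref{teor3.13} to the admissible system $(\nnormbuno_E, (\nnormbuno_E)^\omega)$ and then translate the two alternatives via Proposition \ref{tighteqAtight} and Proposition \ref{typesofminimality}(vii). Your version merely spells out the citations (Corollary \ref{admisiDD3} and the perturbation remark) that the paper leaves implicit.
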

\begin{proof}
    In Theorem \ref{teor3.13} consider the admissible system of blocks $(\nnormbuno_E, (\nnormbuno_E)^\omega)$. As we already observed in Proposition \ref{tighteqAtight} and in Proposition \ref{typesofminimality} for this particular admissible set we obtain exactly our thesis.
\end{proof}

\begin{coro}\label{cor3.13A}
    Let $E$ be a Banach space with normalized basis $\baseen$, then $E$ contains a block subspace $X = [x_n]_n$ satisfying one of the following properties:
    \begin{itemize}
        \item[(1)] For any  $[y_n]_n\leq X$, there is a sequence $(I_n)_n$ of successive intervals in $\ene$ such that for any $A \in \eneinf$, $[y_n]_n$ does not embed into $[x_n, n \notin \cup_{i \in A}I_i]$ as a block sequence.
        \item[(2)] $(x_n)_n$ is a block equivalence minimal basis.
    \end{itemize}
\end{coro}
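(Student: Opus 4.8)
The plan is to deduce this corollary directly from the main dichotomy, Theorem \ref{teor3.13}, by feeding it the admissible system of blocks in which the blocks are all finitely supported $\mathbf{F}_E$-linear combinations of the basis and the admissible sequences are block sequences; that is, the pair $(\nnormbuno_E, bb(\nnormbuno_E))$. By Corollary \ref{admisiDD3} this pair is an admissible system of blocks for $E$, so Theorem \ref{teor3.13} applies and yields a $\nnormbuno_E$-block subspace $X=[x_n]_n$ of $E$ that is either $bb(\nnormbuno_E)$-tight or $bb(\nnormbuno_E)$-minimal. The entire remaining work is to translate each alternative into the concrete statements (1) and (2), and this is governed by the way the admissible family $bb$ codes the embedding relation: as recorded in Table \ref{tab:table2}, for this choice $Y \embast Z$ holds exactly when the basis of $Y$ is equivalent to a $\nnormbuno_E$-block sequence of $Z$, i.e. when $Y$ embeds into $Z$ as a block sequence.

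In the minimal case I would simply invoke Proposition \ref{typesofminimality}(v), which states that $X$ is $bb(\nnormbuno_E)$-minimal if and only if $\basexn$ is block equivalence minimal; this is alternative (2) verbatim. In the tight case I would unfold Definition \ref{defiAtight}: $X$ being $bb(\nnormbuno_E)$-tight means that every $\nnormbuno_X$-block subspace $Y$ of $X$ admits successive intervals $(I_i)_i$ with $Y \nembast [x_n : n \notin \cup_{i\in A} I_i]$ for every $A \in \eneinf$. Reading $\nembast$ through the block-sequence interpretation above, this is precisely non-embeddability ``as a block sequence'', which is alternative (1).

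The one point requiring care, and the only genuine step beyond bookkeeping, is that alternative (1) quantifies over \emph{all} block subspaces $[y_n]_n \leq X$, whereas Definition \ref{defiAtight} ranges only over $\nnormbuno_X$-block subspaces (those spanned by finitely supported $\mathbf{F}_E$-combinations). Here I would appeal to the small-perturbation remark following Subsection \ref{blocksbeingeverything}: any normalized block basic sequence $\baseyn$ in $X$ is equivalent to a $\nnormbuno_E$-block sequence $(y_n')_n$ with the same supports. Since $\embast$ is visibly preserved under replacing the domain basis by an equivalent one (any witness $(u_n)_n \in bb(\nnormbuno_E)\cap Z^\omega$ with $(u_n)_n \sim (y_n')_n$ also satisfies $(u_n)_n \sim \baseyn$, up to adjusting the constant), tightness of $[y_n']_n$ transfers to $[y_n]_n$ with the very same intervals $(I_i)_i$; and the constant is irrelevant since statement (1) is interval-based and constant-free. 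Thus the tightness of $X$ over the smaller class $\nnormbuno_X$ upgrades to the full class of block subspaces. I expect no substantial obstacle: the mathematical content is entirely carried by Theorem \ref{teor3.13}, and the corollary is obtained by specializing the admissible system and applying the dictionary of Proposition \ref{typesofminimality} and Table \ref{tab:table2}.
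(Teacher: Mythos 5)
Your proposal is correct and follows essentially the same route as the paper: specialize Theorem \ref{teor3.13} to the admissible system $(\nnormbuno_E, bb(\nnormbuno_E))$ (admissible by Corollary \ref{admisiDD3}) and translate the two alternatives via Definition \ref{defiAtight} and Proposition \ref{typesofminimality}. In fact you are slightly more careful than the paper: you cite the correct item $(v)$ of Proposition \ref{typesofminimality} (the paper's one-line proof says $(vi)$, an off-by-one slip), and you make explicit the small-perturbation step needed to pass from $\nnormbuno_X$-block subspaces to arbitrary block subspaces of $X$ in alternative (1), which the paper leaves implicit.
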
 
\begin{proof}
   In Theorem \ref{teor3.13} consider the admissible system of blocks $(\nnormbuno_E, bb({\nnormbuno}_E)$ and item $(vi)$ of Proposition \ref{typesofminimality}.
\end{proof}

V. Ferenczi and Ch. Rosendal also remarked in \cite{FerencziRosendal1} that the case of block sequences in this theorem implies the main result of A. Pelczar in \cite{pelczar} and an extension of it due to Ferenczi \cite{Fer}.

\begin{coro}\label{cor3.13B}
    Let $E$ be a Banach space with normalized basis $\baseen$, then $E$ contains a block subspace $X = [x_n]_n$ satisfying one of the following properties:
    \begin{itemize}
        \item[(1)] For any  $[y_n]_n$ block basis of $X$, there is a sequence $(I_n)_n$ of successive intervals in $\ene$ such that for any $A \in \eneinf$, $[y_n]_n$ does not embed into $[x_n, n \notin \cup_{i \in A}I_i]$, as a sequence of disjointly supported vectors.
        \item[(2)] For any $[y_n]_n$ block basis of $X$, $(x_n)_n$ is equivalent to a sequence of disjointly supported vectors of $[y_n]_n$.
    \end{itemize}
\end{coro}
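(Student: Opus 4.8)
The plan is to derive this as yet another special case of Theorem \ref{teor3.13}, exactly parallel to Corollaries \ref{corothirddicho} and \ref{cor3.13A}, by feeding in the admissible system of blocks $(\nnormbuno_E, db(\nnormbuno_E))$. This pair is an admissible system by Corollary \ref{admisiDD3}, so the theorem applies and yields a $\nnormbuno_E$-block subspace $X = [x_n]_n$ of $E$ that is either $db(\nnormbuno_E)$-tight or $db(\nnormbuno_E)$-minimal; the remaining work is purely to translate these two alternatives into properties (1) and (2) via the dictionary already assembled in Section \ref{sec:typesofminimality}.

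First I would record the meaning of the relevant embedding: by the $db$-row and $\nnormbuno_E$-column of Table \ref{tab:table2}, the relation $Y \embast X$ for $\asetf{E} = db(\nnormbuno_E)$ says precisely that the basis of $Y$ is equivalent to a sequence of disjointly (finitely) supported vectors of $X$. With this in hand, if $X$ is $db(\nnormbuno_E)$-tight, then Definitions \ref{defiAtight} and \ref{defiYtightenX} furnish, for each $\nnormbuno_X$-block subspace $Y = [y_n]_n$, a sequence of successive intervals $(I_i)_i$ with $Y \nembast [x_n : n \notin \cup_{i \in A} I_i]$ for every $A \in \eneinf$; unwinding the embedding, this is exactly the assertion that $[y_n]_n$ does not embed into $[x_n : n \notin \cup_{i \in A} I_i]$ as a sequence of disjointly supported vectors, which is property (1). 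If instead $X$ is $db(\nnormbuno_E)$-minimal, then item $(vi)$ of Proposition \ref{typesofminimality} translates this directly into property (2): for every block basis $(y_n)_n$ of $X$ there are disjointly supported blocks $(z_n)_n$ of $Y = [y_n]_n$ with $(z_n)_n \sim (x_n)_n$.

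I expect no genuine obstacle, since all the substance has been absorbed into Theorem \ref{teor3.13}, Corollary \ref{admisiDD3}, and Proposition \ref{typesofminimality}. The only point demanding a little care is the matching of quantifiers: the statement ranges over block bases $(y_n)_n$ of $X$, whereas the definitions of $db(\nnormbuno_E)$-tightness and of $db(\nnormbuno_E)$-minimality range over $\nnormbuno_X$-block subspaces. These coincide after the standard small-perturbation argument recalled in Subsection \ref{blocksbeingeverything}, which replaces any normalized block basis of $X$ by a $\nnormbuno_X$-block sequence with the same supports and equivalent basis; so the two formulations of each property are interchangeable, and the corollary follows.
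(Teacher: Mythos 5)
Your proposal is correct and follows essentially the same route as the paper: apply Theorem \ref{teor3.13} to the admissible system $(\nnormbuno_E, db(\nnormbuno_E))$ (admissible by Corollary \ref{admisiDD3}) and translate the two alternatives via Proposition \ref{typesofminimality}. In fact your version is more careful than the paper's one-line proof, which contains typos (it writes ``$ds(\nnormb_E)$'' and cites item $(vii)$ where the correct reference is $db(\nnormbuno_E)$ and item $(vi)$, exactly as you use), and your remark on matching block bases with $\nnormbuno_X$-block sequences via the small-perturbation argument is a legitimate detail the paper leaves implicit.
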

\begin{proof}
    In Theorem \ref{teor3.13} consider the admissible system of blocks $(\nnormbuno_E, ds({\nnormb}_E)$ and item $(vii)$ of Proposition \ref{typesofminimality}.
\end{proof}

Notice that both properties $(1)$ and $(2)$ in Corollary \ref{cor3.13A} and in Corollary \ref{cor3.13B} are incompatible (see Theorem \ref{AtightnoAminimal}).
Corollaries  \ref{cor3.13A} and \ref{cor3.13B} are stated as Theorem 3.16 in \cite{FerencziRosendal1}. In its enunciate is also considered the embedding as a permutation of a block sequence.
%
Nevertheless, as we have already seen in the proofs of this chapter, such kind of embedding corresponds to a non-admissible set (see Proposition \ref{remperm1}). So, the proofs we have presented do not work for the case of the embedding as permutation of a block sequence, and we see no reason to think that last statement is true.

\subsection{Corollaries from the A-tight - minimal dichotomy: subsequences}

We now pass to the case of subsequences, in which we shall see that Ramsey results allow to reduce the number of relevant dichotomies.


\begin{coro}\label{dichosequences3}
    For any basic sequence $\baseen$ in a Banach space, there is $(x_n)_n\preceq \baseen$ satisfying one of the following properties:
    \begin{itemize}
        \item[$(i)$] For any $(y_n)_n \preceq \basexn$ there is a sequence of successive intervals $(I_n)_n$ such that for every $A \in \eneinf$,
        $(y_n)_n$ is permutatively equivalent to no subsequence of $\basexn$ with indices in
        $\ene \setminus \cup_{i \in A}I_i$.
        \item[$(ii)$] $\basexn$ is spreading. 
    \end{itemize}
\end{coro}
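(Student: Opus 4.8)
The plan is to invoke Theorem \ref{teor3.13} for the admissible system of blocks $(\nnormbasis_E, db(\nnormbasis_E))$ and then to absorb the gap between permutative spreading and spreading by a Ramsey (Silver) argument in the spirit of Proposition \ref{typesofminimality}. First I would record that $(\nnormbasis_E, db(\nnormbasis_E))$ really is an admissible system: $db$ is an admissible family by Proposition \ref{exadmset}, and Proposition \ref{sequencecompadmprop} together with Proposition \ref{exadmprop} guarantees that the induced pair is admissible. Applying Theorem \ref{teor3.13} to it yields a $\nnormbasis_E$-block subspace $X=[x_n]_n$, that is a subsequence $\basexn \preceq \baseen$, which is either $db(\nnormbasis_E)$-tight or $db(\nnormbasis_E)$-minimal.

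In the tight case I would simply unwind the definitions. Being $db(\nnormbasis_E)$-tight means that every subsequence $\baseyn$ of $\basexn$ admits successive intervals $(I_n)_n$ with $\baseyn \nembast [x_n : n \notin \cup_{i \in A} I_i]$ for all $A \in \eneinf$; since, according to Table \ref{tab:table2}, a $db(\nnormbasis_E)$-embedding of $\baseyn$ is precisely a permutative equivalence with a subsequence, this is exactly statement $(i)$. So the whole difficulty is concentrated in the minimal case.

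Assume then that $\basexn$ is $db(\nnormbasis_E)$-minimal. By Proposition \ref{typesofminimality}$(ii)$ this is equivalent to $\basexn$ being permutatively spreading, so every subsequence of $\basexn$ is permutatively equivalent to $\basexn$. The goal is to extract a further subsequence which is (order-)spreading, giving $(ii)$. Following the scheme of Proposition \ref{typesofminimality}, I would look at the set
\[
\mathcal{C} := \{ M \in \eneinf : (x_n)_{n \in M} \sim (x_n)_n \},
\]
which, exactly as there, is analytic in $\eneinf$ (it is the projection of a Borel set coding $C$-equivalence for varying $C$). By Silver's Theorem \ref{silver} there is $H \in \eneinf$ with $[H]^\infty \subseteq \mathcal{C}$ or $[H]^\infty \cap \mathcal{C} = \emptyset$. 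If $[H]^\infty \subseteq \mathcal{C}$, then $(x_n)_{n \in H}$ is order-equivalent to each of its subsequences, hence spreading, and $(x_n)_{n\in H} \preceq \baseen$ is the sought subsequence.

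The main obstacle is to discard the homogeneous alternative $[H]^\infty \cap \mathcal{C} = \emptyset$, i.e. to upgrade the permutative equivalences furnished by $db(\nnormbasis_E)$-minimality to genuine order equivalences along a subsequence. Concretely, the key Ramsey reduction to establish is that a permutatively spreading basic sequence has a subsequence order-equivalent to it — equivalently, that every subsequence of such a sequence contains a further subsequence order-equivalent to the whole, which is exactly the density hypothesis excluding the bad Silver case in the argument imported from \cite{onaquestion}. I expect this to require exploiting the uniformity of the constant supplied by the dichotomy (see the Remark following the proof of Theorem \ref{teor3.13}) in order to stabilise, along a subsequence, the permutations witnessing permutative equivalence and thereby force them to be the identity on ever-longer initial segments. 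Once this reduction is secured the bad alternative cannot occur, the spreading subsequence is produced, and the dichotomy follows; note finally that $(i)$ and $(ii)$ are mutually exclusive, since $db(\nnormbasis_E)$-tightness implies $bb(\nnormbasis_E)$-tightness and hence, by Theorem \ref{AtightnoAminimal}, precludes the $bb(\nnormbasis_E)$-minimality characterising spreading.
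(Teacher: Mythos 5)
Your reduction to Theorem \ref{teor3.13} with the system $(\nnormbasis_E, db(\nnormbasis_E))$, the unwinding of the tight alternative as statement $(i)$, and the identification of the minimal alternative with ``permutatively spreading'' via Proposition \ref{typesofminimality}$(ii)$ all coincide with the paper's argument. But there is a genuine gap at exactly the point you flag yourself: you never prove that a permutatively spreading basic sequence admits a spreading subsequence, and the Silver setup you chose cannot deliver it. With $\mathcal{C} = \{M \in \eneinf : (x_n)_{n\in M} \sim \basexn\}$, Silver's Theorem \ref{silver} does give a homogeneous $H$, but to exclude the bad alternative $[H]^\infty \cap \mathcal{C} = \emptyset$ you must already know that every subsequence of $\basexn$ contains a further subsequence \emph{order}-equivalent to $\basexn$ --- which is precisely a reformulation of the statement you are trying to establish. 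The hypothesis of $db(\nnormbasis_E)$-minimality only supplies \emph{permutative} equivalences, and nothing in your argument converts a permutation into an order-preserving injection; the proposed ``stabilisation of the permutations to the identity on ever-longer initial segments'' is a hope rather than an argument (the witnessing permutations need not fix any initial segment, and uniformity of the constant gives no control over them).

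The paper closes this step by a different mechanism, which is the real content of the corollary beyond the formal application of the dichotomy: it uses the fact that every permutatively spreading basis admits a spreading subsequence, which follows either from the techniques of \cite{Bourgain} or from the spreading-model argument written out (in the harder, signed case) in the proof of Corollary \ref{dichosequencessignos3}. Concretely: let $(f_n)_n$ be a spreading model of $\basexn$; uniform permutative spreading (the uniform constant $C$ coming from the remark after Theorem \ref{teor3.13}) yields for each $n$ a linear order $\leq_n$ on $\{1,\dots,n\}$ with $(x_1,\dots,x_n) \sim_C (f_1,\dots,f_n)_{\leq_n}$; a compactness argument produces a single linear order $\leq$ on $\ene$ with $\basexn \sim_C (f_n)_{\leq}$; and Ramsey's theorem for pairs gives an infinite $N$ on which $\leq$ is either the usual order or its reverse, so that $(x_n)_{n\in N}$ is $C$-equivalent to a spreading sequence and hence is itself spreading. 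To complete your proof you would need to reproduce this argument (or cite \cite{Bourgain}) in place of the Silver dichotomy; as written, the minimal case is not closed.
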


\begin{proof}
    In Theorem \ref{teor3.13} consider the admissible system of blocks $(\nnormbasis_E, db_{\nnormbasis}(E))$. The result follows from item $(ii)$ in Proposition \ref{typesofminimality}, with ``permutatively spreading" as a result of (ii). Additionally we use the fact that every permutatively spreading basis admits a spreading subsequence. This fact follows either from the techniques of \cite{Bourgain}, or from a proof similar (and simpler) to the next corollary (Corollary \ref{dichosequencessignos3}).
\end{proof}

       


\begin{coro}\label{dichosequencessignos3}
    For any basic sequence $\baseen$ in a Banach space, there is a subsequence  $(x_n)_n$ of $\baseen$ which satisfies one of the following properties:
    \begin{itemize}
        \item[$(i)$] For any  subsequence  $(y_n)_n $ of $\basexn$ there is a sequence of successive intervals $(I_n)_n$ such that for every $A \in \eneinf$, we have that $\baseyn$ is permutatively signed equivalent to no subsequence of $\basexn$ with indices in 
        $\ene \setminus  \cup_{i \in A}I_i$.

        \item[$(ii)$]  $(x_n)_n$ is signed equivalent to a spreading sequence. 
        
    \end{itemize}
\end{coro}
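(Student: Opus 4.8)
The plan is to invoke the main dichotomy (Theorem \ref{teor3.13}) for the admissible system of blocks $(\nnormbasis^\pm_E, db(\nnormbasis^\pm_E))$, which is admissible by the corollary following Proposition \ref{signedadmissible}. This produces a $\nnormbasis^\pm_E$-block subspace $X=[x_n]_n$ that is either $db(\nnormbasis^\pm_E)$-tight or $db(\nnormbasis^\pm_E)$-minimal. Since $x_n=\delta_n e_{k_n}$ for signs $\delta_n$ and an increasing sequence $(k_n)_n$, and since signed (permutative) equivalence is insensitive to flipping the sign of individual basis vectors, we may replace $(x_n)_n$ by the genuine subsequence $(e_{k_n})_n$ without affecting any of the relevant properties; thus $(x_n)_n$ may be taken to be a subsequence of $\baseen$. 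First I would dispose of the tight alternative: unwinding Definition \ref{defiAtight}, a $\nnormbasis^\pm_X$-block subspace is exactly a signed subsequence of $(x_n)_n$, and a $db(\nnormbasis^\pm)$-embedding is exactly permutative signed equivalence to a subsequence; hence $db(\nnormbasis^\pm_E)$-tightness of $X$ is literally property $(i)$, every subsequence $\baseyn$ being permutatively signed equivalent to no subsequence indexed outside $\cup_{i\in A}I_i$.

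If instead $X$ is $db(\nnormbasis^\pm_E)$-minimal, then by item $(iv)$ of Proposition \ref{typesofminimality} the sequence $(x_n)_n$ is signed permutatively spreading, and by the remark following the proof of Theorem \ref{teor3.13} this holds with a \emph{uniform} constant $K$: any two subsequences of $(x_n)_n$ are $K$-signed-permutatively equivalent. It remains to produce a subsequence of $(x_n)_n$ that is signed equivalent to a spreading sequence. Here I would first record the reduction that, since an equivalence constant transfers verbatim to every subsequence, being signed equivalent to a spreading sequence is the same as admitting signs $(\epsilon_n)_n$ for which $(\epsilon_n x_n)_n$ is genuinely spreading. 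This extraction is the heart of the matter and is where the Ramsey machinery of Section \ref{sec:preliminares} enters, exactly as in the proof of item $(iv)$ of Proposition \ref{typesofminimality}.

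For the extraction I would proceed in two stages. In the first stage I would stabilize the finite signed types of $(x_n)_n$: for each length $\ell$ and each tolerance drawn from a fixed countable dense set, the map sending an increasing $\ell$-tuple to the approximate signed behaviour of $(x_i)_i$ on that tuple is a Borel (indeed clopen up to the tolerance) colouring of $[\ene]^\ell$, so by iterating the Galvin--Prikry Theorem \ref{galvinprikry} and diagonalizing I obtain a subsequence on which all these finite types are constant; on it the norm of a signed combination depends, up to arbitrarily small error, only on the scalars and the chosen signs and not on the positions of the indices. In the second stage I would use the uniform $K$-signed-permutative equivalence to pin down a single coherent sign sequence $(\epsilon_n)_n$ and to upgrade the merely asymptotic equivalence of the first stage to a genuine one: since every subsequence is already $K$-signed-permutatively equivalent to the whole sequence, the stabilized finite types force $(\epsilon_n x_n)_n$ to be $K'$-equivalent to each of its subsequences, that is, spreading. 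Framing the choice of signs as a projection over $\{-1,1\}^\omega$ of a Borel set, exactly as the Borel set in the proof of item $(iv)$ of Proposition \ref{typesofminimality} is handled, lets Silver's Theorem \ref{silver} select the homogeneous set on which the signs may be taken of constant type.

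The hard part will be the second stage: converting signed permutative equivalence into signed equivalence with no permutation. The permutation freedom must be eliminated simultaneously with a globally coherent choice of signs, and one must ensure that the stabilized finite data, which only control the sequence up to small perturbations, combine with the genuine (non-asymptotic) equivalence coming from signed permutative spreading to yield an honestly spreading sequence rather than merely a spreading model. Once this subsequence is in hand it is signed equivalent to a spreading sequence, giving alternative $(ii)$; the two alternatives are mutually exclusive by Theorem \ref{AtightnoAminimal}.
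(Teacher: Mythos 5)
You follow the paper exactly up to the two alternatives: Theorem \ref{teor3.13} applied to the system $(\nnormbasis^{\pm}_E, db(\nnormbasis^{\pm}_E))$, the identification of the tight alternative with property $(i)$, and Proposition \ref{typesofminimality}$(iv)$ together with the uniformity remark after Theorem \ref{teor3.13}, giving that $(x_n)_n$ is signed permutatively spreading with a uniform constant $K$. The genuine gap is in the final extraction -- producing a subsequence that is signed equivalent to an honestly spreading sequence -- which you yourself flag as ``the hard part'' but never carry out. Your stage 1 (iterated Galvin--Prikry stabilization of finite signed types) can only yield the spreading-model property: the norm of a signed combination becomes position-independent up to $\varepsilon$ only when the indices involved are large relative to the length and the tolerance; no amount of further subsequence extraction makes \emph{initial} segments position-independent, so the claim that on the stabilized subsequence ``the norm depends only on the scalars and the chosen signs and not on the positions'' is not achievable by stage 1 alone. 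Your stage 2 then asserts that the uniform $K$-signed-permutative equivalence combined with the stabilized types ``forces'' $(\epsilon_n x_n)_n$ to be equivalent to all its subsequences, but no mechanism is given for eliminating the permutations: Silver's Theorem \ref{silver}, as you propose to use it, produces a homogeneous set for an analytic subset of $[\ene]^\infty$, which neither converts an unknown permutation into the identity nor interacts with the choice of signs in the way you need.

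The paper closes exactly this gap with three concrete moves that are absent from your proposal. First, it uses the spreading model $(f_n)_n$ of $\basexn$ as a fixed target: the uniform signed permutative equivalence of the initial segment $(x_1,\ldots,x_n)$ with far-out finite subsequences (which are near-isometric to $(f_1,\ldots,f_n)$) gives, for every $n$, signs $(\varepsilon^k_n)_{k\le n}$ and a linear order $\leq_n$ on the integers with $(\varepsilon^1_n x_1,\ldots,\varepsilon^n_n x_n)\sim_C (f_1,\ldots,f_n)_{\leq_n}$; crucially this controls initial segments, not merely asymptotic ones, which is what defeats the spreading-model-versus-spreading obstruction you mention. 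Second, a compactness/diagonal argument over $\{-1,1\}^\omega$ and the (compact) space of linear orders extracts a single infinite sign sequence $(\varepsilon_n)_n$ and a single linear order $\leq$ with $(\varepsilon_n x_n)_n\sim_C(f_n)_{\leq}$. Third -- and this is the device that kills the permutation -- Ramsey's theorem for pairs applied to $\leq$ yields an infinite set $N$ on which $\leq$ coincides with the usual order or with its reverse; in the first case $(\varepsilon_n x_n)_{n\in N}$ is $C$-equivalent to the spreading sequence $(f_n)_n$, and in the second case to the reversed sequence, which is again spreading. Without this order-straightening step (or an equivalent substitute) your argument does not close; everything preceding the extraction is sound and agrees with the paper.
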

\begin{proof}
    In Theorem \ref{teor3.13} consider the admissible system of blocks $(\nnormbasis^{\pm}_E, db({\nnormbasis}^\pm_E)$. The result follows from item $(iv)$ in Proposition \ref{typesofminimality}, with ``$\basexn$ signed permutatively equivalent" as the conclusion of (ii). It remains to check that such a $\basexn$ contains a subsequence which is sign equivalent to a spreading sequence. Let $(f_n)_n$ be a spreading model of $\basexn$ (see for example \cite{at}). From the hypothesis we have that for some constant $C$, for any $n$, there is a finite sequence of $n$ signs $(\varepsilon^k_n)$ and a linear order $\leq_n$ on the integers such that $(\varepsilon^1_n x_1,\ldots,
    \varepsilon^n_n x_n) \sim_C (f_1,\ldots,f_n)_{\leq_n}$ - where the notation
    $(f_i)_{\leq}$ means that $span[f_i]$ is equipped with the norm
    $\|\sum_i \lambda_i f_i\|_{\leq}:=\| \sum_i \lambda_i g_i\|$, 
    if $g_1,\ldots,g_n$ is the $\leq$-increasing enumeration of $f_1,\ldots,f_n$.

     By compactness we find an infinite sequence $(\varepsilon_n)_n$ of signs and a linear order $\leq$ on the integers such that
    $(\varepsilon_n x_n) \sim_C (f_n)_{\leq}$. By Ramsey's theorem for sequences of length $2$, we may find an infinite subset $N$
    of the integers such that $\leq$ coincides either with the usual order on $N$,  or with the reverse order.
    In the first case we obtain that $(\varepsilon_n x_n)_{n \in N}$ is $C$-equivalent to the spreading sequence $(f_n)_{n \in N}$ (or equivalently, to
    $(f_n)_n$); in the second case, it is $C$-equivalent to the  basic sequence $(g_n)$ defined by
    $\| \sum_{i=1}^k \lambda_i g_i \|= \| \sum_{i=1}^k \lambda_{k-i} f_i\|$, which is also spreading. This completes the proof.
\end{proof}

This last result is an interesting improvement on combinatorial results involving subsequences. Indeed any basic sequence contains either a signed subsequence which is spreading, or satisfies a very strong form of tightness (involving
changes of signs and permutations)... On the other hand, the following seems to remain unknown.

\begin{question} Let $(x_n)_n$ be a basic sequence such that all subspaces generated by subsequences of $(x_n)_n$ are isomorphic. Must $(x_n)_n$ contain a spreading subsequence?
\end{question}


\bibliographystyle{unsrt}

\begin{thebibliography}{10}


\bibitem{androschlum}
G. Androulakis and Th. Schlumprecht, {\em The Banach space \(S\) is complementably minimal and subsequentially
  prime}, Stud. Math., 156(3), 227--242, 2003.

\bibitem{at} S. Argyros and S. Todorcevic, {\em Ramsey methods in Analysis}, Adv. Courses Math. CRM Barcelona, Birkh\"auser, Basel (2005).

\bibitem{Bourgain}  J. Bourgain, P. G. Casazza, J. Lindenstrauss and L. Tzafriri, {\em Banach spaces with a unique unconditional basis, up to a permutation}, Mem. Amer. Math. Soc. 322 (1985).

\bibitem{CO}
P. G. Casazza and E. Odell, {\em Tsirelson’s space and minimal subspaces}, Longhorn Notes,
University of Texas, Texas Functional Analysis Seminar (1982-3), 61–72.
\bibitem{DFKO}  S. Dilworth, V. Ferenczi, D. Kutzarova, and E. Odell, {\em On strongly asymptotic lp spaces and minimality}.
Journal of the London Mathematical Society, v.75, 409-419, 2007.

\bibitem{Fer} V. Ferenczi, {\em Minimal subspaces and isomorphically homogeneous sequences in a Banach space}, Isr. J. Math., 156, 125-140, 2006.

\bibitem{TightBaire} V. Ferenczi and G. Godefroy, {\em Tightness of {B}anach spaces and {B}aire category}, 
Topics in Functional and Harmonic Analysis, pages 43--55, 11, 2012.

\bibitem{onaquestion}
V. Ferenczi, A. Pelczar, and Ch. Rosendal, {\em On a question of Haskell P. Rosenthal concerning a characterization of c0 and lp}, Bull. Lond. Math. Soc. 36(3), 396--406, 2004.


\bibitem{Ferergodic}
V. Ferenczi and Ch. Rosendal, 
{\em Ergodic Banach spaces},
Adv. Math., 195(1), 259--282, 2005.

\bibitem{FerencziRosendal1}
V. Ferenczi and Ch. Rosendal.
{\em {B}anach spaces without minimal subspaces},
Journal of Functional Analysis, 257, 149--193, 2009.

\bibitem{Gow96}
W.T. Gowers,  {\em A new dichotomy for Banach spaces}.
Geom. Funct. Anal. 6(6), 1083--1093, 1996.

\bibitem{Gow02}
W.T. Gowers, {\em An infinite Ramsey theorem and some Banach-space dichotomies},
Ann. Math. (2), 156(3), 797--833, 2002.

\bibitem{Kechris}
A.S. Kechris, {\em Classical descriptive set theory}, volume 156, Berlin: Springer-Verlag, 1995.

\bibitem{OdellSchlumprecht}
E. Odell and Th. Schlumprecht, {\em Trees and branches in Banach spaces}.
Trans. Am. Math. Soc., 354(10), 4085--4108, 2002.

\bibitem{procpelczar}
A.M. Pelczar,  {\em Remarks on Gowers' dichotomy}.
Recent progress in functional analysis. Proceedings of the
  international functional analysis meeting on the occasion of the 70th
  birthday of Professor Manuel Valdivia, Valencia, Spain, July 3--7, 2000,
  pages 201--213. Amsterdam: Elsevier, 2001.
  
\bibitem{pelczar}
A.M. Pelczar,
 {\em Subsymmetric sequences and minimal spaces}, Proc. Am. Math. Soc., 131(3):765--771, 2003.
 
\bibitem{Rosendal2006InfiniteAG}
Ch. Rosendal,
 {\em Infinite asymptotic games},
Ann. Inst. Fourier, 59(4):1359--1384, 2009.
\end{thebibliography}

\end{document}